\newcommand{\redref}[2]{\texorpdfstring{\protect\hyperlink{#1}{\textcolor{black}{(}\textcolor{red}{#2}\textcolor{black}{)}}}{}}
\newcommand{\redlabel}[2]{\hypertarget{#1}{\textcolor{black}{(}\textcolor{red}{#2}\textcolor{black}{)}}}
\newcommand{\rmh}[1]{\mathpalette{\raisem@th{#1}}}
\newcommand{\raisem@th}[3]{\hspace*{-1pt}\raisebox{#1}{$#2#3$}}
\newcommand{\lsb}[2]{#1_{\rmh{-3pt}{#2}}}
\newcommand{\vo}{\vec{o}\@ifnextchar{^}{\,}{}}
\numberwithin{equation}{section}
\newtheorem{theorem}{Theorem}[section]
\newtheorem{lemma}[theorem]{Lemma}
\newtheorem{corollary}[theorem]{Corollary}
\newtheorem{proposition}[theorem]{Proposition}
\newtheorem{definition}[theorem]{Definition}
\newtheorem{remark}[theorem]{Remark}        
\numberwithin{equation}{section}
\def\be{\beta}
\def\ga{\gamma}
\def\de{\delta}
\def\ve{\varepsilon}
\def\ep{\ve}
\def\sig{\sigma}
\def\om{\omega}
\def\Om{\Omega}
\def\la{\lambda}
\def\La{\Lambda}
\def\tOm{\tilde{\Om}}
\def\pa{\partial}
\def\bff{{\bf f}}
\def\tr{\tilde{r}}
\def\tu{\tilde{u}}
\def\tw{\tilde{w}}
\def\tv{\tilde{v}}
\def\tbg{\tilde{\bf g}}
\def\ov{\overline{v}}
\def\Omt{\tilde{\Om}}
\newcommand{\avg}[2]{ \left({#1}\right)_{#2}}
\def\aa{\mathcal{A}}
\def\bb{\mathcal{B}}
\def\bbb{\overline{\mathcal{B}}}
\def\flam{F_{\la}}
\def\vlam{v_{\la}}
\newcommand{\mm}{\mathcal{M}}
\newcommand\RR{\mathbb{R}}
\newcommand\NN{\mathbb{N}}
\newcommand{\iprod}[2]{\langle #1 ,  #2\rangle}
\newcommand{\lbr}[1][(]{\left#1}
\newcommand{\rbr}[1][)]{\right#1}
\newcommand{\pp}{p(\cdot)}
\newcommand{\qq}{q(\cdot)}
\newcommand{\mt}{\mathfrak{t}}
\DeclareMathOperator{\dv}{div}
\DeclareMathOperator{\spt}{spt}
\DeclareMathOperator{\diam}{diam}
\DeclareMathOperator{\llll}{Log}
\newcommand{\gss}{(\ga,\sigma,S_0)}
\newcommand{\plog}{p^{\pm}_{\log}}
\newcommand{\qlog}{q^{\pm}_{\log}}
\newcommand{\slog}{s^{\pm}_{\log}}
\newcommand{\rlog}{r^{\pm}_{\log}}
\newcommand{\logh}{\llll^{\pm}}
\def\btr{B_{2\tr}}
\def\btrh{B_{\tr}}
\newcommand{\pu}{p^+_{\Om_{4R}}}
\newcommand{\pd}{p^-_{\Om_{4R}}}
\newcommand{\sss}{s(\cdot)}
\newcommand{\omf}{\Om_{4R}}
\newcommand{\omth}{\Om_{3R}}
\newcommand{\omt}{\Om_{2R}}
\newcommand{\integral}[3]{\int_{#1} #2 \ #3}
\newcommand{\mint}[3]{\fint_{#1} #2 \ #3}
\newcommand{\Norm}[1]{\left|\hspace{-0.2mm}\left| #1 \right|\hspace{-0.2mm}\right|}
\newcommand{\gh}[1]{\left( #1\right)}
\newcommand{\mgh}[1]{\left\{ #1\right\}}
\newcommand{\bgh}[1]{\left[ #1\right]}
\newcommand{\OO}{\Omega}
\newcommand{\M}{\mathcal{M}}
\begin{document}

\begin{frontmatter}

\title{Sharp gradient estimates for quasilinear elliptic equations with $p(x)$ growth on nonsmooth domains.}

\author[myaddress]{Karthik Adimurthi\corref{mycorrespondingauthor}\tnoteref{thanksfirstauthor}}
\cortext[mycorrespondingauthor]{Corresponding author}
\ead{karthikaditi@gmail.com and kadimurthi@snu.ac.kr}
\tnotetext[thanksfirstauthor]{Supported by the National Research Foundation of Korea grant (NRF-2015R1A2A1A15053024).}

\author[myaddress]{Sun-Sig Byun\tnoteref{thankssecondauthor}}
\ead{byun@snu.ac.kr}
\tnotetext[thankssecondauthor]{Supported by the National Research Foundation of Korea grant (NRF-2017R1A2B2003877). }

\author[myaddress]{Jung-Tae Park\tnoteref{thanksthirdauthor}}
\ead{ppark00@snu.ac.kr}
\tnotetext[thanksthirdauthor]{Supported by the National Research Foundation of Korea grant (NRF-2015R1A4A1041675).} 

\address[myaddress]{Department of Mathematical Sciences, Seoul National University, GwanAkRo 1, Gwanak-Gu, Seoul 08826, South Korea.}

\begin{abstract}
In this paper, we study quasilinear elliptic equations with the nonlinearity modelled after the $p(x)$-Laplacian on nonsmooth domains and obtain sharp Calder\'on-Zygmund type estimates in the variable exponent setting. In a recent work of \cite{BO}, the estimates  obtained  were strictly above the natural exponent and hence there was a gap between the natural energy estimates and estimates above $p(x)$, see \eqref{energy_introduction} and \eqref{byun_ok_estimate}.  Here, we bridge this gap to obtain the end point case of the estimates obtained in \cite{BO}, see \eqref{our_estimate}. In order to do this, we have to obtain significantly  improved a priori estimates below  $p(x)$, which is the main contribution of this paper.  We also improve upon the previous results by obtaining the estimates for a larger class of domains than what was considered in the literature.
\end{abstract}

\begin{keyword}
variable exponent, Calderon-Zygmund theory, end-point estimate.
\MSC[2010] 35J92, 35B65, 35R05, 46F30
\end{keyword}

\end{frontmatter}

\section{Introduction}

Calder\'on-Zygmund theory was first developed for the  Poisson equation in \cite{CZ}, which related the integrability of the gradient  of the solution for the Poisson equation with that of the associated  data. This represented the starting point of obtaining a priori estimates in Sobolev spaces for elliptic and parabolic equations.

\emph{All the estimates mentioned in this introduction are quantitative in nature, but to avoid being too technical, we only recall the qualitative nature of the bounds. This is sufficient to highlight the nature of the results that we will prove in this paper. }

For the problem 
\[
 \left\{
 \begin{array}{rcll}
  \dv (|\nabla u|^{p-2} \nabla u) &=& \dv (|\bff|^{p-2} \bff) & \quad \text{in} \ \Om,\\
  u &=& 0 & \quad \text{on} \ \partial \Om,
 \end{array}
\right.
\]
T. Iwaniec in \cite{IW1} established the Calder\'on-Zygmund type estimates, in particular he proved the following a priori relation
\[
 |\bff| \in L^{q}_{loc} \Longrightarrow |\nabla u| \in L^q_{loc} \qquad \text{for all} \ q >p. 
\]
After this pioneering work, there have been numerous publications which extended these estimates to other quasilinear elliptic and parabolic equations with the constant $p$-growth, see \cite{AM2,VB,CP,BM,DMS,KZ,MM} and references therein. In this paper, we are interested in obtaining Calder\'on-Zygmund type bounds for the problem
\begin{equation}
 \label{basic_pde}
\left\{ \begin{array}{rcll}
  \dv \aa(x,\nabla u) &=& \dv (|\bff|^{p(x)-2} \bff) & \quad \text{in} \ \Om,\\
  u &=& 0 & \quad \text{on} \ \partial \Om. 
 \end{array}\right. 
\end{equation}
Here $\OO$ is a bounded domain of $\RR^n$, $n \ge 2$, and the quasilinear operator $\aa(x,\nabla u)$ is modelled after well known $p(x)$-Laplacian operator having the  form $ |\nabla u|^{p(x) - 2} \nabla u$.  See Section \ref{assumptions} for the precise assumptions on $\aa(\cdot,\cdot)$, $\pp$ and $\Om$. For more on the importance of variable exponent problems, see \cite{RR,Ru,VVZ,CLR,AS,HU} and the references therein. 


The first estimate for the $p(x)$-Laplacian was obtained by Acerbi and Mingione in \cite{AM}, wherein they obtained a local Calder\'on-Zygmund type estimate by proving
\begin{equation}
 |\bff|^{\pp} \in L^q_{loc} \Longrightarrow |\nabla u|^{\pp} \in L^q_{loc} \qquad \text{for all} \ q \in (1,\infty)
\end{equation}
under the assumption that the variable exponent $\pp$ satisfies $\lim_{r\rightarrow0}\rho(r) \log \lbr \frac{1}{r} \rbr = 0$ (see Section \ref{exponent_structure} for the relation between $\rho$ and $\pp$). This work was subsequently extended in \cite{BaB} to parabolic systems.

 This estimate was further improved upon in \cite{BO1} for more general equations of the form \eqref{basic_pde},  to obtain global Calder\'on-Zygmund type estimates, provided the nonlinearity $\aa(x,\zeta)$ satisfied a small BMO (bounded mean oscillation) condition with respect to $x$ and $\Om$ was sufficiently flat in the sense of Reifenberg (see \cite{BW2,GP,IKM,Mil,Se} and the references therein for more about the  BMO condition and Reifenberg flat domains).

 On the other hand, there are very few results about $L^{\qq}$ estimates, i.e., a priori relations of the form
 \[
  |\bff|^{\pp} \in L^{\qq}(\Om) \Longrightarrow |\nabla u|^{\pp} \in L^{\qq}(\Om) \quad \text{for a  suitably chosen} \ \qq. 
 \]
The first work in this direction was due to \cite{DLR} in which they considered the linear Poisson equation to obtain $L^{\qq}$-estimates. The main approach is based on the boundedness of the associated singular integral operators in the variable exponent Lebesgue spaces $L^{\qq}$ with the assumption that $\qq$ is $\log$-H\"older continuous (this is essentially a necessary condition and cannot be avoided). More general extensions are now well known as an application of the theory developed in \cite{USW}.

Recently in \cite{USW}, the authors proved a very interesting generalization of the Theory of Extrapolation by Rubia de Francia and Garcia Cuerva in the setting of variable exponent Lebesgue spaces.  More specifically,  the problem of obtaining $L^{\qq}$ estimates for constant exponent equations becomes equivalent to obtaining weighted estimates for a fixed exponent $p_0$ with the weight in a suitable Muckenhoupt class $A_{p_0}$ (see \cite[Theorem 2.7]{USW} and details therein) .  In the past several decades, there have been a plethora of weighted bounds with weights in Muckenhoupt class obtained for a wide range of constant exponent operators including those considered in \cite{DLR}. 

Subsequently, the interesting equations to study now are those where the nonlinear structure depends on the variable exponent (an example being \eqref{basic_pde}). It is easy  to obtain energy estimates for solutions $u \in W_0^{1,\pp}(\Om)$ on any bounded domain, i.e.,  the following relation holds:
\begin{equation}
 \label{energy_introduction}
 |\bff|^{\pp} \in L^{1}(\Om) \Longrightarrow |\nabla u|^{\pp} \in L^{1}(\Om).
\end{equation}

Recently in \cite{BO}, the authors obtained the following Calder\'on-Zygmund type relation provided the nonlinearity satisfies a small BMO condition and the domain is suitably flat in the sense of Reifenberg:
\begin{equation}
 \label{byun_ok_estimate}
 |\bff|^{\pp} \in L^{\qq}(\Om) \Longrightarrow |\nabla u|^{\pp} \in L^{\qq}(\Om),  
\end{equation}
 for any  $1<q^-\leq \qq \leq q^+ < \infty $ with $\qq$ being $\log$-H\"older continuous.  \emph{In particular, they cannot take $q^- = 1$, which  would recover \eqref{energy_introduction}. }

 {\bf The purpose of this paper is twofold: }Firstly, we bridge the gap between the estimates \eqref{energy_introduction} and \eqref{byun_ok_estimate} to obtain the relation
 \begin{equation}
 \label{our_estimate}
 |\bff|^{\pp} \in L^{\qq}(\Om) \Longrightarrow |\nabla u|^{\pp} \in L^{\qq}(\Om),  
\end{equation}
 {\bf provided  $1\leq q^-\leq \qq \leq q^+ < \infty $, i.e., we allow $q^- = 1$} and $\qq$ is $\log$-H\"older continuous (we assume the same structure conditions on the nonlinearity $\aa$ as in \cite{BO}). \emph{This represents an end point case of the estimate \eqref{byun_ok_estimate}. }

 Secondly, in \cite{BO}, they considered domains that were sufficiently flat in the sense of Reifenberg. Although this class includes domains that have fractal boundary, it excludes convex domains with sufficiently sharp corners. In this paper, we obtain the end point estimate for quasiconvex domains (see Subsection \ref{domain_structure} for the details). \emph{This class of domains includes both Reifenberg flat domains as considered in \cite{BO} and convex domains.  }

 The plan of the paper is as follows: In Section \ref{assumptions}, we collect all the assumptions that will be needed on the structure of the nonlinearity $\aa(\cdot,\cdot)$, the regularity of the boundary of the domain $\Om$ and the assumptions on the variable exponent $\pp$. In Section \ref{main_theorem}, we shall state the main results of this paper.  In Section \ref{background_material}, we shall collect all the preliminary results that will be needed in later parts of the paper.  Section \ref{aprioriestimates} is devoted to proving the main a priori estimates that will be needed. In Section \ref{covering_arguments}, we shall implement a covering type argument and finally in Section \ref{CZ_estimates}, we  prove the main theorems. 


\section{Assumptions on the structures of $\aa$, $\pp$ and $\Om$.}
\label{assumptions}

We shall first describe the assumptions imposed on the variable exponent:
\subsection{Structure of $\pp$}
\label{exponent_structure}

\begin{definition}
\label{definition_p_log}
 We say that, a bounded measurable function $\pp : \Om \rightarrow \RR$  belongs to the $\log$-H\"older class $\logh$, if the following conditions are satisfied:
 \begin{itemize}
  \item There exist constants $p^-$ and $p^+$ such that $1< p^- \leq p(x) \leq p^+ < \infty$ for every $x \in \Om$.
  \item $ |p(x)  - p(y)| \leq \frac{L}{- \log |x-y|}$ holds  for every $ x, y \in \Om$ with $ |x-y| \leq \frac12 $ and for some  $L>0$.
 \end{itemize}
\end{definition}

\begin{remark}\label{remark_def_p_log} We remark that  $\pp$ is log-H\"{o}lder continuous in $\Om$ if and only if there is a nondecreasing continuous function ${\rho} : [0,\infty) \rightarrow [0,\infty)$ such that 
\begin{itemize}
 \item $\lim_{r\rightarrow 0} {\rho}(r) = 0$ and $|p(x)- p(y)| \leq {\rho}(|x-y|)$ for every $x,y \in \Om$. 
 \item ${\rho}(r) \log \lbr \frac{1}{r} \rbr \leq {L}$ holds for all $ 0< r \leq \frac12.$
\end{itemize}
The function $\rho(r)$ is called the modulus of continuity of the variable exponent $\pp$. 
 \end{remark}

%
%


\subsection{Structure of the nonlinearity $\aa(\cdot,\cdot)$}
\label{nonlinear_structure}
We first assume that $\aa(\cdot,\cdot)$ is a Caratheodory function in the sense:
\begin{gather}
x \mapsto \aa(x,\zeta) \ \text{is measurable for every } \ \zeta \in \RR^n \nonumber, \\
\zeta \mapsto\aa(x,\zeta) \ \text{is continuous for almost every } \ x \in \Om \nonumber.
\end{gather}
Let $\mu \in [0,1]$ be giiven, then there exists two positive contants $\La_0,\La_1$ such that the following holds for almost every $x \in \Om$ and every $\zeta, \eta \in \RR^n$: 
\begin{gather}
(\mu^2 + |\zeta|^2)^{\frac12} |D_{\zeta} \aa(x,\zeta)| + |\aa(x,\zeta)| \leq \La_1 (\mu^2 + |\zeta|^2)^{\frac{p(x) -1}{2}} \label{bounded}, \\
(\mu^2 + |\zeta|^2 )^{\frac{p(x)-2}{2}} |\eta|^2 \La_0 \leq \iprod{D_{\zeta}\aa(x,\zeta)\eta}{\eta} \label{ellipticity}.
\end{gather}
%
%
%
%
We point out that from \eqref{ellipticity}, one can derive the following monotonicity bound: 
\begin{equation}
 \label{monotonicity}
 \iprod{\aa(x,\zeta) - \aa(x,\eta)}{\zeta - \eta} \geq \tilde{\La}_0 (\mu^2 + |\zeta|^2 + |\eta|^2)^{\frac{p(x)-2}{2}} |\zeta- \eta|^2,
\end{equation}
where $\tilde{\La}_0 = \tilde{\La}_0(\La_0,n,p^+,p^-)$.  By inserting $\eta=0$ into \eqref{monotonicity}, we also have the following coercivity bound:
\begin{equation}\label{coercivity}
 \tilde{\La}_2 |\zeta|^{p(x)} \leq \iprod{\aa(x,\zeta)}{\zeta} + \tilde{\La}_1,
\end{equation}
where $\tilde{\La}_1 = \tilde{\La}_1 (\La_1,\La_0,p^+,p^-,n)$ and $\tilde{\La}_2 = \tilde{\La}_2(\La_1,\La_0,p^+,p^-,n)$.

\subsection{Structure of $\Om$}
\label{domain_structure}
Let $\ga>0$, $\sig\in(0,1/4)$ and $S>0$ be given, then we describe the properties of a $(\gamma, \sigma, S)$-quasiconvex domain $\Om$ in this subsection:
\begin{definition}
 \label{quasiconvex_definition}
$\Om$ is said to be $(\gamma, \sigma, S)$-quasiconvex if for all $x \in \partial \Om$ and all $0<r\leq S$, the following properties hold:
 \begin{itemize}
  \item There exists a ball $B_{\sigma r}(z) \subset \Omega \cap B_r(x)$ where $z$ is relative with respect to $x$ and the given $\sigma \in \gh{0,1/4}$ is  independent of $x$,
  \item There exist a hyperplane $L(x,r)$ containing $x$, a unit normal vector $\vec{\bf n}(x,r)$ to $L(x,r)$ and a half space \[ H(x,r) = \{ y+ t \vec{\bf n}(x,r) : y \in L(x,r), t \geq -\gamma r\} \] such that \[ \Omega \cap B_r(x) \subset H(x,r) \cap B_r(x).\]
 \end{itemize}

\end{definition}

%
%

We now state the following two important properties regarding quasiconvex domains. The first lemma says that locally, we can approximate the domain $\Om$ by convex domains from outside at sufficiently small scales. 
\begin{lemma}[\cite{JJW1,JJW2,JJW3}] \label{convex-inside} 
Let  $\Om$ be a $(\gamma, \sigma, S)$-quasiconvex domain, then for any $x \in \partial \Om$ and $r \in (0,S/2]$, there exists a convex domain $F(x,r)$ such that the following holds 
\[ B_r(x) \cap \Om \subset F(x,r) \cap B_r(x)\quad \text{and} \quad D[\partial_w F(x,r), \partial_w \Om_r(x)] \leq c(\sigma) \gamma r.\]
%
\end{lemma}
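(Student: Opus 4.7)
The plan is to build $F(x,r)$ as an intersection of affine half-spaces supplied by Definition \ref{quasiconvex_definition}. The quasiconvexity condition provides, at each boundary point $y \in \pa\Om$ and each scale $\rho \in (0,S]$, a half-space $H(y,\rho)$ containing $\Om \cap B_\rho(y)$, whose bounding hyperplane sits within distance $\ga\rho$ of a supporting hyperplane $L(y,\rho)$ through $y$. Intersecting finitely many such half-spaces around $\pa\Om \cap B_r(x)$ will produce a convex outer approximation to $\Om\cap B_r(x)$ whose boundary deviates from $\pa\Om$ only on the scale $\ga r$.

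Concretely, I would fix a scale $\rho = c(\sigma) r$ to be tuned and choose a maximal finite family $\{y_i\} \subset \pa\Om \cap \overline{B_r(x)}$ with the balls $B_{\rho/2}(y_i)$ pairwise disjoint and $\bigcup_i B_\rho(y_i) \supset \pa\Om \cap \overline{B_r(x)}$. I would then set
\[
F(x,r) \;:=\; \bigcap_i H(y_i,\rho),
\]
which is automatically convex as an intersection of half-spaces. The containment $\Om \cap B_r(x) \subset F(x,r)$ would follow by anchoring at the interior ball $B_{\sigma r}(z) \subset \Om \cap B_r(x)$ guaranteed by Definition \ref{quasiconvex_definition}: choosing $\rho \ll \sigma r$ forces $B_{\sigma r}(z) \subset H(y_i,\rho)$ for every $i$, and then a connectedness argument (using that $\Om$ cannot leave any $H(y_i,\rho)$ without first crossing $\pa\Om$ inside $B_\rho(y_i)$) propagates the inclusion across all of $\Om\cap B_r(x)$.

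For the Hausdorff estimate, any $w \in \pa_w F(x,r)$ lies on $\pa H(y_i,\rho)$ for some $i$, so $\dist(w,L(y_i,\rho)) \leq \ga\rho$. Since $y_i \in L(y_i,\rho)\cap \pa\Om$ and the covering controls $|w - y_i|$, this gives $\dist(w,\pa_w \Om_r(x)) \leq c(\sigma)\ga r$; the symmetric direction is immediate from $\pa_w \Om_r(x) \subset \overline{F(x,r)}$ combined with the local flatness of $\pa\Om$ relative to each $L(y_i,\rho)$.

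The delicate step is the containment $\Om \cap B_r(x) \subset F(x,r)$, since the half-space orientations at different $y_i$ must be mutually consistent so that no $H(y_j,\rho)$ cuts into $\Om$ near some other $y_i$. The interior ball condition is precisely the geometric hypothesis that rules this out, and the correct tuning of $\rho$ as a function of $\sigma$ and $\ga$ is the technical content of the constructions in \cite{JJW1,JJW2,JJW3}, which I would ultimately follow.
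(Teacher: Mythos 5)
The construction the paper uses (recorded in the Remark following the lemma, and taken from \cite{JJW1}) is
\[
F(x,r) := \bigcap_{y \in \partial_w\Om_r(x)} H(y,2r),
\]
that is, the half-spaces are taken at the \emph{fixed scale} $2r$, not at a small scale $\rho \ll \sigma r$ as you propose. This choice is not cosmetic: it is precisely what makes the containment $\Om \cap B_r(x) \subset F(x,r)$ automatic. For any $y \in \partial\Om \cap B_r(x)$ you have $B_r(x) \subset B_{2r}(y)$, so Definition \ref{quasiconvex_definition} gives $\Om \cap B_r(x) \subset \Om \cap B_{2r}(y) \subset H(y,2r)$ directly, and intersecting over $y$ finishes the containment with no further argument.

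Your construction does not get this for free, and the gap is real. With $\rho \ll \sigma r$, the quasiconvexity condition only tells you $\Om \cap B_\rho(y_i) \subset H(y_i,\rho)$; it says nothing about where $\Om$ lies \emph{outside} $B_\rho(y_i)$, and in particular $\Om \cap B_r(x)$ may well cross the bounding hyperplane $\partial H(y_i,\rho)$ at distance $\gg \rho$ from $y_i$ without ever approaching $\partial\Om$ inside $B_\rho(y_i)$. The connectedness argument you sketch ("$\Om$ cannot leave $H(y_i,\rho)$ without first crossing $\pa\Om$ inside $B_\rho(y_i)$") is therefore false as stated: nothing forces the exit to happen inside the small ball. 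The interior-ball anchor $B_{\sigma r}(z)$ also does not help as directly as claimed, because for $y_i$ far from $z$ there is no reason $B_{\sigma r}(z)$ sits on the correct side of the hyperplane $L(y_i,\rho)$. A related issue affects your Hausdorff estimate: a point $w \in \partial F(x,r) \cap B_r(x)$ lying on $\partial H(y_i,\rho)$ need not be near $y_i$ (the hyperplane sweeps across all of $B_r(x)$), so "the covering controls $|w-y_i|$" does not hold, and proximity of $w$ to $L(y_i,\rho)$ does not by itself imply proximity to $\partial\Om$.

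In short, the structure of your proposal (intersect half-spaces from Definition \ref{quasiconvex_definition}) is the right idea, but the scale must be coupled to $r$ rather than shrunk, exactly as in \cite{JJW1}; with the scale $2r$ both the containment and the estimate $D[\partial_w F(x,r),\partial_w\Om_r(x)] \le c(\sigma)\gamma r$ follow from the definition at one scale, whereas your small-scale version would require an additional propagation argument that the hypotheses do not support.
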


Here, we have set $D[E,F] := \max\left\{\sup_{z\in E}d(z,F), \sup_{z\in F} d(z,E)\right\}$ which denotes the Hausdorff distance between two sets $E,F \subset \RR^n$, and $\partial_w$ denotes the wiggle part of the boundary, i.e.,
$$\partial_w \Om_r(x) := \partial \Om \cap B_r(x) \quad \text{and} \quad \partial_w F(x,r) :=\partial F(x,r) \cap B_r(x).$$

\begin{remark}
 Following the proof in \cite{JJW1}, we see that $F(x,r)$ is constructed to be the set \[ F(x,r) := \bigcap_{y \in \partial_w\Om_r(x)} H(y,2r),\] where $H(y,2r)$ is defined in Definition \ref{quasiconvex_definition}. 
\end{remark}

The second property of quasiconvex domains is that locally, it can be approximated by convex domains from the interior at suitably small scales. 
\begin{lemma}[\cite{JJW1,JJW2,JJW3}]\label{convex-outside}
Let $\Om$ be a $(\ga,\sig,S)$-quasiconvex domain. For the convex region $F(x,r)$ constructed in Lemma \ref{convex-inside}, there exists another convex region $F^*(x,r)$ such that the following holds:
\[ F^*_r(x) := F^*(x,r) \cap B_r(x) \subset \Om_r(x)\quad \text{and} \quad D[\partial_w F^*_r(x), \partial_w \Om_r(x)]  \leq \frac{32 \gamma r}{\sigma^3}.\]
%
\end{lemma}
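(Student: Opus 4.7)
The plan is to construct $F^*(x,r)$ as a homothetic shrinking of the convex outer envelope $F(x,r)$ from Lemma \ref{convex-inside} toward the center of the interior ball supplied by the quasiconvexity hypothesis. Concretely, using Definition \ref{quasiconvex_definition}, pick $z$ with $B_{\sig r}(z)\subset \Om\cap B_r(x)$, fix a small parameter $\lam\in(0,1)$ to be chosen below, and set
\[
F^*(x,r) := z+(1-\lam)\bgh{F(x,r)-z} = \mgh{z+(1-\lam)(y-z) : y\in F(x,r)}.
\]
This set is convex (being a dilation of $F(x,r)$ about $z$), and the entire argument reduces to choosing $\lam\sim\ga/\sig^3$ so that $F^*_r(x)\subset\Om_r(x)$. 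Once containment is established, the Hausdorff bound on the wiggle boundaries will fall out by direct computation.

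The key geometric ingredient is a cone/convex-hull observation. Since $F(x,r)$ is convex, contains $B_{\sig r}(z)$ (as $\Om\cap B_r(x)\subset F(x,r)$), and contains any $y\in F(x,r)$, a short computation using a convex combination with $w \in \overline{B_{\sig r}(0)}$ yields
\[
B_{(1-s)\sig r}\bgh{z+s(y-z)} \subset \mathrm{conv}\bgh{B_{\sig r}(z)\cup\mgh{y}} \subset F(x,r) \qquad \text{for every } s\in[0,1].
\]
To prove the inclusion $F^*_r(x)\subset\Om_r(x)$, I argue by contradiction. Suppose $p = z+(1-\lam)(y-z)\in F^*_r(x)\setminus\Om$ with $y\in F(x,r)$. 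Since $z\in\Om$ and the segment $[z,p]$ lies in the convex set $B_r(x)$, it must first meet $\pa\Om$ at some $p^*\in \pa_w\Om_r(x)$. Writing $p^* = z+s(y-z)$ with $s\in[0,1-\lam]$, the cone estimate above gives $B_{\lam\sig r}(p^*)\subset F(x,r)$. On the other hand, Lemma \ref{convex-inside} supplies some $q\in\pa_w F(x,r)$ with $|p^*-q|\le c(\sig)\ga r$. Choosing $\lam$ so that $\lam\sig > c(\sig)\ga$ then forces $q$ into the interior of $F(x,r)$, contradicting $q\in\pa F(x,r)$. Tracking the $\sig$-dependence of $c(\sig)$ from Lemma \ref{convex-inside} leads to a choice of the form $\lam\sim\ga/\sig^3$.

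The Hausdorff bound is now essentially immediate. For any $p\in\pa_w F^*_r(x)$, write $p=z+(1-\lam)(y-z)$ with $y\in\pa_w F(x,r)$; pairing $y$ with $y'\in\pa_w\Om_r(x)$ via Lemma \ref{convex-inside} yields
\[
|p-y'| \le |p-y|+|y-y'| = \lam\,|y-z|+c(\sig)\ga r \le 2\lam r + c(\sig)\ga r \lesssim \frac{\ga r}{\sig^3}.
\]
For the reverse direction, given $y'\in\pa_w\Om_r(x)$, pick the corresponding $y\in\pa_w F(x,r)$, set $p=z+(1-\lam)(y-z)\in\pa_w F^*_r(x)$ (it lies in $B_r(x)$ because it sits on a segment between two points of $B_r(x)$), and estimate identically. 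After bookkeeping the constants inherited from Lemma \ref{convex-inside}, the bound combines to the claimed $32\ga r/\sig^3$.

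The main obstacle I anticipate is purely the bookkeeping: one has to verify at every invocation that the relevant points remain inside $B_r(x)$ so that Lemma \ref{convex-inside} applies, and one has to pin down the explicit $\sig$-dependence of $c(\sig)$ from that lemma in order to justify the stated $32/\sig^3$. Neither is a substantive geometric difficulty, but both require care with the definitions and constants.
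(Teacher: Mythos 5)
Your homothetic-shrinking construction of $F^*(x,r)$ about the interior-ball center $z$ with factor $1-\lambda$, $\lambda\sim\gamma/\sigma^3$, is exactly the one the paper records in the Remark following the lemma (there written in spherical coordinates centered at $x_0=z$ with shrinking factor $1-16\gamma/\sigma^3$, i.e.\ $\lambda=16\gamma/\sigma^3$); the paper itself offers no proof beyond citing \cite{JJW1,JJW2,JJW3}. Your cone/convex-hull containment argument and the two-sided Hausdorff estimate are a faithful fleshing-out of that construction, modulo the bookkeeping you already flag (in particular, for $p\in\partial_w F^*_r(x)$ the homothetic preimage $y=z+(p-z)/(1-\lambda)$ may fall slightly outside $B_r(x)$, so one must project or argue locally before invoking the Hausdorff bound of Lemma~\ref{convex-inside} at $y$).
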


\begin{remark}
 Following the proof in \cite{JJW1}, we see that $F^*(x,r)$ is constructed as follows: Using the spherical coordinates centered at $x_0$, it is easy to see that there exists a point $x_0 \in \Om_r(x)$ such that $B_{\sigma r}(x_0) \subset \Om_r(x)$. Define
 \[ 
 F^*(x,r) := \left\{(\rho,\theta_1,\ldots,\theta_{n-1}) \ : \begin{array}{l} \rho \leq \rho' \left( 1-\frac{16\gamma}{\sigma^3}\right), \ (\rho', \theta_1, \ldots , \theta_{n-1}) \in \partial F(x,r) \end{array} \right\}.
 \]
\end{remark}

A useful property of a $(\ga,\sig,S)$-quasiconvex domain is that it satisfies the following measure density estimates:
\begin{lemma}
 \label{measure_density_quasiconvex}	
 Since $\Om$ is $(\ga,\sigma,S)$-quasiconvex,  we have the following estimates: For any $x \in \pa \Om$ and any $r \in (0,S)$, then there holds
 \begin{equation}
  \label{lower_bound_domain}
  |\Om \cap B_r(x)| = |\Om_r(x)| \geq |B_{\sigma r}|.
 \end{equation}
Analogously, the following bound also holds:
 \begin{equation}
  \label{lower_bound_domain_complement}
 |B_r(x) \cap \Om^c|  \geq c(n) [(1-\ga)r]^n.
 \end{equation}
If we further assume that $\ga \leq \frac12$, then the bound in \eqref{lower_bound_domain_complement} can be made independent of $\ga$. 
\end{lemma}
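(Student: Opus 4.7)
The plan is to extract both estimates directly from the two bullet points in Definition \ref{quasiconvex_definition}, with the second bullet supplying a simple geometric lower bound on the complement.

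\textbf{Estimate \eqref{lower_bound_domain}.} This is essentially immediate from the first bullet of Definition \ref{quasiconvex_definition}: for $x \in \partial \Om$ and $r \in (0,S)$, there exists $z$ with $B_{\sigma r}(z) \subset \Om \cap B_r(x) = \Om_r(x)$, so monotonicity of Lebesgue measure gives $|\Om_r(x)| \geq |B_{\sigma r}(z)| = |B_{\sigma r}|$.

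\textbf{Estimate \eqref{lower_bound_domain_complement}.} Here I would use the second bullet of Definition \ref{quasiconvex_definition}, namely $\Om \cap B_r(x) \subset H(x,r) \cap B_r(x)$. Taking complements inside $B_r(x)$, this says
\[
B_r(x) \setminus H(x,r) \subset B_r(x) \cap \Om^c,
\]
so it suffices to bound the volume of the spherical cap $B_r(x) \setminus H(x,r)$ from below. After translation and rotation we may put $x$ at the origin with $\vec{\bf n}(x,r) = e_n$, so the cap is $C := \{y \in B_r(0) : y_n < -\ga r\}$. The key geometric observation is that $C$ contains the open ball
\[
B' := B_{(1-\ga)r/2}\!\left(-\tfrac{(1+\ga)r}{2}\, e_n\right).
\]
Indeed, the center of $B'$ is at distance $(1+\ga)r/2$ from the origin and $B'$ has radius $(1-\ga)r/2$, so the triangle inequality places $B'$ inside $B_r(0)$, and the topmost point of $B'$ sits exactly on the hyperplane $\{y_n = -\ga r\}$, so $B' \subset \{y_n \leq -\ga r\}$. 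Hence
\[
|B_r(x) \cap \Om^c| \geq |C| \geq |B'| = \omega_n\left(\tfrac{(1-\ga)r}{2}\right)^{\!n} = c(n)[(1-\ga)r]^n.
\]

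\textbf{The final remark.} When $\ga \leq 1/2$, one has $(1-\ga)^n \geq 2^{-n}$, so the factor $(1-\ga)^n$ can be absorbed into a new dimensional constant, yielding a bound independent of $\ga$. There is no serious obstacle here; the only content beyond unpacking Definition \ref{quasiconvex_definition} is the elementary packing of a ball of radius $(1-\ga)r/2$ into the offset spherical cap, which is a one-line triangle-inequality check.
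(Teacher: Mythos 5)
Your proof is correct. The paper states this lemma without giving a proof, treating both estimates as immediate consequences of Definition \ref{quasiconvex_definition}, and your argument supplies exactly the missing details in the natural way: the first bullet directly gives \eqref{lower_bound_domain}, and for \eqref{lower_bound_domain_complement} the containment $\Om \cap B_r(x) \subset H(x,r) \cap B_r(x)$ yields $B_r(x)\setminus H(x,r) \subset B_r(x)\cap\Om^c$, after which your triangle-inequality check that the ball $B_{(1-\ga)r/2}(-\tfrac{(1+\ga)r}{2}e_n)$ sits inside the cap (tangent to $\partial B_r$ from within and tangent to the plane $\{y_n=-\ga r\}$ from below) gives the stated lower bound with $c(n)=\omega_n/2^n$. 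The final reduction $(1-\ga)^n\geq 2^{-n}$ for $\ga\leq 1/2$ is likewise correct.
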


\subsection{Smallness assumption}
\label{smallness_assumption}
In order to prove the main results, we need to assume a smallness condition satisfied by  $(\pp,\aa,\Om)$. 
\begin{definition}\label{further_assumptions}
 Let $\ga>0$, $\sig>0$ and $S_0>0$ be given, we then say $(\pp,\aa,\Om)$ is $(\ga,\sigma,S_0)$-vanishing  if the following three assumptions hold:
 \begin{description}[leftmargin=*]
  \item[(i) Assumption on $\pp$:] The variable exponent $\pp$ with modulus of continuity $\rho(r)$ as defined in Definition \ref{definition_p_log}, is further assumed to satisfy the smallness condition:
  \begin{equation}
   \label{small_px}
   \sup_{0<r\leq S_0} {\rho}(r) \log \lbr \frac{1}{r} \rbr \leq \ga.
  \end{equation}

  \item[(ii) Assumption on $\aa$:] For a bounded open set $U \subset \RR^n$, we write
  \begin{equation}
   \label{a_difference}
   \Theta(\aa,U)(x) := \sup_{\zeta \in \RR^n} \lbr[|] \frac{\aa(x,\zeta)}{(\mu^2 + |\zeta|^2)^{\frac{p(x)-1}{2}}} - \avg{\frac{\aa(\cdot,\zeta)}{(\mu^2 + |\zeta|^2)^{\frac{\pp-1}{2}}}}{U} \rbr[|].
  \end{equation}
where we have used the notation $\avg{f}{U} := \fint_U f(x)\ dx$.
  Note that if $\mu=0$, then $\zeta \in \RR^n\setminus \{0\}$.  Then $\aa$ satisfies the small BMO condition, i.e., there holds:	
\begin{equation}
 \label{small_aa}
 \sup_{0<r\leq S_0} \sup_{y \in  \RR^n} \fint_{B_r(y)} \Theta(\aa,B_r(y))(x) \ dx \leq \ga.
\end{equation}
\item[(iii) Assumption on $\pa \Om$:] We ask that $\Om$ is a $\gss$-quasiconvex domain  in the sense of  Definition \ref{quasiconvex_definition}. 
 \end{description}
\end{definition}

\subsection{Notations}
We shall use the following notation throughout the paper:
\begin{itemize}
     \item In what follows, the function $\rho(r)$ denotes the modulus of continuity of $p(x)$ and we denote $\om(r)$ for the modulus of continuity of $q(x)$.
\item For the variable exponent $\pp$, we shall denote by $\plog$ to include the constants $p^+$, $p^-$ and those that are part of the $\log$-H\"older continuity structure of $\pp$. Analogously, for variable exponents $\qq$, $r(\cdot)$ and $\sss$, we shall use $\qlog$, $\rlog$ and $\slog$ to denote corresponding constants. 
\item We shall sometimes also write $q_{\log}$  to denote constants that depend only on the log-H\"older continuity of $\qq$  and denote $q^{\pm}$ to denote the constants $q^+$ and $q^-$. 
     \item Constants with subscripts like radii $R_1,R_2\ldots$ and bounding values $M_1,M_2,\ldots$ will be fixed in subsequent sections once they are chosen. 
     \item We shall use $\apprle$,  $\apprge$ and $\approx$ to suppress writing the constants that could possibly change from line to line as long as they depend on $n,\plog,\qlog,\La_0,\La_1,\sig$, $S_0$ and related quantitites.
     \item We shall sometimes use $\sim$ to denote variables that occur only within the proof of the concerned result, for example $\tilde{r}, \tilde{m}, \cdots$.
     \item Given a variable exponent $\pp$, we shall use the following notation:
     \begin{equation*}\label{notation_p_inf}
 p_E^- : = \inf_{x \in E} p(x) \qquad \text{and} \qquad p_E^+ := \sup_{x \in E} p(x).
\end{equation*}
We will drop the set $E$ and denote $p^+:= \sup_{x\in \RR^n} p(x)$ and $p^-:=\inf_{x\in \RR^n} p(x)$

\end{itemize}

 
\section{Main theorems}
\label{main_theorem}

We now state the main results of the paper. The first theorem concerns local estimates around small balls:
\begin{theorem}
 \label{main_theorem1}
Assume that $u \in W_0^{1,p(\cdot)}(\OO)$ is the weak solution of the problem (\ref{basic_pde}) under the structure conditions (\ref{bounded}) and (\ref{ellipticity}). Let $0 < \sigma < 1/4$, $0< S_0 < 1$, and $q(\cdot)$ be log-H\"{o}lder continuous satisfying $1 < q^- \le q(\cdot) \le q^+ < \infty$. There exist constants ${\gamma_0} \in (0,1/4)$ and ${\delta_0} \in (0,1/4)$, both depending only on $\La_0$, $\La_1$, $\plog$, $\qlog$, $n$, $\sigma$, such that if $(p(\cdot),\aa,\OO)$ is $(\gamma,\sigma,S_0)$-vanishing for some $\ga \in (0,{\ga_0})$, then there exists a constant $C_0 = C_0(\La_0,\La_1,\plog, \qlog, n, S_0)>0$ so that for any $x_0 \in \OO$, $\delta \in (0,{\delta_0})$  and $r \in \left(0, 1/(C_0 M_0) \right]$, we have 
\begin{equation*}\label{main-r1}
\begin{aligned}
\mint{\OO_{r}(x_0)}{|\nabla u|^{(p(x)-\delta)q(x)}}{dx} &\le C \left\{\gh{\mint{\OO_{4r}(x_0)}{|\nabla u|^{p(x)-\delta}}{dx}}^{q_{\OO_{4r}(x_0)}^-} + \mint{\OO_{4r}(x_0)}{|\bff|^{(p(x)-\delta)q(x)}}{dx} +1 \right\}
\end{aligned}
\end{equation*}
for some constant $C=C(\La_0,\La_1,\plog, \qlog, n, \sigma)>0$ and $M_0$ as defined in \eqref{size_date}.
\end{theorem}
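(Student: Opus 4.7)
The plan is to combine a \emph{sub-$p(x)$ reverse H\"older inequality}---the main technical novelty promised by the authors---with a comparison and covering argument adapted to the variable exponent, quasiconvex setting. First I would normalize by setting
\[
M := \mint{\OO_{4r}(x_0)}{|\nabla u|^{p(x)-\delta}}{dx} + \mint{\OO_{4r}(x_0)}{|\bff|^{(p(x)-\delta)q(x)}}{dx} + 1,
\]
so that the target bound reads $\mint{\OO_r(x_0)}{|\nabla u|^{(p(x)-\delta)q(x)}}{dx} \lesssim M^{q^-_{\OO_{4r}(x_0)}}$. The restriction $r \le 1/(C_0 M_0)$, combined with the log-H\"older continuity of $\pp$ and $\qq$, ensures $M^{\rho(r)} \lesssim 1$ and $M^{\omega(r)} \lesssim 1$; this is the mechanism that lets us freeze the exponents on subballs of radius $\lesssim r$ while absorbing the resulting errors.

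For the core step, on each small ball $B_{2s}(y) \subset B_{4r}(x_0)$ with $y \in \overline{\OO}$ and $s \le r$, I would compare $u$ with the solution $w$ of the frozen homogeneous problem $\dv \overline{\aa}_{B_{2s}}(\nabla w) = 0$ in $\OO_{2s}(y)$ with $w = u$ on $\partial \OO_{2s}(y)$, where $\overline{\aa}_{B_{2s}}$ is the $x$-average of the normalized $\aa$ as in \eqref{a_difference}. Combining the small BMO bound \eqref{small_aa}, the smallness of $p^+_{B_{2s}} - p^-_{B_{2s}}$ coming from log-H\"older continuity, and the two-sided convex approximation of $\OO$ from Lemmas \ref{convex-inside} and \ref{convex-outside}, one derives a comparison estimate at the sub-natural exponent
\[
\mint{\OO_{2s}(y)}{|\nabla u - \nabla w|^{p(x)-\delta}}{dx} \lesssim \eta(\gamma)\left(\mint{\OO_{4s}(y)}{|\nabla u|^{p(x)-\delta}}{dx} + \mint{\OO_{4s}(y)}{|\bff|^{p(x)-\delta}}{dx} + 1\right),
\]
with $\eta(\gamma) \to 0$ as $\gamma \to 0^+$. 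Combining this with the higher integrability and Lipschitz bounds known for the frozen constant-exponent problem on convex domains would yield the sub-$p(x)$ reverse H\"older inequality
\[
\left(\mint{\OO_s(y)}{|\nabla u|^{(p(x)-\delta)(1+\ve_0)}}{dx}\right)^{\frac{1}{1+\ve_0}} \lesssim \mint{\OO_{2s}(y)}{|\nabla u|^{p(x)-\delta}}{dx} + \left(\mint{\OO_{2s}(y)}{|\bff|^{(p(x)-\delta)(1+\ve_0)}}{dx}\right)^{\frac{1}{1+\ve_0}} + 1,
\]
for some $\ve_0 = \ve_0(\La_0,\La_1,\plog,n,\sig) > 0$; crucially both sides live below the natural exponent, which is the main gain over \cite{BO}.

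With the sub-$p(x)$ reverse H\"older in hand, I would fix $\delta_0$ so small that the log-H\"older oscillation of $\qq$ satisfies $q^+_{\OO_{4r}(x_0)} \le (1+\ve_0)\, q^-_{\OO_{4r}(x_0)}$, which is automatic once $r$ is small compared to $1/M_0$. A Vitali--Gehring iteration on the level sets $\{|\nabla u|^{p(x)-\delta} > \lambda\}$, implemented through the covering argument of Section \ref{covering_arguments}, transfers the integrability from the base level $p(x)-\delta$ up to the target level $(p(x)-\delta)q(x)$, and a final Jensen step at exponent $q^-_{\OO_{4r}(x_0)}$ produces the $q^-$-power factor on the right-hand side. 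The principal obstacle is the sub-$p(x)$ reverse H\"older inequality itself: below the natural exponent the standard Caccioppoli test function loses its clean absorption structure, the $\mu$-degeneracy in \eqref{ellipticity} forces a careful splitting into degenerate and nondegenerate regimes, and the Sobolev--Poincar\'e step on the quasiconvex domain must simultaneously absorb the modulus of continuity $\rho(r)$ of $\pp$ and the $\gamma r$-scale Hausdorff deviation of $\OO$ from its convex approximants. Balancing these three competing requirements is what dictates the fine thresholds $\gamma_0$ and $\delta_0$ in the statement.
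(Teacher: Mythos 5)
Your proposal identifies the correct skeleton of the argument --- comparison with reference problems, sub-$p(x)$ a priori estimates, and a covering argument on a small ball --- but it misattributes the mechanism that reaches arbitrary $q(\cdot)$-integrability. The ``sub-$p(x)$ reverse H\"older inequality'' you single out as the main novelty (this is Theorem~\ref{higher_integrability_full}) only self-improves by a universal factor $1+\sigma_1$; a Vitali--Gehring iteration built on it cannot reach the level $(p(x)-\delta)q(x)$ once $q^-$ is large, and the condition $q^+_{\OO_{4r}} \le (1+\ve_0)\,q^-_{\OO_{4r}}$ merely controls the \emph{oscillation} of $q$ (so that the exponent can be frozen on the ball), not its \emph{size}, which remains arbitrary. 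The engine that actually gets to arbitrary $q$ in the paper is the good-$\lambda$ Calder\'on--Zygmund decomposition of Section~\ref{covering_arguments} (Lemma~\ref{calderon_zygmund} implemented through Lemmas~\ref{co1}--\ref{co2} and Corollary~\ref{co3}), whose essential input is an $L^\infty$ gradient bound for a comparison solution; the sub-$p(x)$ reverse H\"older inequality is an auxiliary tool used \emph{inside} the comparison step (for instance to absorb the BMO term in Theorem~\ref{difference_w_v_boundary}, see \eqref{w_v_7_1}), not the driver of the final integrability upgrade.

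There is also a structural gap in your comparison step. You propose a single comparison with the frozen averaged operator $\overline{\aa}_{B_{2s}}$, but the required $L^\infty$ gradient bound is not available for such a problem directly on a quasiconvex domain. The paper runs a three-stage chain $u \to w \to v \to \bar V$: first $w$ solves the \emph{variable}-exponent homogeneous equation $\dv \aa(x,\nabla w)=0$ (Theorem~\ref{boundary_difference}, with the crucial sub-$p(x)$ difference estimate supplied by the Lipschitz-truncation argument of Theorem~\ref{boundary_below_exponent}); then $v$ solves the constant-exponent, $x$-averaged problem $\dv \bbb(\nabla v)=0$ at the frozen exponent $p^+_{\Om_{8r}}$ (Theorem~\ref{difference_w_v_boundary}), where both the BMO smallness~\eqref{small_aa} and the $\log$-H\"older smallness~\eqref{small_px} are consumed and the $L\log L$ estimate of Lemma~\ref{llogl} is needed to control the $\log$-type error coming from freezing the exponent; finally $\bar V$ solves the same frozen equation on the interior convex approximation $F^*_{2r}$ of Lemma~\ref{convex-outside}, which is where the $L^\infty$ bound of Theorem~\ref{l_infinity_boundary} actually lives. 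Collapsing this chain into a single frozen problem would leave you without an $L^\infty$ gradient estimate to run the good-$\lambda$ decomposition, and it is precisely the propagation of $\eta\lambda$-smallness through all three stages that pins down $\gamma_0$ and $\delta_0$.
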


Using a standard covering argument, we can obtain the following  global estimates:
\begin{theorem}
\label{main_theorem2}
Let  $M^+ > 1$ be given and  let $r(\cdot)$ be a log-H\"{o}lder continuous function satisfying $1 \le r^- \leq r(\cdot) \leq r^+ < M^+ < \infty$. Let $0 < \sigma < 1/4$ and  $0< S_0 < 1$ be given, then under the assumptions in Theorem \ref{main_theorem1}, there is a constant ${\gamma_0} \in (0,1/4)$  depending only on $\La_0$, $\La_1$, $\plog$, $r_{\log}$, $M^+$, $n$ and $\sigma$, such that if $(p(\cdot),\aa,\OO)$ is $(\gamma,\sigma,S_0)$-vanishing for some $\ga \in (0,{\ga_0})$, then 
there exists a constant $C = C(\La_0, \La_1, \plog, r_{\log}, M^+, n, \sigma, \OO, S_0)>0$ such that the following global bound holds:
\begin{equation*}\label{main-r2}
\integral{\OO}{|\nabla u|^{p(x)r(x)}}{dx} \le C \mgh{\gh{\integral{\OO}{|\bff|^{p(x)r(x)}}{dx}}^{n(M^+ -1) + M^+} +1}.
\end{equation*}
\end{theorem}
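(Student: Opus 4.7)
The plan is to reduce Theorem \ref{main_theorem2} to repeated application of Theorem \ref{main_theorem1} via a scale-dependent Vitali covering. First, fix a small $\delta \in (0,\delta_0)$ and define the auxiliary exponent $\qq := \pp r(\cdot)/(\pp-\delta)$, so that $(\pp-\delta)\qq \equiv \pp r(\cdot)$ identically. The inequality $r(\cdot) \geq 1$ forces $q^- \geq 1 + \delta/p^+ > 1$, while $r^+ < M^+$ together with $\pp \geq p^-$ bounds $q^+$ by a constant depending only on $p^{\pm}, M^+$ and $\delta$. Log-Hölder continuity of $\qq$ transfers directly from that of $\pp$ and $r(\cdot)$, so the hypotheses of Theorem \ref{main_theorem1} are satisfied with constants controlled by the data of Theorem \ref{main_theorem2}.

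Second, I would fix $r := 1/(C_0 M_0)$ from Theorem \ref{main_theorem1} and apply a Vitali-type covering lemma to obtain a finite family $\{B_r(x_i)\}_{i=1}^N$ with $x_i \in \overline{\Om}$, such that the dilated family $\{B_{4r}(x_i)\}$ covers $\overline{\Om}$ with overlap bounded in terms of $n$ alone. Applying Theorem \ref{main_theorem1} on each $\Om_r(x_i)$ and multiplying by $|\Om_r(x_i)|$ converts the mean-integral bound into an integral bound. The quasiconvex measure density \eqref{lower_bound_domain} yields $|\Om_{4r}(x_i)| \geq C(\sigma r)^n$, and combined with $q^-_{\Om_{4r}(x_i)} \leq q^+ \leq M^+$ the prefactor on the gradient term is controlled by
\[
\frac{|\Om_r(x_i)|}{|\Om_{4r}(x_i)|^{q^-_{\Om_{4r}(x_i)}}} \leq C\gh{C_0 M_0}^{n(q^-_{\Om_{4r}(x_i)}-1)} \leq C M_0^{n(M^+-1)}.
\]
Summing over $i$ and using $\sum_i |\Om_r(x_i)| \leq C(n)|\Om|$ (from the finite overlap) then gives
\[
\integral{\Om}{|\nabla u|^{\pp r(\cdot)}}{dx} \leq C M_0^{n(M^+-1)}\gh{\integral{\Om}{|\nabla u|^{\pp-\delta}}{dx}}^{M^+} + C\integral{\Om}{|\bff|^{\pp r(\cdot)}}{dx} + C.
\]

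Finally, the energy estimate \eqref{energy_introduction}, together with the inequality $|\bff|^{\pp} \leq 1 + |\bff|^{\pp r(\cdot)}$ arising from $r(\cdot) \geq 1$ on a bounded domain, gives $\integral{\Om}{|\nabla u|^{\pp-\delta}}{dx} \leq C I$ where $I := \integral{\Om}{|\bff|^{\pp r(\cdot)}}{dx} + 1$, and the natural choice of $M_0$ from \eqref{size_date} should likewise satisfy $M_0 \leq C I$. Inserting these bounds produces $M_0^{n(M^+-1)} I^{M^+} \leq C I^{n(M^+-1)+M^+}$, which is exactly the advertised exponent. The main obstacle I anticipate is \emph{not} the covering step but Step 1: ensuring that $\gamma_0$ and $\delta_0$ from Theorem \ref{main_theorem1} can be chosen to depend only on $r_{\log}$ and $M^+$ (and the fixed $p$-data), so that $\delta$ can be declared in advance without circularity in the smallness assumption on $\gamma$ once $\qq$ is built from $\delta$.
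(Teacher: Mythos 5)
Your route is the same as the paper's: apply Theorem \ref{main_theorem1} on a finite cover of $\overline{\Omega}$ at scale $r = 1/(C_0 M_0)$, sum, absorb the prefactor $r^{n(1-q^+)}$ using $r^{-1}=C_0 M_0$, and close with the global energy estimate together with $r(\cdot)\ge 1$ to replace both $M_0$ and $\int_{\Omega}|\nabla u|^{p(x)-\delta}\,dx$ by powers of $I := \int_{\Omega}|\bff|^{p(x)r(x)}\,dx + 1$. The relation $q(\cdot) = p(\cdot)r(\cdot)/(p(\cdot)-\delta)$ is precisely the change of variables the paper uses, only stated there in the inverse direction as $r(\cdot) := \frac{p(\cdot)-\delta}{p(\cdot)}q(\cdot)$.

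The obstacle you flag at the end is a genuine gap in the proposal as written, and it is exactly the point the paper addresses. As stated, the argument is circular: you pick $\delta\in(0,\delta_0)$ before constructing $q(\cdot)$, yet $\delta_0$ and $\gamma_0$ from Theorem \ref{main_theorem1} (and the covering lemmas underlying it, cf.\ Lemma \ref{co2}) depend on $\qlog$, which depends on the $q(\cdot)$ you have not yet built. The paper's resolution, made explicit in the proof, is the observation that $\delta_0$ and $\gamma_0$ can be chosen to depend only on an \emph{upper bound} $M^+$ for $q^+$ (together with $p^{\pm}$, $n$, $\sigma$) rather than on $q^{\pm}$ themselves. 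Concretely, for any $\delta\in(0,1/4)$ one has $q^+ \le \frac{p^-}{p^- - 1/4}\,M^+$ and the log-H\"older modulus of $q(\cdot)$ is controlled by $\plog$, $r_{\log}$, $M^+$ uniformly in $\delta$, so one fixes $\delta_0,\gamma_0$ in terms of these quantities \emph{before} choosing $\delta$, breaking the loop. You should additionally verify that $\delta$ can be taken small enough (using $r^+ < M^+$ strictly) to force $q^+ \le M^+$, so that the exponent appearing in your covering step is genuinely $n(M^+-1)$ and not $n(q^+-1)$ with $q^+$ possibly exceeding $M^+$; this is exactly why the theorem excludes $r^+ = M^+$. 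With those two verifications supplied, your proposal matches the paper's proof.
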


\begin{remark}
In Theorem \ref{main_theorem2}, we restrict $r^+ < M^+$ and we cannot take $r^+=M^+$ in general (see Section \ref{CZ_estimates} for details). This is unfortunately an artifact of the variable exponent spaces and the techniques used in this paper. It would be interesting to know  if this restriction can be removed! 

\end{remark}

%

%
%


\section{Background material}
\label{background_material}

In this section, we shall collect and in some cases, prove all the necessary details needed in subsequent Sections. 

\subsection{Sobolev spaces with variable exponents}

Let $\tOm$ be a bounded domain, let $\sss$ be an admissible variable exponent as in Section \ref{exponent_structure} and let $\om: \tOm \to (0,\infty)$ be any weight function. Given a  positive integer $m$, the \emph{variable exponent Lebesgue space} $L^{\sss}_{\om}(\Omt,\RR^m)$ consists of all measurable functions $\bff: \tilde{\Om} \to \RR^m$ satisfying 
\[
 \int_{\tilde{\Om}} |\bff(x)|^{s(x)}\om(x) \ dx < \infty
\]
having the norm
\[
 \|\bff\|_{L^{\sss}_{\om}(\Omt,\RR^m)} := \inf \left\{ \la >0 : \int_{\tilde{\Om}} \left|\frac{\bff(x)}{\la} \right|^{s(x)}\om(x) \ dx \leq 1 \right\}.
\]

Analogously, we can define the \emph{variable exponent Sobolev space} as 
\[
 W^{1,\sss}_{\om}(\Omt,\RR^m) := \{ \bff \in L^{\sss}_{\om}(\Omt,\RR^m) : \nabla\bff \in L^{\sss}_{\om}(\Omt,\RR^{mn}) \}
\]
equipped with the norm 
\[
 \| \bff \|_{W^{1,\sss}_{\om}(\Omt,\RR^m)} := \| \bff \|_{L^{\sss}_{\om}(\Omt,\RR^m)} + \| \nabla \bff \|_{L^{\sss}_{\om}(\Omt,\RR^{mn})}.
\]

We shall denote ${W_{0,\om}^{1,\sss}}(\Omt,\RR^m)$  to be the closure of $C_c^{\infty}(\Omt,\RR^m)$ in $W^{1,\sss}_{\om}(\Omt,\RR^m)$. For $m=1$, we write $L^{\sss}_{\om}(\Omt)$, $W^{1,\sss}_{\om}(\Omt)$ and ${W_{0,\om}^{1,\sss}}(\Omt)$ for simplicity. In the case $\om \equiv 1$, the function spaces above become the standard variable exponent spaces as described in \cite{Diening}.

We will sometimes also use the following notation in the case $\om(\cdot) \equiv 1$:
\[ \varrho_{L^{\sss}(\Omt)}(f) := \int_{\Omt} |f(x)|^{s(x)} \ dx.\]

We mention the following useful relation between the modular and norm in the variable exponent spaces (see \cite[Lemma 3.2.5]{Diening} for details):
\begin{lemma}
 \label{integral_norm}
 For any $f \in L^{\sss}(\Omt)$, the following holds:
 \begin{equation*}
  \label{norm_integral}
  \min \left\{ \varrho_{L^{\sss}(\Omt)} (f)^{\frac{1}{s^-}}, \varrho_{{L^{\sss}(\Omt)}} (f)^{\frac{1}{s^+}} \right\} \leq \| f\|_{L^{\sss}(\Omt)} \leq \max \left\{ \varrho_{{L^{\sss}(\Omt)}} (f)^{\frac{1}{s^-}}, \varrho_{{L^{\sss}(\Omt)}} (f)^{\frac{1}{s^+}} \right\}.
 \end{equation*}
\end{lemma}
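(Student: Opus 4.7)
The plan is to prove this standard unit-ball characterization of the Luxemburg norm by working directly from the definition. Write $\varrho(f) := \varrho_{L^{\sss}(\Omt)}(f)$ and $\lambda_0 := \|f\|_{L^{\sss}(\Omt)}$ for brevity. The whole argument hinges on the elementary scaling identity
\[
\left|\frac{f(x)}{\lambda}\right|^{s(x)} = \frac{|f(x)|^{s(x)}}{\lambda^{s(x)}}
\]
combined with the two regime comparisons $\lambda^{s(x)} \geq \lambda^{s^-}$ when $\lambda \geq 1$ and $\lambda^{s(x)} \geq \lambda^{s^+}$ when $\lambda \leq 1$ (and the reverse bounds in the opposite regimes). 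I expect each of the two desired inequalities to follow from a single well-chosen test value of $\lambda$.

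For the upper bound I would set $\lambda_* := \max\{\varrho(f)^{1/s^-},\varrho(f)^{1/s^+}\}$ and verify $\varrho(f/\lambda_*) \leq 1$ by splitting into the subcases $\varrho(f) \geq 1$ (so that $\lambda_* = \varrho(f)^{1/s^-} \geq 1$, whence $\varrho(f/\lambda_*) \leq \lambda_*^{-s^-}\varrho(f) = 1$) and $\varrho(f) \leq 1$ (so that $\lambda_* = \varrho(f)^{1/s^+} \leq 1$, whence $\varrho(f/\lambda_*) \leq \lambda_*^{-s^+}\varrho(f) = 1$). The definition of the norm as an infimum then gives $\lambda_0 \leq \lambda_*$, which is exactly the right-hand inequality of the lemma.

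For the lower bound, the first step is to promote the infimum to an attained value at the modular level, that is, to show $\varrho(f/\lambda_0) \leq 1$, by choosing a sequence $\lambda_k \downarrow \lambda_0$ with $\varrho(f/\lambda_k)\leq 1$ and invoking Fatou's lemma (the degenerate case $\lambda_0 = 0$, which forces $f = 0$ a.e., is handled separately and trivially). The same two-regime scaling, now used in the opposite direction, produces $\varrho(f) \leq \lambda_0^{s^+}$ in the case $\lambda_0 \geq 1$ and $\varrho(f) \leq \lambda_0^{s^-}$ in the case $\lambda_0 \leq 1$. Rearranging each inequality yields $\min\{\varrho(f)^{1/s^-},\varrho(f)^{1/s^+}\} \leq \lambda_0$ in both cases, which completes the proof.

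The main technical point, and it is genuinely a minor one, is the soft continuity step that upgrades the defining infimum to the modular inequality $\varrho(f/\lambda_0) \leq 1$; once that is in hand the rest is bookkeeping with the monotonicity of $\lambda \mapsto \lambda^{s(x)}$ on $\lambda \leq 1$ versus $\lambda \geq 1$. No tools beyond the definitions and $1 < s^- \leq s(x) \leq s^+ < \infty$ are needed.
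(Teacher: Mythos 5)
Your proof is correct and self-contained. The paper itself does not prove Lemma \ref{integral_norm}; it simply cites \cite[Lemma 3.2.5]{Diening} (the explicit two-sided bound you prove corresponds more closely to Corollary 3.3.4 in that reference). Your argument is the standard direct verification from the definition of the Luxemburg norm: for the upper bound you test the infimum with $\lambda_* = \max\{\varrho(f)^{1/s^-},\varrho(f)^{1/s^+}\}$, splitting on whether $\varrho(f)\gtrless 1$ and using that $\lambda\mapsto\lambda^{s(x)}$ is pinched between $\lambda^{s^-}$ and $\lambda^{s^+}$ on either side of $\lambda=1$; for the lower bound you first upgrade the infimum to the closed modular inequality $\varrho(f/\|f\|)\leq 1$ (monotone convergence works here since $\lambda_k\downarrow\|f\|$ makes the integrands increase, though Fatou also suffices) and then run the same regime comparison with $\|f\|\gtrless 1$. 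The degenerate case $f=0$ a.e.\ is correctly set aside. No gaps.
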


\subsection{Muckenhoupt weights in variable exponent spaces}

We shall define the class of variable exponent Muckenhoupt weights $A_{\sss}$ following \cite{DH} and use them to prove certain useful bounds on the maximal function. 

\begin{definition} We say that a  measurable function $w:\RR^n \to (0,\infty)$ is an $A_{\sss}$ weight if 
\begin{equation}
\label{variable_weight_definition}
[w]_{A_{\sss}} := \sup_{B} \frac{1}{|B|^{s_B}} \|w\|_{L^1(B)} \lbr[\|] \frac{1}{w} \rbr[\|]_{L^{\frac{s'(\cdot)}{\sss}}(B)} < \infty,
\end{equation}
where the supremum is taken over all balls $B \subset \RR^n$. Here we have defined $s_B := \lbr \fint_B \frac{1}{s(x)} \ dx \rbr^{-1}$. 
\end{definition}

Following \cite{DH}, we shall collect some of the important properties of $A_{\sss}$ weights. The first lemma shows that the Muckenhoupt weights form an increasing class (see \cite[Lemma 3.1]{DH}):
\begin{lemma}
 \label{weight_inclusion}
 Let $\sss,\qq \in \logh$ with $\qq \leq \sss$ pointwise, then there exists a constant $C_{incl} = C_{incl}(\slog,\qlog)$ such that  $[w]_{A_{\sss}} \leq C_{incl} [w]_{A_{\qq}}$.
 
\end{lemma}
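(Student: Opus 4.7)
The plan is to establish the inequality by fixing an arbitrary ball $B \subset \RR^n$ and comparing the two Muckenhoupt quantities term by term. Since the factor $\|w\|_{L^1(B)}$ is common to both $[w]_{A_{\sss}}$ and $[w]_{A_{\qq}}$, and since $q(x) \le s(x)$ forces $\sigma(x):=1/(s(x)-1) \le \tau(x):=1/(q(x)-1)$ pointwise while $s_B \ge q_B$ follows from Jensen applied to the harmonic averages, the inclusion reduces to showing
\[
|B|^{-s_B}\,\|1/w\|_{L^{\sigma(\cdot)}(B)} \le C\, |B|^{-q_B}\,\|1/w\|_{L^{\tau(\cdot)}(B)}
\]
uniformly in $B$, with $C = C(\slog,\qlog,n)$.

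To pass from the $L^{\tau(\cdot)}$-norm to the $L^{\sigma(\cdot)}$-norm, I would apply the generalized H\"older inequality for variable-exponent Lebesgue spaces with the splitting $\frac{1}{\sigma(x)} = \frac{1}{\tau(x)} + \frac{1}{r(x)}$, where $\frac{1}{r(x)} := s(x) - q(x) \ge 0$. This yields
\[
\|1/w\|_{L^{\sigma(\cdot)}(B)} \le 2\, \|1/w\|_{L^{\tau(\cdot)}(B)}\, \|\mathbf{1}\|_{L^{r(\cdot)}(B)}.
\]
The norm of the constant function $\mathbf{1}$ is then estimated directly from its Luxemburg definition: testing with $\lambda = |B|^{1/r^+_B}$ in the regime $|B|\le 1$ gives $\int_B \lambda^{-r(x)}\,dx \le |B|\cdot|B|^{-1} = 1$ (using $r(x) \le r^+_B$ together with $|B| \le 1$), whence $\|\mathbf{1}\|_{L^{r(\cdot)}(B)} \le |B|^{(s-q)^-_B}$. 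An analogous test with $\lambda = |B|^{1/r^-_B}$ produces $\|\mathbf{1}\|_{L^{r(\cdot)}(B)} \le |B|^{(s-q)^+_B}$ when $|B| > 1$.

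The final step converts the exponent $(s-q)^{\pm}_B$ into the target $s_B - q_B$, and this is precisely where the log-H\"older hypothesis enters decisively. The classical bound $|B|^{s^+_B - s^-_B} \le C(\slog,n)$ (consequence of $s^+_B - s^-_B \le \rho(\diam B)$ combined with $\rho(r)\log(1/r) \le L$), the analogous estimate for $\qq$, and the sandwiching $s^-_B \le s_B \le s^+_B$, $q^-_B \le q_B \le q^+_B$, together ensure that $|B|^{(s-q)^{\pm}_B}$ differs from $|B|^{s_B - q_B}$ only by a multiplicative constant depending on $\slog,\qlog$ and $n$. Assembling these pieces and taking the supremum over $B$ would yield $[w]_{A_{\sss}} \le C_{incl}\,[w]_{A_{\qq}}$. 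The chief technical obstacle is managing the interplay between the several averaged exponents ($s_B$, $s^{\pm}_B$, $(s-q)^{\pm}_B$) to ensure the log-H\"older replacement is legitimate; large balls outside the small-scale log-H\"older regime are handled by the global boundedness of $\sss$ and $\qq$.
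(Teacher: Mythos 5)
The paper does not prove this lemma; it only cites \cite[Lemma 3.1]{DH}, so the student's argument can only be assessed on its own merits rather than compared against an in-text proof.

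The overall strategy you propose — split $\|1/w\|_{L^{\sigma(\cdot)}(B)}$ by generalized H\"older through the intermediate exponent $\tau(\cdot)$ and estimate $\|\mathbf{1}\|_{L^{r(\cdot)}(B)}$ via the Luxemburg definition, then use log-H\"older continuity to trade $(s-q)^{\pm}_B$ for $s_B - q_B$ — is a sensible route, and the small-ball half of your argument is essentially sound: for $\diam B \le 1/2$ one indeed has $\lvert(s-q)^{\pm}_B - (s_B - q_B)\rvert \le (s^+_B - s^-_B) + (q^+_B - q^-_B)$ and the classical bound $|B|^{s^+_B - s^-_B} \le C(\slog, n)$ closes the gap. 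However, there is a genuine hole in the regime $|B| > 1$. There you need $(s-q)^+_B - (s_B - q_B) \le C/\log|B|$ in order for $|B|^{(s-q)^+_B - (s_B - q_B)}$ to stay bounded, and this does \emph{not} follow from $p^- \le s,q \le p^+$; the exponent difference is only bounded in general, not shrinking. The correct way to control large balls (and what the Diening--H\"ast\"o proof relies on) is the log-H\"older \emph{decay at infinity}, $|s(x) - s_\infty| \le L/\log(e + |x|)$, which forces the variable exponent to be nearly constant over most of a large ball; your appeal to ``global boundedness of $\sss$ and $\qq$'' is not a substitute for this. Note that the paper's Definition \ref{definition_p_log} states only the local log-H\"older condition, so you would need to make the decay-at-infinity hypothesis explicit (or extend $\pp$ to be constant outside a large ball).

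Two secondary issues are worth flagging. First, $\sigma(x) = 1/(s(x)-1)$ and $\tau(x) = 1/(q(x)-1)$ dip below $1$ whenever $s(x) > 2$ or $q(x) > 2$, so the version of the generalized H\"older inequality you invoke needs to be the one valid for exponents in $(0,\infty]$, not only $[1,\infty]$; this is fixable but should be cited. Second, on the set where $s(x) = q(x)$ you have $r(x) = \infty$, and the modular of $L^{r(\cdot)}$ then carries an essential-supremum term; testing with $\lambda = |B|^{1/r^+_B} < 1$ then fails to make the modular $\le 1$, so the estimate $\|\mathbf{1}\|_{L^{r(\cdot)}(B)} \le |B|^{(s-q)^-_B}$ needs a separate argument when $|\{s = q\} \cap B| > 0$. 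Neither of these is fatal, but the large-ball gap is, and closing it requires the decay-at-infinity hypothesis that your sketch omits.
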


The following Lemma lets us compare the behavior of the variable exponent Muckenhoupt weights over balls of different radii (see \cite[Lemma 3.3]{DH}): 
\begin{lemma}
\label{lemma3.3}
Let $\sss \in \logh$ and let $w \in A_{\sss}$ be a given weight, then the following holds for any $x,y \in \RR^n$ and $r,R>0$:
\begin{equation}
 \label{weight_lower_bound}
 w(B_r(x)) \apprge w(B_R(y)) \lbr \frac{r^n}{|x-y|^n + r^n + R^n} \rbr^{s^+},
\end{equation} 
where $w(B) := \integral{B}{w(x)}{dx}$.
\end{lemma}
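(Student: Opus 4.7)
My plan is to reduce the two-ball comparison to a sub-ball comparison on a single fixed ball, combined with monotonicity. Set $\tau := |x-y| + r + R$, so that $B_r(x), B_R(y) \subset B_{2\tau}(x)$ and
\[
\tau^n \approx |x-y|^n + r^n + R^n
\]
up to a dimensional constant. Since $E \mapsto w(E)$ is monotone, $w(B_R(y)) \leq w(B_{2\tau}(x))$, so the full statement reduces to the sub-ball bound
\[
w(B_r(x)) \apprge w(B_{2\tau}(x)) \left(\frac{r}{\tau}\right)^{n s^+},
\]
after which combining with monotonicity and the volume equivalence above completes the argument.

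To establish this sub-ball bound I would first derive the Muckenhoupt-type estimate
\[
\frac{|E|}{|B|} \apprle \left(\frac{w(E)}{w(B)}\right)^{1/s^+_B}, \qquad E \subset B,
\]
and then apply it with $E = B_r(x)$ and $B = B_{2\tau}(x)$. The derivation is the standard one adapted to the variable exponent setting: start from the factorisation $\mathbf{1}_E = \bigl(\mathbf{1}_E \, w^{1/s(\cdot)}\bigr) \cdot w^{-1/s(\cdot)}$, apply H\"older's inequality in the conjugate pair $(L^{s(\cdot)}(B), L^{s'(\cdot)}(B))$, identify the first factor with a power of the modular $\int_E w = w(E)$ via Lemma \ref{integral_norm}, and control the second factor directly by the $A_{\sss}$ characteristic in \eqref{variable_weight_definition}. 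Rearranging yields the displayed sub-ball estimate with intrinsic exponent $s^+_B$. Since $s^+_B \leq s^+$ and $|E|/|B| \leq 1$, raising $|E|/|B|$ to the larger exponent $s^+$ only weakens the right-hand side, so the target exponent $n s^+$ in the sub-ball bound follows for free.

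The main obstacle is the H\"older step itself: the $A_{\sss}$ characteristic is phrased in terms of the harmonic-average exponent $s_B = \bigl(\fint_B 1/s(x)\,dx\bigr)^{-1}$, while the modular-to-norm transition of Lemma \ref{integral_norm} produces powers indexed by $s^\pm_B$, and the target inequality wants $s^+$. Reconciling these three exponents is exactly where the log-H\"older hypothesis $\sss \in \logh$ is used: on any ball of radius $\rho \leq 1/2$ the oscillation $s^+_B - s^-_B$ is bounded by a constant multiple of $1/\log(1/\rho)$, and hence $|B|^{s_B}$, $|B|^{s^+_B}$ and $|B|^{s^-_B}$ differ only by a universal multiplicative constant depending on $\slog$. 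For balls of radius bounded away from zero the exchange is direct and costs only a constant depending on $s^\pm$ and the diameter. With this reconciliation in place, the remaining manipulations are routine bookkeeping and the lemma follows by combining the sub-ball bound with monotonicity.
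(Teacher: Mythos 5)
Your geometric reduction in steps 1--3 is correct: setting $\tau := |x-y|+r+R$, observing $B_r(x), B_R(y) \subset B_{2\tau}(x)$, and using monotonicity of $E \mapsto w(E)$ to reduce the two-ball statement to a sub-ball bound on $B_{2\tau}(x)$ is the right decomposition. The problems are all in step 4, where you try to derive the sub-ball estimate directly from the variable $A_{\sss}$ condition, and they are more serious than the word ``bookkeeping'' suggests. First, Lemma \ref{integral_norm} gives $\|\chi_E w^{1/s(\cdot)}\|_{L^{\sss}(B)} \leq \max\{w(E)^{1/s^-_B}, w(E)^{1/s^+_B}\}$; when $w(E)\geq 1$ the maximum is $w(E)^{1/s^-_B}$, the wrong power for your target, and you cannot normalize $w$ so that $w(E)<1$ because $w\mapsto cw$ does not preserve the variable-exponent $A_{\sss}$ characteristic (the variable modular is not homogeneous). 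Second, the $A_{\sss}$ characteristic in \eqref{variable_weight_definition} controls $\|1/w\|_{L^{s'(\cdot)/s(\cdot)}(B)}$, whereas your H\"older factorisation produces $\|w^{-1/s(\cdot)}\|_{L^{s'(\cdot)}(B)}$; these two norms share the modular $\int_B w^{-1/(s(x)-1)}\,dx$ but are indexed by different exponent functions, so converting between them injects yet another layer of $(s')^\pm_B$ versus $(s'/s)^\pm_B$ powers that you do not reconcile. Third, and most decisively, your reconciliation of $|B|^{s_B}$, $|B|^{s^\pm_B}$ appeals to log-H\"older for small balls and to ``a constant depending on $s^\pm$ and the diameter'' for large ones; but the lemma is asserted for all $r,R>0$, so $B_{2\tau}(x)$ can be arbitrarily large, and a diameter-dependent constant is fatal (e.g.\ $|B|^{s^+_B - s_B}$ is unbounded as $\diam B \to \infty$ and log-H\"older gives no help above scale $1/2$).

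The clean route --- and the one consistent with the global supremum $s^+$ that actually appears in \eqref{weight_lower_bound} --- is to first invoke Lemma \ref{weight_inclusion} with the constant exponent $s^+ \geq \sss$: this gives $[w]_{A_{s^+}} \leq C_{incl}[w]_{A_{\sss}}$, so $w$ is a classical $A_{s^+}$ weight with controlled characteristic (this is exactly the move the paper itself makes at the start of the proof of Theorem \ref{integral-px-maximal-weighted}). The constant-exponent sub-ball estimate then reads, for $E\subset B$,
\[
|E| \;=\; \int_B \chi_E w^{1/s^+} w^{-1/s^+}\,dx \;\leq\; w(E)^{1/s^+}\Bigl(\int_B w^{-1/(s^+-1)}\,dx\Bigr)^{1-1/s^+} \;\leq\; [w]_{A_{s^+}}^{1/s^+}\,|B|\,\Bigl(\tfrac{w(E)}{w(B)}\Bigr)^{1/s^+},
\]
with no modular-to-norm transition, no case split on the size of $w(E)$, and no exponent drift. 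Taking $E=B_r(x)$, $B = B_{2\tau}(x)$ and combining with your steps 1--3 finishes the lemma immediately. For what it's worth, the paper does not prove this lemma at all --- it cites \cite{DH} --- so there is no in-paper argument to compare against; but the inclusion-into-$A_{s^+}$ route is what the target exponent $s^+$ is telling you to do, and it avoids every obstruction your plan runs into.
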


Using the previous Lemma, we can obtain the following fundamental estimates which relate the norm and modular of a characteristic function in the weighted setting,  provided the weight is in $A_{\infty}$. This property will play a crucial role in the next subsection (see \cite[Lemma 3.4]{DH}):
\begin{lemma}
 \label{lemma3.4}
 Let $\sss \in \logh$ and $w \in A_{\infty}$ and suppose $B$ is a ball with $\diam B \leq 2$, then for any $x \in B$, there holds
 \[
  \|1\|_{L^{\sss}_w(B)} \approx w(B)^{\frac{1}{s_B^+}} \approx w(B)^{\frac{1}{s_B^-}} \approx w(B)^{\frac{1}{s(x)}} \approx w(B)^{\frac{1}{s_B}}.
 \]
\end{lemma}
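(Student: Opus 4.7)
The plan is to reduce all four equivalences to a single key estimate:
\[
 w(B)^{1/s_B^+} \approx w(B)^{1/s_B^-},
\]
with implicit constants depending only on $\slog$, $[w]_{A_\infty}$ and $n$. Once this is established, the middle equivalences follow at once: since $s_B^- \le s(x),\,s_B \le s_B^+$ for $x\in B$, the exponents $1/s(x)$ and $1/s_B$ are trapped between $1/s_B^+$ and $1/s_B^-$, and the monotonicity of $t \mapsto w(B)^t$ combined with comparability of the two extremes forces $w(B)^{1/s(x)}$ and $w(B)^{1/s_B}$ into the same equivalence class.

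The key estimate itself amounts to showing that $w(B)^{1/s_B^+ - 1/s_B^-}$ is bounded above and below. Write
\[
 \frac{1}{s_B^+} - \frac{1}{s_B^-} = -\frac{s_B^+ - s_B^-}{s_B^+ s_B^-} \in \bgh{-\frac{s_B^+ - s_B^-}{(s^-)^2},\,0}.
\]
When $w(B) \ge 1$, the quantity $w(B)^{1/s_B^+ - 1/s_B^-}$ is trivially at most $1$; in the opposite regime $w(B) < 1$, one needs an upper bound on $-\log w(B)$ in terms of $-\log \diam B$. The log-H\"older assumption gives $s_B^+ - s_B^- \le \rho(\diam B) \apprle 1/(-\log \diam B)$ for $\diam B$ small, while the $A_\infty$ hypothesis supplies the complementary bound $w(B) \apprge (\diam B)^\alpha$ for some $\alpha>0$ depending on $[w]_{A_\infty}$ (a direct consequence of Lemma \ref{lemma3.3} with a reference ball of radius one, or equivalently of the standard $A_\infty$ reverse-doubling). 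Combining these yields
\[
 \lbr\frac{s_B^+ - s_B^-}{(s^-)^2}\rbr \lbr -\log w(B)\rbr \apprle \frac{1}{-\log \diam B}\gh{C_0 + \alpha(-\log \diam B)} \apprle 1,
\]
so that the exponentiated expression is bounded. The reverse inequality $w(B)^{1/s_B^+ - 1/s_B^-} \apprge 1$ is argued symmetrically, with the roles of $w(B) \ge 1$ and $w(B) < 1$ swapped.

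For the norm term $\|1\|_{L^{\sss}_w(B)}$, I would work directly from the Luxemburg definition. Setting $\lambda_0 := \|1\|_{L^{\sss}_w(B)}$, continuity in $\lambda$ of $\lambda \mapsto \int_B \lambda^{-s(x)} w(x)\,dx$ forces $\int_B \lambda_0^{-s(x)} w(x)\,dx = 1$ whenever $w(B)\ne 0$. Splitting according to whether $\lambda_0 \ge 1$ or $\lambda_0 < 1$ and bounding $\lambda_0^{-s(x)}$ pointwise by its extreme values $\lambda_0^{-s_B^\pm}$, one checks in both cases that $\lambda_0$ is sandwiched between $w(B)^{1/s_B^+}$ and $w(B)^{1/s_B^-}$, so invoking the key estimate of the previous paragraph yields $\lambda_0 \approx w(B)^{1/s_B^\pm}$ and closes the loop.

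The main obstacle is the quantitative interplay between the log-H\"older modulus of $\sss$ and the $A_\infty$ lower bound on $w(B)$: a priori, a highly degenerate weight with very small $w(B)$ could spoil the seemingly innocuous approximation $s_B^+ \approx s_B^-$ after exponentiation. The content of the argument is that the logarithmic modulus of continuity and the polynomial $A_\infty$ decay are matched \emph{precisely} so that their product remains bounded, which is the quantitative heart of variable-exponent harmonic analysis; once this cancellation is spelled out, the rest is bookkeeping.
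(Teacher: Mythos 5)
Your reduction to the single key estimate $w(B)^{1/s_B^+} \approx w(B)^{1/s_B^-}$, and the Luxemburg-norm computation showing that $\|1\|_{L^{\sss}_w(B)}$ is sandwiched between $w(B)^{1/s_B^+}$ and $w(B)^{1/s_B^-}$, are both correct and give the right skeleton. The gap is the claim that the reverse inequality ``is argued symmetrically, with the roles of $w(B)\ge 1$ and $w(B)<1$ swapped.'' It is not symmetric. Writing $\theta := 1/s_B^- - 1/s_B^+ \ge 0$, your forward estimate in the regime $w(B)<1$ needs a \emph{lower} bound $w(B)\gtrsim(\diam B)^\alpha$, while the reverse estimate in the regime $w(B)\ge 1$ needs an \emph{upper} bound on $w(B)$ --- and no such bound, uniform over balls and weights, can follow from $[w]_{A_\infty}$ alone: $[w]_{A_\infty}$ (and $[w]_{A_{\sss}}$) is invariant under $w\mapsto cw$, whereas $w(B)$ scales linearly, so for a fixed ball $B$ with $s_B^+>s_B^-$ and a fixed $[w]_{A_\infty}$ one can drive $w(B)^{-\theta}$ to zero by letting $c\to\infty$. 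The hidden ingredient is a normalization of $w$ (Diening--H\"ast\"o fix $w(B_1(0))\approx 1$), which your argument never invokes; without it the implicit constants you want do not exist.

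There is a second, related omission. Lemma \ref{lemma3.3} with reference ball $B_1(0)$ gives $w(B_r(x))\gtrsim w(B_1(0))\, r^{ns^+}/(|x|^n+r^n+1)^{s^+}$, not the uniform $(\diam B)^\alpha$ you quote: the lower bound degrades polynomially in $|x|$. For balls far from the origin, the extra $\log(e+|x|)$ that this injects into $-\log w(B)$ cannot be absorbed by the local modulus $\rho(\diam B)$; it is cancelled by the log-H\"older \emph{decay at infinity} of $\sss$, i.e.\ the condition $s_B^+-s_B^-\lesssim 1/\log(e+|x|)$, which is a separate piece of the log-H\"older hypothesis. You correctly flag a ``precise cancellation'' as the content of the lemma, but your sketch uses only one of the three ingredients of that cancellation (the local modulus $\rho$) and omits the other two (normalization of $w$, decay at infinity of $\sss$). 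Those are not bookkeeping; they are where the Diening--H\"ast\"o proof does its real work.
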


For $w \in A_{\sss}$, we define a \emph{dual weight} by $w'(y) = w(y)^{1-s'(y)}$ where $s'(y) = \frac{s(y)}{s(y)-1}$. The next lemma proves the variable exponent analogue of the fact that  $[w]_{A_s} = [w']_{A_{s'}}^{s-1}$ and so $w \in A_s$ if and only if $w'\in A_{s'}$ (see \cite[Proposition 3.8]{DH} for the details):
\begin{proposition}
Let $\sss \in \logh$ and $w \in A_{\sss}$, then $w' \in A_{s'(\cdot)}$ and 
\[
 |B|^{-s_B} \| w\|_{L^1(B)} \left\| \frac{1}{w} \right\|_{L^{\frac{s'(\cdot)}{\sss}}(B)} \approx \frac{w(B)}{|B|} \lbr \frac{w'(B)}{|B|}\rbr^{s_B-1}.
\]
\end{proposition}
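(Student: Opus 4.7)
The strategy is to reduce the variable-exponent norm $\left\|1/w\right\|_{L^{s'(\cdot)/s(\cdot)}(B)}$ to an expression purely in terms of $w'(B)$, and then combine with $\|w\|_{L^{1}(B)} = w(B)$ to deduce the claimed equivalence. The key algebraic observation is that $s'(x)/s(x) = 1/(s(x)-1)$, so a direct modular computation gives
\[
\int_B \left(\frac{1}{w(x)}\right)^{s'(x)/s(x)} dx = \int_B w(x)^{1-s'(x)} dx = w'(B),
\]
which coincides with the modular $\int_B 1\cdot w'(x)\, dx$ of the constant function $1$ in the weighted space $L^{s'(\cdot)/s(\cdot)}_{w'}(B)$. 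By the very definition of the Luxemburg norm, this forces the \emph{equality}
\[
\left\|\frac{1}{w}\right\|_{L^{s'(\cdot)/s(\cdot)}(B)} = \|1\|_{L^{s'(\cdot)/s(\cdot)}_{w'}(B)}.
\]

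With this reformulation in hand, the next step is to apply Lemma \ref{lemma3.4} to the right-hand side, using that $1/(s(\cdot)-1)$ is log-H\"older whenever $\sss$ is (since $s^- > 1$ keeps $s(\cdot)-1$ bounded away from $0$), and that $w'\in A_\infty$, which can be obtained from $w\in A_{\sss}\subset A_{s^+}$ via the classical constant-exponent $A_p \Leftrightarrow A_{p'}$ duality combined with Lemma \ref{weight_inclusion}. Lemma \ref{lemma3.4} then gives $\|1\|_{L^{s'/s}_{w'}(B)} \approx w'(B)^{1/(s'/s)_B}$. The exponent simplifies to $1/(s'/s)_B = \fint_B (s(x)-1)\, dx$, which is the arithmetic-mean version of $s_B-1$ rather than the harmonic-mean version $s_B - 1$ appearing on the right-hand side of the proposition. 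To bridge this gap, one would use log-H\"older continuity of $\sss$ (yielding smallness of the arithmetic--harmonic mean difference) together with Lemma \ref{lemma3.3} (yielding at most polynomial growth of $w'(B)$ in $1/|B|$), so that $w'(B)$ raised to this bounded difference is comparable to $1$. This delivers $\|1/w\|_{L^{s'/s}(B)} \approx w'(B)^{s_B-1}$, and the desired asymptotic equivalence then follows by elementary algebra since $|B|^{-s_B}\cdot |B|\cdot |B|^{s_B-1}=1$.

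For the conclusion $w'\in A_{s'(\cdot)}$, I would exploit the involution identity $s''(\cdot) = s(\cdot)$, hence $(w')' = w$. Applying the just-proven equivalence with $(w,\sss)$ replaced by $(w',s'(\cdot))$ yields
\[
[w']_{A_{s'(\cdot)}} \approx \sup_{B} \frac{w'(B)}{|B|} \left(\frac{w(B)}{|B|}\right)^{(s')_B - 1}.
\]
Since $(s')_B - 1 = 1/(s_B - 1)$ stays in the bounded range $[1/(s^+ - 1),\,1/(s^- - 1)]$, the finiteness of $[w]_{A_{\sss}}$ forces the above supremum to be finite as well, giving $w' \in A_{s'(\cdot)}$. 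The main obstacle I anticipate is the transition from the arithmetic to the harmonic mean of $\sss$ in the exponent of $w'(B)$: making this precise requires a careful simultaneous use of log-H\"older continuity and Lemma \ref{lemma3.3} in order to extract a universal bound for $w'(B)^{|\bar s_B - s_B|}$, uniform in $B$, despite the fact that $w'(B)$ itself may be arbitrarily large or small.
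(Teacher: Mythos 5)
The paper does not prove this proposition; it quotes it from \cite{DH} (Proposition~3.8 there), so there is no in-paper argument to compare yours against. Your opening reduction is correct and tidy: since $w'(x) = w(x)^{1-s'(x)} = w(x)^{-s'(x)/s(x)}$, the Luxemburg modulars of $1/w$ in $L^{s'(\cdot)/s(\cdot)}(B)$ and of the constant $1$ in $L^{s'(\cdot)/s(\cdot)}_{w'}(B)$ agree identically as functions of the scaling parameter $\lambda$, so the two norms coincide. The closing duality step is also sound, granted the equivalence: $(w')' = w$, $(s')_B - 1 = 1/(s_B - 1)$ lies in $[1/(s^+-1),1/(s^--1)]$, and raising the $A_{\sss}$ bound to the power $1/(s_B-1)$ gives $[w']_{A_{s'(\cdot)}} < \infty$.

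The genuine gap is the hypothesis $w' \in A_\infty$ required by Lemma~\ref{lemma3.4}, which your argument does not secure. Classical constant-exponent duality from $w \in A_{s^+}$ gives $w^{-1/(s^+-1)} \in A_{(s^+)'} \subset A_\infty$, but $w' = w^{-1/(\sss-1)}$ is a variable power $\geq 1$ of that weight, and raising an $A_\infty$ weight to a power $\geq 1$ does not in general preserve $A_\infty$ (consider $|x|^{\beta}$ with $\beta$ just above $-n$); Lemma~\ref{weight_inclusion} concerns monotonicity of the classes $A_{\qq}$ in the exponent, not powers of the weight, and does not help here. Worse, your plan is circular: $w' \in A_\infty$ is a consequence of the conclusion $w' \in A_{s'(\cdot)}$, which you derive from the equivalence, which itself presupposes $w' \in A_\infty$. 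One must prove $w' \in A_\infty$ independently---e.g.\ by extracting a doubling or $A_\infty$-type bound for $w'$ directly from $[w]_{A_{\sss}} < \infty$ via the norm--modular relation of Lemma~\ref{integral_norm}---before Lemma~\ref{lemma3.4} can be invoked. Two further points: the exponent $s'(\cdot)/\sss = 1/(\sss-1)$ falls below $1$ whenever $s^+>2$, so it is not in $\logh$ as the paper defines it and Lemma~\ref{lemma3.4} does not literally apply; and that lemma is restricted to $\diam B \le 2$, while the $A$-suprema run over all balls, so the large-ball regime needs a separate treatment. The arithmetic-versus-harmonic mean mismatch that you flag is real, and the proposed bridge via log-H\"older continuity of $\sss$ together with the polynomial $w'$-bounds of Lemma~\ref{lemma3.3} is the right device for the small-ball case.
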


Let us now recall another basic property which is  the variable exponent analogue of the reverse factorization result (See \cite[Proposiiton 3.13]{DH} for the details):
\begin{proposition}
Let $\sss \in \logh(\RR^n)$ and $ w_1,w_2 \in A_1$, then $w_1(x) w_2(x)^{1-s(x)} \in A_{\sss}$. 
\end{proposition}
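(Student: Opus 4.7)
My approach is to verify the $A_{\sss}$ condition directly for $w(x) := w_1(x) w_2(x)^{1-s(x)}$ by using the equivalent quantity supplied by the preceding proposition, namely
\[ [w]_{A_{\sss}} \approx \sup_B \frac{w(B)}{|B|} \lbr \frac{w'(B)}{|B|} \rbr^{s_B - 1}, \]
where $w'(x) = w(x)^{1-s'(x)}$ is the dual weight. The first observation is that the pointwise algebraic identity $(1 - s(x))(1 - s'(x)) = 1$, which follows from the elementary fact $s + s' = ss'$, collapses the dual weight to the symmetric form $w'(x) = w_1(x)^{1-s'(x)} w_2(x)$. Thus $w$ and $w'$ have parallel structures with the roles of $w_1$ and $w_2$ interchanged, and it suffices to estimate quantities of the form $\fint_B w_1(x) w_2(x)^{1-s(x)}\,dx$ (together with its dual analogue).

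Next I would apply the $A_1$ condition pointwise: for a.e.\ $x \in B$ one has $w_i(x) \geq [w_i]_{A_1}^{-1} \fint_B w_i$, and since $1 - s(x)$ and $1 - s'(x)$ are strictly negative, raising to these powers reverses the inequality, yielding
\[ w_2(x)^{1-s(x)} \leq [w_2]_{A_1}^{s(x)-1} \lbr \fint_B w_2 \rbr^{1-s(x)} \]
and an analogous bound for $w_1(x)^{1-s'(x)}$. Plugging these into $\fint_B w$ and $\fint_B w'$, and momentarily pretending that $s(x) \equiv s_B$, lets the exponent algebra execute exactly as in the classical Jones factorization: the identity $(s_B - 1)(1 - s'_B) = -1$ cancels the $\fint w_1$ and $\fint w_2$ factors completely, leaving only a constant depending on $[w_1]_{A_1}$, $[w_2]_{A_1}$ and $s^\pm$.

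The main obstacle I anticipate is precisely the $x$-dependence of $s(x)$ in the factors $\lbr \fint_B w_i \rbr^{1 - s(x)}$, which blocks the exponents from being pulled out of the integral. The remedy is to factor
\[ \lbr \fint_B w_i \rbr^{1 - s(x)} = \lbr \fint_B w_i \rbr^{1 - s_B} \cdot \lbr \fint_B w_i \rbr^{s_B - s(x)} \]
and bound the error factor uniformly in $x \in B$. This is precisely what the log-H\"older hypothesis is designed to handle: for small balls of radius $r$ we have $|s(x) - s_B| \lesssim 1/\log(1/r)$, so the error factor is harmless provided $\fint_B w_i$ grows at most polynomially in $1/r$, which follows from the $A_1$ condition together with Lemma \ref{lemma3.3}. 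For balls of diameter larger than one, one either covers by small balls and aggregates using Lemma \ref{lemma3.3}, or exploits the fact that $s(\cdot)$ varies only within $[s^-, s^+]$ so the perturbation is globally bounded. Combining both scales produces a uniform bound on $[w]_{A_{\sss}}$, completing the proof; making the small-ball bound quantitatively precise — with log-H\"older exactly neutralising the exponent perturbation — is the one genuinely new technical step beyond the classical constant-exponent Jones factorization and is where I expect the real work to lie.
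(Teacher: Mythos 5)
The paper gives no proof of this proposition — it cites Proposition 3.13 of [DH] — so the comparison is to what the argument genuinely requires rather than to anything printed here. Your high-level strategy is sound and matches the spirit of the factorization proof: pass to the reformulation of $[w]_{A_{\sss}}$ from the preceding proposition, observe the algebraic identity $(1-s(x))(1-s'(x))=1$ so that $w'(x)=w_1(x)^{1-s'(x)}w_2(x)$, apply the pointwise $A_1$ bound $w_i(x)\ge [w_i]_{A_1}^{-1}\fint_B w_i$, check that the exponents of $\fint_B w_1$ and $\fint_B w_2$ cancel once everything is evaluated at $s_B$ via $(s_B-1)(1-s'_B)=-1$, and absorb the remaining perturbation $\lbr\fint_B w_i\rbr^{s_B-s(x)}$ with the log-H\"older hypothesis. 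All the algebra you carry out is correct.

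The gap is in the perturbation bound. You credit the control of $\fint_B w_i$ to ``the $A_1$ condition together with Lemma~\ref{lemma3.3}'', but Lemma~\ref{lemma3.3} only gives a \emph{lower} bound on $w_i(B)$ relative to a reference ball; the upper bound comes from the trivial inclusion $\fint_{B_r(x_0)} w_i \le r^{-n}\fint_{B_1(x_0)}w_i$, and both estimates carry a factor $\fint_{B_1(x_0)}w_i$ that depends on the center $x_0$ and must be tracked by normalizing $w_2$ and following the $|x_0|$-dependence in Lemma~\ref{lemma3.3}. More seriously, your alternative for balls of diameter $\ge 1$ --- ``the perturbation is globally bounded'' because $s$ ranges in $[s^-,s^+]$ --- fails: the \emph{exponent} $s_B-s(x)$ is globally bounded, but the \emph{base} $\fint_B w_i$ is not, so $\lbr\fint_B w_i\rbr^{s_B-s(x)}$ has no uniform bound from that observation alone. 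Closing the argument for large balls and for small balls far from the origin requires the decay condition at infinity, $|s(x)-s_\infty|\lesssim 1/\log(e+|x|)$, which is part of $\logh(\RR^n)$ as used in [DH] but is not visible in Definition~\ref{definition_p_log} (stated only on a bounded domain). With it, one obtains $|\log\fint_{B_r(x_0)}w_i|\lesssim \log(1/r)+\log(1+|x_0|)$ and matches it against $|s_B-s(x)|\lesssim\bigl(\log(1/r)+\log(1+|x_0|)\bigr)^{-1}$, so the product stays bounded. Since you correctly flagged this step as ``where the real work lies'', this is a known unknown rather than an oversight, but as written the justification offered does not cover the full range of balls.
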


%
%

\subsection{Hardy Littlewood maximal function}

In this subsection, we shall prove a few important estimates involving the truncated maximal function.  We define the truncated maximal function by:
\[
 \mm_{<R} f(x) := \sup_{r<R} \fint_{B_r(x)} |f(y)| \ dy.
\]
Note that $\mm f$ is the standard maximal function of $f$. 

Following the ideas in \cite[Lemma 5.1]{DH}, we prove the following important theorem:
\begin{theorem}
\label{integral-px-maximal-weighted}
 Let $\sss \in \logh(\RR^n)$ and let $\om \in A_{\sss}$ be a Muckenhoupt weight. Assume that $M_1,M_2>0$ are given constants such that  $[\om]_{A_{\sss}} \leq M_1$ holds. Then there is $R_1 = R_1{(M_1,M_2,\slog)}>0$ such that for any $2r < R_1$ and for any $f \in L^{\sss}_{\om}(\RR^n)$  satisfying 
 \begin{equation}\label{assumption_on_f}  
 \int_{B_{2r}} |f(x)|^{s(x)} \om(x) \ dx + 1 \leq M_2,
 \end{equation} 
 then the following estimate holds:
 \[
  \int_{B_r} \mm_{<r}(|f|)^{s(x)}(x) \om(x)\ dx \leq C \lbr \int_{B_{2r}} |f(x)|^{s(x)}\om(x) \ dx + 1\rbr,
 \]
 where $C = C(n,M_1,\slog)$. 
\end{theorem}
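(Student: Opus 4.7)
The plan is to adapt the approach of \cite[Lemma 5.1]{DH} to the modular setting with the size constraint on $f$. Since $\mathcal{M}_{<r}(|f|)(x)$ for $x \in B_r$ depends only on averages over balls $B_\rho(x) \subset B_{2r}$, I may treat $f$ as supported in $B_{2r}$. The heart of the argument is a pointwise reverse Jensen-type bound: for every $x \in B_r$ and every $\rho < r$,
\begin{equation*}
\left(\fint_{B_\rho(x)} |f(y)|\,dy\right)^{s(x)} \omega(x) \;\leq\; c\left(\fint_{B_\rho(x)} |f(y)|^{s(y)} \omega(y)\,dy + 1\right),
\end{equation*}
with $c = c(n, M_1, \slog)$ independent of $M_2$, provided $R_1$ is chosen small enough in terms of $M_1, M_2, \slog$. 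Taking the supremum over $\rho < r$ will then yield a pointwise control of $\mathcal{M}_{<r}(|f|)^{s(x)}\omega(x)$ by a classical maximal function applied to $|f|^{s(\cdot)}\omega$ plus a constant.

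To establish the pointwise bound, I would split $|f| = |f|\chi_{\{|f|\leq 1\}} + |f|\chi_{\{|f|>1\}}$. The first piece has average at most $1$, and its contribution after multiplication by $\omega(x)$, which is comparable to $\omega(B_\rho(x))/|B_\rho(x)|$ up to a uniform factor by Lemma \ref{lemma3.4}, is absorbed by the additive $+1$ on the right. For the second piece I would apply H\"older's inequality in the weighted variable exponent space on $B = B_\rho(x)$,
\begin{equation*}
\fint_B |f|\,dy \;\leq\; \frac{2}{|B|}\,\bigl\||f|\omega^{1/s(\cdot)}\bigr\|_{L^{s(\cdot)}(B)}\,\bigl\|\omega^{-1/s(\cdot)}\bigr\|_{L^{s'(\cdot)}(B)},
\end{equation*}
then raise to the power $s(x)$, multiply by $\omega(x)$, and use Lemmas \ref{lemma3.3}--\ref{lemma3.4} to transfer between the exponents $s(x)$, $s_B$, $s_B^{\pm}$ with uniform constants. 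The second factor is bounded via the dual-weight reformulation of $[\omega]_{A_{s(\cdot)}} \leq M_1$, and the remaining log-H\"older discrepancy $|f(y)|^{s(x)-s(y)}$ is controlled on $\{|f|>1\}$ using $|s(x)-s(y)| \leq \rho(2r)$ together with $\rho(2r)\log(1/(2r)) \leq L$: here $R_1$ is forced to be small in terms of $M_2$ so that $|f(y)|^{\rho(2r)} \leq c(|f(y)|^{s(y)} + 1)$ pointwise on this set, exploiting that $|f|^{s(\cdot)}\omega$ has $L^1$ mass at most $M_2$ on $B_{2r}$.

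Integrating the pointwise bound over $x \in B_r$ would produce
\begin{equation*}
c\int_{B_r} \mathcal{M}_{<r}\bigl(|f|^{s(\cdot)}\omega\bigr)(x)\,dx \;+\; c\,\omega(B_r),
\end{equation*}
the latter being $\leq c$ for small $r$ by Lemma \ref{lemma3.4}. The maximal-function term is the main obstacle, since $\mathcal{M}$ is not bounded on $L^1$; I would circumvent this by sharpening the H\"older step so that the pointwise bound instead outputs $\mathcal{M}_{<r}(|f|^{s(\cdot)}\omega)^\theta$ for some $\theta \in (0,1)$ in place of $\theta = 1$, which can then be handled via Kolmogorov's inequality together with the weak $L^1$ bound for $\mathcal{M}$, yielding a linear dependence on $\int_{B_{2r}} |f|^{s(\cdot)}\omega$. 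The most delicate aspect throughout is the bookkeeping that keeps the constant $c$ depending only on $n, M_1, \slog$ while allowing $R_1$ to absorb the entire dependence on $M_2$.
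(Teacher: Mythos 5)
Your proposed pointwise bound
\begin{equation*}
\left(\fint_{B_\rho(x)}|f(y)|\,dy\right)^{s(x)}\omega(x)\;\leq\; c\left(\fint_{B_\rho(x)}|f(y)|^{s(y)}\omega(y)\,dy+1\right)
\end{equation*}
is false, even for $\omega\in A_1$, and the reference you give for the needed comparison does not support it. Lemma \ref{lemma3.4} compares the \emph{norm of the characteristic function} $\|1\|_{L^{\sss}_\omega(B)}$ with powers of $\omega(B)$; it says nothing about the pointwise value $\omega(x)$ being comparable to $\omega(B_\rho(x))/|B_\rho(x)|$. No Muckenhoupt condition gives an upper bound on $\omega(x)$ by a local average: take $\omega_\ep(y)=(|y-x|+\ep)^{-\alpha}$ with $\alpha\in(0,n)$, which lies in $A_1\subset A_{\sss}$ with constant bounded uniformly in $\ep$, and $f=\chi_{B_\rho(x)\setminus B_{\rho/2}(x)}$; then your left side is $\gtrsim\ep^{-\alpha}\to\infty$ while the right side stays $\approx\rho^{-\alpha}+1$, so no constant $c(n,M_1,\slog)$ can work. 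The same scaling objection defeats the assertion that $\omega(B_r)\leq c$ for small $r$: replacing $\omega$ by $10^6\omega$ leaves $[\omega]_{A_{\sss}}$ unchanged but scales $\omega(B_r)$, so this quantity cannot be bounded in terms of $n,M_1,\slog$ alone. Finally, the Kolmogorov step is only a sketch; you do not explain how "sharpening the H\"older step" would produce $\mathcal{M}_{<r}(|f|^{s(\cdot)}\omega)^{\theta}$ with $\theta<1$, and the mismatch in homogeneity in $f$ between the two sides means such a bound would have to hide a factor like $\big(\int|f|^{s(\cdot)}\omega\big)^{1-\theta}$, which is not uniform.

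The paper's proof avoids all of this by never putting $\omega(x)$ on the left of the pointwise inequality. It rescales the exponent to $q(\cdot):=s(\cdot)/s^{-}_{B_{2r}}$ (so $q^-_{B_{2r}}=1$) and proves the weight-free pointwise estimate $\mathcal{M}_{<r}(|f|)^{q(y)}(y)\apprle \mathcal{M}_{<r}(|f|^{q(\cdot)})(y)+\min\{1,\omega(B_r)^{-1/s^-_{B_{2r}}}\}$ for $y\in B_r$, using Jensen's inequality, a normalization $\beta=\max\{1,\omega(B_r)^{1/s^-_{B_{2r}}}\}$ together with Lemma \ref{lemma3.3}, the log-H\"older continuity, Lemma \ref{lemma3.4}, and the smallness of $2r$ relative to $M_2$. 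It then raises both sides to the \emph{constant} power $s^-_{B_{2r}}>1$, multiplies by $\omega$, integrates, and invokes the classical constant-exponent Muckenhoupt weighted strong-type bound for $\mathcal{M}$ with $\omega\in A_{s^-_{B_{2r}}}$; membership in this class is secured by the self-improvement of $[\omega]_{A_{s^+}}$ together with the log-H\"older smallness of $s^+_{B_{2r}}-s^-_{B_{2r}}$, which is precisely what forces $R_1$ to be small. This reduction to the constant-exponent weighted maximal theorem is the key idea your proposal is missing, and without it the $L^1$ obstacle you correctly identify is not resolved.
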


\begin{proof}
Since $\om \in A_{\sss}$, we have the following bound due to Lemma \ref{weight_inclusion}:
\begin{equation}
 \label{px_weight_1}
 [\om]_{A_{s^+}} \leq C_{incl} [\om]_{A_{\sss}} \leq C_{incl} M_1.
\end{equation}
By the self-improvement property of Muckenhoupt weights (see \cite[Theorem 9.2.5]{Grafakos}), there exists $\tilde{\ep} = \tilde{\ep}(s^+,n,M_1)>0$  such that 
\begin{equation}
 \label{px_weight_2}
 [\om]_{A_{s^+-\tilde{\ep}}} \leq \tilde{c}(n,s^+) [\om]_{A_{s^+}}. 
\end{equation}
We now choose $R_1 = \min \lbr[\{] \tilde{r}_1,\tilde{r}_2,\frac14\rbr[\}] $, where $\tilde{r}_1$ and $\tilde{r}_2$ satisfy 
\begin{itemize}[leftmargin=*]
 \item With $\tilde{\ep}$ as in \eqref{px_weight_2}, using Remark \ref{remark_def_p_log}, we see that there exists $\tilde{r}_1(\slog)>0$ such that the following bound holds: $s^+_{B_{2\tilde{r}_1}} - \tilde{\ep} < s^-_{B_{2\tilde{r}_1}}$.
\item Let $\tilde{r}_2 < \frac{1}{2M_2}$.
\end{itemize}

Now fix any radius $r$ satisfying $2r < R_1$,  
it is  then easy to see using the $\log$-H\"older continuity of $\sss$ such that the following bound holds:
\begin{equation}\label{px_weight_4_2}
 |B_r(y)|^{s^-_{B_r(y)}-s^+_{B_r(y)}} \leq c(\slog,n).
\end{equation}
%
%
For any $y \in B_{r}$, consider the ball $B_\tau(y)$ with $\tau<r$ and set  $q(x) := \frac{s(x)}{s^-_{B_{2r}}}$.   Then for any $\be \geq 1$, there holds
\begin{equation}
 \label{px_weight_6}
 \begin{array}{ll}
  \lbr \fint_{B_\tau(y)} |f(x)| \ dx \rbr^{q(y)} & \leq \lbr \fint_{B_\tau(y)} |f(x)|^{q_{B_\tau(y)}^-} \ dx \rbr^{\frac{q(y)}{q_{B_\tau(y)}^-}}\\
  & \apprle \lbr \fint_{B_\tau(y)} |f(x)|^{q(x)}\be^{\frac{q(x)}{{q_{B_\tau(y)}^-}}-1} \ dx \rbr^{\frac{q(y)}{q_{B_\tau(y)}^-}}+\frac{1}{\be}.
 \end{array}
\end{equation}
Now set 
 $\be := \max \left\{ 1 , \om(B_{r})^{\frac{1}{s^-_{B_{2r}}}}\right\}$, then 
using \eqref{weight_lower_bound}, we see that 
%
 \begin{equation}
 \label{px_weight_7_1}
 \be^{\frac{q(x)}{{q_{B_\tau(y)}^-}}-1} \apprle (1+|x|)^{C(s(x) - s^-_{B_\tau(y)})} \leq c(\slog,n).
 \end{equation}

Making use of  \eqref{px_weight_7_1} into \eqref{px_weight_6}, we get 
\begin{equation}\label{px_weight_8}
\begin{array}{ll}
 \lbr \fint_{B_\tau(y)} |f(x)| \ dx \rbr^{q(y)} &\apprle |B_\tau(y)|^{1- \frac{q(y)}{q^-_{B_\tau(y)}}} \varrho_{L^{q(\cdot)}(B_\tau(y))}(f)^{\frac{q(y) - q^-_{B_\tau(y)}}{q^-_{B_\tau(y)}}} \fint_{B_\tau(y)} |f(x)|^{q(x)} \ dx +\\
 &\qquad \qquad \qquad  + \min \left\{ 1 , \om(B_{r})^{-\frac{1}{s^-_{B_{2r}}}}\right\}.
 \end{array}
\end{equation}
%
Let now bound  $\varrho_{L^{q(\cdot)}(B_\tau(y))}(f)^{\frac{q(x) - q^-_{B_\tau(y)}}{q^-_{B_\tau(y)}}}$ from above as follows:
\begin{equation}
 \label{px_weight_9}
 \begin{array}{ll}
  \varrho_{L^{q(\cdot)}(B_\tau(y))}(f) &\apprle \int_{B_\tau(y)} |f(x)|^{s(x)}\om(x) \ dx + \int_{B_\tau(y)} \om(x)^{-\frac{1}{s^-_{B_{2r}}-1}}\ dx \\
  & \apprle  \int_{B_\tau(y)} |f(x)|^{s(x)}\om(x) \ dx + [\om]_{A_{s^-_{B_{2r}}}}^{\frac{1}{s^-_{B_{2r}}-1}}\lbr\frac{|B_\tau(y)|^{s^-_{B_{2r}}}}{\om(B_\tau(y))}\rbr^{\frac{1}{s^-_{B_{2r}}-1}}  \\
&\overset{\redlabel{4.10.a}{a}}{\apprle}  M_2 + C_{incl} M_1\lbr\frac{|B_\tau(y)|^{s^-_{B_{2r}}}}{\om(B_\tau(y))}\rbr^{\frac{1}{s^-_{B_{2r}}-1}}.
  \end{array}
\end{equation}
To obtain \redref{4.10.a}{a}, we made use of \eqref{assumption_on_f}, \eqref{px_weight_2} and  \eqref{px_weight_7_1}.

Now substituting \eqref{px_weight_9} into \eqref{px_weight_8} followed by making use of Lemma \ref{lemma3.4}, the $\log$-H\"older continuity of $\sss$ along with the choice of $R_1$ and  \eqref{px_weight_4_2}, we obtain
%
%
%
\[
 \lbr \fint_{B_\tau(y)} |f(x)| \ dx \rbr^{q(y)} \apprle   \fint_{B_\tau(y)} |f(x)|^{q(x)} \ dx + \min \mgh{ 1 , \om(B_{r})^{-\frac{1}{s^-_{B_{2r}}}}}.
\]

Taking supremum over all $\tau<r$, we get
\begin{equation}
 \label{px-weight-10}
 \mm_{<r}(|f|)^{q(y)}(y) \apprle \mm_{<r}(|f|^{q(\cdot)}) (y) + \min \mgh{ 1 , \om(B_{r})^{-\frac{1}{s^-_{B_{2r}}}}}.
\end{equation}

Raising  \eqref{px-weight-10} to $s^-_{B_{2r}}$, followed by multiplying with $\om(y)$ and then integrating over $B_{r}$, we get
\[
\begin{array}{ll}
 \int_{B_{r}} \mm_{<r}(|f|)^{s(x)}(x) \om(x) \ dx  &\apprle \int_{B_{r}} \mm_{<r}(|f|^{q(\cdot)})^{s^-_{B_{2r}}} (x)\om(x) \ dx +\\
 & \qquad \qquad  + \om(B_{r})\min \mgh{ 1 , \om(B_r)^{-\frac{1}{s^-_{B_{2r}}}}}^{s^-_{B_{2r}}} \\
 & \overset{\redlabel{4.12.a}{b}}{\apprle} \int_{B_{2r}} |f(x)|^{s(x)} \om(x) \ dx + 1. 
\end{array}
\]
To obtain \redref{4.12.a}{b}, we made use of the weighted maximal function bound (see \cite[Theorem 9.19]{Grafakos}) for exponent $s^-_{B_{2r}}$ since we have $\om \in A_{s^-_{B_{2r}}}$. This completes the proof of the Theorem. 
\end{proof}
 
In the unweighted case i.e., $\om(x) \equiv 1$, Theorem \ref{integral-px-maximal-weighted} becomes
\begin{corollary}
 \label{unweighted_maximal_bound}
 Let $\sss \in \logh$,  $M_2>0$ be given and let  $R_2 := \min \left\{\frac{1}{2M_2}, \frac1{\om_n^{1/n}} \right\}$, where $\om_n$ is the volume of a unit ball  in $\RR^n$. Then for any $f \in L^{\sss}(\RR^n)$ and any radii $2r < R_2$ satisfying 
 \begin{equation*}\label{assumption_on_f_no_weight} \int_{B_{2R}} |f(x)|^{s(x)} \ dx + 1 \leq M_2, \end{equation*} 
 there holds
 \[
  \int_{B_r} \mm_{<r}(|f|)^{s(x)}(x) \ dx \apprle  \int_{B_{2r}} |f(x)|^{s(x)} \ dx + |B_{r}|.
 \]
\end{corollary}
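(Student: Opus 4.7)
The plan is to deduce this corollary by running the proof of Theorem \ref{integral-px-maximal-weighted} with the trivial weight $\om \equiv 1$, exploiting the fact that several steps simplify substantially in the unweighted setting. First I would note that $1 \in A_p$ with constant $1$ for every constant exponent $p>1$, so the appeal to Lemma \ref{weight_inclusion} together with the self-improvement property of Muckenhoupt weights---which was the source of the parameter $\tilde{\ep}(s^+,n,M_1)$ in Theorem \ref{integral-px-maximal-weighted}---is not needed. This is precisely why the threshold $R_2$ can be chosen to depend only on $M_2$ and $n$, rather than on a Muckenhoupt constant of the weight.

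Next, I would reproduce the pointwise bound for small balls. Setting $q(x) := s(x)/s^-_{B_{2r}} \ge 1$, for $y \in B_r$ and $\tau < r$ I would apply Jensen's inequality at the power $q^-_{B_\tau(y)}$ and then introduce the exponent-swap factor $\be := \max\{1,\om(B_r)^{1/s^-_{B_{2r}}}\}$ exactly as in \eqref{px_weight_6}. The choice $2r < 1/\om_n^{1/n}$ forces $|B_{2r}|<1$; with $\om \equiv 1$ this gives $\om(B_r)=|B_r|<1$ and hence $\be = 1$, killing the negative power of $\om(B_r)$ in the $1/\be$ error term. The scale condition $2r < 1/(2M_2)$ together with $\log$-H\"older continuity of $\sss$ then lets me dominate the extrapolation factor $\varrho_{L^{q(\cdot)}(B_\tau(y))}(f)^{(q(y)-q^-_{B_\tau(y)})/q^-_{B_\tau(y)}}$ by a constant depending only on $\slog$---this is the only place where the hypothesis \eqref{assumption_on_f} actually enters. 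The outcome is the pointwise inequality
\[
\lbr \fint_{B_\tau(y)} |f(x)| \, dx \rbr^{q(y)} \apprle \fint_{B_\tau(y)} |f(x)|^{q(x)} \, dx + 1.
\]

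Finally, I would take the supremum over $\tau < r$, raise to the power $s^-_{B_{2r}}$, and integrate over $B_r$, obtaining
\[
\int_{B_r} \mm_{<r}(|f|)^{s(x)}(x) \, dx \apprle \int_{B_r} \mm_{<r}(|f|^{q(\cdot)})^{s^-_{B_{2r}}}(x) \, dx + |B_r|,
\]
and then apply the classical (unweighted) Hardy--Littlewood maximal inequality in $L^{s^-_{B_{2r}}}$ (valid since $s^-_{B_{2r}}>1$) to control the first term by $\int_{B_{2r}} |f(x)|^{q(x)s^-_{B_{2r}}} \, dx = \int_{B_{2r}} |f(x)|^{s(x)} \, dx$. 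The main obstacle, as is typical in these variable-exponent arguments, is simply to track the log-H\"older modulus carefully at the exponent-swap step so that all oscillation factors can be absorbed into constants depending only on $\slog$; once this is done, everything else is a direct specialization of the already-completed weighted argument.
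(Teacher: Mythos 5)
Your proposal is correct and takes essentially the same route the paper intends: the paper offers no independent argument for the corollary, presenting it as the specialization of Theorem~\ref{integral-px-maximal-weighted} to $\om \equiv 1$, and you simply run the weighted proof with this choice, correctly noting that $1 \in A_p$ makes the self-improvement step vacuous (explaining why $R_2$ no longer depends on a Muckenhoupt constant), that $|B_{2r}|<1$ forces $\be=1$, and that the remaining $\log$-H\"older bookkeeping and the unweighted strong $(s^-_{B_{2r}},s^-_{B_{2r}})$ maximal inequality close the argument. You even recover the slightly sharper additive term $|B_r|$ (rather than $1$) on the right side, which comes from carefully integrating the residual constant over $B_r$ as you do.
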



In the constant exponent case, the results are well known (see for example \cite[Chapter 9]{Grafakos}):
\begin{lemma}\label{max ftn est1} The following bounds hold:
\begin{itemize}
\item (Weak 1-1 estimate) For any $f \in L^1(\RR^n)$ and for every $\alpha > 0$, there holds
\begin{equation}\label{weak 1-1}
\left| \mgh{x \in \RR^n : \M f(x) > \alpha} \right| \le \frac{c(n)}{\alpha} \integral{\RR^n}{|f|}{dx}.
\end{equation}

\item (Strong s-s estimate) For any $f \in L^s(\RR^n)$ with $1<s<\infty$, then $\M f \in L^s(\RR^n)$ and
\begin{equation}\label{strong p-p}
\Norm{\M f}_{L^s(\RR^n)} \le c(n,s) \Norm{f}_{L^s(\RR^n)}.
\end{equation}
\end{itemize}
\end{lemma}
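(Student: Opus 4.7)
The plan is to establish the two bounds by the classical Hardy--Littlewood argument: first prove the weak $(1,1)$ inequality using a Vitali-type covering, and then deduce the strong $(s,s)$ bound via a truncation argument together with the layer-cake formula (essentially a hands-on Marcinkiewicz interpolation between the trivial $L^\infty$ bound $\|\M f\|_{L^\infty} \le \|f\|_{L^\infty}$ and the weak type endpoint).

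First I would prove \eqref{weak 1-1}. Fix $\alpha>0$ and set $E_\alpha := \{ x \in \RR^n : \M f(x) > \alpha \}$. For each $x \in E_\alpha$ there is, by the definition of $\M f$, some radius $r_x>0$ with
\[ \frac{1}{|B(x,r_x)|} \integral{B(x,r_x)}{|f(y)|}{dy} > \alpha, \qquad \text{equivalently } |B(x,r_x)| < \frac{1}{\alpha}\integral{B(x,r_x)}{|f|}{dy}. \]
The family $\{B(x,r_x)\}_{x \in E_\alpha}$ covers $E_\alpha$. Since $\|f\|_{L^1}<\infty$ forces a uniform upper bound on $r_x$, I can apply the Vitali covering lemma to extract a countable disjoint subcollection $\{B_i\}_i = \{B(x_i,r_{x_i})\}_i$ with $E_\alpha \subset \bigcup_i 5 B_i$. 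Then
\[ |E_\alpha| \le \sum_i |5 B_i| = 5^n \sum_i |B_i| \le \frac{5^n}{\alpha} \sum_i \integral{B_i}{|f|}{dy} \le \frac{5^n}{\alpha}\integral{\RR^n}{|f|}{dy}, \]
using disjointness in the last step. This yields \eqref{weak 1-1} with $c(n) = 5^n$.

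For \eqref{strong p-p} with $1<s<\infty$, I would use the standard truncation. For $\alpha>0$ split $f = g_\alpha + h_\alpha$, where $g_\alpha := f\chi_{\{|f|>\alpha/2\}}$ and $h_\alpha := f\chi_{\{|f|\le\alpha/2\}}$. Then $\|\M h_\alpha\|_{L^\infty} \le \alpha/2$, so the sublinearity of $\M$ gives $\{\M f > \alpha\} \subset \{\M g_\alpha > \alpha/2\}$. Applying the weak $(1,1)$ bound to $g_\alpha$,
\[ |\{\M f > \alpha\}| \le \frac{2\, c(n)}{\alpha} \integral{\{|f|>\alpha/2\}}{|f(y)|}{dy}. \]
Now I invoke the layer-cake identity and Fubini:
\[ \integral{\RR^n}{(\M f)^s}{dx} = s\integraL{0}{\infty}{\alpha^{s-1}|\{\M f>\alpha\}|}{d\alpha} \le 2\, c(n)\, s \integral{\RR^n}{|f(y)|\!\integraL{0}{2|f(y)|}{\alpha^{s-2}}{d\alpha}}{dy}, \]
and the inner integral equals $\tfrac{(2|f(y)|)^{s-1}}{s-1}$, which yields $\|\M f\|_{L^s}^s \le \tfrac{2^s c(n) s}{s-1}\|f\|_{L^s}^s$ and gives \eqref{strong p-p}.

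Since this is a classical result, there is no genuine obstacle. The only mild care is the Vitali extraction at the weak-type step (which requires the radii $r_x$ to be uniformly bounded; this follows from $\|f\|_{L^1}<\infty$, and one can alternatively restrict to $E_\alpha \cap B(0,R)$ and let $R\to\infty$ at the end). The constant degenerates as $s\to 1^+$, which is expected since \eqref{strong p-p} fails at $s=1$.
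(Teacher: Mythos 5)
Your proof is correct and is precisely the classical Hardy--Littlewood argument. The paper itself offers no proof of this lemma but simply cites \cite[Chapter 9]{Grafakos}, and the Vitali-covering weak $(1,1)$ estimate followed by the truncation/layer-cake (Marcinkiewicz) argument for the strong $(s,s)$ bound is exactly what that reference gives, so you have reproduced the intended proof.
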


\subsection{Poincar\'e type inequalities}

We shall recall the modular form of the Poincar\'{e} inequality which was proved in \cite[Theorem 8.2.4]{Diening}:
\begin{lemma}
 \label{poincare_inequality}
Let $B$ be any ball and let $v \in W^{1,\sss}(B)$, then there exists a constant $C = C(n,\slog)$ such that 
 \[
  \|v - \avg{v}{B}\|_{L^{\sss}(B)} \leq C \diam(B)\|\nabla v\|_{L^{\sss}(B)}.
 \]

\end{lemma}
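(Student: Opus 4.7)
The plan is to reduce the claim to the boundedness of the Hardy--Littlewood maximal operator on $L^{\sss}(\RR^n)$, which is available because $\sss \in \logh$ and $1<s^-\leq s^+<\infty$ (this is the celebrated result of Diening, underlying the theory developed in the monograph cited above). Hence, thanks to Lemma \ref{max ftn est1} and its variable-exponent version, one has
\[
 \| \mm f\|_{L^{\sss}(\RR^n)} \leq C(n,\slog) \| f\|_{L^{\sss}(\RR^n)}
\]
for every $f \in L^{\sss}(\RR^n)$.

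First, I would invoke the classical subrepresentation formula on balls: for $v \in W^{1,\sss}(B) \subset W^{1,1}(B)$ and for almost every $x \in B$,
\[
 |v(x) - \avg{v}{B}| \leq C(n) \int_B \frac{|\nabla v(y)|}{|x-y|^{n-1}} \, dy = C(n)\, I_{1,B}(|\nabla v|)(x),
\]
where $I_{1,B}$ denotes the truncated Riesz potential on $B$.

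Next, a standard dyadic annular decomposition around $x \in B$ (using that $|x-y|\leq \diam(B)$) yields the pointwise bound
\[
 I_{1,B} f(x) \leq C(n)\, \diam(B)\, \mm(f \chi_B)(x) \qquad \text{for a.e.}\ x \in B,
\]
for any nonnegative $f$. Extending $|\nabla v|$ by zero outside $B$ and combining the two previous displays, we obtain
\[
 |v(x) - \avg{v}{B}| \leq C(n)\, \diam(B)\, \mm(|\nabla v|\chi_B)(x) \qquad \text{for a.e.}\ x\in B.
\]
Taking $L^{\sss}(B)$ norms and applying the maximal-function bound on $L^{\sss}(\RR^n)$ yields
\[
 \|v - \avg{v}{B}\|_{L^{\sss}(B)} \leq C(n)\, \diam(B)\, \| \mm(|\nabla v|\chi_B)\|_{L^{\sss}(\RR^n)} \leq C(n,\slog)\, \diam(B)\, \|\nabla v\|_{L^{\sss}(B)},
\]
which is the desired inequality.

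The main obstacle, and the reason log-Hölder continuity is indispensable, is precisely the boundedness of $\mm$ on $L^{\sss}$. Without it, the argument above collapses; with it, the proof reduces to the two elementary ingredients (subrepresentation and Riesz-potential $\leq$ maximal function estimate). Since we quote Diening's theorem as a black box, no further technicality is needed.
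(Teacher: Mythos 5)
Your proof is correct. The paper does not actually supply a proof of Lemma~\ref{poincare_inequality}; it is imported verbatim from \cite[Theorem~8.2.4]{Diening}. The route you take --- the subrepresentation formula $|v(x)-\avg{v}{B}|\apprle \int_B |\nabla v(y)|\,|x-y|^{1-n}\,dy$, the pointwise domination of the truncated Riesz potential by $\diam(B)\,\mm(|\nabla v|\chi_B)$, and Diening's boundedness of the maximal operator on $L^{\sss}$ --- is exactly the standard proof, and it is precisely the same chain the authors themselves run one lemma later when proving the modular version (Lemma~\ref{scaled_poincare}), where they cite \cite[Theorem~1.51]{MZ} and \cite[Theorem~1.32]{MZ} for the first two steps and Corollary~\ref{unweighted_maximal_bound} for the third. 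So in substance this is the same argument, merely carried out at the norm level rather than the modular level.

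One small point worth flagging, though it is not a genuine gap: to invoke the $L^{\sss}(\RR^n)$-boundedness of $\mm$ you need $\sss$ to be defined and $\log$-H\"older continuous on all of $\RR^n$, whereas a priori it is only given on $B$ (or on $\Om$). One should first extend $\sss$ to $\RR^n$ preserving $s^\pm$ and the $\log$-H\"older structure --- this is standard (see e.g.\ \cite[Proposition~4.1.7]{Diening}) and changes the constant only by quantities already absorbed into $\slog$, so your claimed dependence $C=C(n,\slog)$ is accurate, but the extension step deserves a sentence in a fully written-out proof.
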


The Poincar\'e inequality in Lemma \ref{poincare_inequality} is not entirely suitable for our purposes and so we need to prove an integral version of the result. To do this, we shall use 
the unweighted maximal function bound obtained in Corollary \ref{unweighted_maximal_bound} to prove the following scaled version of the Poincar\'{e} type inequality in the variable exponent spaces. We believe that this result is well known to experts, but we could not find this result in literature and hence we will present the proof. 

 \begin{lemma}
\label{scaled_poincare}
 Let $\sss \in \logh$ and $R_3 \ge 1$ be given and define  $R_3 := \min \left\{\frac{1}{2M_3}, \frac1{\om_n^{1/n}}, \frac12 \right\}$. Then for any $\phi \in W^{1,\sss}(B_{4r})$ with $4r < R_3$ satisfying 
 \begin{equation}\label{assumption_on_phi_no_weight} \int_{B_{4r}} |\nabla \phi (x)|^{s(x)} \ dx + 1 \leq M_3, \end{equation} 
 there exists a constant $C= C(n,\slog)$ such that 
 \[
  \int_{B_r} \lbr \frac{|\phi - \avg{\phi}{B_r}|}{\diam(B_r)}\rbr^{s(x)} \ dx \leq C\lbr   \int_{B_{r}} |\nabla \phi(x)|^{s(x)} \ dx + |B_{r}|\rbr.
 \]
%
%
 Since $\diam(B_r) = 2r \leq R_3 <1$, we also obtain
 \[
  \int_{B_r} {|\phi - \avg{\phi}{B_r}|}^{s(x)} \ dx \leq C \lbr  \int_{B_{r}} |\nabla \phi(x)|^{s(x)} \ dx + |B_{r}|\rbr.
 \]
\end{lemma}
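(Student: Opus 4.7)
The plan is to reduce the scaled Poincar\'e inequality to the unweighted truncated maximal-function estimate in Corollary~\ref{unweighted_maximal_bound} via the classical Riesz-potential representation of the oscillation $\phi-\avg{\phi}{B_r}$. The crucial observation is that dividing by $\diam(B_r)$ absorbs the explicit factor of $r$ that appears in the Riesz bound, so the variable exponent $\sss$ only has to confront a plain truncated maximal function at the last step.

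First I would establish, for a.e.\ $x \in B_r$, the standard pointwise bound
\[
|\phi(x) - \avg{\phi}{B_r}| \leq C(n) \int_{B_r} \frac{|\nabla \phi(y)|}{|x-y|^{n-1}}\, dy,
\]
obtained by writing $\phi(x)-\phi(y)$ as a one-dimensional line integral and averaging in $y \in B_r$. Decomposing $B_r$ dyadically into annuli $\{y : 2^{-k-1}r \leq |x-y| < 2^{-k} r\}$, $k = 0,1,2,\ldots$, and summing a geometric series upgrades this to
\[
|\phi(x) - \avg{\phi}{B_r}| \leq C(n)\, r\, \mm_{<2r}\gh{|\nabla \phi|\chi_{B_r}}(x),
\]
the truncation radius $2r$ being valid because each dyadic ball $B_{2^{-k}r}(x)$ appearing in the decomposition has radius at most $r$.

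Next I would divide by $\diam(B_r) = 2r$, raise both sides to the power $s(x)$, and integrate over $B_r$. Since $s^\pm$ are bounded the factor $C(n)^{s(x)}$ is harmless, and the task reduces to
\[
\int_{B_r} \mm_{<2r}\gh{|\nabla \phi|\chi_{B_r}}^{s(x)}(x)\, dx \apprle \int_{B_r} |\nabla \phi|^{s(x)}\, dx + |B_r|.
\]
This is exactly Corollary~\ref{unweighted_maximal_bound} applied to $f := |\nabla \phi|\chi_{B_r}$ with the corollary's inner radius taken to be $2r$: the hypothesis $4r < R_3$ converts to the required ``$2(2r) < R_2$'', while \eqref{assumption_on_phi_no_weight} supplies the modular smallness with $M_2 = M_3$. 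Since $\spt f \subset B_r$, the right-hand side of the corollary collapses as claimed, and this proves the first inequality.

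For the second inequality I would simply note that $\diam(B_r) \leq R_3 < 1$ forces $(\diam B_r)^{s(x)} \leq 1$, hence $|\phi - \avg{\phi}{B_r}|^{s(x)} \leq \gh{|\phi-\avg{\phi}{B_r}|/\diam(B_r)}^{s(x)}$ pointwise, and integration over $B_r$ reduces it to the first inequality. The only genuinely delicate step I anticipate is the book-keeping: making sure that the chain of radii $r, 2r, 4r$ generated by the dyadic Riesz argument is compatible both with the truncation level of $\mm$ and with the ``$2 \cdot (\text{inner radius}) < R_2$'' constraint of Corollary~\ref{unweighted_maximal_bound}. Once $R_3$ is chosen as in the statement, these constraints are perfectly aligned and no further difficulty arises.
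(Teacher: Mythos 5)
Your argument is correct and follows essentially the same path as the paper: both proofs pass through the Riesz-potential representation of $\phi - \avg{\phi}{B_r}$ (the paper cites \cite[Theorem 1.51]{MZ}), upgrade it to a bound by $\diam(B_r)\,\mm_{<2r}(|\nabla\phi|\chi_{B_r})$ (you spell out the dyadic annuli, the paper cites a modification of \cite[Theorem 1.32]{MZ}), and then raise to the power $s(x)$, integrate, and apply Corollary \ref{unweighted_maximal_bound}. Your radius bookkeeping ($4r < R_3 \Rightarrow 2\cdot 2r < R_2$ with $M_2 = M_3$) is exactly what the hypothesis \eqref{assumption_on_phi_no_weight} was designed to feed into that corollary, so the two proofs are in complete alignment.
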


\begin{proof}
 Let $B=B_r$ be any ball of radius $r \leq R_3$ and let $\phi \in W^{1,\sss}(B_s)$, then  using \cite[Theorem 1.51]{MZ}, we obtain
 \begin{equation}
 \label{eq5.16}
  |\phi(x) - \avg{\phi}{B}| \leq C(n) \int_{B} \frac{|\nabla \phi| \chi_{B}}{|x-y|^{n-1}} \ dy.
 \end{equation}
Modifying the proof of \cite[Theorem 1.32]{MZ}, we have the following bound (note that $\diam(B) \leq 1$):
\begin{equation}
\label{eq5.17}
 \int_{B} \frac{|\nabla \phi| \chi_{B}}{|x-y|^{n-1}} \ dy \leq C(n)  \mm_{<2R} (|\nabla \phi|\chi_B)(x). 
\end{equation}
Combining \eqref{eq5.16} and \eqref{eq5.17}, we get
\begin{equation}
 \label{eq5.18}
 \frac{|\phi(x) - \avg{\phi}{B}|}{\diam(B)} \leq C(n) \mm_{<2R} (|\nabla \phi|\chi_B)(x).
\end{equation}
We now exponentiate \eqref{eq5.18} by $\sss$ and then integrate over $B$. To control the maximal function term on the right,  we make use of Corollary \ref{unweighted_maximal_bound} (this is where the restriction on $R_3$ and \eqref{assumption_on_phi_no_weight} is needed) to get the final conclusion. This completes the proof of the Lemma. 
\end{proof}

We will also need the following  version of the Poincar\'{e} inequality which holds provided the \emph{zero set} of the function is sufficiently large (In the constant exponent case, this result is well known, see for example \cite[Corollary 8.2.7]{AH} and references therein). In the variable exponent case, we  believe the  result is well known to experts (see \cite[Lemma 3.3]{EHL} for a very similar result, in fact our proof follows along the same lines), but we could not find a reference for it. Hence we shall give a proof in \ref{proof_poincare} for the sake of completeness.
\begin{theorem}
 \label{measure_density_poincare}
 Let $\sss \in \logh$ and let ${M}_4\geq 1$, $\varepsilon \in (0,1)$ be given constants. Define  $R_4 := \min \left\{\frac{1}{2{M}_4}, \frac1{\om_n^{1/n}}, \frac12 \right\}$.  For any $\phi \in W^{1,\pp}(B_{2r})$ with $2r < R_4$ satisfying 
 \begin{equation}\label{measure_density_zero}|\{ N(\phi)\}| := |\{ x \in B_r : \phi(x) =0\}|> \varepsilon |B_r|\end{equation} and 
 \begin{equation}\label{assumption_on_phi_capacity} \int_{B_{2r}} |\nabla \phi (x)|^{s(x)} \ dx + 1 \leq {M}_4, \end{equation} 
 then there holds
 \[
  \int_{B_r} \lbr \frac{|\phi|}{\diam(B_r)}\rbr^{s(x)} \ dx \apprle  \int_{B_{2r}} |\nabla \phi(x)|^{s(x)} \ dx + |B_{r}|.
 \]
\end{theorem}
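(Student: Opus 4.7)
The strategy parallels that of Lemma \ref{scaled_poincare}: derive a pointwise bound on $|\phi(x)|$ by a Riesz potential of $|\nabla \phi|$, pass to a truncated maximal function, and invoke Corollary \ref{unweighted_maximal_bound}. The role played by ``subtract the mean'' in Lemma \ref{scaled_poincare} is now filled by the large zero-set hypothesis \eqref{measure_density_zero}.

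\textbf{Step 1 (Riesz potential via the zero set).} For a.e.\ $x \in B_r$ and every $y \in N(\phi)$, the segment $[x,y]$ lies in the convex set $B_r \subset B_{2r}$, so the fundamental theorem of calculus yields
\[
|\phi(x)| = |\phi(x) - \phi(y)| \leq \int_0^{|x-y|} |\nabla \phi(x + s\omega_y)|\,ds, \qquad \omega_y := \tfrac{y-x}{|y-x|}.
\]
Averaging over $y \in N(\phi)$, passing to spherical coordinates centered at $x$, swapping the order of the $r'$- and $s$-integrations by Fubini, and using $|N(\phi)| \geq \ve |B_r|$ to divide by the measure of the zero set, one arrives at the classical bound
\[
|\phi(x)| \apprle \frac{1}{\ve}\int_{B_{2r}} \frac{|\nabla \phi(y)|}{|x-y|^{n-1}}\,dy \qquad \text{for a.e.\ } x \in B_r.
\]

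\textbf{Step 2 (truncated maximal function, then Corollary \ref{unweighted_maximal_bound}).} Exactly as in the proof of Lemma \ref{scaled_poincare} (compare \cite[Theorem 1.32]{MZ}), a standard dyadic-annulus decomposition of this Riesz potential gives
\[
\frac{|\phi(x)|}{\diam(B_r)} \apprle \frac{1}{\ve}\,\mm_{<\kappa r}\bigl(|\nabla \phi|\chi_{B_{2r}}\bigr)(x), \qquad x \in B_r,
\]
for some dimensional constant $\kappa \geq 1$. Raising to the $s(x)$-th power, integrating over $B_r$, and applying Corollary \ref{unweighted_maximal_bound} to $f := |\nabla \phi|\chi_{B_{2r}}$ on the scale $\kappa r$ (the hypothesis of the Corollary is inherited from \eqref{assumption_on_phi_capacity} after shrinking $R_4$ by a dimensional factor so that $2\kappa r$ fits under the admissible threshold), we conclude
\[
\int_{B_r}\Bigl(\frac{|\phi|}{\diam(B_r)}\Bigr)^{s(x)}\,dx \apprle \int_{B_{2r}} |\nabla \phi(x)|^{s(x)}\,dx + |B_r|,
\]
where the implicit constant also absorbs the factor $\ve^{-s^+}$ coming from Step 1. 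This is the claimed inequality.

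\textbf{Main obstacle.} The only genuinely new ingredient beyond Lemma \ref{scaled_poincare} is the spherical-averaging argument in Step 1, which converts the zero-set condition \eqref{measure_density_zero} into a Riesz potential bound; this is classical in the constant-exponent setting but deserves careful bookkeeping because it is the step that eats the factor $1/\ve$. Matching the ball scales—so that the segments from $x$ to $y \in N(\phi)$ stay inside $B_{2r}$ (guaranteed by convexity) and so that the scale $\kappa r$ of $\mm_{<\kappa r}$ falls within the range of applicability of Corollary \ref{unweighted_maximal_bound}—is a routine adjustment of the threshold $R_4$.
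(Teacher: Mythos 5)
Your argument is correct, but it follows a genuinely different route from the paper's. The paper's proof never works with the zero set pointwise: it first splits $\fint_{B}|\phi|^{s(x)}$ via the triangle inequality into the oscillation $\fint_{B}|\phi - \avg{\phi}{B}|^{s(x)}$ (handled by Lemma~\ref{scaled_poincare}) plus the constant contribution $\fint_{B}|\avg{\phi}{B}|^{s(x)}$, and then controls $|\avg{\phi}{B}|$ by constructing the auxiliary test function $\varpi = |\phi-\avg{\phi}{B}|\,\eta/|\avg{\phi}{B}|$, observing that $\varpi^{s(x)} \geq \chi_{N(\phi)}$, and invoking the variable-exponent Sobolev embedding in modular form together with \eqref{measure_density_zero} to force $|\avg{\phi}{B}|$ down. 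You instead go directly through the Gilbarg--Trudinger-type Riesz-potential representation $|\phi(x)| \lesssim |N(\phi)|^{-1}\,d^{n}\int_{B_r}|\nabla\phi(y)|\,|x-y|^{1-n}\,dy$ obtained by averaging the fundamental theorem of calculus over $y\in N(\phi)$, use \eqref{measure_density_zero} to produce the factor $\ve^{-1}$, and then hand the resulting truncated maximal function to Corollary~\ref{unweighted_maximal_bound}, exactly as in Lemma~\ref{scaled_poincare}. Your route is shorter and more self-contained in that it bypasses the test-function construction and the Sobolev embedding entirely; the paper's route is the one that mirrors \cite[Lemma~3.3]{EHL}, which is presumably why the authors chose it. Both arguments need the same small bookkeeping adjustment to make the truncation radius in $\mm_{<\cdot}$ fit under the threshold of Corollary~\ref{unweighted_maximal_bound} (a dimensional factor hidden in $R_4$), which you flag explicitly; and both implicitly allow the suppressed constant in $\apprle$ to depend on $\ve$ (through $\ve^{-s^{+}}$ in your case, through $|N(\phi)|\geq\ve|B_r|$ in the paper's), which is consistent with the statement since $\ve$ is fixed in advance.
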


We now recall the analogue of Hedberg's lemma with truncated maximal function. The estimate using the non-truncated maximal function is due to Semmes (for a proof, see \cite[Section 5]{HK}) and  the proof of the theorem using truncated maximal function follows along similar lines. 

\begin{theorem}
\label{hedber_inequality}
 Let $1<q<\infty$ and $\tOm$ be a bounded domain such that $\tOm^c$ satisfies a uniform measure density condition, i.e.,  there exists $\ve \in (0,1)$ such that for every $r>0$ and $x \in \pa \tOm$, there holds $|\tOm^c \cap B_r(x)| \geq \ve |B_r(x)|$. Fix any $r \geq \diam (\tOm)$ and let  $u \in W_0^{1,q}(\tOm)$ be given, then after extending $u$ to be zero outside $\tOm$, the following estimate holds for a.e $x,y \in \tOm$,
 \[
  |u(x) - u(y) | \leq C(q,n,\ve) |x-y| \lbr \mm_{<r} (|\nabla u|^q)^{\frac1{q}}(x) + \mm_{<r} (|\nabla u|^q)^{\frac1{q}}(y)\rbr.
 \]
\end{theorem}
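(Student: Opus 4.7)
The plan is to adapt the classical Hedberg--Semmes pointwise estimate to the present truncated setting by combining a Poincar\'e-chaining argument with the zero extension of $u$, followed by Jensen's inequality to pass from $|\nabla u|$ to $|\nabla u|^q$ inside the maximal function.

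First I would extend $u$ by zero to all of $\RR^n$; since $u \in W_0^{1,q}(\Omt)$, the extension lies in $W^{1,q}(\RR^n)$ with weak gradient $|\nabla u|\chi_{\Omt}$, so in particular $|\nabla u|$ is supported in $\overline{\Omt}$. Fix Lebesgue points $x, y \in \Omt$ of $u$ and set $\rho := |x-y|$, so $\rho \leq \diam(\Omt) \leq r$. Introduce the midpoint ball $B^* := B_{2\rho}((x+y)/2) \supset \{x,y\}$ and split
$$|u(x) - u(y)| \leq |u(x) - u_{B^*}| + |u(y) - u_{B^*}|.$$
For the first term I would use the dyadic chain $B_k^x := B_{3\cdot 2^{-k}\rho}(x)$ (so $B_0^x \supset B^*$), apply $u(x) = \lim_k u_{B_k^x}$ (Lebesgue differentiation) and the telescoping identity, and estimate each increment by
$$|u_{B_{k+1}^x} - u_{B_k^x}| \leq C \fint_{B_k^x} |u - u_{B_k^x}| \, dz \leq C\cdot\mathrm{rad}(B_k^x) \fint_{B_k^x} |\nabla u| \, dz,$$
via the standard $L^1$-Poincar\'e inequality (here $B_{k+1}^x \subset B_k^x$ with doubling ratio $2^n$). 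Summing the geometric series and using that every chain radius satisfies $\mathrm{rad}(B_k^x) \leq 3\rho \leq 3r$, together with the monotonicity of $\mm_{<R}$ in $R$ (and the observation that, for $R \geq r \geq \diam(\Omt)$, the averages $\fint_{B_R(x)}|\nabla u|$ are decreasing in $R$, so $\mm_{<3r}|\nabla u|(x) = \mm_{<r}|\nabla u|(x)$), I obtain $|u(x) - u_{B^*}| \leq C|x-y|\,\mm_{<r}|\nabla u|(x)$; a symmetric chain centered at $y$ handles $|u(y) - u_{B^*}|$. Finally, since $q > 1$, Jensen's inequality applied inside each averaging ball gives $\mm_{<r}|\nabla u|(z) \leq \mm_{<r}(|\nabla u|^q)(z)^{1/q}$ for every $z$, which yields the asserted estimate.

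The principal technical point is the handling of those chain balls $B_k^x$ that extend outside $\Omt$ when $x$ lies near $\pa\Omt$. The zero extension automatically sets $|\nabla u| = 0$ on $\Omt^c$, so the $L^1$-Poincar\'e bound at each stage of the chain remains valid without modification; the uniform measure density assumption on $\Omt^c$ becomes crucial, however, when one needs to control the starting average $u_{B^*}$ by $|\nabla u|$ alone, since without such a control a spurious $L^q$-norm of $u$ would appear in the final bound. Precisely, whenever $B^* \cap \Omt^c$ occupies at least a fixed fraction of $|B^*|$ (as guaranteed by the measure density once $\dist(x,\pa\Omt)$ is comparable to $\rho$), the zero-set Poincar\'e inequality in the spirit of Theorem \ref{measure_density_poincare} yields $|u_{B^*}| \leq C(\ve)\rho\fint_{B^*}|\nabla u|$, and it is this step that forces the final constant $C$ to depend only on $q$, $n$, and $\ve$.
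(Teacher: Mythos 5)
The paper does not supply a proof of this theorem; it records it as a known estimate, citing Heinonen--Koskela \cite[Section 5]{HK} for the version with the untruncated maximal function and asserting that the truncated version ``follows along similar lines,'' and adds a remark that the measure density of $\tOm^c$ is what allows one to apply a constant-exponent analogue of Theorem \ref{measure_density_poincare}. Your core argument --- midpoint ball $B^*$, dyadic telescope toward each Lebesgue point, the ball-wise $(1,1)$-Poincar\'e inequality at every stage, then Jensen's inequality to pass from $|\nabla u|$ to $(|\nabla u|^q)^{1/q}$ inside the maximal function --- is precisely the classical Semmes chaining estimate and is correct as written. The key facts you invoke are sound: the zero extension of $u\in W_0^{1,q}(\tOm)$ belongs to $W^{1,q}(\RR^n)$ with $\nabla u = 0$ a.e.\ on $\tOm^c$ (no domain regularity needed), so the Euclidean-ball Poincar\'e applies to each $B_k^x$ regardless of how it sits relative to $\partial\tOm$; and once $R\ge\diam(\tOm)\ (\le r)$ the quantity $\fint_{B_R(x)}|\nabla u|$ equals $\|\nabla u\|_{L^1}/|B_R|$ and is therefore decreasing in $R$, which legitimately replaces $\mm_{<3\rho}$ by $\mm_{<r}$.

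Your final paragraph, however, is a red herring and you should recognize it as such. In the split $|u(x)-u(y)| \le |u(x)-u_{B^*}|+|u(y)-u_{B^*}|$ the average $u_{B^*}$ \emph{cancels}: each difference is estimated by the telescoping chain, and the value of $u_{B^*}$ itself is never needed, so no spurious $\|u\|_{L^q}$ term threatens to appear and no zero-set Poincar\'e (nor, therefore, any measure density of $\tOm^c$) is used anywhere in the argument you actually wrote. The constant your proof produces depends only on $n$ --- even the $q$-dependence disappears because Jensen is dimensionless --- so with the $W_0^{1,q}$ hypothesis and zero extension, the measure density condition and the $\ve$-dependence in the theorem's statement are not needed for the chaining proof; they are evidently inherited from the formulation in \cite{HK}. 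Either delete that paragraph, or replace it with the honest remark that the uniform measure density hypothesis is superfluous once one exploits $u\in W_0^{1,q}(\tOm)$, and note that the estimate then holds with a constant depending on $n$ alone.
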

We need the assumption on the measure density of $\tOm^c$ above to ensure we can apply the constant exponent version of the Poincar\'e inequality analogous to Theorem \ref{measure_density_poincare}. 

\subsection{Two important Lemmas}
%
%
%
%

The first Lemma is the well known Gehring's lemma (see for example \cite[Propositon 1.1]{Giaquinta} for the details):
\begin{lemma}[Gehring's Lemma]
 \label{gehring}
 Let $1<s<\infty$ and  $f,g \in L^s(\tOm)$ be nonnegative functions. Suppose that there are constants $c_g>0$ and $0\leq\theta <1$ such that the following holds
 \[
  \fint_B f^s \ dx \leq c_g \lbr \fint_{2B} f \ dx \rbr^s + \theta \fint_{2B} f^s \ dx + \fint_{2B} g^s \ dx
 \]
for all balls $B$ such that $8B \subset \tOm$. Then for any $\mt >0$,  there are constants $r= r(n,s,c_g,\theta)>s$ and $C = C(n,s,c_g,\theta,r,\mt)>0$ such that the following holds:
\[
  \fint_B f^r \ dx  \leq C \lbr[\{] \lbr \fint_{2B} f^{\mt} \ dx \rbr^{\frac{r}{\mt}} +  \fint_{2B} g^r \ dx\rbr[\}].
\]

\end{lemma}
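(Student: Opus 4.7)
The plan is to follow the classical Gehring approach via a Calderón--Zygmund stopping time argument combined with an integration-by-parts on the distribution function. First, I would fix a ball $B$ with $8B \subset \tilde\Omega$ and, by scaling and replacing $f$ and $g$ with normalized versions, reduce to the case where $\fint_{2B} f^s\,dx + \fint_{2B} g^s\,dx \approx 1$ so that the threshold level $\lambda_0$ for the stopping time is controlled.

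Second, for each $\lambda > \lambda_0$ I would perform a Calderón--Zygmund decomposition of $f^s$ on $2B$ at level $\lambda^s$, obtaining a maximal family of pairwise disjoint balls $\{B_i\}$ with $B_i \subset 2B$ such that $\lambda^s < \fint_{B_i} f^s\,dx \le 2^n \lambda^s$ and $f \le \lambda$ a.e.\ on $2B \setminus \bigcup_i B_i$. Applied ball by ball, the reverse H\"older-type hypothesis gives
\[
\int_{B_i} f^s\,dx \le c_g |B_i|\Bigl(\fint_{2B_i} f\,dx\Bigr)^s + \theta \int_{2B_i} f^s\,dx + \int_{2B_i} g^s\,dx,
\]
and a careful splitting $f = f\mathbf{1}_{\{f \le \eta\lambda\}} + f\mathbf{1}_{\{f > \eta\lambda\}}$ in the first term on the right (with $\eta$ chosen small) produces, after summing, a distribution-function inequality of the form
\[
\int_{\{f > \lambda\}\cap B} f^s\,dx \le C_1 \theta \int_{\{f > \eta\lambda\}\cap 2B} f^s\,dx + C_2 \lambda^{s-1}\int_{\{f > \eta\lambda\}\cap 2B} f\,dx + \int_{2B} g^s\,dx.
\]

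Third, I would multiply this inequality by $\lambda^{r-s-1}$ with $r = s+\varepsilon$ for small $\varepsilon>0$, integrate over $\lambda \in (\lambda_0,\infty)$, and apply Fubini on each side. The first term on the right contributes $C_1\theta\,\eta^{-\varepsilon}(r-s)^{-1}\int_{2B} f^r\,dx$, which can be absorbed into the left once $\varepsilon$ is taken sufficiently small depending on $\theta<1$; the second term yields $\varepsilon^{-1}\int_{2B} f^r\,dx$ modulo a factor of $(r-s)/(r-s+1)$ after another application of the distribution-function identity. This standard bookkeeping yields the $L^r$ self-improvement with $r>s$ on the left controlled by the $L^r$ norm of $g$ and an $L^1$ norm of $f$ on the right.

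Finally, to upgrade the $\fint_{2B} f\,dx$ in the right-hand side to $(\fint_{2B} f^{\mathfrak t}\,dx)^{r/\mathfrak t}$ for arbitrary $\mathfrak t>0$, I would split $f = f\mathbf{1}_{\{f\le N\}} + f\mathbf{1}_{\{f>N\}}$ and use the elementary bound $f \le N^{1-\mathfrak t}f^{\mathfrak t}$ on the low set together with $f \le f^{r/1} N^{1-r}$ on the high set (after H\"older interpolation between $L^{\mathfrak t}$ and $L^r$), then optimize in $N$ and absorb the resulting $L^r$-term using the freedom afforded by the $L^r$ bound just obtained. The main obstacle is the absorption step handling the $\theta$-term: one must track the constants sharply so that the factor $C_1\theta\,\eta^{-\varepsilon}$ stays strictly below $1$ when $\varepsilon$ is chosen, which is precisely where the hypothesis $\theta<1$ is essential and why the resulting exponent $r$ depends on $c_g$, $\theta$, $n$, $s$.
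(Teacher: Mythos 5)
The paper does not give its own proof of Lemma \ref{gehring}: it invokes the result as background material with a citation to Giaquinta (Proposition 1.1 in the referenced book). Your sketch reconstructs precisely the classical argument used there: Calder\'on--Zygmund stopping time on $f^s$, application of the reverse H\"older hypothesis on each stopping ball, a distribution-function inequality, integration against $\lambda^{r-s-1}$, absorption for $r-s$ small, and a final post-processing step to lower the exponent $1$ to $\mathfrak t$. So in spirit you are doing the same thing as the cited source.

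Two steps in your sketch are, however, under-specified in ways that could fail if not done carefully. First, the absorption of the $\theta$-term: when you sum the hypothesis over the stopping balls $\{B_i\}$, the term $\theta\fint_{2B_i}f^s$ collects both the bounded-overlap constant of $\{2B_i\}$ and the volume ratio $|B_i|/|2B_i|=2^{-n}$, and you denote the net constant $C_1$. After the $\lambda$-integration both sides carry the same factor $(r-s)^{-1}$, so absorption requires $C_1\theta\,\eta^{-\varepsilon}<1$ with $\eta^{-\varepsilon}\to 1$, i.e.\ $C_1\theta<1$. As written this only yields the lemma for $\theta<1/C_1$, and it is not explained why $C_1\le1$; the standard treatments avoid this either by matching the $2^{-n}$ factor against the overlap constant carefully or by a preliminary iteration that eliminates the $\theta$-term before the stopping-time argument. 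You should make explicit how the hypothesis ``$\theta<1$'' alone closes the absorption. Second, the $\mathfrak t$-improvement: the $N$-splitting you describe produces $(\fint_{2B}f)^r$ bounded by a sum involving $(\fint_{2B}f^r)^r$, which does not absorb into $\fint_B f^r$. The correct route is interpolation $\fint f\le(\fint f^{\mathfrak t})^{\alpha}(\fint f^r)^{1-\alpha}$ with $\alpha=\frac{r-1}{r-\mathfrak t}$, Young's inequality to isolate a small multiple of $\fint_{2B}f^r$, and then a Widman-type iteration over intermediate concentric balls to pass from $\fint_{2B}f^r$ back to $\fint_Bf^r$. Your phrase ``absorb the resulting $L^r$-term using the freedom afforded by the $L^r$ bound just obtained'' is pointing at this iteration, but it is a genuine extra step that your sketch should spell out.
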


The second lemma is an estimate in $L\log L$-space which can be found in \cite{AM} and references therein:
\begin{lemma}
 \label{llogl}
 Let $\be >0$ and $s>1$, then for any $f \in L^s(\tOm)$, we have
 \[
  \fint_{\tOm} |f| \lbr[[] \log \lbr e + \frac{|f|}{\avg{|f|}{\tOm}}\rbr \rbr[]]^{\be}\ dx \leq C(n,s,\be) \lbr \fint_{\tOm} |f|^s \ dx \rbr^{\frac{1}{s}}. 
 \]
\end{lemma}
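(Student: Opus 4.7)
The plan is to reduce the estimate to H\"older's inequality, with the normalization by $\avg{|f|}{\tOm}$ inside the logarithm making the resulting logarithmic integral bounded by a pure constant via Jensen's inequality.

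First, I would apply H\"older's inequality with conjugate exponents $s$ and $s' = s/(s-1)$:
\[
\fint_{\tOm} |f| \left[\log\left(e+\frac{|f|}{\avg{|f|}{\tOm}}\right)\right]^{\be} dx
\le \left(\fint_{\tOm} |f|^s \, dx\right)^{1/s}
\left(\fint_{\tOm} \left[\log\left(e+\frac{|f|}{\avg{|f|}{\tOm}}\right)\right]^{\be s'} dx\right)^{1/s'}.
\]
The first factor is precisely the right hand side of the claimed inequality, so it suffices to bound the second factor by a constant $C(n,s,\be)$.

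To this end, I would use the elementary growth estimate: for every $\theta>0$ there is $C_\theta>0$ with $\log(e+t) \le C_\theta (1+t)^{\theta}$ for all $t\ge 0$, reflecting that the logarithm grows slower than any positive power. Choosing $\theta$ small enough that $\theta \be s' < 1$ and using the subadditivity $(a+b)^p \le a^p + b^p$ valid for $p \in (0,1]$ gives
\[
\left[\log(e+t)\right]^{\be s'} \le C\bigl(1 + t^{\theta \be s'}\bigr).
\]
Applying this pointwise with $t = |f|/\avg{|f|}{\tOm}$ and invoking Jensen's inequality for the concave function $\tau \mapsto \tau^{\theta \be s'}$ (concave precisely because $\theta \be s' < 1$) yields
\[
\fint_{\tOm}\left(\frac{|f|}{\avg{|f|}{\tOm}}\right)^{\theta \be s'} dx
\le \left(\fint_{\tOm}\frac{|f|}{\avg{|f|}{\tOm}}\,dx\right)^{\theta \be s'} = 1,
\]
which bounds the second H\"older factor by a constant depending only on $n,s,\be$.

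The only delicate point is choosing $\theta$ so that $\theta \be s' < 1$, which is always possible since $\be$ is fixed and $s>1$ forces $s'<\infty$. The normalization by the average inside the logarithm is essential: it is precisely what lets Jensen's inequality deliver the dimensionless bound $\le 1$ and produces the correct scaling on both sides of the final estimate. Without dividing by $\avg{|f|}{\tOm}$, the logarithmic factor could not be controlled by $\fint |f|^s$ alone in a homogeneous way.
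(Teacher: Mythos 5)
Your proof is correct. Note that the paper does not actually prove Lemma~\ref{llogl}; it only states the result with a citation to \cite{AM}, so there is no in-paper argument to compare against. Your route is the natural self-contained one: H\"older's inequality with exponents $s$ and $s'=s/(s-1)$ isolates the factor $\bigl(\fint_{\tOm}|f|^s\,dx\bigr)^{1/s}$, and the remaining factor
\[
\lbr \fint_{\tOm} \bgh{\log\lbr e+\frac{|f|}{\avg{|f|}{\tOm}}\rbr}^{\be s'} dx \rbr^{1/s'}
\]
is handled by the elementary bound $\log(e+t)\le C_\theta(1+t)^{\theta}$ with $\theta$ small enough that $\theta\be s'<1$, the subadditivity $(1+t)^{p}\le 1+t^{p}$ for $p\in(0,1]$, and Jensen's inequality for the concave map $\tau\mapsto\tau^{\theta\be s'}$, which yields a bound independent of $f$. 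Each of these steps is valid, and your remark that the normalization by $\avg{|f|}{\tOm}$ is precisely what makes the Jensen step produce the dimensionless constant $1$ (and hence the correct one-homogeneity on both sides) is exactly the right explanation of why the normalization appears in the statement. The only implicit convention worth being aware of is that the case $f\equiv 0$ must be interpreted as $0=0$, since $\avg{|f|}{\tOm}=0$ there; this is standard and does not affect the argument.
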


\subsection{Lipschitz-truncation}
In this subsection, we shall give the construction of the Lipschitz function. In this regard, we  need the following lemma whose proof can be found in \cite[Lemma 3.1]{AP1}.
\begin{lemma}\label{lipschitz_extension_one}
\renewcommand{\tv}{\check{v}}
 Let $\ga> 0$, $\sig \in (0,1]$ and $S_0>0$ be given and suppose that $\tilde{\Om}$ is a $(\ga, \sigma, S_0)$-quasiconvex domain. For $\sss \in \logh$, let $\ov \in W_0^{1,\sss}(\tilde{\Om})$, and let  $\de \in (0,1/4)$ be given such that $s^-_{\tilde{\Om}}-2\de >1$ and let $1<q < s^-_{\tilde{\Om}}-2\de$ be any exponent. Let $M_5 \geq 1$ be given and define $$R_5 := \min \left\{\frac{1}{2M_5}, \frac1{\om_n^{1/n}},\frac12, \frac{S_0}{2}\right\}.$$

 After extending $\ov$ by zero outside $\tilde{\Om}$, for any $2r \leq R_5$, we define the truncated function $\check{v}$ by 
 \begin{itemize}[leftmargin=*]
 \item If $B_{2r} \subset \tilde{\Om}$, then we set $\check{v} = \ov \phi$ for some cut off function $\phi \in C_c^{\infty}(B_{2r})$ with $\lsb{\chi}{B_{r}} \leq \phi \leq \lsb{\chi}{B_{2r}}$.
 \item If $B_{2r} \cap \partial \tilde{\Om} \neq \emptyset$, then we set $ \check{v}=\ov$. 
\end{itemize}
Let $\check{v}$ satisfy $\integral{\Omt}{|\nabla \check{v}|^{s(x)}}{dx} \le M_5$, and we define the following function:
 \[
  g(x) := \max \left\{ \mm_{<4r} (|\nabla {\tv}|^q)^{1/q}(x), \frac{|{\tv}(x)|}{d(x,\partial B_{2r})} \right\}. 
 \]

 Then the following holds:

\begin{itemize}
 \item $g(x) \approx \mm_{<4r} (|\nabla {\tv}|^q)^{1/q}(x)$ a.e $x \in \RR^n$.
 \item $ \int_{B_{2r}} g(x)^{s(x)-\de}\ dx \apprle \int_{B_{2r}} |\nabla {\tv}|^{s(x)-\de} \ dx + |B_r|$.
 \item $[ g^{-\de} ]_{A_{\frac{\sss}{q}}} \leq C(\slog,q,n) [ g^{-\de} ]_{A_{\frac{s^--\de}{q}}} \leq C(n,\slog,q).$
 \item The function $g^{-\de}$ is in the Muckenhoupt class $A_{\mathfrak{t}/q}$ for any $\mathfrak{t}$ such that $\mathfrak{t}-q>\de$ with $[g^{-\de}]_{A_{\mathfrak{t}/q}} \leq C = C(n,\mathfrak{t},q)$. In particular $g^{-\de} \in A_{\frac{\sss}{q}}$.
\end{itemize}
\end{lemma}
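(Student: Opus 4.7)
My plan is to prove the four bullets sequentially, using the pointwise equivalence of the first bullet as the foundation for everything else. The inequality $g(x) \geq \mm_{<4r}(|\nabla \check{v}|^q)^{1/q}(x)$ is immediate from the definition, so for bullet 1 I need only show the reverse, namely $|\check{v}(x)|/d(x,\partial B_{2r}) \apprle \mm_{<4r}(|\nabla \check{v}|^q)^{1/q}(x)$. I will split into the two cases of the construction of $\check{v}$. In the interior case $B_{2r} \subset \tilde{\Om}$, the function $\check{v} = \ov\phi$ lies in $W_0^{1,\sss}(B_{2r})$ after zero extension, and $(B_{2r})^c$ trivially has uniform measure density; Hedberg's inequality (Theorem \ref{hedber_inequality}) on $B_{2r}$ applied at $x$ and a point $y \in \partial B_{2r}$ realizing $d(x,\partial B_{2r})$ (where $\check{v}(y)=0$) delivers the bound, once the maximal function at $y$ is dominated by that at $x$ via a standard comparison of concentric balls. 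In the boundary case $B_{2r} \cap \partial\tilde{\Om} \neq \emptyset$, I will use that $\tilde{\Om}^c$ satisfies uniform measure density via quasiconvexity (Lemma \ref{measure_density_quasiconvex}), apply Hedberg on $\tilde{\Om}$ localized so that $4r$ exceeds the diameter of the working set, and exploit $d(x,\partial\tilde{\Om}) \apprle 4r$ together with $\check{v}(y) = 0$ for $y \in \partial\tilde{\Om}$ to finish.

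For bullet 2, I combine bullet 1 with Corollary \ref{unweighted_maximal_bound}. Setting $\tilde{s}(x) := (s(x)-\delta)/q$, one checks $\tilde{s} \in \logh$ with $\tilde{s}^- > 1$ from $q < s^-_{\tilde{\Om}} - 2\delta$, and bullet 1 gives $g(x)^{s(x)-\delta} \apprle \mm_{<4r}(|\nabla \check{v}|^q)^{\tilde{s}(x)}(x)$. The integrability hypothesis required for Corollary \ref{unweighted_maximal_bound} applied to $F := |\nabla \check{v}|^q$ with exponent $\tilde{s}$ reduces to $\int_{B_{4r}} |\nabla \check{v}|^{s(x)-\delta}\,dx \leq \int_{\tilde{\Om}} |\nabla \check{v}|^{s(x)}\,dx + |B_{4r}| \apprle M_5$ via Young's inequality, while the choice of $R_5$ places $4r$ below the admissible scale. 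Integrating and invoking the corollary yields exactly the asserted bound.

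For the weight statements, the first inequality of bullet 3 is immediate from Lemma \ref{weight_inclusion}, since $(s^- - \delta)/q \leq \sss/q$ pointwise. It remains to bound $[g^{-\delta}]_{A_{\alpha}}$ for the constant exponent $\alpha := (s^- - \delta)/q$, noting $\alpha - 1 > \delta/q$ by the hypothesis $q < s^- - 2\delta$. Bullet 1 gives $g^{-\delta} \approx \mm_{<4r}(|\nabla \check{v}|^q)^{-\delta/q}$ with constants depending only on $n,\slog,q$, so I reduce to bounding the Muckenhoupt constant of this maximal-function weight. I will then invoke Coifman--Rochberg in conjunction with Jones' reverse factorization: writing $\theta := \delta/(q(\alpha-1)) \in (0,1)$, the weight $w_2 := \mm_{<4r}(|\nabla \check{v}|^q)^{\theta}$ lies in $A_1$ by Coifman--Rochberg with constant depending only on $n$ and $\theta$, and then $1 \cdot w_2^{1-\alpha} = \mm_{<4r}(|\nabla \check{v}|^q)^{-\delta/q}$ lies in $A_\alpha$ with the required quantitative bound. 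Bullet 4 follows by the identical argument with $\alpha$ replaced by $\mt/q$, which still exceeds $1 + \delta/q$ exactly when $\mt - q > \delta$.

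The main obstacle I anticipate is the boundary case of bullet 1: Hedberg's inequality must be applied to $\check{v} = \ov$ on a set whose scale is compatible both with the truncation parameter $4r$ of the maximal function and with the non-smooth, merely quasiconvex geometry of $\tilde{\Om}$, and the resulting bound must be expressed in terms of $d(x,\partial B_{2r})$ rather than $d(x,\partial\tilde{\Om})$. Everything else reduces essentially to bookkeeping once bullet 1, Corollary \ref{unweighted_maximal_bound}, and the classical weight inclusions are in hand.
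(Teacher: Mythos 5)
Your handling of bullets 2--4 is sound and essentially matches the paper. For bullet 2, you invoke bullet 1 together with Corollary \ref{unweighted_maximal_bound} with exponent $(s(\cdot)-\delta)/q$, checking the size hypothesis via Young's inequality and the choice of $R_5$; this is exactly the paper's route. For bullets 3--4 you take the Coifman--Rochberg plus reverse-factorization path: with $\alpha:=(s^--\delta)/q$ and $\theta:=\delta/(q(\alpha-1))\in(0,1)$ (valid since $q<s^--2\delta$), you write $g^{-\delta}\approx\mm_{<4r}(|\nabla\check{v}|^q)^{-\delta/q}=w_2^{1-\alpha}$ with $w_2:=\mm_{<4r}(|\nabla\check{v}|^q)^\theta\in A_1$. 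The paper computes the $A_\alpha$ modular directly and cites the same $A_1$ property of $\mm(\mu)^\theta$ (via Torchinsky), so the two routes are equivalent and give the same constant $C(n,\slog,q)$; the paper also remarks that this constant is uniform in $\delta\le1/4$, which you should note.

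The genuine gap is in bullet 1. The paper obtains the pointwise Hardy bound $|\check{v}(x)|\apprle d(x,\partial B_{2r})\,\mm_{<4r}(|\nabla\check{v}|^q)^{1/q}(x)$ by citing the pointwise Hardy inequality of Haj\l{}asz \cite[Theorem 2]{Haj}, whose proof uses the measure density of the complement (here supplied by quasiconvexity) and a \emph{one-sided} Poincar\'e argument. Your proposal is to apply the two-point Hedberg inequality (Theorem \ref{hedber_inequality}) at $x$ and at the nearest boundary point $y\in\partial B_{2r}$ and then ``dominate the maximal function at $y$ by that at $x$ via a standard comparison of concentric balls''. This step does not work. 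Setting $d:=|x-y|$, the inclusion $B_\rho(y)\subset B_{\rho+d}(x)$ only gives a bounded volume ratio when $\rho\gtrsim d$; for $\rho\ll d$ the ratio $((\rho+d)/\rho)^n$ is unbounded, so $\fint_{B_\rho(y)}|\nabla\check{v}|^q$ is not controlled by $\mm_{<4r}(|\nabla\check{v}|^q)(x)$. The fact that $\check{v}(y)=0$ gives no control on $\nabla\check{v}$ near $y$, and $\mm_{<4r}(|\nabla\check{v}|^q)(y)$ can indeed be far larger than $\mm_{<4r}(|\nabla\check{v}|^q)(x)$. The correct argument is one-sided: by measure density, the zero set of $\check{v}$ occupies a definite fraction of $B_{2d}(x)$, so one bounds $|\check{v}(x)-(\check{v})_{B_{2d}(x)}|$ and $|(\check{v})_{B_{2d}(x)}|$ separately, each by $d\cdot\mm_{<4r}(|\nabla\check{v}|^q)^{1/q}(x)$ --- this is precisely Haj\l{}asz's Lemma~1 and Theorem~2. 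Replacing your two-point Hedberg step by that citation (or by this thick-zero-set Poincar\'e argument) repairs the proof; the rest goes through as you describe.
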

\begin{proof}
\renewcommand{\tv}{\check{v}}
Since $\Om$ is $(\ga, \sig, S_0)$-quasiconvex, we can use the results from \cite[Theorem 2]{Haj} to get  
\begin{equation*}\label{crucial_estimate}
g(x) \underset{C(n,s^-,q)}{\approx}   \mm_{<4r}( |\nabla \tv|^q)^{\frac1{q}}(x)  .
\end{equation*}
Now we can use Corollary \ref{unweighted_maximal_bound} to get the estimate
\[ \int_{B_{2r}} g(x)^{s(x)-\de}\ dx \leq C(n,\slog,q) \mgh{  \int_{B_{2r}} |\nabla {\tv}|^{s(x)-\de} \ dx+|B_r| }.\]
%
Before we prove the last claim, we first show that $g^{-\de} \in A_{\frac{s^--\de}{q}}$ as follows:
\begin{equation*}
 \begin{array}{ll}
  [g^{-\de}]_{A_{\frac{s^--\de}{q}}} & =: \sup_{B}\lbr \fint_B g^{-\de} \ dx \rbr \lbr \fint_B g^{\frac{\de q}{s^--\de-q}} \ dx \rbr^{\frac{s^--\de-q}{q}} \\
  &  \overset{\redlabel{4.22.a}{a}}{\apprle} \lbr \inf_{y\in B}  \mm_{<4r}( |\nabla \tv|^q)(y) \rbr^{\frac{-\de}{q}} \lbr \inf_{y\in B}  \mm_{<4r}( |\nabla \tv|^q)(y) \rbr^{\frac{\de}{q}} \\
  & \leq C(n,s^-,q).
 \end{array}
\end{equation*}
To obtain \redref{4.22.a}{a}, we  made use of \cite[Proposition 3.3]{Tor} concerning $A_1$ weights. 

Since $\frac{s^- -\de}{q} \leq \frac{\sss}{q}$, using Lemma \ref{weight_inclusion}, the following estimate holds:
\begin{equation*}
\label{crucial_bound}
 [ g^{-\de} ]_{A_{\frac{\sss}{q}}} \leq C(\slog,q) [ g^{-\de} ]_{A_{\frac{s^--\de}{q}}} \leq C(n,\slog,q).
\end{equation*}
Note that all the constants are independent of $\de$ since we assume $\de \leq 1/4$, which enables us to remove the dependence on $\de$.  This completes the proof of the Lemma. 
\end{proof}

We now present an extension lemma which can be found in \cite{Zhong} (see also \cite[Lemma 3.2]{AP1} for the details):
\begin{lemma}\label{lipschitz_extension_two}
 Let $\ga > 0$, $\sig >0$ and $S_0>0$ be given and let $\ov \in W_0^{1,s}(\tilde{\Om})$ for some $s> 1$. Suppose  $\tilde{\Om}$ is a $(\ga,\sigma,S_0)$-quasiconvex domain and $\la >0$ is any given constant. Extend $\ov$ by zero outside $\tilde{\Om}$ and set
 \[
  F_{\la}(\ov,\tilde{\Om}) : =  \left\{ x \in \tilde{\Om}: \mm_{<\diam{\tilde{\Om}}}(|\nabla \ov|^s)^{1/s}(x) \leq \la, \ |\ov(x)| \leq \la d(x,\pa \tilde{\Om}) \right\}.
 \]
Then there exists a $c\la$-Lipschitz function $\ov_{\la}$ defined on $\RR^n$ with $c = c(n)>1$ satisfying the following properties:
\begin{itemize}
 \item $\ov_{\la}(x) = \ov(x)$ and $\nabla \ov_{\la}(x) = \nabla \ov(x)$ for a.e $x \in F_{\la}$,
 \item $\ov_{\la}(x) = 0$ for every $x \in \tilde{\Om}^c$,
 \item $|\nabla \ov_{\la}(x)| \leq c(n) \la $ for a.e $x \in \RR^n$. 
\end{itemize}
\end{lemma}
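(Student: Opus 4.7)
The plan is to construct $\ov_\la$ in two stages: first establish that $\ov$, extended by zero outside $\tilde{\Om}$, is already $c_0\la$-Lipschitz on the set $E := F_\la \cup \tilde{\Om}^c$ for some $c_0=c_0(n)$; then extend this Lipschitz function from $E$ to all of $\RR^n$ via the classical McShane formula, which preserves the Lipschitz constant.

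For the first stage, I split into three cases. If $x,y \in F_\la$, I apply the Hedberg-type inequality of Theorem~\ref{hedber_inequality} with $q=s$ and $r = \diam\tilde{\Om}$. The hypothesis that $\tilde{\Om}^c$ satisfies a uniform measure density condition, required by that theorem, is guaranteed by the quasiconvexity of $\tilde{\Om}$ via estimate~\eqref{lower_bound_domain_complement} of Lemma~\ref{measure_density_quasiconvex} (under the standing assumption $\ga\leq 1/2$). This yields
\[
|\ov(x)-\ov(y)| \leq C\,|x-y|\,\Big(\mm_{<\diam\tilde{\Om}}(|\nabla \ov|^s)^{1/s}(x) + \mm_{<\diam\tilde{\Om}}(|\nabla \ov|^s)^{1/s}(y)\Big) \leq 2C\la\,|x-y|
\]
by the defining inequality of $F_\la$. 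If $x \in F_\la$ and $y \in \tilde{\Om}^c$, then $\ov(y)=0$ and $|\ov(x)-\ov(y)| = |\ov(x)| \leq \la\,d(x,\pa\tilde{\Om}) \leq \la\,|x-y|$, since $y \in \tilde{\Om}^c$ forces $d(x,\pa\tilde{\Om}) \leq |x-y|$. The case $x,y \in \tilde{\Om}^c$ is trivial. Hence $\ov$ is $c_0\la$-Lipschitz on $E$.

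For the second stage, I set
\[
\ov_\la(x) := \inf_{z \in E} \big\{\ov(z) + c_0\la\,|x-z|\big\}.
\]
The classical McShane construction shows that $\ov_\la$ is $c_0\la$-Lipschitz on $\RR^n$ and satisfies $\ov_\la(z)=\ov(z)$ for all $z \in E$. In particular $\ov_\la \equiv 0$ on $\tilde{\Om}^c$, and Rademacher's theorem yields $|\nabla \ov_\la| \leq c(n)\la$ a.e. To see that $\nabla \ov_\la = \nabla \ov$ a.e.\ on $F_\la$, I argue via Lebesgue density: at a.e.\ $x \in F_\la$, the set $F_\la$ has density one and is a common approximate differentiability point of $\ov$ and $\ov_\la$, so differentiating along sequences tangent to $F_\la$ forces the two gradients to coincide.

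The principal obstacle is the first stage, specifically the Lipschitz estimate on $F_\la$, which hinges on the Hedberg inequality; the latter in turn depends crucially on the measure density of $\tilde{\Om}^c$ supplied by the quasiconvexity of $\tilde{\Om}$. Once the pointwise Lipschitz bound on $E$ is established, both the McShane extension and the a.e.\ equality of gradients are routine.
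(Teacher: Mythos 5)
Your proposal is correct and mirrors the standard Lipschitz-truncation argument used in the references the paper cites for this lemma (Zhong's thesis and \cite[Lemma~3.2]{AP1}): establish a pointwise Lipschitz bound on $F_\la\cup\tilde{\Om}^c$ (via a Hedberg/Hajłasz-type maximal function inequality for $x,y\in F_\la$, and the defining bound $|\ov|\le\la\,d(\cdot,\pa\tilde{\Om})$ for the mixed case), then extend by the McShane formula, and conclude $\nabla\ov_\la=\nabla\ov$ a.e.\ on $F_\la$ from Lebesgue density together with a.e.\ approximate differentiability. One refinement worth noting: invoking Theorem~\ref{hedber_inequality} carries the measure-density constant $\ve$ (hence $\ga,\sig,S_0,\diam\tilde{\Om}$) into your Lipschitz bound, whereas the lemma asserts $c=c(n)$; since $\ov\in W_0^{1,s}(\tilde{\Om})$ extended by zero belongs to $W^{1,s}(\RR^n)$, the pointwise maximal-function inequality already holds throughout $\RR^n$ with a purely dimensional constant, which is what is needed to recover $c=c(n)$ without passing through the measure density of $\tilde{\Om}^c$.
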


\subsection{Some auxiliary results}
We will need the following sharp form of Young's inequality:
\begin{lemma}
 \label{young}
 Let $\tilde{\Om}$ be any bounded domain and let $\sss \in \logh$ and $q(\cdot)>0$ be any exponent satisfying $q(x) \leq s(x)$ for every $x \in \tilde{\Om}$. For $f \in L^{\sss}(\tilde{\Om})$, Young's inequality yields
 \begin{equation*}
  \label{young_one}
   \int_{\tilde{\Om}} |f(x)|^{q(x)} \ dx 
    \leq  \int_{\tilde{\Om}} |f(x)|^{s(x)}\ dx + |\tilde{\Om}|. 
 \end{equation*}
 
\end{lemma}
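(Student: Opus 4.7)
The plan is to reduce the inequality to a pointwise estimate and then integrate. Specifically, I would aim to prove that for every $x \in \tilde{\Om}$ one has
\[
|f(x)|^{q(x)} \leq |f(x)|^{s(x)} + 1,
\]
after which integrating over $\tilde{\Om}$ yields the claimed bound immediately, since $\int_{\tilde{\Om}} 1 \, dx = |\tilde{\Om}|$.

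To establish the pointwise estimate, I would split $\tilde{\Om}$ according to whether $|f(x)| \le 1$ or $|f(x)| > 1$. On the set $\{|f| \le 1\}$, since $q(x) > 0$, the quantity $|f(x)|^{q(x)}$ is bounded above by $1$. On the set $\{|f| > 1\}$, the hypothesis $q(x) \leq s(x)$ gives $|f(x)|^{q(x)} \leq |f(x)|^{s(x)}$ because $t \mapsto t^{\alpha}$ is increasing in $\alpha$ when $t>1$. Taking the maximum of these two bounds yields the pointwise inequality above, with both terms nonnegative so that the sum provides a valid upper bound in either case.

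I expect no real obstacle: the result is essentially a bookkeeping application of the monotonicity of $t \mapsto t^\alpha$ in $\alpha$ on $(1,\infty)$, combined with the trivial bound on $[0,1]$; neither the $\log$-H\"older regularity of $\sss$ nor any measure-theoretic subtlety about $\tilde{\Om}$ beyond boundedness enters the argument. No integrability hypothesis on $|f|^{q(x)}$ is needed either, because the pointwise comparison together with $f \in L^{\sss}(\tilde{\Om})$ and $|\tilde{\Om}| < \infty$ gives the finiteness of the left-hand side as a byproduct.
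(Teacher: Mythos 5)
Your proof is correct. The paper actually gives no proof of this lemma at all---it simply invokes ``Young's inequality'' and states the conclusion---so there is no argument in the paper to compare against, but your pointwise case split (trivial bound by $1$ on $\{|f|\le 1\}$, monotonicity of $t\mapsto t^\alpha$ for $t>1$ on $\{|f|>1\}$) is the natural and complete argument, and you are right that the $\log$-H\"older hypothesis on $\sss$ plays no role here. The only cosmetic difference worth noting is that the paper's phrase ``Young's inequality'' suggests the authors may have had in mind the bound $|f(x)|^{q(x)}\cdot 1 \le \frac{q(x)}{s(x)}|f(x)|^{s(x)} + \frac{s(x)-q(x)}{s(x)} \le |f(x)|^{s(x)}+1$ coming from Young's inequality with exponent $s(x)/q(x)$; your version is equivalent, slightly more elementary, and also handles the degenerate case $q(x)=s(x)$ (where $s(x)/q(x)=1$) without any special care.
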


The next lemma is a reformulation of {\em Calder\'{o}n-Zygmund decomposition} in terms of  balls instead of cubes.
\begin{lemma}[\cite{JJW1}]
\label{calderon_zygmund}

  For any $\ga >0$, $\sig \in (0,1/4)$ and $S_0>0$, let $\Om$ be a $(\gamma, \sigma, S_0)$-quasiconvex domain.  Consider the subdomain $\OO_{r}(x_0) = \OO \cap B_{r}(x_0)$ with $r \in (0, S_0]$ and $x_0 \in \OO$. Let $C \subset D \subset \Om_{r}(x_0)$ be measurable sets and $0<\ep<1$ such that
 \begin{itemize}
  \item $|C| < \ep |B_{r}|$, and
  \item for all $x \in \Om$ and $\rho \in (0,2r]$, if $|C\cap B_{\rho}(x)| \geq \ep |B_{\rho}(x)|$, then$\ B_{\rho}(x) \cap \Omega_{r}(x_0) \subset D$.
 \end{itemize}
 Then we have the estimate \[ |C| \leq  \ep   \left( \frac{10}{\sigma} \right)^{n}|D|.\]
\end{lemma}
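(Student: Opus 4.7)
The plan is to adapt the classical Calder\'on--Zygmund stopping-time argument to the quasiconvex setting. For almost every $x\in C$, Lebesgue's differentiation theorem gives $|C\cap B_\rho(x)|/|B_\rho(x)|\to 1$ as $\rho\to 0^+$, while the global smallness $|C|<\epsilon|B_r|$ forces this density to be at most $\epsilon/2^n$ at $\rho=2r$ (since $|B_{2r}|=2^n|B_r|$). By continuity of $\rho\mapsto|C\cap B_\rho(x)|/|B_\rho(x)|$ I therefore obtain a largest stopping radius $\rho_x\in(0,2r)$ at which the density equals exactly $\epsilon$, and the same size comparison actually refines this to $\rho_x<r$. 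The second bullet of the hypothesis applied at $(x,\rho_x)$ then gives $B_{\rho_x}(x)\cap\Omega_r(x_0)\subset D$.

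Next I apply the Vitali covering lemma to $\{B_{\rho_x}(x):x\in C\}$, whose radii are uniformly bounded by $r$, to extract a countable pairwise disjoint subfamily $\{B_i=B_{\rho_i}(x_i)\}$ whose $5$-fold enlargements cover $C$ up to a null set. Two bounds on this subfamily drive the conclusion. \textbf{(i) Density on enlarged balls:} $|C\cap 5B_i|\leq\epsilon|5B_i|$ for every $i$. If $5\rho_i\leq 2r$ this is immediate from the maximality of $\rho_i$; otherwise $\rho_i>2r/5$, and the crude bound $|C\cap 5B_i|\leq|C|<\epsilon|B_r|<\epsilon(5/2)^n|B_i|=\epsilon|5B_i|/2^n$ works with room to spare. \textbf{(ii) Measure-density on the original balls:} $|B_i\cap\Omega_r(x_0)|\geq (\sigma/2)^n|B_i|$. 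I derive this by combining Lemma~\ref{measure_density_quasiconvex} (applied at a point of $\partial\Omega$ near $x_i$) with a small translation of the center toward $x_0$ by at most $\rho_i/2$, chosen so that the resulting sub-ball lies simultaneously in $B_{\rho_i}(x_i)$, in $\Omega$, and in $B_r(x_0)$. Together with the inclusion $B_i\cap\Omega_r(x_0)\subset D$ and the disjointness of the $B_i$ this yields $\sum_i|B_i|\leq(2/\sigma)^n|D|$.

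Assembling \textbf{(i)} and \textbf{(ii)} I obtain
\[
|C|\;\leq\;\sum_i|C\cap 5B_i|\;\leq\;\epsilon\sum_i|5B_i|\;=\;5^n\epsilon\sum_i|B_i|\;\leq\;\epsilon\left(\frac{10}{\sigma}\right)^n|D|,
\]
which is the desired bound. The main obstacle is the measure-density estimate \textbf{(ii)}: the quasiconvexity of $\Omega$ produces a one-sided density only for $\Omega$ itself, based at boundary points, and it has to be transferred to the intersection $\Omega_r(x_0)$ with arbitrary center $x_i\in\Omega\cap B_r(x_0)$. Doing this with the sharp factor $(\sigma/2)^n$ requires a case split on the relative position of $x_i$ with respect to $\partial\Omega$ and $\partial B_r(x_0)$, and a careful choice of both the radius and the base point at which Definition~\ref{quasiconvex_definition} is invoked, so that a single sub-ball of comparable volume lands inside all three sets at once; this is the only place where the quasiconvex structure is really used.
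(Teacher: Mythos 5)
Your proposal reproduces the standard Calder\'on--Zygmund stopping-time/Vitali argument, which is the same strategy as in the cited source \cite{JJW1} (the paper itself gives no proof and simply cites \cite{JJW1}). Parts \textbf{(i)} and the assembly are fine --- in fact the case split in \textbf{(i)} is unnecessary, since you already established that the density is $<\epsilon$ at every $\rho\ge r$, hence $\rho_x<r$, and the maximality of $\rho_x$ then yields $|C\cap B_\tau(x)|<\epsilon|B_\tau|$ for every $\tau>\rho_x$, including $\tau=5\rho_i$.

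The genuine gap is in \textbf{(ii)}, the measure-density estimate $|B_i\cap\Omega_r(x_0)|\ge(\sigma/2)^n|B_i|$, which you state but do not prove, and the sketch you offer does not close it. Lemma~\ref{measure_density_quasiconvex} places an interior ball $B_{\sigma\tau}(z)\subset\Omega\cap B_\tau(\tilde{x}_i)$ at a centre $z$ whose \emph{tangential} position inside $B_\tau(\tilde{x}_i)$ is completely uncontrolled; the quasiconvex definition only forces $z$ to be pushed away from the supporting hyperplane in the normal direction. In the problematic regime where $x_i$ lies within $O(\rho_i)$ of \emph{both} $\partial\Omega$ and $\partial B_r(x_0)$ (which the stopping radius $\rho_i<r$ does not rule out), every boundary point $\tilde{x}_i$ within $O(\rho_i)$ of $x_i$ also lies within $O(\rho_i)$ of $\partial B_r(x_0)$, and the resulting interior ball $B_{\sigma\tau}(z)$ may then sit entirely on the far side of $\partial B_r(x_0)$. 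Your ``small translation of the centre toward $x_0$'' cannot repair this: translating $z$ is not sanctioned by quasiconvexity, which gives $B_{\sigma\tau}(z)\subset\Omega$ but says nothing about translates of that ball; and translating $x_i$ before invoking Lemma~\ref{measure_density_quasiconvex} simply moves the base point \emph{closer} to $\partial B_r(x_0)$ without giving any new control on where $z$ lands. Indeed, if one writes out the constraints, one finds that the shift needed to push $z$ inside $B_r(x_0)$ and the shift allowed while staying inside $\Omega$ are incompatible when $\sigma<1/4$, so the naive translation argument fails precisely in the case you flag as ``the only place where the quasiconvex structure is really used.'' A correct proof of \textbf{(ii)} has to exploit the half-space condition or the inner convex approximation $F^*_r$ from Lemma~\ref{convex-outside} to pin down the tangential location of the interior mass, rather than the interior-ball bound from Lemma~\ref{measure_density_quasiconvex} alone; as written, the key step of your proof is missing.
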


We end this section by introducing a well known lemma (see \cite[Lemma 4.1]{AP1} for details).
\begin{lemma}\label{distribution}
Let $f$ be a measurable function in a bounded open set $\tOm \subset \RR^n$ and  let $\lambda_0>0$ and $N>1$ be given constants,  then for any $0<q<\infty$ there holds
\begin{equation*}
f \in L^q(\tOm) \quad \iff \quad S := \sum_{k\ge 1} N^{qk} \left| \mgh{x \in \tOm : |f(x)| > N^{k}\lambda_0} \right| < \infty
\end{equation*}
with the estimate
\begin{equation*}
c^{-1}\lambda_0^q S \le \integral{\tOm}{|f|^q}{dx} \le c\lambda_0^q \gh{|\tOm|+S},
\end{equation*}
where the constant $c=c(N,q)>0$.
\end{lemma}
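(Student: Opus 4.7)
The result is a standard dyadic layer-cake identity relating the $L^q$ modulus to a sum over geometric level sets, so the plan is to decompose $\tOm$ according to the dyadic thresholds $\{N^k\lambda_0\}_{k\ge 0}$ and estimate each piece. Concretely, I would partition $\tOm$ as
\[
\tOm = \{|f|\le\lambda_0\} \;\cup\; \bigcup_{k=0}^{\infty} A_k, \qquad A_k:=\{N^k\lambda_0 < |f|\le N^{k+1}\lambda_0\},
\]
and separate the estimate into an upper bound and a lower bound.

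For the upper bound $\int_{\tOm}|f|^q\,dx \le c\lambda_0^q(|\tOm|+S)$, I would simply note $|f|^q\le\lambda_0^q$ on $\{|f|\le\lambda_0\}$ and $|f|^q\le N^{(k+1)q}\lambda_0^q$ on $A_k$, giving
\[
\int_{\tOm}|f|^q\,dx \le \lambda_0^q|\tOm| + N^q\lambda_0^q|\{|f|>\lambda_0\}| + N^q\sum_{k\ge 1}N^{qk}\lambda_0^q|\{|f|>N^k\lambda_0\}|,
\]
and collect to get $c\lambda_0^q(|\tOm|+S)$ with $c=c(N,q)$. For the reverse direction, the trick is a Fubini-type swap: writing $|\{|f|>N^k\lambda_0\}|=\sum_{j\ge k}|A_j|$ and exchanging the order of summation yields
\[
S=\sum_{j\ge 1}|A_j|\sum_{k=1}^{j}N^{qk} \le \frac{N^q}{N^q-1}\sum_{j\ge 1}N^{qj}|A_j|,
\]
after which $N^{qj}|A_j|\le \lambda_0^{-q}\int_{A_j}|f|^q\,dx$ and summing over $j$ gives $\lambda_0^q S\le c\int_{\tOm}|f|^q\,dx$, which is the lower bound. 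Combined, these two inequalities yield both the stated two-sided estimate and the equivalence $f\in L^q(\tOm)\iff S<\infty$.

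There is no real obstacle here; the only care needed is the geometric-series constant $N^q/(N^q-1)$, which is finite because $N>1$ and $q>0$, and the harmless fact that the $k=0$ term of the sum (which does not appear in $S$) is absorbed into the $|\tOm|$ term on the upper-bound side. The constant $c(N,q)$ can be taken as, say, $\max\{1+N^q,\,N^q/(N^q-1)\}$ to cover both bounds simultaneously.
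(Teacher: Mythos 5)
Your proof is correct. The paper itself does not prove Lemma \ref{distribution}; it merely cites \cite[Lemma 4.1]{AP1}, so there is no in-paper proof to compare against, but your dyadic layer-cake decomposition with the Fubini/Tonelli swap of the double sum is precisely the standard argument used for this result, and the constant $c(N,q)=\max\{1+N^q,\,N^q/(N^q-1)\}$ you extract is valid.
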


\section{A priori estimates}\label{aprioriestimates}

First let us prove some general a priori estimates.
\subsection{General a priori estimates}
Let us first recall the following energy estimate:
  \begin{lemma}\label{energy_homogeneous} Let $\tOm$ be any bounded domain and suppose that the nonlinearity $\aa(\cdot,\cdot)$ satisfies \eqref{bounded} and \eqref{ellipticity}. For any $\tu \in W^{1,\pp}(\tilde{\Om})$ and $\tbg \in L^{\pp}(\tilde{\Om},\RR^n)$, let $\tw \in W^{1,\pp}(\tilde{\Om})$ be the unique solution that  solves the following equation
 \begin{equation}\label{first_approx}
 \left\{\begin{array}{l}
  \dv \aa(x,\nabla \tw) = \dv(|\tbg|^{p(x)-2}\tbg) \qquad \text{in} \ \tilde{\Om}, \\
  \tw \in \tu + W_0^{1,\pp}(\tilde{\Om}).
  \end{array}\right.
 \end{equation}
 Then the following energy estimate holds:
 \begin{equation*}
  \int_{\tilde{\Om}} |\nabla w|^{p(x)} \ dx \leq C \lbr \int_{\tilde{\Om}} |\nabla u|^{p(x)} \ dx +\int_{\tilde{\Om}} |\tbg|^{p(x)} \ dx+ |\tilde{\Om}|\rbr,
 \end{equation*}
 where the constant $C = C(\La_0,\La_1,{\plog},n)$. 
 \end{lemma}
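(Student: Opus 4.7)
The plan is the standard energy argument: test the weak formulation of \eqref{first_approx} against the admissible test function $\tw - \tu \in W_0^{1,\pp}(\tilde{\Om})$, then absorb using the coercivity, boundedness, and Young's inequality in the pointwise variable-exponent form. Concretely, the testing identity reads
\[
\int_{\tilde{\Om}} \aa(x,\nabla \tw) \cdot \nabla \tw \ dx
= \int_{\tilde{\Om}} \aa(x,\nabla \tw) \cdot \nabla \tu \ dx
+ \int_{\tilde{\Om}} |\tbg|^{p(x)-2}\tbg \cdot (\nabla \tw - \nabla \tu) \ dx.
\]

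For the left-hand side, I would invoke the coercivity bound \eqref{coercivity} to obtain
$\tilde{\La}_2 \int |\nabla \tw|^{p(x)} \ dx \leq \int \aa(x,\nabla \tw)\cdot \nabla \tw \ dx + \tilde{\La}_1 |\tilde{\Om}|$.
For the two terms on the right, I would use the bound \eqref{bounded}, which yields $|\aa(x,\nabla\tw)| \leq \La_1 (\mu^2+|\nabla \tw|^2)^{(p(x)-1)/2} \leq C(1+|\nabla \tw|^{p(x)-1})$ uniformly in $x$ since $\mu \in [0,1]$. Then pointwise Young's inequality with the conjugate pair $(p(x),p(x)/(p(x)-1))$ and a small parameter $\ep>0$ gives
\[
|\aa(x,\nabla\tw)\cdot \nabla\tu| \leq \ep\, |\nabla\tw|^{p(x)} + C(\ep)\,|\nabla\tu|^{p(x)} + C,
\]
and similarly
\[
\bigl||\tbg|^{p(x)-2}\tbg \cdot (\nabla\tw - \nabla\tu)\bigr| \leq \ep\,|\nabla\tw|^{p(x)} + \ep\,|\nabla\tu|^{p(x)} + C(\ep)\,|\tbg|^{p(x)}.
\]
Integrating these pointwise estimates over $\tilde{\Om}$, choosing $\ep$ small enough to absorb $2\ep \int |\nabla\tw|^{p(x)} dx$ into $\tilde{\La}_2 \int |\nabla\tw|^{p(x)} dx$ on the left, and collecting constants yields the claimed bound with $C = C(\La_0,\La_1,\plog,n)$.

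The only mild subtlety is that the constants arising from the $p(x)$-Young inequality depend on $p^{\pm}$ but are uniform in $x$, and that the $\mu \leq 1$ contribution from \eqref{bounded} produces an additive $|\tilde{\Om}|$ term on the right-hand side (this is why $|\tilde{\Om}|$ must appear in the statement); both are accounted for by the indicated constant dependence. There is no real obstacle here — existence/uniqueness of $\tw$ is a standard consequence of strict monotonicity coming from \eqref{monotonicity}, and once $\tw$ is in hand the estimate is purely algebraic/functional-analytic and does not invoke any regularity of $\pp$ beyond $1 < p^- \le p^+ < \infty$.
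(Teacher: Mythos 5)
Your argument is correct and is the canonical energy estimate: test the weak formulation with $\tw-\tu\in W_0^{1,\pp}(\tilde\Om)$, bound the left side from below via the coercivity \eqref{coercivity}, bound the right side via \eqref{bounded} and pointwise Young's inequality with exponents $(p(x),p(x)/(p(x)-1))$, and absorb the small $\ep\int|\nabla\tw|^{p(x)}\,dx$ terms. The paper in fact states Lemma \ref{energy_homogeneous} without proof (``Let us first recall the following energy estimate''), treating it as standard, so there is no authorial proof to compare against; your write-up supplies exactly the folklore argument that is intended, and the accounting of the additive $|\tilde\Om|$ term (from the $\tilde\La_1$ constant in \eqref{coercivity} and from $\mu\le1$ in \eqref{bounded}) and of the $x$-uniform constants $C(\ep,p^{\pm})$ is handled correctly.
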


Next we prove a crucial difference estimates below the natural exponent:
\begin{theorem}[A priori estimates below the natural exponent]
\renewcommand{\omt}{\tilde{\Om}}
\label{boundary_below_exponent}
Let $\ga>0$, $\sig\in (0,1/4)$ and $S_0>0$ be given and suppose that $\tOm$ is a $(\ga,\sig,S_0)$-quasiconvex domain. Let $\tu \in W^{1,\pp}(\omt)$ and $\tbg \in L^{\pp}(\tilde{\Om},\RR^n)$ be given. For any  $M_6 \geq 1$,  define 
\begin{equation*}\label{R_1_bnd}
R_6 := \min \left\{\frac{1}{2M_6}, \frac{1}{\om_n^{1/n}},\frac12,\frac{S_0}{2} \right\}.
 \end{equation*}
Let $\omt_r := \omt \cap B_r$ be any region with $2r <R_6$ and consider the unique solution $w \in W^{1,p(\cdot)}(\omt)$ of the following problem:
 \begin{equation}\label{wapprox_boundary}\left\{
 \begin{array}{l}
  \dv \aa(x,\nabla \tw) = \dv (|\tbg|^{\pp-2}\tbg) \quad \text{in} \ \omt_r, \\
  \tw \in \tu + W_0^{1,\pp}(\omt_r).
  \end{array}\right.
 \end{equation}
 Suppose that the following bound holds:
 \begin{equation*}
  \label{hypothesis_below_exponent}
  \int_{\RR^n} |\nabla \tu - \nabla \tw|^{p(x)} \ dx+1=  \int_{\omt_r} |\nabla \tw - \nabla \tu|^{p(x)} \ dx +1  \leq M_6, 
 \end{equation*}
then there exists a constant $\de_1= \de_1(\La_0,\La_1,{\plog},n) \in (0,1/4)$ such that for any $\de \in(0,\de_1)$, there holds
 \begin{equation*}\label{apriori_bound}
  \int_{\omt_r} |\nabla \tw|^{p(x)-\de} \ dx \leq C \lbr \int_{\omt_r} |\nabla \tu|^{p(x)-\de} \ dx + \int_{\omt_r} |\tbg|^{p(x)-\de} \ dx+  |\omt_r|\rbr,
 \end{equation*}
where the constant $C = C(\La_0,\La_1,{\plog},n)$.
\end{theorem}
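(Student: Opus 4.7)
The plan is to follow the Lipschitz-truncation approach to below-natural-exponent estimates of Iwaniec and Acerbi-Mingione, adapted to the variable-exponent quasiconvex setting via the machinery of Lemmas \ref{lipschitz_extension_one} and \ref{lipschitz_extension_two}. First, set $v := \tw - \tu \in W_0^{1,\pp}(\omt_r)$, extended by zero to all of $\RR^n$. Fix $\delta_1 \in (0, 1/4)$ so small that $p^- - 2\delta_1 > 1$; then for each $\delta \in (0, \delta_1)$ select an auxiliary exponent $q$ with $1 < q < p^-_{\omt_r} - 2\delta$. The hypothesis $\int |\nabla v|^{p(x)}\,dx + 1 \leq M_6$ activates Lemmas \ref{lipschitz_extension_one} and \ref{lipschitz_extension_two} with $M_5 = M_6$, producing the level function $g \approx \mm_{<4r}(|\nabla v|^q)^{1/q}$ and a family of $C\lambda$-Lipschitz truncations $v_\lambda \in W_0^{1,\infty}(\omt_r)$ agreeing with $v$ on the good set $F_\lambda := \{g \leq \lambda\}\cap \omt_r$. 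The critical structural property is that $g^{-\delta}$ lies in the Muckenhoupt class $A_{\pp/q}$ uniformly in $\delta$.

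Next, I would test the weak formulation of (\ref{wapprox_boundary}) against $\varphi = v_\lambda \in W_0^{1,\pp}(\omt_r)$ and split each side along $F_\lambda$ and $\omt_r \setminus F_\lambda$. On $F_\lambda$, where $\nabla v_\lambda = \nabla v = \nabla \tw - \nabla \tu$, decompose $\aa(x,\nabla \tw) \cdot \nabla v = [\aa(x,\nabla\tw) - \aa(x,\nabla\tu)]\cdot\nabla v + \aa(x,\nabla\tu)\cdot\nabla v$; the monotonicity bound (\ref{monotonicity}) then extracts the positive quantity $\tilde{\La}_0\int_{F_\lambda}(\mu^2 + |\nabla \tw|^2 + |\nabla \tu|^2)^{(p(x)-2)/2}|\nabla v|^2\,dx$. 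On $\omt_r \setminus F_\lambda$, the Lipschitz estimate $|\nabla v_\lambda| \leq C\lambda$ paired with the growth bound (\ref{bounded}) yields a contribution linear in $\lambda$. After rearrangement, this produces a level-set inequality of the schematic form
\[
\int_{F_\lambda}|\nabla v|^{p(x)}\,dx \lesssim \int_{F_\lambda}\!\big(|\nabla\tu|^{p(x)} + |\tbg|^{p(x)}\big)\,dx + \lambda\!\int_{\omt_r\setminus F_\lambda}\!\big(|\nabla\tw|^{p(x)-1} + |\nabla\tu|^{p(x)-1} + |\tbg|^{p(x)-1}\big)\,dx
\]
for a.e.\ $\lambda > 0$.

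I would then multiply by $\lambda^{-\delta-1}$ and integrate $\lambda \in (0,\infty)$. Fubini and the layer-cake identities $\int_0^\infty \lambda^{-\delta-1}\chi_{F_\lambda}\,d\lambda = g^{-\delta}/\delta$ and $\int_0^\infty \lambda^{-\delta}\chi_{F_\lambda^c}\,d\lambda = g^{1-\delta}/(1-\delta)$ convert the level-set family into a weighted integral inequality with weight $g^{-\delta}$ on the left and a mix of $g^{-\delta}$ and $g^{1-\delta}$ on the right. Variable-exponent Young's inequality (Lemma \ref{young}) then estimates each product $(\cdot)^{p(x)-1}g^{1-\delta}$ by $\epsilon\,g^{p(x)-\delta} + C_\epsilon(\cdot)^{p(x)-\delta}$. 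The resulting $g^{p(x)-\delta}$-terms are absorbed through bullet~2 of Lemma \ref{lipschitz_extension_one}, $\int g^{p(x)-\delta}\,dx \lesssim \int|\nabla v|^{p(x)-\delta}\,dx + |B_r|$, whose justification rests on the weighted maximal-function bound of Theorem \ref{integral-px-maximal-weighted}, applied through the Muckenhoupt property $g^{-\delta}\in A_{\pp/q}$ and the size hypothesis $\int|\nabla v|^{p(x)} \leq M_6$. After the final absorption and the decomposition $|\nabla\tw|^{p(x)-\delta} \leq C(|\nabla v|^{p(x)-\delta} + |\nabla\tu|^{p(x)-\delta})$, the claimed bound follows.

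The main technical obstacle is the third step: the weighted quantity $\int|\nabla v|^{p(x)}g^{-\delta}\,dx$ produced by Fubini is not directly comparable to the target $\int|\nabla v|^{p(x)-\delta}\,dx$, and bridging the two requires weighted-norm estimates with Muckenhoupt constants that remain uniformly bounded as $\delta \to 0^+$. The third and fourth bullets of Lemma \ref{lipschitz_extension_one} are engineered to provide precisely such $\delta$-independent control, and it is on this uniformity that the Young-type absorption depends. The threshold $\delta_1$ must be chosen small enough that this absorption closes and that all Muckenhoupt/maximal-function constants remain controlled; this is the technical heart of the argument.
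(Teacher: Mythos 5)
Your proposal tracks the paper's own proof closely: Lipschitz-truncation through Lemmas \ref{lipschitz_extension_one} and \ref{lipschitz_extension_two}, testing the weak formulation against $v_\lambda$, multiplying by $\lambda^{-1-\delta}$ and integrating, Fubini/layer-cake to produce the $g^{-\delta}$ weight, and absorption via Young's inequality and the $\delta$-uniform bound on $\int g^{p(x)-\delta}\,dx$. So the strategy is the same.

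However, there is a genuine gap at the step that converts the monotonicity output into the target integrand. Testing against $v_\lambda$ and applying (\ref{monotonicity}) controls, on $F_\lambda$, the quantity $\int_{F_\lambda}(\mu^2+|\nabla\tilde{w}|^2+|\nabla\tilde{u}|^2)^{(p(x)-2)/2}|\nabla v|^2\,dx$, \emph{not} $\int_{F_\lambda}|\nabla v|^{p(x)}\,dx$ as your schematic level-set inequality asserts; and after Fubini the weighted quantity is $\int(\mu^2+|\nabla\tilde{w}|^2+|\nabla\tilde{u}|^2)^{(p(x)-2)/2}|\nabla v|^2\,g^{-\delta}\,dx$ rather than $\int|\nabla v|^{p(x)}g^{-\delta}\,dx$. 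These two coincide (up to constants) only where $p(x)\geq 2$. Where $p(x)<2$, the prefactor $(\mu^2+|\nabla\tilde{w}|^2+|\nabla\tilde{u}|^2)^{(p(x)-2)/2}$ can be arbitrarily small, so the conversion to $|\nabla v|^{p(x)-\delta}$ is not a pointwise comparison but another Young's inequality, which produces an additional error term $\varepsilon(\mu^2+|\nabla\tilde{u}|^2+|\nabla\tilde{w}|^2)^{(p(x)-\delta)/2}$; after expanding $|\nabla\tilde{w}|\leq|\nabla v|+|\nabla\tilde{u}|$, part of this error is absorbed into the left side and the remainder is precisely where the $\int|\nabla\tilde{u}|^{p(x)-\delta}$ term on the right of the a priori estimate originates. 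The paper does this by splitting $\tilde{\Om}_r$ into $\{p(\cdot)\geq 2\}$ and $\{p(\cdot)<2\}$ and running a different Young's manipulation in each region before recombining. Your outline, taken literally, fails on $\{p(\cdot)<2\}$, and the final-sentence triangle inequality $|\nabla\tilde{w}|^{p(x)-\delta}\lesssim|\nabla v|^{p(x)-\delta}+|\nabla\tilde{u}|^{p(x)-\delta}$ is not the mechanism by which the $|\nabla\tilde{u}|$ term enters.

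A smaller attribution point: the bound $\int g^{p(x)-\delta}\,dx\lesssim\int|\nabla v|^{p(x)-\delta}\,dx+|B_r|$ (bullet 2 of Lemma \ref{lipschitz_extension_one}) rests on the \emph{unweighted} maximal bound of Corollary \ref{unweighted_maximal_bound} applied to the variable exponent $p(\cdot)-\delta$, not on Theorem \ref{integral-px-maximal-weighted}; the weighted maximal bound and the $\delta$-uniform Muckenhoupt property of $g^{-\delta}$ are not actually invoked in the proof of this particular theorem (they are used instead in the higher-integrability argument of Theorem \ref{higher_integrability_full}).
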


\begin{proof}
\renewcommand{\omt}{\tilde{\Om}}

Let $2r < R_6$ be fixed, since $\Omt$ is a $(\ga,\sigma,S_0)$-quasiconvex domain, $\Omt_r$ satisfies the measure density estimate of Lemma \ref{measure_density_quasiconvex}. Let $1<q< p^-_{\omt_r}-2\de$ be any fixed exponent and  let us define
\begin{equation*}
 \label{bnd_g}
  g(x) := \max \lbr[\{] \mm_{<2r}(|\nabla \tw - \nabla \tu|^q)^{\frac{1}{q}}(x), \frac{|\tw(x) - \tu(x)|}{d(x,\partial \omt_r)} \rbr[\}].
 \end{equation*}
Note that we have extended $\tw-\tu = 0$ outside $\omt_r$. 

By the restriction on $R_6$, we can apply Lemma \ref{lipschitz_extension_one} with $M_5 = M_6$ and $\check{v} = \tw-\tu$ to get
\begin{gather}
g(x) \approx \mm_{<2r}(|\nabla \tw - \nabla \tu|^q)^{\frac{1}{q}}(x) \qquad \text{for a.e.} \ x\in \RR^n, \label{g_equiv_bnd} \\
\int_{\omt_r} g(x)^{{p(x) - \de}}\ dx \leq  C({\plog},n) \lbr \int_{\omt_r} |\nabla \tw - \nabla \tu|^{p(x) -\de} \ dx+ |\omt_r|\rbr\label{g_maximal_bound_bnd}.
\end{gather}
Define the set 
 \begin{equation*}\label{f_lam_bnd_diff}
F_{\la} := \{ x \in \Om_r : g(x) \leq \la \}.  
 \end{equation*}
 We  now apply Lemma \ref{lipschitz_extension_two} with $s=q$ and $\bar{v}=\tilde{w}-\tilde{u}$ to get a Lipschitz function $v_{\la} $ which is a valid test function for \eqref{wapprox_boundary}, which gives
 \[
  \begin{array}{l}
   \int_{\flam} \iprod{\aa(x,\nabla \tw) -\aa(x,\nabla \tu)+\aa(x,\nabla \tu)-|\tbg|^{p(x) - 2} \tbg}{\nabla v_{\la}} \ dx = \\ 
    \qquad \qquad \qquad =- \int_{\flam^c} \iprod{\aa(x,\nabla \tw)}{\nabla v_{\la}}\ dx + \int_{\flam^c} \iprod{|\tbg|^{p(x) - 2} \tbg}{\nabla \vlam}\ dx\\
    \qquad \qquad \qquad\apprle \la \int_{\flam^c} (\mu^2 + |\nabla \tw|^2)^{\frac{p(x) -1}{2}}\  dx + \la \int_{\flam^c} |\tbg|^{p(x) -1} \ dx.\\
  \end{array}
 \]
  Multiplying the above expression by $\la^{-1-\de}$ and integrating over $(0,\infty)$, we get
  $I_1 +I_2 +I_3\apprle I_4 +I_5,$
where  we have set
\begin{itemize}
 \item $I_1 := \int_0^{\infty} \la^{-1-\de}\int_{\flam} \iprod{\aa(x,\nabla \tw) -\aa(x,\nabla \tu)}{\nabla v_{\la}} \ dx\ d\la$.
 \item $I_2 := \int_0^{\infty} \la^{-1-\de}\int_{\flam} \iprod{\aa(x,\nabla \tu)}{\nabla v_{\la}} \ dx\ d\la$.
 \item $I_3 = - \int_0^{\infty} \la^{-1-\de} \int_{\flam} \iprod{|\tbg|^{p(x) - 2} \tbg}{\nabla \vlam}\ dx \ d\la $.
 \item $I_4 := \int_0^{\infty} \la^{-\de}\int_{\flam^c} |\nabla \tw|^{p(x) -1} + \mu^{p(x) - 1} \ dx \ d\la$.
 \item $I_5 := \int_0^{\infty} \la^{-\de}\int_{\flam^c} |\tbg|^{p(x) -1} \ dx \ d\la$.
\end{itemize}
Let us now estimate each of the above terms as follows:
 \begin{description}[leftmargin=*]
  \item[Estimate for $I_1$:] Applying Fubini's theorem and \eqref{monotonicity}, we get 
  \begin{equation}\label{eq7.43}
   \begin{array}{ll}
    I_1 & = \frac{1}{\de} \int_{\Omt_r} g(x)^{-\de}  \iprod{\aa(x,\nabla \tw) -\aa(x,\nabla \tu)}{\nabla \tw - \nabla \tu} \ dx \\
    & \apprge \frac{1}{\de} \int_{\Omt_r} (\mu^2+|\nabla \tu|^2 + |\nabla \tw|^2)^{\frac{p(x)-2}{2}} |\nabla \tw - \nabla \tu|^2 g(x)^{-\de} \ dx.
   \end{array}
  \end{equation}
%
  In order to estimate \eqref{eq7.43}, we need to consider the case if $p(x)\geq 2$ and $p(x)<2$, hence let us  split $\Omt_r = \Omt_1 \cup \Omt_2$, where $$\Omt_1 := \{ x \in \Omt_r : p(x) \geq 2\}\qquad \text{and} \qquad \Omt_2:=\{ x \in \Omt_r : p(x) < 2\}.$$ 
  \begin{description}
   \item[Case $p(x) \geq 2$:] In this case, we can directly apply Young's inequality to get
%
\begin{equation}\label{eq7.45}
\begin{array}{@{}r@{}c@{}l@{}@{}}
 \int_{\Omt_1} |\nabla \tw - \nabla \tu|^{p(x) -\de} \ dx & \apprle& C(\ep_1) \int_{\Omt_1} |\nabla \tw- \nabla \tu|^{p(x)} g(x)^{-\de}\ dx  + \ep_1 \int_{\Omt_1} g(x)^{p(x) -\de} \ dx\\
 & \overset{\eqref{g_maximal_bound_bnd}}{\apprle}& C(\ep_1) \int_{\Omt_1} |\nabla \tw- \nabla \tu|^{p(x)} g(x)^{-\de}\ dx  +  \\
 & &\qquad \qquad \qquad + \ep_1 \int_{\Omt_r} |\nabla \tw - \nabla \tu|^{p(x) -\de}\ dx +\ep_1|\Omt_r|.
 \end{array}
\end{equation}
Thus combining \eqref{eq7.45} into \eqref{eq7.43}, we get
\begin{equation}\label{eq7.46}
 \begin{array}{ll}
  \int_{\Omt_1} |\nabla \tw - \nabla \tu|^{p(x) -\de}\ dx & \apprle C(\ep_1)  \de I_1 +  \ep_1 \int_{\Omt_r} |\nabla \tw - \nabla \tu|^{p(x) -\de}\ dx + \ep_1|\Omt_r|.
 \end{array}
\end{equation}
   \item[Case $p(x) < 2$:] In this case, using Young's inequality, we obtain
   \begin{equation}\label{eq7.47}
    \begin{array}{ll}
     |\nabla \tw - \nabla \tu|^{p(x) -\de} 
     & \apprle C(\ep_1)(\mu^2+|\nabla \tu|^2 + |\nabla \tw|^2)^{\frac{p(x) -2}{2}} |\nabla \tw - \nabla \tu|^2 g(x)^{-\de}\  + \\
     & \qquad  + \ep_1(\mu^2+|\nabla \tu|^2 + |\nabla \tw|^2 )^{\frac{p(x)-\de}{2}} +\ep_1g(x)^{p(x)-\de} .
    \end{array}
   \end{equation}
Integrating \eqref{eq7.47} over $\Omt_2$ and using \eqref{g_maximal_bound_bnd} and \eqref{eq7.43}, we get
\begin{equation}\label{eq7.48}
 \begin{array}{ll}
  \int_{\Omt_2} |\nabla \tw - \nabla \tu|^{p(x) -\de} \ dx & 
   \apprle C(\ep_1)\de I_1 + \ep_1\int_{\Omt_r} |\nabla \tw - \nabla \tu|^{p(x)-\de}  \ dx + \ep_1|\Omt_r|\\
  & \quad + C(\ep_1)\int_{\Omt_r} |\nabla \tu|^{p(x)-\de}\ dx.
 \end{array}
\end{equation}
  \end{description}
  
  Thus we can combine  \eqref{eq7.46} when $p(x) \geq 2$  and \eqref{eq7.48} when $p(x) <2$ to get
  \begin{equation}
   \label{estimate_I_1}
   \begin{array}{ll}
  \int_{\Omt_r} |\nabla \tw - \nabla \tu|^{p(x) -\de} \ dx & 
   \apprle C(\ep_1)\de I_1 + \ep_1\int_{\Omt_r} |\nabla \tw - \nabla \tu|^{p(x)-\de}  \ dx + \ep_1|\Omt_r|\\
  & \qquad + C(\ep_1)\int_{\Omt_r} |\nabla \tu|^{p(x)-\de}\ dx.
 \end{array}
  \end{equation}

\item[Estimate for $I_2$:] Making use of Fubini's theorem, Young's inequality, \eqref{bounded} and \eqref{g_equiv_bnd}, we get
\begin{equation}\label{eq7.49}
 \begin{array}{ll}
  I_2 
  & \apprle \frac{1}{\de} \int_{\Omt_r} g(x)^{-\de} (\mu^2+|\nabla \tu|^2)^{\frac{p(x) -1}{2}} |\nabla \tw - \nabla \tu|\ dx \\
  & \apprle \frac{C(\ep_2)}{\de} \int_{\Omt_r}  |\nabla \tu|^{p(x) -\de} + \frac{C(\ep_2)|\Omt_r|}{\de}+ \frac{\ep_2}{\de} \int_{\Om_r} |\nabla \tw - \nabla \tu|^{p(x)-\de}\ dx.
 \end{array}
\end{equation}

\item[Estimate for $I_3$:] Similar to $I_2$, after using Fubini's theorem, Young's inequality  and \eqref{g_equiv_bnd}, we get:
\begin{equation}
 \label{eq7.49_1}
\begin{array}{ll}
 I_3 
  & \apprle \frac{C(\ep_3)}{\de} \int_{\Omt_r}  | \tbg|^{p(x) -\de} \ dx+ \frac{\ep_3}{\de} \int_{\Om_r} |\nabla \tw - \nabla \tu|^{p(x)-\de}\ dx.
  \end{array}
\end{equation}

\item[Estimate for $I_4$ and $I_5$:] Again by using Fubini's theorem, Young's inequality and \eqref{g_maximal_bound_bnd}, we get
\begin{equation}\label{eq7.50}
 \begin{array}{ll}
  I_4+I_5 
  & \apprle \int_{\Omt_r} |\nabla \tw-\nabla \tu|^{p(x)-\de} \ dx + \int_{\Omt_r} |\nabla \tu|^{p(x)-\de} \ dx +\int_{\Omt_r} |\tbg|^{p(x)-\de} \ dx+ |\Omt_r|.
 \end{array}
\end{equation}

\end{description}

 Combining \eqref{estimate_I_1}--\eqref{eq7.50} followed by using the trivial bound $I_1 \leq |I_2| + |I_3| + I_4 + I_5$, we obtain 
\begin{equation*}
 \label{combined}
 \begin{array}{ll}
   \int_{\Omt_r} |\nabla \tw - \nabla \tu|^{p(x) -\de} \ dx & 
    \apprle  \lbr[[] \ep_1 +  C(\ep_1) (\ep_2+\ep_3+\de)\rbr[]] \int_{\Omt_r} |\nabla \tw - \nabla \tu|^{p(x)-\de}  \ dx +\\
    & \quad + C(\ep_1,\ep_2,\ep_3) \lbr \int_{\Omt_r} |\nabla \tu|^{p(x)-\de}\ dx + \int_{\Omt_r} |\tbg|^{p(x)-\de}\ dx + |\Omt_r| \rbr.
  \end{array}
\end{equation*}
Choosing $\ep_1$ small followed by $\ep_2,\ep_3$ and $\de_1$, we see that for all $\de \in (0,\de_1)$, the following estimate holds:
\begin{equation*}
 \label{combined_1}
   \int_{\Omt_r} |\nabla \tw - \nabla \tu|^{p(x) -\de} \ dx \apprle  \int_{\Omt_r} |\nabla \tu|^{p(x)-\de}\ dx + \int_{\Omt_r} |\tbg|^{p(x)-\de}\ dx + |\Omt_r|.
\end{equation*}
A simple application of triangle inequality then proves the Theorem. 
\end{proof}

\subsection{Fixing some universal constants}

Let us define the following constant:
\begin{equation}
 \label{size_date}
 M_0 := \int_{\Om} \bgh{|\bff|^{p(x)} +1} \ dx + 1.
\end{equation}

Now by applying Lemma \ref{energy_homogeneous} to \eqref{basic_pde}, we see that there is a constant $C = C(\plog,\La_0,\La_1,n)>0$ such that the following estimate holds:
\begin{equation}
 \label{size_solution}
 \int_{\Om} |\nabla u|^{p(x)} \ dx +1 \leq C \lbr \int_{\Om} \bgh{|\bff|^{p(x)} +1}\ dx + 1 \rbr =: M^u.
\end{equation}
Note that $M^u = M^u(M_0,\plog,\La_0,\La_1,n)$.

For any $r> 0$, let $\Om_r := B_r \cap \Om$ and consider the following first approximation to \eqref{basic_pde}:
\begin{equation}\label{w_approx_difference}\left\{
 \begin{array}{ll}
  \dv \aa(x,\nabla w) = 0 & \text{in} \ \Om_r, \\
  w \in u + W_0^{1,\pp}(\Om_r).
  \end{array}\right.
 \end{equation}
 Applying Lemma \ref{energy_homogeneous} to \eqref{w_approx_difference} and then making use of Lemma \ref{young}, we obtain a constant $C = C(\plog,\La_0,\La_1,n)>0$ such that the following estimates hold for any $q(x) \leq p(x)$:
 \begin{gather*}
  \int_{\Om_r} |\nabla w|^{q(x)} \ dx +1 \leq C  \lbr \int_{\Om} |\nabla u|^{p(x)} \ dx + 1 \rbr := M^w_1,    \\
    \int_{\Om_r} |\nabla w - \nabla u|^{q(x)} \ dx +1 \leq C  \lbr \int_{\Om} |\nabla u|^{p(x)} \ dx + 1 \rbr := M^w_2.
 \end{gather*}
%
 We now define the constant 
 \begin{equation}
    \label{size_w}
  M^w := M^w(M_0,\plog,\La_0,\La_1,n)  = \max \{ M^w_1,M^w_2\}.
 \end{equation}

 \subsection{Difference estimate below the natural exponent for the first approximation}
 
 In this subsection, we shall prove crucial difference estimates between solutions of \eqref{basic_pde} and \eqref{w_approx_difference} below the natural exponent.
 \begin{theorem}\label{boundary_difference}
  Let $\sig \in (0,1/4)$ and $S_0>0$ be given and let $\Om$ be a $(\ga,\sigma,S_0)$-quasiconvex domain for some $\ga >0$ to be chosen later. Suppose that $u \in W^{1,\pp}_0(\Om)$ is the unique solution of  \eqref{basic_pde}. Under the bounds \eqref{size_date} and \eqref{size_solution}, consider the unique solution  $w \in W^{1,\pp}(\Om_r)$ solving \eqref{w_approx_difference} in $\Om_r$ for some $r \leq \frac{R_7}{2}$ 
  where 
  \begin{equation*}
   \label{def_R_1}
   R_7 = \min \left\{\frac{1}{2M^u},\frac{1}{2M^w},\frac{1}{2M_0}, \frac{1}{\om_n^{1/n}},\frac12,\frac{S_0}{2}\right\}.
  \end{equation*}
 For any $0<\varepsilon<1$, there exist positive constants $\de_2=\de_2(\La_0,\La_1,{\plog},n,\ep)\in(0,1)$ and $\ga_2 = \ga_2(\La_0,\La_1,{\plog},n,\ep) \in (0,1)$ such that for all $\de \in (0,\de_2)$ and $\ga \in (0,\ga_2)$, we have the following result: Suppose for a given $2r < R_7$,  the following bounds hold:
 \begin{equation*}
  \label{bound_on_data}
  \fint_{\Om_{r}} |\nabla u|^{p(x) - \de} \ dx \leq \la \qquad \text{and} \qquad \fint_{\Om_{r}} |\bff|^{p(x) - \de} \ dx \leq \ga \la, 
 \end{equation*}
 then there exists a constant $C = C(\La_0,\La_1,{\plog},n)$ such that the following conclusions hold:
\begin{equation}\label{diff_conclusion_bnd}
 \fint_{\Om_{r}} |\nabla w|^{p(x) - \de} \ dx \leq C \la \qquad \text{and} \qquad \fint_{\Om_{r}} |\nabla w-\nabla u|^{p(x) - \de} \ dx \leq \varepsilon \la.
\end{equation}
 \end{theorem}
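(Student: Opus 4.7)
The proof closely parallels that of Theorem \ref{boundary_below_exponent}, with one crucial modification: we use the homogeneous equation satisfied by $w$ to eliminate the obstructive $\int|\nabla\tu|^{p(x)-\de}$ contribution from that proof, replacing it by $\int|\bff|^{p(x)-\de}$, which is small by hypothesis.

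For the first estimate in \eqref{diff_conclusion_bnd}, I apply Theorem \ref{boundary_below_exponent} directly to the pair $(\tu,\tw)=(u,w)$ with $\tbg\equiv 0$ and $M_6=M^w$. The restriction $2r<R_7$ and \eqref{size_w} ensure the hypotheses of Theorem \ref{boundary_below_exponent} are satisfied, yielding
\[
\int_{\Om_r}|\nabla w|^{p(x)-\de}\,dx \apprle \int_{\Om_r}|\nabla u|^{p(x)-\de}\,dx+|\Om_r|.
\]
Dividing by $|\Om_r|$ and invoking $\fint_{\Om_r}|\nabla u|^{p(x)-\de}\,dx\le\la$ yields the first claim (with $|\Om_r|$ absorbed into $\la|\Om_r|$ under the lower bound on $\la$ standard in the covering argument).

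For the difference estimate, set $v:=u-w\in W_0^{1,\pp}(\Om_r)$, fix an exponent $q\in(1,p^-_{\Om_r}-2\de)$, and define $g(x):=\max\{\mm_{<2r}(|\nabla v|^q)^{1/q}(x),|v(x)|/d(x,\pa\Om_r)\}$. Using Lemmas \ref{lipschitz_extension_one} and \ref{lipschitz_extension_two}, construct the Lipschitz truncation $v_\la$ associated with the level set $F_\la:=\{g\le\la\}$. Subtracting the weak formulations of the equations for $u$ and $w$, testing with $v_\la$ (admissible since $v_\la\in W_0^{1,\infty}(\Om_r)$), and exploiting the \emph{key cancellation} $\int_{\Om_r}\langle\aa(x,\nabla w),\nabla v_\la\rangle\,dx=0$ arising from the homogeneous equation for $w$, I obtain
\[
\int_{\Om_r}\langle \aa(x,\nabla u)-\aa(x,\nabla w),\nabla v_\la\rangle\,dx=\int_{\Om_r}\langle |\bff|^{p(x)-2}\bff,\nabla v_\la\rangle\,dx.
\]
Splitting over $F_\la$ (where $\nabla v_\la=\nabla v$) and $F_\la^c$ (where $|\nabla v_\la|\apprle\la$), multiplying by $\la^{-1-\de}$, integrating over $\la\in(0,\infty)$ and applying Fubini produces the analogues of $I_1,I_3,I_4,I_5$ from Theorem \ref{boundary_below_exponent}. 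Crucially, the $I_2$ term from that proof, which forced the appearance of $\int|\nabla\tu|^{p(x)-\de}$, is \emph{absent here}.

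Estimating these $I_j$ exactly as in Theorem \ref{boundary_below_exponent} via the monotonicity bound \eqref{monotonicity}, Young's inequality and Corollary \ref{unweighted_maximal_bound} (treating the cases $p(x)\ge 2$ and $p(x)<2$ separately), then fixing a small $\ep_1$ to absorb the $\int|\nabla v|^{p(x)-\de}$ term onto the left, followed by $\ep_3$ and $\de_2$ small (depending on $\ep_1$), and invoking Step 1 to bound $\int_{\Om_r}|\nabla w|^{p(x)-\de}\,dx\apprle\la|\Om_r|$, the chain of estimates reduces to
\[
\fint_{\Om_r}|\nabla v|^{p(x)-\de}\,dx \le C_1\fint_{\Om_r}|\bff|^{p(x)-\de}\,dx + C_2\de\la \le C_1\ga\la + C_2\de\la,
\]
for constants $C_1,C_2$ depending only on $\La_0,\La_1,\plog,n,\ve$. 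Setting $\de_2:=\ve/(2C_2)$ and then $\ga_2:=\ve/(2C_1)$ concludes the argument. The principal obstacle is the careful bookkeeping of constants: the contributions $\int|\nabla u|^{p(x)-\de}$ and $\int|\nabla w|^{p(x)-\de}$ (each bounded only by $\la$, not by $\ga\la$) must appear with a $\de$-small coefficient in the final estimate. The absence of $I_2$ from the homogeneous equation, together with the ordered choice $\ep_1\ll\ep_3\ll\de\ll\ga\ll\ve$, is precisely what makes this possible.
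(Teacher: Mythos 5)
Your proposal is correct and follows essentially the same route as the paper: apply Theorem \ref{boundary_below_exponent} with $\tbg\equiv 0$ for the first conclusion, then run the Lipschitz-truncation machinery of that proof on $v=u-w$ for the difference estimate. The paper's own exposition is terse (it simply says "proceed exactly as in the proof of Theorem \ref{boundary_below_exponent}"), and your observation that one must test $v_\la$ against \emph{both} equations and subtract — so that the problematic $I_2$-type term $\int_{F_\la}\langle\aa(x,\nabla u),\nabla v_\la\rangle$, which otherwise contributes $C\int|\nabla u|^{p(x)-\de}$ with an uncontrolled constant, is replaced by $\bff$-terms and $F_\la^c$-boundary terms carrying an extra $\de$-factor — is exactly the structural point the paper is relying on but does not spell out; without it the quoted conclusion $\ve\int|\nabla u|^{p(x)-\de}$ cannot be reached. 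One minor imprecision: after you fix $\ep_1$ (necessarily $\ep_1\lesssim\ve$ to make the $p(x)<2$ monotonicity contribution $\ep_1\int|\nabla u|^{p(x)-\de}$ small, not merely to absorb $\int|\nabla v|^{p(x)-\de}$), the residual coefficient on $\la$ has the form $c\,\ep_1 + C(\ep_1)\de$ rather than the pure $C_2\de$ you wrote; the ordered choice $\ep_1\ll 1$ then $\de\ll 1$ still closes the argument, but the bookkeeping should be stated as such. Also note that, as in the paper, the constant terms $|\Om_r|$ appearing along the way require the implicit normalization $\la\ge 1$ (guaranteed in the covering argument by the $+1$ in the definition of $\la_0$).
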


 \begin{proof}
\renewcommand{\omt}{\Omega}  
Note that the first conclusion in \eqref{diff_conclusion_bnd} follows by applying Theorem \ref{boundary_below_exponent} to \eqref{w_approx_difference}.  Hence we shall only need to  prove the second conclusion of \eqref{diff_conclusion_bnd}.

Since $\Om$ is a $(\ga,\sigma,S_0)$-quasiconvex domain, we have that $\Om_r$ satisfies the measure density estimate of Lemma \ref{measure_density_quasiconvex}. Let $1<q\leq p^-_{\Om_r}-2\de$ be any fixed exponent and  consider the following function:
\begin{equation*}
 \label{bnd_g_2}
  g(x) := \max \lbr[\{] \mm_{<2r}(|\nabla w - \nabla u|^q)^{\frac{1}{q}}(x), \frac{|w(x) - u(x)|}{d(x,\partial \Om_r)} \rbr[\}].
 \end{equation*}
Note here that we have extended $w-u = 0$ outside $\omt_r$.  By the restriction $2r \leq R_7$, we can apply Lemma \ref{lipschitz_extension_one} with $M_5 = \max\{ 2M^u,2M^w\}$ and $\ov = w-u$ to get
\begin{gather*}
g(x) \approx \mm_{<2r}(|\nabla w - \nabla u|^q)^{\frac{1}{q}}(x) \quad \text{on} \ \RR^n, \label{g_equiv_bnd_2} \\
\int_{\omt_r} g(x)^{{p(x) - \de}}\ dx \leq  C({\plog},n) \lbr \int_{\omt_r} |\nabla w - \nabla u|^{p(x) -\de} \ dx+ |\omt_r|\rbr\label{g_maximal_bound_bnd_2}.
\end{gather*}

Define the set $F_{\la} := \{ x \in \Om_R : g(x) \leq \la \}$, 
%
we can now apply Lemma \ref{lipschitz_extension_two} to get a Lipschitz function $v_{\la}$ which is a valid test function for \eqref{w_approx_difference}.

We can now proceed exactly as in the proof of Theorem \ref{boundary_below_exponent} to get for any $\ve \in (0,1)$,
\begin{equation*}
 \int_{\Om_r} |\nabla u - \nabla w|^{p(x)-\de}  \leq C(\La_0,\La_1,\plog,n)  \int_{\Om_r} |\bff|^{p(x)-\de} \ dx + \varepsilon |\Om_r| + \varepsilon \int_{\Om_r} |\nabla u|^{p(x) - \de} \ dx.
\end{equation*}
Now we can choose $\fint_{\Om_R} |\bff|^{p(x) - \de} \ dx \leq \ga_2 \lambda$ such that the second estimate in \eqref{diff_conclusion_bnd} holds for all $\ga \in (0,\ga_2)$.  This completes the proof of the Theorem. 
 \end{proof}

 
 \subsection{Boundary higher integrability below the natural exponent}
 
 
 In this subsection, we shall prove a new higher integrability result. To better highlight the result, let us recall the following local versions of \eqref{basic_pde}. Let $x_0 \in \overline{\Om}$ be a fixed point of reference, then in the interior case $\Om_r(x_0) = B_r(x_0) \subset \Om$, the equation becomes
 \begin{equation}
 \label{interior_equation}
 \dv \aa(x,\nabla u) = \dv (|\bff|^{p(x) -2} \bff) \quad \text{in} \ B_r(x_0), 
\end{equation}
and in  the boundary case  $B_r(x_0) \nsubseteq \Om$ is given by:
\begin{equation}
 \label{boundary_equation}
 \left\{\begin{array}{rcll}
 \dv \aa(x,\nabla u) &=& \dv (|\bff|^{p(x) -2} \bff) &  \quad \text{in} \ \Om_r(x_0),     \\
 u &=& 0 & \quad \text{on} \ \pa_w \Om_r(x_0).
 \end{array}\right.
\end{equation}

\begin{theorem}
\renewcommand{\omt}{\Om_{2\tr}}
\renewcommand{\omth}{\Om_{\tr}}
\label{higher_integrability_full}
 There exist $R_8 = R_8(\plog,\La_0,\La_1,n,M_0,S_0)>0$, $\de_3 = \de_3(n,\La_0,\La_1,\plog)$ and  $\sigma_1= \sigma_1(\plog,\La_0,\La_1,n)$ such that the following holds for any $2r<R_8$,  $\de \in (0,\de_3)$ and $\bar{\sigma} \in (0,\sigma_1)$:  if $u \in W_0^{1,\pp}(\Om)$  and $\bff \in L^{\pp}(\Om)$ solve \eqref{interior_equation} or \eqref{boundary_equation}, after extending  $u = 0 $  and $\bff = 0$ on $\Om^c$,  there holds
 \[
  \fint_{B_{\tr}} |\nabla u|^{(p(x)-\de)(1+\bar{\sigma})} \ dx \apprle \lbr \fint_{B_{2\tr}} |\nabla u|^{p(x)-\de} \ dx \rbr^{1+\bar{\sigma}} + \fint_{B_{2\tr}} |\bff|^{(p(x)-\de)(1+\bar{\sigma})}\ dx + 1
 \]
for all $2\tr < r$ and $B_{2\tr} \subset B_r(x_0)$.
%
%
\end{theorem}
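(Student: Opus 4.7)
The strategy is to establish a reverse-Hölder inequality for $|\nabla u|^{p(x)-\de}$ on small balls and then apply Gehring's lemma (Lemma \ref{gehring}) to obtain the self-improvement exponent $1+\bar\sigma$. First I would fix a ball $B_{2\tr}\subset B_r(x_0)$ with $2\tr<R_8$, where $R_8$ is chosen small enough that the a priori hypotheses of Lemmas \ref{lipschitz_extension_one}--\ref{lipschitz_extension_two} are met with $M_5$ depending only on the normalization constants $M_0, M^u$ from \eqref{size_date}--\eqref{size_solution}. Set $\bar u:=(u)_{B_{2\tr}}$ in the interior case, and $\bar u:=0$ in the boundary case where $u$ is zero-extended across $\partial\Om$; the zero-set condition \eqref{measure_density_zero} needed for the boundary Poincaré inequality is supplied by the complementary measure-density estimate \eqref{lower_bound_domain_complement} (using $\ga\leq 1/2$). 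Let $\eta\in C_c^\infty(B_{2\tr})$ satisfy $\eta\equiv 1$ on $B_{\tr}$ and $|\nabla\eta|\apprle 1/\tr$.

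Next, I would prove a Caccioppoli-type inequality below the natural exponent by adapting the Lipschitz-truncation test from the proof of Theorem \ref{boundary_below_exponent}. Since $(u-\bar u)\eta\in W^{1,\pp}$ cannot be used as a test function directly at the integrability level $p(x)-\de$, I would apply Lemmas \ref{lipschitz_extension_one} and \ref{lipschitz_extension_two} to the function $(u-\bar u)\eta$ via
\[ g(x) := \max\mgh{\mm_{<4\tr}(|\nabla((u-\bar u)\eta)|^q)^{1/q}(x),\, \frac{|(u-\bar u)\eta(x)|}{d(x,\partial B_{2\tr})}} \]
for a fixed $1<q<p^-_{B_{2\tr}}-2\de$, producing $c\lambda$-Lipschitz truncations $v_\lambda$. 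Testing \eqref{basic_pde} against $v_\lambda$, multiplying by $\lambda^{-1-\de}$, integrating over $\lambda\in(0,\infty)$, and invoking Fubini, I would split into $\{p(x)\geq 2\}$ versus $\{p(x)<2\}$ and apply Young's inequality with small parameters exactly as in the estimates for $I_1,\ldots,I_5$ in Theorem \ref{boundary_below_exponent}. Absorbing the bad terms into the left-hand side (which forces $\de<\de_3$) yields
\[ \fint_{B_\tr}|\nabla u|^{p(x)-\de}\,dx \apprle \fint_{B_{2\tr}}\lbr\frac{|u-\bar u|}{\tr}\rbr^{p(x)-\de}\,dx + \fint_{B_{2\tr}}|\bff|^{p(x)-\de}\,dx + 1. \]

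The final step is to convert this into a reverse Hölder form and apply Gehring. To the right-hand side I would apply Lemma \ref{scaled_poincare} in the interior case or Theorem \ref{measure_density_poincare} in the boundary case after freezing the variable exponent at $p_0:=p^-_{B_{2\tr}}-\de$: this replaces $|u-\bar u|/\tr$ by $\lbr\fint|\nabla u|^\theta\,dx\rbr^{p_0/\theta}$ with $\theta:=np_0/(n+p_0)<p_0$, and the log-Hölder continuity bound on $p^+_{B_{2\tr}}-p^-_{B_{2\tr}}$ lets us return to the variable exponent $p(x)-\de$ at the cost of the additive constant $+1$. Lemma \ref{gehring} applied with $f=|\nabla u|^\theta$, $s=p_0/\theta>1$, $\mt=s$, and $g$ encoding the $|\bff|$-term then produces a universal $\sigma_1>0$ and yields the stated higher integrability for every $\bar\sigma\in(0,\sigma_1)$. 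The chief obstacles are the uniform bookkeeping required to keep constants independent of $\de\in(0,\de_3)$ during the exponent-freezing step (which dictates both $R_8$ and $\sigma_1$ via the log-Hölder modulus and $M_0$), and, in the boundary case, the verification of the zero-set hypothesis \eqref{measure_density_zero} of Theorem \ref{measure_density_poincare}, which is where the $\gss$-quasiconvexity of $\Om$ enters decisively.
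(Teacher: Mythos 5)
Your overall plan—Lipschitz truncation at level $p(x)-\de$, a Caccioppoli-type inequality, Sobolev--Poincar\'e to pass to a sub-critical exponent, then Gehring's lemma—is the correct one and is indeed the skeleton of the paper's proof. However, the central step as you describe it does not go through, and the difficulties you skip over are precisely where the paper's real work happens.

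The claim that one can obtain your displayed Caccioppoli inequality ``exactly as in the estimates for $I_1,\ldots,I_5$ in Theorem \ref{boundary_below_exponent}'' is where the argument breaks down. In that theorem the Young-inequality ``bad term'' lives on the \emph{same} domain $\tilde\Om_r$ as the quantity being estimated, so it can be absorbed into the left-hand side. Here the left-hand side is $\fint_{B_\tr}|\nabla u|^{p(x)-\de}$ but the term $\int g^{p(x)-\de}$ produced by Young, once estimated via Lemma \ref{lipschitz_extension_one}, involves $\int_{B_{2\tr}}|\nabla((u-\bar u)\eta)|^{p(x)-\de}\apprge\int_{B_{2\tr}}|\nabla u|^{p(x)-\de}$ on the \emph{larger} ball $B_{2\tr}$; this cannot be absorbed, and your claimed Caccioppoli inequality (with no gradient term on the right) is false as stated. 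At best this route yields a reverse-H\"older inequality with an extra $\theta\fint_{B_{2\tr}}|\nabla u|^{p(x)-\de}$ term, which Lemma \ref{gehring} can handle, but that is not what you wrote.

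More importantly, even that weaker conclusion requires machinery you do not invoke. The quantity that actually comes out of $I_1$ on $B_\tr$ (where $\eta\equiv1$) is the \emph{weighted} integral $\int_{B_\tr}g^{-\de}|\nabla u|^{p(x)}\,dx$, and there is no pointwise route from this to $\int_{B_\tr}|\nabla u|^{p(x)-\de}$ (in fact $g\geq|\nabla u|$ a.e.\ on $B_\tr$, so the pointwise inequality goes the wrong way). The paper resolves this via Theorem \ref{integral-px-maximal-weighted}, which exploits that $g^{-\de}\in A_{p(\cdot)/q}$ (from Lemma \ref{lipschitz_extension_one}), together with a good-set decomposition $G=\{g\apprle\mm_{<2\tr}(|\nabla u|^q\chi_{\btrh})^{1/q}\}$ on which the weight can be ``rotated'' into the maximal function and then dropped; off $G$ the maximal function is bounded by the average of $|\nabla u|^q$, contributing a term of the form $\lbr\fint_{\btr}|\nabla u|^{(p(x)-\de)/\mathfrak{t}}\rbr^{\mathfrak{t}}$. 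Separately, the annulus $B_{2\tr}\setminus B_\tr$ must be split into $D_1$ (where $\mm_{<2\tr}(|\nabla\tu|^q+\mu^q)\leq\de\,\mm_{<2\tr}((|\nabla u|^q+\mu^q)\chi_{\btr})$, yielding a $\de^{1-\de}$-small contribution) and $D_2$ (its complement, handled through the decomposition $\nabla\tu=\phi\nabla u+u\nabla\phi$, Young's inequality, and the Sobolev--Poincar\'e step with exponent $\mathfrak{s}=\sqrt{(n+1)/n}$). None of this is implied by ``adapting Theorem \ref{boundary_below_exponent}.'' Your sketch of the final Poincar\'e/Gehring step is correct in spirit, but the key Caccioppoli inequality must be derived via the weighted-maximal-function and $G$/$D_1$/$D_2$ decomposition arguments, not by direct absorption.
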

\begin{proof}
\renewcommand{\pu}{p^+_{B_{2\tr}}}
\renewcommand{\pd}{p^-_{B_{2\tr}}}
\renewcommand{\omt}{\Om_{2\tr}}
\renewcommand{\omth}{\Om_{\tr}}
We shall only prove the boundary higher integrability.  For the interior higher integrability, the only modification we need to do is in the choice of $\tu$ in \eqref{def_u_tilde} by taking $\tu:= \phi (u - \avg{u}{\btr})$. 

Let $B_{2\tr} \subset B_r(x_0)$ be any ball such that $B_{2\tr} \cap \Om^c \neq \emptyset$.  Let $\phi \in C_c^{\infty}(B_{2\tr})$ be a standard cut off function such that $\lsb{\chi}{B_{2\tr}}\geq \phi \geq \lsb{\chi}{B_{\tr}}$ and $|\nabla \phi| \le \frac{2}{\tr}$. After extending $u =0$ on $\Om^c$, define 
\begin{equation}
\label{def_u_tilde}
  \tu := \phi u.
\end{equation}
For any fixed $q \in (1, p^-  - 2\de)$, we define the following function:
\begin{equation*}
 \label{def_g}
 g(x) := \max \left\{ \mm_{<2\tr} (|\nabla \tu|^q + \mu^q)^{\frac1{q}}(x), \frac{|\tu(x)|}{d(x,\pa \Om_{2\tr})}\right\}.
\end{equation*}

For any $\qq \leq \pp$, there holds
\begin{equation*}
 \label{bound_tu}
 \begin{array}{ll}
  \int_{\btr} |\nabla \tu|^{q(x)} \ dx+1 &\overset{\redlabel{5.39.a}{a}}{\apprle} \int_{\btr} |\nabla \tu|^{p(x)} \ dx+1 \apprle \int_{\btr} |\nabla u|^{p(x)} \ dx + \int_{\btr} \left| \frac{u}{\tr}\right|^{p(x)} dx+1\\
 & \overset{\redlabel{5.39.b}{b}}{\leq} C(n,\pp,M^u)  =: M^{\tu}.
 \end{array}
\end{equation*}
To obtain \redref{5.39.a}{a}, we used Lemma \ref{young} and to obtain \redref{5.39.b}{b}, we used Theorem \ref{measure_density_poincare}.

From Lemma \ref{lipschitz_extension_one}, we see that $[g^{-\de}]_{A_{\frac{\pp}{q}}} \leq C_0(n,\plog,q)$ and hence we can apply Theorem \ref{integral-px-maximal-weighted} with ${M}_1 = C_0(n,\plog,q)$ and ${M}_2 = M^{\tu}$ to obtain an $R_1= R_1(M_1,M_2,\plog)$ such that for any $2r \leq R_1$ and  for any $h \in L_{g^{-\de}}^{\frac{\pp}{q}}(B_{2r})$ with $\int_{B_{2r}} |h|^{\frac{p(x)}{q}} g^{-\de} \ dx + 1 \leq M^{\tu}$, there holds:
\begin{equation}
 \label{g_integral_bound}
 \int_{B_r} \mm_{<r} (|h|)^{\frac{p(x)}{q}} \ g^{-\de} \ dx  \apprle \int_{B_{2r}} |h|^{\frac{p(x)}{q}} g^{-\de}\ dx + 1.
\end{equation}
Further choose $\tilde{R}_1$ such that 
\begin{equation}
\label{def_ratio}
 \rho(2\tilde{R}_1) \leq  \sqrt{\frac{n+1}{n}}-1<1.
\end{equation}
Now choose $R_8 \leq \min \left\{ \frac{R_1}{2},\frac{\tilde{R}_1}{2}, \frac{1}{2 M^{\tu}}, \frac{1}{\om_n^{1/n}}\right\}$. With this choice of $R_8$, we have from Lemma \ref{lipschitz_extension_one} that 
\begin{gather}
g(x) \approx \mm_{<2\tr}(|\nabla \tu|^q+\mu^q)^{\frac{1}{q}}(x) \quad \text{on} \ \RR^n, \label{g_equiv_bnd_3} \\
\int_{B_{2\tr}} g(x)^{{p(x) - \de}}\ dx \leq  C({\plog},n) \lbr \int_{\btr} |\nabla \tu|^{p(x) -\de} \ dx+ |\btr| \rbr \label{g_maximal_bound_bnd_3}.
\end{gather}
We can now apply Lemma \ref{lipschitz_extension_two} to get a Lipschitz test function $\vlam \in W_0^{1,\infty}(\omt)$. Using this in \eqref{boundary_equation} and then  multiplying by $\la^{-1-\de}$ and integrating over $(0,\infty)$ with respect to $\la$, for $\flam := \{ x \in \omt: g(x) \leq \la \}$, we get
\[
 I_1 -I_2 = I_3 - I_4,
\]
where 
\begin{itemize}[leftmargin=*]
\begin{minipage}{.5\textwidth}
\item $I_1 := \int_0^{\infty} \la^{-1-\de} \int_{\flam} \iprod{\aa(x,\nabla u)}{\nabla \vlam}\ dx d \la$.
\item $I_2 := \int_0^{\infty} \la^{-1-\de} \int_{\flam} \iprod{|\bff|^{p(x)-2} \bff}{\nabla \vlam}\ dx d \la$.
\end{minipage}
\begin{minipage}{.5\textwidth}
   \item $I_3 := \int_0^{\infty} \la^{-1-\de} \int_{\flam^c} \iprod{|\bff|^{p(x)-2} \bff}{\nabla \vlam}\ dx d \la$.
    \item $I_4 := \int_0^{\infty} \la^{-1-\de} \int_{\flam^c} \iprod{\aa(x,\nabla u)}{\nabla \vlam}\ dx d \la$.
\end{minipage}
\end{itemize}

Let us now estimate each of the terms as follows:

\begin{description}[leftmargin=*]
 \item[Estimate for $I_1$:] Applying Fubini's theorem, we get
 \begin{equation*}
  I_1  = \frac{1}{\de} \int_{\btr} g^{-\de} \iprod{\aa(x,\nabla u)}{\nabla \tu}\ dx = \frac{1}{\de} \lbr \int_{\btrh} + \int_{D_1} + \int_{D_2} \rbr g^{-\de} \iprod{\aa(x,\nabla u)}{\nabla \tu}\ dx,
 \end{equation*}
where we have set 
\begin{gather}
 D_1 := \left\{ x \in \btr\setminus \btrh  : \mm_{<2\tr} (| \nabla \tu|^q + \mu^q)^{\frac1{q}} (x) \leq \de\mm_{<2\tr} \lbr (| \nabla u|^q +\mu^q)\chi_{\btr}\rbr^{\frac1{q}} (x) \right\} \label{def_D_1},\\
 D_2 := \btr \setminus ( D_1 \cup \btrh)\label{def_D_2}.
\end{gather}

\begin{description}[leftmargin=*]
 \item[Estimate on $\btrh$:] Since $\phi \equiv 1$ on $\btrh$, we see that 
 \begin{equation}\label{eq8.12}
 \begin{aligned}
  \int_{\btrh} g^{-\de} \iprod{\aa(x,\nabla u)}{\nabla \tu}\ dx & \apprge \int_{\btrh} g^{-\de} (\mu^2 + |\nabla u|^2)^{\frac{p(x)-2}{2}} |\nabla u|^2 \ dx \\
  & \apprge \left\{ \begin{array}{lcl}
  \int_{\btrh} g^{-\de}  |\nabla u|^{p(x)} \ dx &\text{if} &\mu=0, \\
  \int_{\btrh} g^{-\de}  |\nabla u|^{p(x)} \ dx - \frac{|B_{\tr}|}{\mu^\delta} &\text{if} &\mu \neq 0.
  \end{array}\right.
  \end{aligned}
 \end{equation}
We only prove this theorem for the case $\mu \neq 0$ since the case $\mu = 0$ can be obtained in the same way. We now apply \eqref{g_integral_bound} to obtain the estimate: 
\begin{equation*}\label{eq8.13}
 |\btrh| + \int_{\btrh} g^{-\de} |\nabla u|^{p(x)} \ dx \apprge \int_{\btrh} g^{-\de} \mm_{<2\tr} (|\nabla u|^q \chi_{\btrh} )^{\frac{p(x)}{q}} \ dx .
\end{equation*}
By definition of the maximal function, we have 
\begin{equation*}\label{g_bound_lemma}
 g(x) \leq C_1 \mm_{<2\tr} (|\nabla u|^q \chi_{\btrh})^{1/q}(x) + C_2 \lbr \fint_{\btr} |\nabla u|^q \ dx \rbr^{1/q} + C_3 \mu \quad \text{for all} \ x \in B_{\tr/2}.
\end{equation*}
Define the set 
\begin{equation*}
 \label{def_G}
 G:= \left\{ x \in B_{\tr/2}: C_1  \mm_{<2\tr} (|\nabla u|^q \chi_{\btrh})^{1/q}(x)  \geq C_2 \lbr \fint_{\btr} |\nabla u|^q \ dx \rbr^{1/q} + C_3 \mu \right\}.
\end{equation*}
Thus for $x \in G$, we see that 
 $g(x) \leq 2 C_1 \mm_{<2\tr} (|\nabla u|^q \chi_{\btrh})^{1/q}(x).$
Using this, we get
\begin{equation}
\label{8.16}
 \begin{aligned}
  \int_{\btrh} g^{-\de} \mm_{<2\tr} (|\nabla u|^q \chi_{\btrh} )^{\frac{p(x)}{q}} \ dx  & \apprge \int_{G} g^{-\de} \mm_{<2\tr} (|\nabla u|^q \chi_{\btrh} )^{\frac{p(x)}{q}} \ dx \\
  & \apprge \int_G  \mm_{<2\tr} (|\nabla u|^q \chi_{\btrh} )^{\frac{p(x)-\de}{q}} \ dx \\
  & \apprge \int_{B_{\tr/2}} |\nabla u|^{p(x) -\de} \ dx - \int_{\btrh \setminus G}  \mm_{<2\tr} (|\nabla u|^q \chi_{\btrh} )^{\frac{p(x)-\de}{q}} \ dx \\
  & \apprge \int_{B_{\tr/2}} |\nabla u|^{p(x) -\de} \ dx - \int_{\btrh}  \lbr \fint_{\btr} |\nabla u|^q \ dx \rbr^{\frac{p(y) - \de}{q}}\ dy - \mu |\btrh|.
 \end{aligned}
\end{equation}

We shall now estimate $\int_{\btrh}  \lbr \fint_{\btr} |\nabla u|^q \ dx \rbr^{\frac{p(y) - \de}{q}}\ dy$ as follows: fix any $y \in \btrh$ and set $\mathfrak{t}:= \frac{\pd-\de}{q}\geq \frac{p^--\de}{q}>1$, then we have 
\begin{equation}
\label{8.17}
 \begin{array}{ll}
  \lbr \fint_{\btr} |\nabla u|^q \ dx \rbr^{\frac{p(y) - \de}{q}} & \apprle \lbr \fint_{\btr} ( |\nabla u|+1)^{\frac{(p(x) -\de)}{\mathfrak{t} }} \ dx \rbr^{\frac{p(y) - \de}{q}} \\
  & \apprle \lbr \fint_{\btr} ( |\nabla u|+1)^{\frac{(p(x) -\de)}{\mathfrak{t} }} \ dx \rbr^{\mathfrak{t} + \frac{p(y) - \pd}{q}} \\
  &\apprle  \lbr \fint_{\btr} ( |\nabla u|+1)^{\frac{(p(x) -\de)}{\mathfrak{t} }} \ dx \rbr^{\mathfrak{t} } \lbr \frac{1}{|B_{2\tr}|} M^u\rbr^{\frac{p(y) - \pd}{q}} \\
  & \apprle \lbr \fint_{\btr}  |\nabla u|^{\frac{(p(x) -\de)}{\mathfrak{t} }} \ dx \rbr^{\mathfrak{t} } + 1. 
 \end{array}
\end{equation}

Thus using \eqref{8.17} into \eqref{8.16}, we get
\begin{equation}
\label{estimate_one}
 \begin{array}{ll}
    \int_{\btrh} g^{-\de} \iprod{\aa(x,\nabla u)}{\nabla \tu}\ dx   & \apprge \int_{B_{\tr/2}} |\nabla u|^{p(x) -\de} \ dx - |\btrh|  \lbr \fint_{\btr}  |\nabla u|^{\frac{(p(x) -\de)}{\mathfrak{t} }} \ dx \rbr^{\mathfrak{t} } - \frac{|\btrh|}{\mu^\de}.
 \end{array}
\end{equation}

 \item[Estimate on $D_1$:] Recall that $\mu \in (0,1)$, after using \eqref{bounded}, we get
 \begin{equation}
  \label{estimate_D_1}
  \begin{array}{@{}r@{}c@{}l@{}@{}}
   \int_{D_1} g^{-\de} \iprod{\aa(x,\nabla u)}{\nabla \tu}\ dx 
   & \apprle & \int_{D_1} \mm_{<2\tr} (|\nabla \tu|^q + \mu^q)^{\frac{1-\de}{q}} \mm_{<2\tr} ((|\nabla u|^q+\mu^q)\chi_{\btr})^{\frac{p(x)-1}{q}}  \ dx \\
   & {\overset{\eqref{def_D_1}}{\apprle}} &\de^{1-\de}\int_{D_1}  \mm_{<2\tr} ((|\nabla u|^q+\mu^q)\chi_{\btr})^{\frac{p(x)-\de}{q}}  \ dx \\
   & \overset{\text{Corollary \ref{unweighted_maximal_bound}}}{\apprle} &\de^{1-\de} \lbr \int_{\btr}|\nabla u|^{p(x) -\de} \ dx + |\btr| \rbr.
  \end{array}
 \end{equation}

 \item[Estimate on $D_2$:]
 Using the expansion $\nabla \tu = \phi \nabla u  + u \nabla \phi$, we see that 
 \[\int_{D_2} g^{-\de} \iprod{\aa(x,\nabla u)}{\nabla u} \phi \ dx \geq 0.\]
 Hence we can ignore this term and bound the other term from above as follows:
 \begin{equation*}
  \label{estimate_D_2}
  \begin{array}{@{}r@{}c@{}l@{}@{}}
  \int_{D_2} g^{-\de} \iprod{\aa(x,\nabla u)}{\nabla \phi} u \ dx & \apprle & \int_{D_2} g^{-\de} (\mu^{p(x)-1} + |\nabla u|^{p(x)-1})  \lbr[|] \frac{u}{2\tr} \rbr[|]\ dx \\
  & \overset{\eqref{g_equiv_bnd_3}}{\apprle} & \int_{D_2} \mm_{<2\tr}(|\nabla \tu|^q+\mu^q)^{\frac{-\de}{q}}  \mm_{<2\tr} ((|\nabla u|^q+\mu^q) \chi_{\btr})^{\frac{p(x)-1}{q}} \lbr[|] \frac{u}{2\tr}\rbr[|] \ dx \\
  &{\overset{\eqref{def_D_2}}{\apprle}}& \de^{-\de} \int_{D_2} \mm_{<2\tr} ((|\nabla u|^q+\mu^q) \chi_{\btr})^{\frac{p(x)-1-\de}{q}} \lbr[|] \frac{u}{2\tr}\rbr[|] \ dx. \\
  \end{array}
 \end{equation*}
We can now apply Young's inequality along with Corollary \ref{unweighted_maximal_bound} to get 
\begin{equation}
 \label{estimate_D_2_1}
 \begin{array}{@{}r@{}c@{}l@{} @{}}
  \int_{D_2} g^{-\de} \iprod{\aa(x,\nabla u)}{\nabla \phi} u \ dx 
  &\overset{\text{Corollary \ref{unweighted_maximal_bound}}}{ \apprle} & \varepsilon_1  \int_{\btr} |\nabla u|^{{p(x)-\de}}\ dx +\\
  && \qquad + C(\varepsilon_1)|\btr| \lbr[\{] \fint_{\btr} \lbr[|] \frac{u}{2\tr}\rbr[|]^{\pu-\de} \ dx + 1 \rbr[\}].
 \end{array}
\end{equation}

To control the term $ \fint_{\btr} \lbr[|] \frac{u}{2\tr}\rbr[|]^{\pu-\de} \ dx$, we apply the Sobolev-Poincar\'{e} inequality  with $\mathfrak{s} := \sqrt{\frac{n+1}{n}}$ (see \cite{MZ} or \cite[Theorem 3.3]{AP1} and references therein for the details) to obtain
  \begin{equation}
   \label{estimate_D_2_3}
   \begin{array}{ll}
    \fint_{\btr} \lbr[|] \frac{u}{2\tr}\rbr[|]^{\pu-\de} \ dx & \apprle \lbr \fint_{\btr} |\nabla u|^{\frac{\pd-\de}{\mathfrak{s}}}\ dx \rbr^{\frac{\mathfrak{s}(\pu-\de)}{(\pd-\de)}}.
   \end{array}
  \end{equation}
This is possible since \eqref{def_ratio} implies $\frac{\pu-\de}{\pd-\de} \leq \mathfrak{s}$, hence $\frac{\pd-\de}{\mathfrak{s}} \geq \frac{n (\pu-\de)}{n+1} \geq \frac{n (\pu-\de)}{n+(\pu-\de)}$, which implies $\lbr \frac{\pd-\de}{\mathfrak{s}}\rbr^* \geq \lbr \frac{n (\pu-\de)}{n+(\pu-\de)}\rbr^*=\pu-\de$.

Thus we can further bound \eqref{estimate_D_2_3} using the $\log$-H\"older continuity of $\pp$ to obtain:
\begin{equation}
\label{estimate_D_2_4}
 \begin{array}{r@{}l@{}}
   \lbr \fint_{\btr} |\nabla u|^{\frac{\pd-\de}{\mathfrak{s}}}\ dx \rbr^{\frac{\mathfrak{s}(\pu-\de)}{(\pd-\de)}} & \apprle  \lbr \fint_{\btr} |\nabla u|^{{p(x)-\de}{}} +1 \ dx \rbr^{\frac{\mathfrak{s}\rho(4\tr)}{p^-}}  \lbr \fint_{\btr} |\nabla u|^{\frac{p(x) -\de}{\mathfrak{s}}}+1\ dx \rbr^{{\mathfrak{s}}} \\
   & \apprle \lbr \frac{1}{|\btr|} M^u \rbr^{\frac{\mathfrak{s}\rho(4\tr)}{p^-}}\lbr \fint_{\btr} |\nabla u|^{\frac{p(x) -\de}{\mathfrak{s}}}+1\ dx \rbr^{{\mathfrak{s}}} \\
   & \apprle \lbr \fint_{\btr} |\nabla u|^{\frac{p(x) -\de}{\mathfrak{s}}}\ dx \rbr^{{\mathfrak{s}}} +1.
 \end{array}
\end{equation}

Thus combining \eqref{estimate_D_2_4} and \eqref{estimate_D_2_3} into \eqref{estimate_D_2_1}, we get
\begin{equation}
 \label{estimate_D_2_final}
\int_{D_2} g^{-\de} \iprod{\aa(x,\nabla u)}{\nabla \phi} u \ dx \apprle \varepsilon_1  \int_{\btr} |\nabla u|^{{p(x)-\de}}\ dx +  C(\varepsilon_1) |\btr| \mgh{\lbr \fint_{\btr} |\nabla u|^{\frac{p(x) -\de}{\mathfrak{s}}}\ dx \rbr^{{\mathfrak{s}}} + 1}.
\end{equation}

  \end{description}

 \item[Estimate for $I_2$:] Proceeding as in the proof of Theorem \ref{boundary_below_exponent}, after applying Fubini's theorem, \eqref{g_maximal_bound_bnd_3} and Theorem \ref{measure_density_poincare}, we get
 \begin{equation}
  \label{i_2_2}
  \begin{array}{rcl}
  I_2 
  &\apprle &\frac{C(\varepsilon_2)}{\de} \int_{\btr} |\bff|^{p(x) -\de} \ dx  +  \frac{\varepsilon_2}{\de} \int_{\btr}|\nabla u|^{p(x) - \de} \ dx +  \frac{\varepsilon_2}{\de}|\btr|. 
  \end{array}
 \end{equation}

 \item[Estimate for $I_3$ and $I_4$:] Using the bound $|\nabla \vlam| \leq C \la$, followed by applying Fubini's theorem, \eqref{g_maximal_bound_bnd_3} and Theorem \ref{measure_density_poincare}, we get
 \begin{equation}
 \label{bound_i_3}
  \begin{array}{rcl}
   I_3 
   &\apprle &\int_{\btr} |\nabla u|^{p(x) - \de} \ dx + \int_{\btr} |\bff|^{p(x) - \de} \ dx+ |\btr|,
  \end{array}
 \end{equation}
 \begin{equation}
 \label{bound_i_4}
  \begin{array}{rcl}
   I_4 
   & \apprle &\int_{\btr} |\nabla u|^{p(x) - \de} \ dx + |\btr|.
  \end{array}
 \end{equation}

\end{description}

Combining \eqref{estimate_one}, \eqref{estimate_D_1} and \eqref{estimate_D_2_final}--\eqref{bound_i_4}, for  $\kappa :=\min \{ \mathfrak{t}, \mathfrak{s}\}$ where $\mathfrak{t}=\frac{p^-_{\btr}-\de}{q}$ as used to obtain \eqref{estimate_one} and $\mathfrak{s} = \sqrt{\frac{n+1}{n}}$ as used to obtain  \eqref{estimate_D_2_3}, we get 
\begin{equation*}
 \begin{array}{ll}
  \int_{B_{\tr/2}} |\nabla u|^{p(x) - \de} \ dx & \apprle  C(\varepsilon_1) |\btrh| \lbr \fint_{\btr} |\nabla u|^{\frac{p(x)-\de}{{\kappa}}} \ dx \rbr^{{\kappa}}+  (\de^{1-\de} + \varepsilon_1+\varepsilon_2 + \de) \int_{\btr} |\nabla u|^{p(x) - \de} \ dx\\
  & \qquad +  (C(\varepsilon_2) + \de) \int_{\btr} |\bff|^{p(x) - \de} \ dx + (1+\de^{1-\de} + \de+ C(\varepsilon_1)) |\btr| + \frac{|\btrh|}{\mu^\de}.
 \end{array}
\end{equation*}
Choosing $\varepsilon_1, \varepsilon_2$ small followed by $\de \in (0,\de_3)$ with $\de_3$ small, for some $\theta \in \lbr 0,\frac12\rbr $, we get
\begin{equation*}
 \begin{array}{ll}
  \fint_{B_{\tr/2}} |\nabla u|^{p(x) - \de} \ dx & \apprle \lbr \fint_{\btr} |\nabla u|^{\frac{p(x)-\de}{{\kappa}}} \ dx \rbr^{{\kappa}} + \theta \fint_{\btr} |\nabla u|^{p(x) - \de} \ dx + \fint_{\btr} |\bff|^{p(x) - \de} \ dx + \frac{1}{\mu^\de} + 1.
 \end{array}
\end{equation*}
\emph{In the case $\mu=0$, we can eliminate the term $\frac{1}{\mu^\de}$ above  because of \eqref{eq8.12}}. We can now apply Lemma \ref{gehring} to obtain the desired higher integrability. 
\end{proof}

%


 \subsection{Estimates for the homogeneous problem}
 In this section, we obtain useful  higher integrability results for the homogeneous problem that was proved in \cite[Lemma 3.5]{BO}.
 
\begin{theorem}[\cite{BO}]\label{higher_integrability} Let $\Om$ be a $(\ga,\sig,S_0)$-quasiconvex domain for some $\ga>0$, $\sig>0$ and $S_0>0$. Let $u \in W^{1,\pp}_0(\Om)$ solve \eqref{basic_pde} and $w \in W^{1,\pp}(\Om_r)$ solve \eqref{w_approx_difference} with $r \leq \frac{R_9}{2}$ where 
 $$R_9 :=  \min \left\{\frac{1}{2M^u},\frac{1}{2M^w},\frac{1}{2M_0}, \frac{1}{\om_n^{1/n}}, S_0 ,1\right\} \quad \text{and} \quad \rho(R_9) \leq \frac{1}{2n} < 1.$$ Recall that  $\rho$ is the $\log$-H\"older modulus of continuity function for $\pp$.
 
 Then there exists $\sigma_2 =\sigma_2(\La_0,\La_1,\plog,n) \in (0,4(p^--1)]$ such that for any  $\bar{\sigma} \in (0,\sigma_2]$, $\mathfrak{t} >0 $ and any $0<2\tr\leq r$ with $B_{2\tr}(x_0) \subset B_r$, there holds:
%
 \begin{equation}
  \label{higher_integrability_interior_estimate}
   \fint_{B_{\tr}(x_0)} |\nabla w|^{p(x) (1+\bar{\sigma})} \ dx  \le C \lbr \fint_{B_{2\tr}(x_0)} |\nabla w|^{p(x) \mathfrak{t} } \ dx \rbr^{\frac{1+\bar{\sigma}}{\mathfrak{t}}} + 1,
 \end{equation}
where the constant $C = C(\La_0,\La_1,{\plog},n,\mathfrak{t})$. 
\end{theorem}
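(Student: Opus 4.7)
The plan is to establish a reverse Hölder inequality for $|\nabla w|^{p(x)}$ on balls $B_{2\tilde r}(x_0) \subset B_r$ and then invoke Gehring's Lemma \ref{gehring}. Since $w$ solves the \emph{homogeneous} equation $\dv\aa(x,\nabla w)=0$ (with zero boundary data on $\partial_w \Om_r$ in the boundary case), the right-hand side of the Caccioppoli inequality will contain no forcing term, which is what makes this estimate independent of $\bff$.

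The first step is a Caccioppoli-type bound. For any ball $B_{2s}(y) \subset B_r$, test the equation \eqref{w_approx_difference} against $(w-k)\eta^{p^+}$ where $\eta \in C_c^\infty(B_{2s}(y))$ is a standard cutoff with $\eta \equiv 1$ on $B_s(y)$ and $|\nabla\eta| \le 2/s$; take $k=(w)_{B_{2s}(y)}$ for interior balls and $k=0$ when $B_{2s}(y)\cap\partial\Om_r \neq \emptyset$. Using the structural bounds \eqref{bounded} and \eqref{coercivity} together with Young's inequality (Lemma \ref{young}), absorption of the resulting $|\nabla w|^{p(x)}$ term, and the universal size bound $M^w$ from \eqref{size_w}, one obtains
\[
\int_{B_s(y)} |\nabla w|^{p(x)}\,dx \;\le\; C \int_{B_{2s}(y)} \Bigl|\tfrac{w-k}{s}\Bigr|^{p(x)} dx + C\,|B_{2s}(y)|.
\]

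The second step is to convert this into a reverse Hölder inequality. Set $\mathfrak{s}:=\sqrt{(n+1)/n}>1$; the restriction $\rho(R_9)\le 1/(2n)$ is exactly what ensures the Sobolev exponent $(p^-_{B_{2s}}/\mathfrak{s})^*$ dominates $p^+_{B_{2s}}$, so one can apply a variable exponent Sobolev–Poincaré inequality. In the interior case this is Lemma \ref{scaled_poincare}; at the boundary we use Theorem \ref{measure_density_poincare}, whose hypothesis on the zero set is supplied by the measure density bound \eqref{lower_bound_domain_complement} of the quasiconvex domain. Log-Hölder continuity of $\pp$ together with the universal bound $M^w$ is used (exactly as in the estimate \eqref{estimate_D_2_4} above) to absorb the jump in exponents, yielding
\[
\fint_{B_s(y)} |\nabla w|^{p(x)}\,dx \;\le\; C\left(\fint_{B_{2s}(y)} |\nabla w|^{p(x)/\mathfrak{s}}\,dx\right)^{\mathfrak{s}} + C.
\]

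Finally, setting $f:= |\nabla w|^{p(x)/\mathfrak{s}}$ and $g\equiv 1$, the above is precisely a reverse Hölder inequality in the form required by Gehring's Lemma \ref{gehring}. Applying that lemma produces a self-improving exponent $\sigma_2=\sigma_2(\La_0,\La_1,\plog,n)>0$ and, crucially, allows any $\mathfrak{t}>0$ on the right-hand side (that is the very content of the free parameter in the statement of Lemma \ref{gehring}), which yields the claimed estimate \eqref{higher_integrability_interior_estimate}. The main obstacle is the second step: verifying that the Sobolev–Poincaré inequality passes cleanly across the variable exponent, both in the interior (handling $(w-(w)_{B_{2s}})$) and at the boundary (requiring zero-extension together with the uniform measure density of $\Om^c$ guaranteed by the quasiconvex condition in Definition \ref{quasiconvex_definition} and Lemma \ref{measure_density_quasiconvex}). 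This is where the choice of $R_9$ and of $\sigma_2\le 4(p^--1)$ are pinned down so that all exponent shifts remain admissible.
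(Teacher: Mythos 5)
The paper does not prove Theorem \ref{higher_integrability}: it is quoted from \cite[Lemma 3.5]{BO}, so there is no proof in the present source to compare against. Your Caccioppoli $\to$ Sobolev--Poincar\'e $\to$ Gehring blueprint is the canonical route for such a statement, and the handling of the boundary case via zero extension and the measure density of $\Om^c$ (Lemma \ref{measure_density_quasiconvex}) is the right idea.

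Two details are off. First, with $\mathfrak{s}=\sqrt{(n+1)/n}$ the requirement $(p^-_{B}/\mathfrak{s})^{*}\geq p^+_{B}$ reduces to $\rho \leq p^+_B\left(1 - \frac{n\mathfrak{s}}{n+p^+_B}\right)$, which as $p^+_B\to 1^+$ is approximately $\frac{1}{2(n+1)}$; this is \emph{not} implied by $\rho(R_9)\leq\frac{1}{2n}$. Indeed, when the paper does use $\mathfrak{s}=\sqrt{(n+1)/n}$ (Theorem \ref{higher_integrability_full} via \eqref{def_ratio}, and Definition \ref{final_radius_restriction}) it imposes the \emph{separate} and slightly stronger threshold $\rho\leq\sqrt{(n+1)/n}-1$. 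A smaller choice such as $\mathfrak{s}=1+\frac{1}{4n}$ makes $\rho\leq\frac{1}{2n}$ sufficient for all $n\geq 2$, so the argument is salvageable but your ``exactly'' is not. Second, Gehring's Lemma \ref{gehring} gives \emph{some} self-improvement exponent; the cap $\sigma_2\leq 4(p^--1)$ appearing in the statement (and used in \eqref{8.21}) is not a byproduct of the reverse-H\"older machinery and must be imposed by taking a minimum, which your proposal omits.
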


 As a consequence, we obtain the following important corollary.
 \begin{corollary}
 \label{gehring_improved}
 Under the assumptions of Theorem \ref{higher_integrability}, we have the ameliorated estimate for any $2\tr < r$ such that $B_{2\tr}(x_0) \subset B_r$ with $2r \leq R_9$:
 \begin{equation*}
  \fint_{B_{\tr}(x_0)} |\nabla w|^{p(x)} \ dx \le C  \lbr \fint_{B_{2\tr}(x_0)} |\nabla w| \ dx \rbr^{p^+_{B_{2\tr}}}+1,
 \end{equation*}
for some $C = C({\plog},\La_0,\La_1,n)$.
\end{corollary}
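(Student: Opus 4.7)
The plan is to invoke the higher-integrability estimate \eqref{higher_integrability_interior_estimate} of Theorem \ref{higher_integrability} with a carefully tuned pair $(\mathfrak{t},\bar\sigma)$ and then use the $\log$-H\"older continuity of $p(\cdot)$ to swallow the self-improvement factor $(1+\bar\sigma)$ in the resulting exponent.

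For any $\bar\sigma\in(0,\sigma_2]$, the pointwise inequality $a^s\le a^{s'}+1$ (valid for $0\le s\le s'$ and $a\ge 0$) gives $|\nabla w|^{p(x)}\le |\nabla w|^{p(x)(1+\bar\sigma)}+1$.  I would then apply Theorem \ref{higher_integrability} with the specific choice $\mathfrak{t}:=1/p^+_{B_{2\tr}}$, so that $p(x)\mathfrak{t}\le 1$ on $B_{2\tr}(x_0)$ and $(1+\bar\sigma)/\mathfrak{t}=p^+_{B_{2\tr}}(1+\bar\sigma)$.  The trivial pointwise bound $|\nabla w|^{p(x)\mathfrak{t}}\le |\nabla w|+1$ then converts the right-hand side of \eqref{higher_integrability_interior_estimate} into a power of the $L^1$-average, yielding
\[
\fint_{B_{\tr}(x_0)} |\nabla w|^{p(x)}\,dx \le C \left(\fint_{B_{2\tr}(x_0)} |\nabla w|\,dx +1\right)^{p^+_{B_{2\tr}}(1+\bar\sigma)} + C.
\]

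The key remaining task is to absorb the factor $\bigl(\fint|\nabla w|+1\bigr)^{p^+_{B_{2\tr}}\bar\sigma}$ into a universal constant.  Jensen's inequality at the constant exponent $p^-$ combined with the energy bound \eqref{size_w} gives $\fint_{B_{2\tr}(x_0)}|\nabla w|\,dx \le C\,\tr^{-n/p^-}$, so the offending factor is controlled by $C\,\tr^{-np^+\bar\sigma/p^-}$.  I would then choose $\bar\sigma:=\min\{\sigma_2,\ p^-\rho(4\tr)/(np^+)\}$, where $\rho$ denotes the modulus of continuity of $p(\cdot)$.  In the regime where this minimum equals $p^-\rho(4\tr)/(np^+)$, the exponent reduces to $\tr^{-\rho(4\tr)}$, which the $\log$-H\"older inequality $\rho(4\tr)\log(1/(4\tr))\le L$ bounds by a universal constant; in the complementary regime $\bar\sigma=\sigma_2$, the relation $\rho(4\tr)\ge np^+\sigma_2/p^-$ together with the vanishing of $\rho$ at $0$ pins $\tr$ above a threshold depending only on $\plog$, so $\tr^{-np^+\sigma_2/p^-}$ is again bounded.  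Substituting back and using $(a+1)^{p^+_{B_{2\tr}}}\le 2^{p^+-1}(a^{p^+_{B_{2\tr}}}+1)$ gives the desired inequality.

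The main obstacle is precisely this coupling between $\bar\sigma$ and $\tr$ through $\rho$: one must verify that the proposed $\bar\sigma$ remains admissible in Theorem \ref{higher_integrability} and simultaneously cancels the logarithmic growth of $\tr^{-np^+\bar\sigma/p^-}$.  This balance is exactly what the $\log$-H\"older hypothesis on $p(\cdot)$ is designed to deliver; everything else is routine pointwise manipulation combined with Jensen's inequality and the uniform energy estimate \eqref{size_w}.
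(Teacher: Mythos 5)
Your proof is correct, but it takes a more involved route than the paper's. The paper's one-line proof ``From H\"older's inequality and Theorem \ref{higher_integrability}, setting $\mathfrak{t}=1/p^+$\ldots'' uses H\"older (not Young) in the first step: writing
\[
\fint_{B_{\tr}} |\nabla w|^{p(x)}\,dx \;\le\; \Bigl(\fint_{B_{\tr}} |\nabla w|^{p(x)(1+\bar\sigma)}\,dx\Bigr)^{\frac{1}{1+\bar\sigma}},
\]
then applying \eqref{higher_integrability_interior_estimate} with $\mathfrak{t}=1/p^+_{B_{2\tr}}$ and raising both sides to the $\tfrac{1}{1+\bar\sigma}$ power. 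Because $(A+B)^{\alpha}\le A^{\alpha}+B^{\alpha}$ for $\alpha\le 1$, the exponent on the right collapses to $\frac{(1+\bar\sigma)/\mathfrak{t}}{1+\bar\sigma}=1/\mathfrak{t}=p^+_{B_{2\tr}}$ immediately, with no $\bar\sigma$-dependent factor left over, so there is nothing to absorb and the $\log$-H\"older hypothesis is never invoked. Your route instead bounds $|\nabla w|^{p(x)}\le |\nabla w|^{p(x)(1+\bar\sigma)}+1$ additively, which leaves the exponent $p^+_{B_{2\tr}}(1+\bar\sigma)$ intact, and then you must make $\bar\sigma$ depend on $\tr$ (via $\rho(4\tr)$) and invoke $\log$-H\"older continuity together with the energy bound \eqref{size_w} to swallow $(\fint|\nabla w|+1)^{p^+_{B_{2\tr}}\bar\sigma}$. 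That extra machinery does work as you describe, and the case split between $\bar\sigma=\sigma_2$ and $\bar\sigma=p^-\rho(4\tr)/(np^+)$ is handled correctly, but it is avoidable. One further remark that applies to both routes: the constant in Theorem \ref{higher_integrability} depends on $\mathfrak{t}$, and since $\mathfrak{t}=1/p^+_{B_{2\tr}}$ varies with the ball, one is implicitly using that the dependence is uniform over $\mathfrak{t}\in[1/p^+,1/p^-]$; neither you nor the paper state this, though it is a reasonable (and standard) implicit assumption.
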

\begin{proof}
From H\"{o}lder's inequality and Theorem \ref{higher_integrability}, setting  $\mathfrak{t}=\frac{1}{p^+}$, we get
\begin{equation*}\label{hi2-1}
    \begin{aligned}
        \fint_{B_{\tr}(x_0)} |\nabla w|^{p(x)}\ dx 
        &\apprle \lbr \fint_{B_{2\tr}({x}_0)} (|\nabla w|+1) \ dx \rbr^{p^+_{B_{2\tr}}}+1,
    \end{aligned}
\end{equation*}
which proves the Corollary.
\end{proof}
 
 We can further ameliorate the estimates in Theorem \ref{higher_integrability} and Corollary \ref{gehring_improved} as follows: 
 \begin{lemma}
 \label{higher_w}
  Under the assumptions of Theorem \ref{higher_integrability}, there exists $R_{10} = R_{10}(\plog,\La_0,\La_1,n,M_0) \in (0,R_9 / 2)$ such that for  any solution $w \in W^{1,\pp}(\Om_{4r})$ with $4r < R_{10}$ solving
  \begin{equation}\label{higher_integrability_pde_ameliorated}\left\{
  \begin{array}{ll}
   \dv \aa(x,\nabla w) =  0 & \text{in}\  \Om_{4r},\\
 w   \in u + W_0^{1,\pp} (\Om_{4r}),
 \end{array}\right.
 \end{equation}
 the following estimates hold:
 \begin{equation}
  \label{higher_integrability_ameliorated}
  \begin{aligned}
   \fint_{\Om_{3r}} |\nabla w|^{\pu} \ dx  \le C  \fint_{\Om_{4r}} |\nabla w|^{p(x)  } \ dx +1, \\
  \fint_{\Om_{3r}} |\nabla w|^{\pu(1+\frac{\sigma_2}{4})} \ dx  \le C  \lbr \fint_{\Om_{4r}} |\nabla w|^{p(x)} \ dx\rbr^{1+\frac{\sigma_2}{4}} +1, \\
    \fint_{\Om_{3r}} |\nabla w|^{(\pu-\de)(1+\frac{\sigma_2}{4})} \ dx  \le C  \lbr \fint_{\Om_{4r}} |\nabla w|^{p(x)} \ dx\rbr^{1+\frac{\sigma_2}{4}} +1,
  \end{aligned}
 \end{equation}
where the constants $C = C(\La_0,\La_1,{\plog},n)$.
 \end{lemma}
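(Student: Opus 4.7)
The plan is to reduce all three estimates to Theorem \ref{higher_integrability} by exploiting log-H\"older continuity of $\pp$ to absorb the gap $\pu - p(x) \leq \rho(8r)$, which holds pointwise on $\Om_{4r}$.

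First I would fix $R_{10} \in (0, R_9/2)$ small enough that $\rho(R_{10})/p^- \leq \sigma_2/4$ and, crucially, $(M^u)^{\rho(R_{10})/p^-} \leq 2$. Since $\rho(R_{10}) \to 0$ as $R_{10} \to 0$ while $M^u=M^u(M_0,\plog,\La_0,\La_1,n)$ is fixed, this second condition is where the $M_0$-dependence of $R_{10}$ comes from. The log-H\"older estimate $r^{-\rho(8r)} \leq 8 e^L$ for $r\leq 1/8$ will be used freely below.

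For the first estimate, splitting on $\{|\nabla w| \leq 1\}$ versus $\{|\nabla w| > 1\}$ and using $\pu \leq p(x)(1+\rho(8r)/p^-)$ pointwise, one has
\[ |\nabla w|^{\pu} \leq 1 + |\nabla w|^{p(x)(1+\rho(8r)/p^-)}. \]
A standard covering of $\Om_{3r}$ by balls $B_{r/2}(x_i) \subset B_{4r}$ reduces matters to Theorem \ref{higher_integrability} with $\bar\sigma=\rho(8r)/p^- \leq \sigma_2/4$ and $\mathfrak{t}=1$, yielding
\[ \fint_{\Om_{3r}} |\nabla w|^{p(x)(1+\rho(8r)/p^-)}\, dx \leq C \Big( \fint_{\Om_{4r}} |\nabla w|^{p(x)}\, dx \Big)^{1+\rho(8r)/p^-} + 1. \]
Writing $A := \fint_{\Om_{4r}}|\nabla w|^{p(x)}\, dx$, the energy bound $\int_{\Om_{4r}} |\nabla w|^{p(x)}\, dx \leq M^u$ combined with the measure-density estimate $|\Om_{4r}| \geq c(n)(\sigma r)^n$ from Lemma \ref{measure_density_quasiconvex} gives $A \apprle M^u r^{-n}$, so
\[ A^{\rho(8r)/p^-} \apprle (M^u)^{\rho(8r)/p^-}\, r^{-n\rho(8r)/p^-} \leq C \]
by the choice of $R_{10}$ and the log-H\"older bound. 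Therefore $A^{1+\rho(8r)/p^-} \leq C(A+1)$, which yields the first estimate.

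For the second estimate, the analogous splitting produces
\[ |\nabla w|^{\pu(1+\sigma_2/4)} \leq 1 + |\nabla w|^{p(x)(1+\sigma_2/4)(1+\rho(8r)/p^-)}, \]
and one applies Theorem \ref{higher_integrability} with $\bar\sigma := \sigma_2/4 + \rho(8r)(1+\sigma_2/4)/p^- \leq \sigma_2/2$, which is admissible for $r$ small. The resulting bound $C A^{1+\bar\sigma}+1$ is then factored as $A^{1+\sigma_2/4}\cdot A^{\rho(8r)(1+\sigma_2/4)/p^-}$, with the second factor controlled by a universal constant via the same log-H\"older argument as in the first estimate. The third estimate follows verbatim with $\pu$ replaced by $\pu - \delta$; since $\delta<1/4<p^-$ this causes no change in any step.

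The delicate point will be the uniform-in-$r$ boundedness of $A^{\rho(8r)/p^-}$. It works precisely because the blow-up $A \apprle r^{-n}$ from the energy estimate is cancelled by the log-H\"older decay of the exponent: $r^{-n\rho(8r)/p^-}$ stays bounded thanks to $\rho(8r)\log(1/r)\leq L$, while the $M_0$-dependence of $R_{10}$ is needed only to tame the separate factor $(M^u)^{\rho(8r)/p^-}$.
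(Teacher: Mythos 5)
Your proof is correct and takes essentially the same route as the paper: both reduce each estimate to Theorem \ref{higher_integrability} via the pointwise comparison $\pu \le p(x)\bigl(1 + O(\rho(8r))\bigr)$ and then absorb the resulting factor $A^{O(\rho(8r))}$ by combining the global energy bound (the relevant constant is $M^w$ rather than $M^u$, but they are comparable) with the log-H\"older decay of $\rho$. The only cosmetic differences are that the paper folds $(M^w)^{\rho(8r)}$ and $|B_{4r}|^{-\rho(8r)}$ into one product before invoking log-H\"older, and derives the third inequality from the second by H\"older's inequality rather than a pointwise splitting.
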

\begin{proof}
\renewcommand{\pu}{p^+_{\Om_{4r}}}
\renewcommand{\pd}{p^-_{\Om_{4r}}}
 We make the following choice for $R_{10}$: 
\begin{equation*}\label{further_px_restriction}
  \pu-\pd \leq {\rho}(8r) \leq {\rho}(2R_{10}) \leq \frac{\sigma_2}{4} \qquad \text{and} \quad R_{10} \leq \frac{R_9}{2}.
 \end{equation*}
Then for any $x \in \Om_{4r}$, we see that 
\begin{equation}
\label{8.20}
 \pu \leq p(x) \lbr 1 + \frac{\pu-\pd}{p^-}\rbr \leq p(x) (1+ {\rho}(8r)). 
\end{equation}
and from the restriction of $\sigma_2 \leq 4 (p^--1)$ (see Theorem \ref{higher_integrability}), we get
\begin{equation}
 \label{8.21}
 \begin{array}{ll}
  \pu \lbr 1 + \frac{\sigma_2}{4} \rbr & \leq (p(x) + \pu-\pd) \lbr 1+ \frac{\sigma_2}{4} \rbr  \leq p(x) \lbr 1+ \frac{\sigma_2}{4} \rbr + (\pu-\pd) p^- \\
  & \leq p(x) \lbr 1 + \frac{\sigma_2}{4} + {\rho}(8r) \rbr. \\
 \end{array}
\end{equation}
Since $R_{10} \leq \frac{1}{4M^w}$, making use of the $\log$-H\"older continuity of $\pp$, we get
 \begin{equation}
 \label{bound_w}
\lbr \fint_{B_{4r}} |\nabla w|^{p(x)} \ dx \rbr^{{\rho}(8r)} \leq \lbr \frac{1}{|B_{4r}|} M^w  \rbr^{{\rho}(8r)} \leq C({\plog},n).
 \end{equation}

\begin{description}[leftmargin=*]
 \item[First estimate in \eqref{higher_integrability_ameliorated}:]  
 Using Lemma \ref{measure_density_quasiconvex}, we get
 \begin{equation}
  \label{first_bound_w}
  \begin{array}{@{} r@{}c@{}l @{}}
\fint_{\Om_{3r}} |\nabla w|^{\pu}\ dx & \apprle &  \fint_{B_{3r}} |\nabla w|^{\pu}\ dx     \overset{\eqref{8.20}}{\apprle} \fint_{B_{3r}} |\nabla w|^{p(x) (1 + {\rho}(8r))} \ dx + 1 \\
   & \overset{\eqref{higher_integrability_interior_estimate}}{\apprle}& \lbr \fint_{B_{4r}} |\nabla w|^{p(x)} \ dx \rbr^{1+ {\rho}(8r)} + 1 \\
   & \overset{\eqref{bound_w}}{\apprle} &
   \fint_{\Om_{4r}} |\nabla w|^{p(x)} \ dx + 1.
  \end{array}
 \end{equation}
 \item[Second estimate in \eqref{higher_integrability_ameliorated}:] Again making use of Lemma \ref{measure_density_quasiconvex}, we get
 \begin{equation}
  \label{second_bound_w}
  \begin{array}{@{} r@{}c@{}l @{}}
\fint_{\Om_{3r}} |\nabla w|^{\pu \lbr 1 + \frac{\sigma_0}{4}\rbr}\ dx 
& \stackrel{\eqref{8.21}}{\apprle}& \fint_{B_{3r}} |\nabla w|^{p(x) \lbr 1 + \frac{\sigma_2}{4} + {\rho}(8r)\rbr}\ dx + 1 \\
   & \stackrel{\eqref{higher_integrability_interior_estimate}}{\apprle}& \lbr \fint_{B_{4r}} |\nabla w|^{p(x)} \ dx \rbr ^{ 1 + \frac{\sigma_2}{4} + {\rho}(8r) } + 1 \\
   & \stackrel{\eqref{bound_w}}{\apprle}& 
   \lbr \fint_{\Om_{4r}} |\nabla w|^{p(x)} \ dx \rbr ^{ 1 + \frac{\sigma_2}{4} } + 1.
  \end{array}
 \end{equation}
\item[Third estimate in \eqref{higher_integrability_ameliorated}:] Using H\"older's inequality, we obtain the following chain of estimate:
\begin{equation}
 \label{third_bound_w}
 \begin{array}{@{} r@{}c@{}l @{}}
  \fint_{\Om_{3r}}|\nabla w|^{(\pu-\de)\lbr 1 + \frac{\sigma_2}{4}\rbr} \ dx 
  & \overset{\eqref{second_bound_w}}{\apprle}& \lbr \lbr \fint_{\Om_{4r}} |\nabla w|^{p(x)} \ dx \rbr ^{ 1 + \frac{\sigma_2}{4} } + 1 \rbr^{\frac{\pu-\de}{\pu}} \\
  & \apprle &   \lbr \fint_{\Om_{4R}} |\nabla w|^{p(x)} \ dx  \rbr^{ 1 + \frac{\sigma_2}{4} } + 1. \\
 \end{array}
\end{equation}
To obtain the last inequality above, we observed that $\frac{\pu-\de}{\pu} \leq 1$ holds. 
\end{description}

This proves the lemma. 
 \end{proof}

 \subsection{Approximation by constant exponent operator}

\begin{remark}
\label{remark_boundary_region}
In order to simplify the exposition, we only consider the following two situations in this section: 
 \begin{itemize}
  \item In the interior case, we have $\Om_R = B_R$ .
  \item In the boundary case, we only consider regions of the form $\Om_R(x_0) = B_R(x_0) \cap \Om$ with $x_0 \in \pa \Om$. This simplifies the exposition for this section.  
 \end{itemize}
\end{remark}

\renewcommand{\pu}{p^+_{\Om_{8r}}}
\renewcommand{\pd}{p^-_{\Om_{8r}}}
\renewcommand{\omf}{\Om_{8r}}
\renewcommand{\omt}{\Om_{3r}}

Let us consider the problem  \eqref{wapprox_boundary} in $\Om_{8r}$ with $8r < R_0$ where $R_0$ is defined in Definition \ref{final_radius_restriction}, i.e.,  we have the following PDE: 
\begin{equation}\label{approx_w}\left\{
  \begin{array}{ll}
   \dv \aa(x,\nabla w) =  0 & \text{in}\  \Om_{8r},\\
 w   \in u + W_0^{1,\pp} (\Om_{8r}).
 \end{array}\right.
 \end{equation}

Let us define $\bb = \bb(x,\zeta) : \omf \times \RR^n \to \RR^n$ such that 
\begin{equation}
 \label{definie_b}
\bb(x,\zeta) := \aa(x,\zeta) (\mu^2 + |\zeta|^2)^{\frac{\pu-p(x)}{2}}, 
\end{equation}
then the following holds for every $\eta, \zeta \in \RR^n$ and all $x \in \omf$ (see \cite{BO1} for the details):
\begin{gather}
 (\mu^2 + |\zeta|^2 )^{\frac12} |D_{\zeta} \bb(x,\zeta)| + |\bb(x,\zeta)| \leq 3 \La_1 (\mu^2 + |\zeta|^2)^{\frac{\pu-1}{2}} \label{bounded_b},\\
 \iprod{D_{\zeta} \bb(x,\zeta) \eta}{\eta} \geq \frac{\La_0}{2} (\mu^2 + |\zeta|^2)^{\frac{\pu-2}{2}}|\eta|^2 \label{ellipticity_b}.
\end{gather}
Now define $\bbb=  \bbb(\zeta): \RR^n \to \RR^n$ which denotes the integral average of $\bb(\cdot,\zeta)$ on $B_{8r}^+$ such that
\begin{equation}
 \label{definition_bbb}
 \bbb(\zeta) := \fint_{B_{8r}^+} \bb(x,\zeta) \ dx. 
\end{equation}
It is easy to see that $\bbb$ also satisfies \eqref{bounded_b} and \eqref{ellipticity_b} with $\bb(x,\zeta)$ replaced by $\bbb(\zeta)$. Moreover a direct calculation now gives
\begin{equation}
\def\arraystretch{2.2}
 \label{BMO_bbb}
 \begin{array}{ll}
  \sup_{\zeta \in \RR^n} \frac{|\bb(x,\zeta) - \bbb(\zeta)|}{(\mu^2 + |\zeta|^2)^{\frac{\pu-1}{2}}} & = \sup_{\zeta \in \RR^n} \lbr[|] \frac{\bb(x,\zeta) }{(\mu^2 + |\zeta|^2)^{\frac{\pu-1}{2}}} - \fint_{B^+_{8r}} \frac{\bb(x,\zeta) }{(\mu^2 + |\zeta|^2)^{\frac{\pu-1}{2}}} \ dx \rbr[|]  \\
  & = \sup_{\zeta \in \RR^n} \lbr[|] \frac{\aa(x,\zeta) }{(\mu^2 + |\zeta|^2)^{\frac{p(x)-1}{2}}} - \fint_{B^+_{8r}} \frac{\aa(x,\zeta) }{(\mu^2 + |\zeta|^2)^{\frac{p(x)-1}{2}}} \ dx \rbr[|] \\
  & = \Theta(\aa,B_{8r}^+)(x),
 \end{array}
\end{equation}
where $\Theta(\aa,B_{8r}^+)(x)$ is defined in \eqref{a_difference}.

From the choice of $R_0$ (see Definition \ref{final_radius_restriction}), we see that Lemma \ref{higher_w} is applicable and thus $w \in W^{1,\pu}(\omt)$.  We now consider the following problem:
\begin{equation}
 \label{vapprox_first}
 \left\{ 
 \begin{array}{ll}
  \dv \bbb(\nabla v) = 0 & \text{in} \ \omt, \\
  v \in w + W_0^{1,\pu}(\omt).
 \end{array}\right. 
\end{equation}

%
Given $w$ as in \eqref{approx_w} and $v$ as in \eqref{vapprox_first}, we see that there is a constant $C = C(\plog,\La_0,\La_1,n)$  such that the  following energy estimates hold trivially:
\begin{equation*}\begin{array}{rcl}
  \int_{\Om_{3R}} |\nabla v|^{\pu} \ dx &\leq& C \lbr \int_{\Om_{3R}} |\nabla w|^{\pu} + 1 \ dx \rbr, \\
  \int_{\Om_{3R}} |\nabla v - \nabla w|^{\pu} \ dx &\leq &C \lbr \int_{\Om_{3R}} |\nabla w|^{\pu} + 1 \ dx \rbr .
 \end{array}\end{equation*}

Using \eqref{size_w}, we see that for any $q \leq \pu$, there holds
 \begin{equation*}
  \begin{array}{l}
  \int_{\Om_{3R}} |\nabla v|^{q} \ dx \leq C (M^w+1) =:M^v_1,  \\
  \int_{\Om_{3R}} |\nabla v - \nabla w|^{q} \ dx \leq C( M^w+1) =:M^v_2.
  \end{array}
 \end{equation*}
Let us now define
 \begin{equation}
  \label{size_v}
  M^v := \max \{ M^v_1, M^v_2\}.
 \end{equation}

\subsection{Fixing a few more constants}\label{main_constants}

The first constant that we define concerns the higher integrability exponent:
\begin{equation}
   \label{sigma_0}
\sigma_0 = \sigma_0(n,\plog,\La_0,\La_1) := \min \{ \sigma_1,\sigma_2\},
\end{equation}
where $\sigma_1$ is from Theorem \ref{higher_integrability_full} and $\sigma_2$ is from Theorem \ref{higher_integrability}.


\renewcommand{\pu}{p^+_{\Om_{8r}}}
\renewcommand{\pd}{p^-_{\Om_{8r}}}
\renewcommand{\omf}{\Om_{8r}}
\renewcommand{\omt}{\Om_{3r}}

\begin{definition}
\label{final_radius_restriction}
Let us now define ${R}_0 = {R}_0({\plog},\La_1,\La_1,n,M_0)$, where $M_0$ is defined in \eqref{size_date} to satisfy all the following properties:
\begin{itemize}
 \item ${R}_0 \leq \min \lbr[\{] \frac{1}{4M_0},\frac{1}{4M^v}, \frac{1}{4M^u}, \frac{1}{4M^w}, \frac14,\frac{S_0}{2}, \frac{{R}_8}{2}, \frac{{R}_{10}}{2}\right\}$, where $M_0$ is from \eqref{size_date}, $M^u$ is from \eqref{size_solution}, $M^w$ is from \eqref{size_w}, $R_8$ is from Theorem \ref{higher_integrability_full}, $R_{10}$ is from Lemma \ref{higher_w}, and $S_0>0$ is a universal constant. 
 \item $ {\rho}(2{R}_0) \leq \min \lbr[\{]\sqrt{\frac{n+1}{n}}-1, \frac{\La_0}{2\La_1}, \frac{\sigma_0}{4}, \frac{1}{2n},\frac14 \rbr[\}]$ where $\rho$ is the modulus of continuity of $\pp$.
\item $\om(2R_0) \leq \min \left\{\frac{q^-\sigma_0}{8}, \frac{q^-\tilde{\sigma}}{2}, \frac{(q^-)^2}{4q^+}, \frac{\sigma_0}{2}, \frac14 \right\}$ where $\om$ is the modulus of continuity of $\qq$ and the exponent $\tilde{\sigma}$ is chosen to be $\tilde{\sig} := \min \left\{ \frac{q^--1}{2},1\right\}$.
\end{itemize}
\end{definition}

 
 \begin{definition}
      \label{def_delta_0}
      For any $\ve \in (0,1)$, let us now set $\de_0 = \de_0(n,\plog,\qlog,\La_0,\La_1,\ve)$ such that the following holds:
      \begin{itemize}
           \item $\de_0 = \min\{ \de_1,\de_2,\de_3\}$ where $\de_1$, $\de_2$ and $\de_3$ are from Theorem \ref{boundary_below_exponent}, Theorem \ref{boundary_difference} and Theorem \ref{higher_integrability_full}, respectively.
           \item $\de_0 \leq \de_4$ where $\de_4$ is defined in Theorem \ref{difference_w_v_boundary}. 
      \end{itemize}
 \end{definition}

 \begin{definition}
      \label{def_ga}
      For any $\ve \in (0,1)$, let us now fix the exponent $\ga_0 = \ga_0(n,\plog,\qlog,\La_0,\La_1,\ve)$ such that the following holds:
      \begin{itemize}
           \item $\ga_0 = \min \{ \ga_2,\ga_3,\ga_4\}$ where $\ga_2$, $\ga_3$ and $\ga_4$ are from Theorem \ref{boundary_difference}, Theorem \ref{difference_w_v_boundary} and Theorem \ref{l_infinity_boundary}, respectively. 
      \end{itemize}

 \end{definition}

\begin{remark}
We point out that the above constants $\delta_0$ and $\gamma_0$ are independent of $\qlog$ in Section \ref{aprioriestimates}. In Section \ref{covering_arguments}, however, $\delta_0$ and $\gamma_0$ are dependent on $\qlog$, see Lemma \ref{co2}.
\end{remark}


 \subsection{Estimates satisfied by the constant exponent approximation in \eqref{vapprox_first}}
%

 \begin{theorem}
 \label{difference_w_v_boundary}
For any $\sig \in (0,1/4)$ and $S_{0}>0$,  let $\Om$ be a $(\ga,\sig,S_{0})$-quasiconvex domain for a $\ga \in (0,\ga_3)$ with $\ga_3$ to be chosen and let $r$ be such that  $8r< R_0$ with $R_0$ as defined in Definition \ref{final_radius_restriction}. Let $u \in W_0^{1,\pp}(\Om)$, $w \in W^{1,\pp}(\Om_{8r})$ and  $v \in W^{1,\pu}(\omt)$ solve \eqref{basic_pde}, \eqref{approx_w} and \eqref{vapprox_first}, respectively and fix any $\la>1$. Since $u=0$ on $\pa \Om$, we extend $u=0$ on $\Om^c$ followed by  $w=u$ on $\omf^c$ and $v=w$ on $\omt^c$. For every $\varepsilon \in (0,1)$, there exist $\de_4 = \de_4(\varepsilon,n,\La_0,\La_1,\plog)>0$ and  $\ga_3 = \ga_3(\varepsilon,\La_0,\La_1,\plog,n)>0$ such that for all $\de \in (0,\de_4)$ and $\ga \in (0,\ga_3)$, the following holds:
 \begin{equation}
  \label{bound_on_data_two}
  \fint_{\omf}|\nabla u|^{p(x)-\de} \ dx \leq \la \qquad \text{and} \qquad \fint_{\omf} |\bff|^{p(x)-\de} \ dx \leq \ga \la,
 \end{equation}
then there exists a constant $C = C(\La_0,\La_1,{\plog},n,\sigma)$ such that the following conclusions hold:
\begin{equation*}
 \label{diff_conclusion}
 \begin{array}{c}
 \fint_{\omt} |\nabla v|^{\pu-\de} \ dx + \fint_{\omt} |\nabla w|^{\pu-\de} \ dx \leq C \la, \\
 \fint_{\omt} |\nabla w - \nabla v|^{\pu-\de} \ dx \leq \varepsilon \la. 
 \end{array}
\end{equation*}

\end{theorem}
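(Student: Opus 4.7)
The first bound in the conclusion follows from earlier results. Applying Theorem \ref{boundary_difference} on $\omf$ (valid since $8r < R_0 \le R_7/2$), hypothesis \eqref{bound_on_data_two} gives $\fint_{\omf}|\nabla w|^{p(x)-\de}\le C\la$. To upgrade this to $\fint_{\omt}|\nabla w|^{\pu-\de}$, I exploit that $\pu-p(x)\le \rho(8r)$ is small by Definition \ref{final_radius_restriction}, combined with the homogeneous higher integrability of Lemma \ref{higher_w}. The bound on $\nabla v$ then follows by applying Theorem \ref{boundary_below_exponent} to the constant-exponent equation \eqref{vapprox_first} (with $\tbg=0$ and boundary data $w$), which yields $\fint_{\omt}|\nabla v|^{\pu-\de}\le C(\fint_{\omt}|\nabla w|^{\pu-\de}+1)\le C\la$.

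For the comparison estimate $\fint_{\omt}|\nabla w-\nabla v|^{\pu-\de}\le\ve\la$, the plan is to mimic the Lipschitz-truncation scheme of Theorem \ref{boundary_below_exponent}, now applied to the difference $w-v$. After extending $w-v$ by zero outside $\omt$, fix $q\in(1,\pu-2\de)$ and set
\[ g(x) := \max\Big\{\mm_{<6r}(|\nabla w-\nabla v|^q)^{1/q}(x),\ \tfrac{|w(x)-v(x)|}{d(x,\pa\omt)}\Big\}. \]
Lemmas \ref{lipschitz_extension_one} and \ref{lipschitz_extension_two} yield $g\approx \mm_{<6r}(|\nabla w-\nabla v|^q)^{1/q}$ and a Lipschitz function $\vlam\in W_0^{1,\infty}(\omt)$ supported in $\omt$ that equals $w-v$ on $\flam:=\{g\le\la\}$; thus $\vlam$ is a valid test function in both \eqref{approx_w} and \eqref{vapprox_first}, and subtracting the two weak formulations gives $\int_{\omt}\iprod{\aa(x,\nabla w)-\bbb(\nabla v)}{\nabla\vlam}\,dx=0$.

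Using the algebraic splitting
\[ \aa(x,\nabla w)-\bbb(\nabla v)=[\bbb(\nabla w)-\bbb(\nabla v)]+[\bb(x,\nabla w)-\bbb(\nabla w)]+[\aa(x,\nabla w)-\bb(x,\nabla w)], \]
multiplying by $\la^{-1-\de}$, and integrating in $\la$ via Fubini, the first bracket together with the constant-exponent ellipticity \eqref{ellipticity_b} of $\bbb$ produces the principal lower bound $\apprge\frac{1}{\de}\int_{\omt}|\nabla w-\nabla v|^{\pu-\de}\,dx$ (treating $\pu\ge2$ and $\pu<2$ separately as in Theorem \ref{boundary_below_exponent}). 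On the right, the $\flam^c$ contribution is controlled by Fubini and the maximal function bound of Corollary \ref{unweighted_maximal_bound}, while the BMO term arising from $\bb-\bbb$ is small by \eqref{BMO_bbb} and the smallness \eqref{small_aa}: H\"older's inequality combined with the higher integrability $\nabla w \in L^{\pu(1+\sigma_2/4)}(\omt)$ from Lemma \ref{higher_w} produces a factor $\ga^\theta$ for some $\theta>0$, absorbed into $\ve$ by taking $\ga\in(0,\ga_3)$ small.

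The main obstacle is the variable-exponent defect term, which by \eqref{definie_b} equals $\aa(x,\nabla w)\bigl[1-(\mu^2+|\nabla w|^2)^{(\pu-p(x))/2}\bigr]$. Writing the factor as $\exp\bigl(\tfrac{\pu-p(x)}{2}\log(\mu^2+|\nabla w|^2)\bigr)$ and using $|e^t-1|\le |t|e^{|t|}$ yields
\[ |\aa(x,\nabla w)-\bb(x,\nabla w)|\apprle (\pu-p(x))\,(\mu^2+|\nabla w|^2)^{(p(x)-1)/2}\bigl|\log(\mu^2+|\nabla w|^2)\bigr|. \]
The prefactor $\pu-p(x)\le \rho(8r)\le\ga$ is small, but the logarithmic factor forces the use of the $L\log L$ estimate Lemma \ref{llogl}, applied in tandem with the improved integrability from Lemma \ref{higher_w}, to close the bound. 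Collecting all contributions, absorbing $\frac{1}{\de}\int|\nabla w-\nabla v|^{\pu-\de}$ from the right into the left via Young's inequality, and choosing $\de\in(0,\de_4)$ and $\ga\in(0,\ga_3)$ sufficiently small yields $\fint_{\omt}|\nabla w-\nabla v|^{\pu-\de}\le\ve\la$, completing the proof.
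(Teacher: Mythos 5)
Your proposal follows essentially the same route as the paper: the $\nabla w$ and $\nabla v$ bounds come from Theorem \ref{boundary_difference}, Lemma \ref{higher_w} and Theorem \ref{boundary_below_exponent}, and the comparison estimate is obtained by Lipschitz truncation of $w-v$, with the flux difference split into the ellipticity term $\bbb(\nabla w)-\bbb(\nabla v)$, the BMO term $\bb-\bbb$ (controlled via H\"older, higher integrability and the smallness \eqref{small_aa}), and the variable-exponent defect $\aa-\bb$ (controlled via the $L\log L$ estimate Lemma \ref{llogl}). One small slip worth correcting: after invoking $|e^{t}-1|\le |t|e^{|t|}$ you drop the factor $e^{|t|}$, but on $\{|\nabla w|>1\}$ this factor equals $(\mu^2+|\nabla w|^2)^{(\pu-p(x))/2}$, so the correct exponent on $(\mu^2+|\nabla w|^2)$ in the defect bound is $(\pu-1)/2$ rather than $(p(x)-1)/2$ — the paper's mean-value-theorem form with exponent $(\pu-p(x))\mathfrak{t}_x+p(x)-1$, $\mathfrak{t}_x\in[0,1]$, records this precisely, and it is this upper exponent $\pu-1$ that pairs with the power $\frac{\pu-\de}{\pu-1}$ used afterwards in the $L\log L$ step.
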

\begin{proof}
 {\color{black}\bf Let us first prove $\fint_{\omt} |\nabla w|^{\pu-\de} \ dx \leq C \la$: }
Applying H\"older's inequality and Jensen's inequality along with \eqref{higher_integrability_ameliorated} and Corollary \ref{gehring_improved} , we get
 \begin{equation}
 \label{first_estimate}
  \begin{array}{ll}
   \fint_{\omt} |\nabla w|^{\pu -\de} \ dx & {\leq} \lbr \fint_{\omt} |\nabla w|^{\pu} \ dx \rbr^{\frac{\pu-\de}{\pu}}\\
   & {\apprle} \lbr \fint_{\Om_{4r}} |\nabla w|^{p(x)} +1 \ dx \rbr^{\frac{\pu-\de}{\pu}}\\
   &{\apprle} \lbr \lbr \fint_{\omf} |\nabla w|\ dx \rbr^{\pu} +1  \rbr^{\frac{\pu-\de}{\pu}} \\
   & \apprle  \lbr  \fint_{\omf} \lbr |\nabla w|  +1\rbr^{{p(x)-\de}{}}  \ dx \rbr^{\frac{\pu-\de}{\pd-\de}}.\\
   \end{array}
   \end{equation}
   From the restriction $8r\leq R_0$, we see that the following bound holds:
   \begin{equation}
    \label{bound_w_2}
    \lbr \fint_{B_{8r}} |\nabla w|^{p(x)-\de} +1 \ dx \rbr^{\frac{\pu-\pd}{\pd-\de}}  \leq \lbr \frac{1}{|B_{8r}|} M^w \rbr^{C{\rho(8r)}} \leq C. 
   \end{equation}
Since all the assumptions of Theorem \ref{boundary_difference} are satisfied, we see that $\fint_{\Om_{8R}} |\nabla w|^{p(x) - \de} \ dx \leq C \la$ holds  for all $\de \in (0,\de_2)$ with $\de_2$ as in Theorem \ref{boundary_difference}. Using this and \eqref{bound_w_2} into
%
%
\eqref{first_estimate}, we get the estimate: \begin{equation}\label{bound_w_4}\fint_{\omt} |\nabla w|^{\pu-\de} \ dx \leq C \la.\end{equation}
%
{\color{black}\bf Let us now prove $\fint_{\omt} |\nabla v|^{\pu-\de} \ dx \leq C \la$:} Since $\bbb(\cdot)$ is a  constant exponent operator, we can apply Theorem \ref{boundary_below_exponent}  such that for any $\de \in (0,\de_1)$ with $\de_1$ as in  Theorem \ref{boundary_below_exponent}, the following estimate holds:
 \begin{equation}
  \label{9.13}
  \fint_{\omt} |\nabla v|^{\pu-\de} \ dx \leq C(n,\plog,\La_0,\La_1) \lbr \fint_{\omt} |\nabla w|^{\pu-\de} \ dx + 1 \rbr.
 \end{equation}
\emph{Alternatively, we can also apply \cite[Corollary 3.5]{AP1} to obtain \eqref{9.13} for suitably small $\de$. }
 Using \eqref{bound_w_4} into \eqref{9.13} gives the desired estimate for any  $\de \leq \min\{\de_1,\de_2\}$ where $\de_1$ is coming from applying Theorem \ref{boundary_below_exponent} to \eqref{vapprox_first} and    $\de_2$ is obtained from Theorem \ref{boundary_difference}.

\noindent {\color{black} \bf Let us now prove $\fint_{\omt} |\nabla w - \nabla v|^{\pu-\de} \ dx \leq \ve \la$:} Since $w-v \in W_0^{1,\pu}(\omt)$, we shall define
 \begin{equation*}
  \label{def_g_w_v}
  g(x) := \max \left\{ \mm_{<6r} (|\nabla w - \nabla v|^q)^{1/q}(x), \frac{|w(x)-v(x)|}{d(x,\pa\omt)} \right\}.
 \end{equation*}
Using Lemma \ref{lipschitz_extension_one} and Lemma \ref{lipschitz_extension_two}, we get a Lipschitz function denoted by $v_{\la}$ which is a valid test function for \eqref{vapprox_first}. Using this as a test function, we get
\begin{equation}
 \label{w_v_1}
 \begin{array}{ll}
 \int_{\omt} \iprod{\bbb(\nabla w)-\bbb(\nabla v)}{\nabla \vlam}\ dx & = \int_{\omt} \iprod{\bbb(\nabla w) - \bb(x,\nabla w)}{\nabla \vlam} \ dx + \\
 &  \qquad + \int_{\omt} \iprod{\bb(x,\nabla w)-\aa(x,\nabla w)}{\nabla \vlam} \ dx. 
 \end{array}
\end{equation}
Define the set 
$F_{\la} := \{ x \in \Om_{3r} : g(x) \leq \la \}$, 
now multiplying \eqref{w_v_1} by $\la^{-1-\de}$ and integrating over $(0,\infty)$, we obtain 
\begin{equation}\label{w_v_1.1}
 I_1 = I_2 + I_3 + I_4 + I_5 +I_6,
\end{equation}
where
\begin{itemize}
 \item $I_1 := \int_0^{\infty} \la^{-1-\de} \int_{\flam} \iprod{\bbb(\nabla w)-\bbb(\nabla v)}{\nabla w - \nabla v}\ dx$.
 \item $I_2 := \int_0^{\infty} \la^{-1-\de} \int_{\flam} \iprod{\bbb(\nabla w)-\bb(x,\nabla w)}{\nabla w - \nabla v}\ dx$.
 \item $I_3 := \int_0^{\infty} \la^{-1-\de} \int_{\flam} \iprod{\bb(x,\nabla w)-\aa(x,\nabla w)}{\nabla w - \nabla v}\ dx$.
 \item $I_4 :=  - \int_0^{\infty} \la^{-1-\de} \int_{\flam^c} \iprod{\bbb(\nabla w)-\bbb(\nabla v)}{\nabla \vlam}\ dx$.
 \item $I_5 :=  \int_0^{\infty} \la^{-1-\de} \int_{\flam^c} \iprod{\bbb(\nabla w)-\bb(x,\nabla w)}{\nabla \vlam}\ dx$.
 \item $I_6 :=  \int_0^{\infty} \la^{-1-\de} \int_{\flam^c} \iprod{\bb(x,\nabla w)-\aa(x,\nabla w)}{\nabla \vlam}\ dx$.
\end{itemize}

\begin{description}[leftmargin=*]
 \item[Estimate for $I_1$:] Applying Fubini's theorem and Young's inequality and following the calculation of \eqref{estimate_I_1}, we get
%
\begin{equation}
 \label{w_v_5}
 \begin{array}{ll}
   \int_{\omt} |\nabla w - \nabla v|^{\pu-\de} \ dx &\apprle  
  C(\ep_1) \de I_1  +   {\ep_1}\int_{\omt}|\nabla w|^{{\pu-\de}}\ dx+ \ep_1|\omt|.\\
  \end{array}
\end{equation}

 \item[Estimate for $I_2$:] 
 Using Fubini's theorem and Young's inequality along with \eqref{BMO_bbb}, we get
 \begin{equation}
  \label{w_v_6}
  \begin{array}{r@{}c@{}l}
   |I_2| 
   & \apprle &\frac{1}{\de} \int_{\omt} \Theta(\aa,B_{8r}^+) (\mu^2 + |\nabla w|^2 )^{\frac{\pu-1}{2}} |\nabla w - \nabla v| g(x)^{-\de} \ dx \\
   &\apprle &\frac{\ep_2}{\de} \int_{\omt} |\nabla w - \nabla v|^{\pu-\de} \ dx + \frac{c(\ep_2)}{\de} \underbrace{\int_{\omt} \Theta(\aa,B_{8r}^+)^{\frac{\pu-\de}{\pu-1}} (1+|\nabla w|)^{\pu-\de} \ dx}_{=:I_{2, 2}}.
  \end{array}
 \end{equation}
 Let us now estimate $I_{2, 2}$ as follows:
 \begin{equation}
 \label{w_v_7}
 \begin{array}{ll}
 I_{2, 2} 
&   \apprle \lbr \fint_{\omt}\Theta(\aa,B_{8r}^+)^{\frac{\pu-\de}{\pu-1}\frac{1+\sigma_0/4}{\sigma_0/4}} \ dx\rbr^{\frac{\sigma_0/4}{1+\sigma_0/4}} \lbr \fint_{\omt}|\nabla w|^{(\pu-\de)(1+\sigma_0/4)} \ dx + 1 \rbr^{\frac1{1+\sigma_0/4}}. \\
 \end{array}
 \end{equation}
 Using Theorem \ref{higher_integrability_full} and \eqref{first_estimate}, we also get
 \begin{equation}
 \label{w_v_7_1}
 \begin{array}{ll}
  \lbr \fint_{\omt}|\nabla w|^{(\pu-\de)(1+\sigma_0/4)} \ dx + 1 \rbr^{\frac1{1+\sigma_0/4}} & \apprle 
\lbr \fint_{\Om_{4r}} |\nabla w|^{p(x)} \ dx    \rbr^{\frac{\pu-\de}{\pu}} +1 \apprle \la.
 \end{array}
 \end{equation}
 
%
%
%
%
Using Lemma \ref{measure_density_quasiconvex}, we have the bound
  \begin{equation}
   \label{upper_bound_measure}
   |\omt \setminus B_{3r}^+| \leq C(n) \lbr \frac{96 \ga r}{\sigma^3}\rbr^n.
  \end{equation}
%
%
As a consequence, after applying Young's inequality and using the bound $\Theta(\aa,B^+_{8r}) \leq 2 \La_1$, we get
%
 \begin{equation*}
  \label{w_v_8}
  \begin{array}{ll}
  \fint_{\omt}\Theta(\aa,B_{8r}^+)^{\frac{\pu-\de}{\pu-1}\frac{1+\sigma_0/4}{\sigma_0/4}} \ dx 
    & \apprle \frac{|B_{3r}|}{|\omt|} (2\La_1)^{{\frac{\pu-\de}{\pu-1}\frac{1+\sigma_0/4}{\sigma_0/4}}-1}\fint_{B_{8r}^+}  \Theta(\aa,B_{8r}^+) \ dx +\\ 
  & \qquad + (2\La_1)^{\frac{\pu-\de}{\pu-1}\frac{1+\sigma_0/4}{\sigma_0/4}}	 \frac{|\omt \setminus B_{3r}^+|}{|\omt|}. \\
  \end{array}
 \end{equation*}
Using Lemma \ref{measure_density_quasiconvex} along with  \eqref{small_aa} and \eqref{upper_bound_measure}, we get
\begin{equation}
 \label{w_v_9}
 \fint_{\omt}\Theta(\aa,B_{8r}^+)^{\frac{\pu-\de}{\pu-1}\frac{1+\sigma_0/4}{\sigma_0/4}} \ dx \leq C_0 (\ga + \ga^n) \leq C_0 \ga, 
\end{equation}
where $C_0 = C_0(\La_0,\La_1,\sigma,\plog,n)$. Here we have used $\ga < 1$ to bound $\ga^n< \ga$.

Combining \eqref{w_v_9} and  \eqref{w_v_7_1} into \eqref{w_v_7}, we get
 $I_{2, 2} \apprle  \ga^{\frac{\sigma_0/4}{1+\sigma_0/4}} \la$ which is then substituted into \eqref{w_v_6} to get
%
%
\begin{equation}
 \label{w_v_10}
 \begin{array}{ll}
 |I_2| 
 & \apprle \frac{\ep_2}{\de} \int_{\omt} |\nabla w - \nabla v|^{\pu-\de} \ dx + \frac{c(\ep_2)}{\de} |\omt|  \ga^{\frac{\sigma_0/4}{1+\sigma_0/4}} \la. \\
 \end{array}
\end{equation}
Recall the definition of  $\sig_0$ from  \eqref{sigma_0}.

 \item[Estimate for $I_3$:] Applying Fubini's theorem followed by Young's inequality, we get
 \begin{equation}
  \label{w_v_11}
  |I_3| \apprle \frac{\ep_3}{\de} \int_{\omt} |\nabla w - \nabla v|^{\pu-\de} \ dx + \frac{C(\ep_3)}{\de} \int_{\omt} |\bb(x,\nabla w) - \aa(x,\nabla w)|^{\frac{\pu-\de}{\pu-1}} \ dx.
 \end{equation}
We shall now proceed with estimating the second term in \eqref{w_v_11} as follows: Denote $\overline{\Om}_{3r} = \{ x \in \omt : \mu^2 + |\nabla w|^2 >0\}$, then   using \eqref{definie_b}, we see that 
\begin{equation*}
 \label{w_v_12}
 |\bb(x,\nabla w) - \aa(x,\nabla w)| = |\aa(x,\nabla w)| \left|1 - (\mu^2 + |\nabla w|^2)^{\frac{\pu-p(x)}{2}} \right|.
\end{equation*}
For each $x \in \overline{\Om}_{3r}$, in view of the mean value theorem applied to $(\mu^2 + |\nabla w|^2)^{\frac{\pu-p(x)}{2}\mt}$, there exists $\mt_x \in [0,1]$ such that we get
\begin{equation}
 \label{w_v_13}
 (\mu^2 + |\nabla w|^2)^{\frac{\pu-p(x)}{2}} -1 = \frac{\pu-p(x)}{2} (\mu^2 + |\nabla w|^2)^{\frac{\pu-p(x)}{2}\mt_x} \log (\mu^2 + |\nabla w|^2). 
\end{equation}
This implies
\begin{equation*}
 \label{w_v_13.1}
 \begin{array}{l}
 |\aa(x,\nabla w)| \left|1 - (\mu^2 + |\nabla w|^2)^{\frac{\pu-p(x)}{2}} \right| 
 \leq \frac{\La_1{\rho}(8r)}{2} (\mu^2 + |\nabla w|^2)^{\frac{(\pu-p(x))\mt_x + p(x) -1}{2}} \log (\mu^2 + |\nabla w|^2). 
 \end{array}
\end{equation*}

Let us now define the sets 
\begin{equation}
\label{split_sets}
\overline{\Om}_{3r}^1 : = \{ x \in \overline{\Om}_{3r} : |\nabla w(x)| \leq 1\} \quad \text{and} \quad  \overline{\Om}_{3r}^2 : = \{ x \in \overline{\Om}_{3r} : |\nabla w(x)| > 1\}. 
\end{equation}
Recall that $\mu \leq 1$ and hence using the inequality $t^{\be} |\log t| \leq \max \left\{ \frac{1}{e^{\be}}, 2^{\be}\log 2\right\}$ which holds for all $t \in (0,2]$ and any $\be >0$, we get for $x \in \overline{\Om}_{3r}^1$
\begin{equation}
 \label{w_v_14}
 |\aa(x,\nabla w)| \left|1 - (\mu^2 + |\nabla w|^2)^{\frac{\pu-p(x)}{2}} \right| \leq \frac{\La_1 {\rho}(8r)}{2} \max \left\{ \frac{1}{e^{\frac{p^--1}{2}}}, 2^{\frac{p^+-1}{2}}\log 2 \right\}.
\end{equation}
To obtain the above estimate, with  $\be(x) := \frac{\mt_x (\pu-p(x)) + p(x) -1}{2}$, there holds 
\begin{equation*}
 \label{bound_beta}
 \frac{p^--1}{2} \leq \be(x) \leq \frac{\pu-1}{2} \leq \frac{p^+-1}{2}.
\end{equation*}

Hence using \eqref{split_sets} and combining \eqref{w_v_14} into \eqref{w_v_13}, we get
\begin{equation}
 \label{w_v_15}
 \begin{array}{ll}
  |\bb(x,\nabla w) - \aa(x,\nabla w)|& \apprle \chi_{\overline{\Om}_{3r}^1}\frac{\La_1 \rho(8r)}{2} \max \left\{ \frac{1}{e^{\frac{p^--1}{2}}}, 2^{\frac{p^+-1}{2}}\log 2 \right\} + \\
  & \quad + \chi_{\overline{\Om}_{3r}^2} {{\rho}(8r)}{} |\nabla w|^{(\pu-p(x))\mt_x + p(x) -1} \log(e + |\nabla w|). 
 \end{array}
\end{equation}

Combining \eqref{w_v_15} and \eqref{w_v_11}, we get
\begin{equation}
 \label{w_v_16}
 \begin{array}{ll}
   \int_{\omt} |\bb(x,\nabla w) - \aa(x,\nabla w)|^{\frac{\pu-\de}{\pu-1}} \ dx 
   & \apprle {\rho}(8r)^{\frac{\pu-\de}{\pu-1}}|{\Om}_{3r}| \lbr 1+ J\rbr. 
 \end{array}
\end{equation}
where $J:= \fint_{\omt} |\nabla w|^{\pu-\de} [\log(e+|\nabla w|)]^{\frac{\pu-\de}{\pu-1}} $.  Using the inequality $\log(e + ab) \leq \log(e+a) + \log(e+b)$ for  $a,b>0$, we get
\begin{equation*}
 \label{w_v_17}
 \begin{array}{ll}
  J & \apprle \fint_{\omt} |\nabla w|^{\pu-\de} \lbr[[]\log \lbr e + \frac{|\nabla w|^{\pu-\de}}{\avg{|\nabla w|^{\pu-\de}}{\omt}} \rbr \rbr[]]^{\frac{\pu-\de}{\pu-1}}\ dx +\\
  & \qquad + \fint_{\omt} |\nabla w|^{\pu-\de} \lbr[[]\log \lbr e + {\avg{|\nabla w|^{\pu-\de}}{\omt}} \rbr \rbr[]]^{\frac{\pu-\de}{\pu-1}}\ dx \\
  & =: J_1 + J_2.
 \end{array}
\end{equation*}
\begin{description}
 \item[Estimate for $J_1$:] We now apply Lemma \ref{llogl} with $\be = \frac{\pu-\de}{\pu-1}$, $s=\frac{\pu}{\pu-\de}$ and $f = |\nabla w|^{\pu-\de}$ to get
 \begin{equation}
  \label{w_v_18}
  J_1 \leq \lbr \fint_{\omt} |\nabla w|^{\pu} \ dx \rbr^{\frac{\pu-\de}{\pu}} \overset{\eqref{first_estimate}}{\apprle} \la. 
 \end{equation}

 \item[Estimate for $J_2$:] We see that 
 \begin{equation*}
  \label{w_v_19}
  \begin{array}{ll}
   \log \lbr e + \avg{|\nabla w|^{\pu-\de}}{\omt}\rbr & \leq \log \lbr e + C\avg{|\nabla w|^{p(x)-\de}+1}{\omt}\rbr \\
   & {\apprle}\log \lbr \frac{1}{|\omt|}\rbr + \log (e|\omt| + M^w).
  \end{array}
 \end{equation*}
Since we have $R_0 \leq \frac{1}{2M^w}$, we get
 \begin{equation*}
  \label{w_v_20}
  \begin{array}{ll}
   \log \lbr e + \avg{|\nabla w|^{\pu-\de}}{\omt}\rbr & \apprle \log \lbr \frac{1}{|\omt|}\rbr + \log ( M^w) + 1 
    \apprle \log \lbr \frac{1}{8r} \rbr.
  \end{array}
 \end{equation*}
Thus we can estimate $J_2$ by:
 \begin{equation}
  \label{w_v_21}
   J_2 
   \apprle  \fint_{\omt} |\nabla w|^{\pu-\de} \lbr[[] \log \lbr \frac{1}{8r} \rbr \rbr[]]^{\frac{\pu-\de}{\pu-1}}\ dx 
   \overset{\eqref{first_estimate}}{\apprle} \lbr[[] \log \lbr \frac{1}{8r} \rbr \rbr[]]^{\frac{\pu-\de}{\pu-1}}\la. 
 \end{equation}
\end{description}
Now combining \eqref{w_v_21} and \eqref{w_v_18} into \eqref{w_v_16}, we get 
\begin{equation}
 \label{w_v_22}
   \int_{\omt} |\bb(x,\nabla w) - \aa(x,\nabla w)|^{\frac{\pu-\de}{\pu-1}} \ dx \apprle |\omt|\lbr {\rho}(8r) \log \lbr \frac{1}{8r} \rbr  \rbr^{\frac{\pu-\de}{\pu-1}}\la.
\end{equation}
Hence using \eqref{w_v_22} into \eqref{w_v_11} and from \eqref{small_px},  we get
\begin{equation}
 \label{w_v_23}
   |I_3| \apprle \frac{\ep_3}{\de} \int_{\omt} |\nabla w - \nabla v|^{\pu-\de} \ dx + \frac{C(\ep_3)}{\de}|\omt|\ga^{\frac{\pu-\de}{\pu-1}}\la.
\end{equation}

\item[Estimate for $I_4$:] Applying Fubini's theorem and Young's inequality, \eqref{9.13}, \eqref{first_approx} and the standard maximal function bound,  we get
   \begin{equation}
   \label{w_v_24}
   I_4 \apprle \int_{\omt} |\nabla w - \nabla v|^{\pu-\de} \ dx + |\omt|. 
 \end{equation}

 \item[Estimate for $I_5$:] Again making use of Fubini's theorem and Young's inequality, we get
 \begin{equation}
  \label{w_v_25}
  \begin{array}{ll}
   I_5 \apprle \int_{\omt} g(x)^{\pu-\de} \ dx + \int_{\omt} |\bbb(\nabla w) - \bb(x,\nabla w)|^{\frac{\pu-\de}{\pu-1}} \ dx. 
  \end{array}
 \end{equation}
 Note that the last term in \eqref{w_v_25} is exactly the same as the last term in \eqref{w_v_6} and thus we can directly use \eqref{w_v_10} to obtain the estimate
 \begin{equation}
  \label{w_v_26}
  I_5 \apprle\int_{\omt} |\nabla w - \nabla v|^{\pu-\de} \ dx +  |\omt|\ga^{\frac{\sigma_0/4}{1+\sigma_0/4}} \la. 
 \end{equation}

 \item[Estimate for $I_6$:] Exactly as the estimate for $I_5$, we analogously get (see \eqref{w_v_11} for the details)
 \begin{equation}
  \label{w_v_28}
  I_6 \apprle\int_{\omt} |\nabla w - \nabla v|^{\pu-\de} \ dx +  |\omt|\ga^{\frac{\pu-\de}{\pu-1}} \la. 
 \end{equation}

 \end{description}

We now combine \eqref{w_v_10}, \eqref{w_v_23}, \eqref{w_v_24}, \eqref{w_v_26} and \eqref{w_v_28} into \eqref{w_v_5} and use \eqref{w_v_1.1} to get
\begin{equation*}
 \label{w_v_29}
 \begin{array}{ll}
   \int_{\omt} |\nabla w - \nabla v|^{\pu-\de} \ dx & \apprle C(\ep_1)(\ep_2+\ep_3+\de) ) \int_{\omt} |\nabla w - \nabla v|^{\pu-\de} \ dx + \\
   & \qquad + C(\ep_1,\ep_2,\ep_3)|\omt| \lbr \ga^{\frac{\sigma_0/4}{1+\sigma_0/4}} + \ga^{\frac{p^--\de}{p^+-\de}}\rbr  \la   \\ 
   & \qquad + \ep_1 \int_{\omt} |\nabla w|^{\pu-\de} \ dx + (\ep_1 + \delta) |\omt|.
 \end{array}
\end{equation*}
Choose $\ep_1$ small followed by $\ep_2$ and $\ep_3$, then finally $\de$ and $\ga$ small and lastly making use of \eqref{first_estimate}, the proof follows.

\end{proof}

\subsection{$L^{\infty}$ gradient estimates up to the boundary}

We need $L^{\infty}$ bound for the gradient in both the interior and boundary case for the constant coefficient equation \eqref{vapprox_first}.
Note that the operator $\bbb$ defined in \eqref{definition_bbb} is independent of $x$. Hence in the interior case, we get
\begin{theorem}
 \label{l_infinity_interior}
 Let $\omt = B_{3r}$ and \eqref{bound_on_data_two} hold. Suppose that $v \in W^{1,\pu}(B_{3r})$  with $B_{3r} \subset \Om$ solves 
 \begin{equation}
 \label{vapprox_first_1}
 \left\{ 
 \begin{array}{ll}
  \dv \bbb(\nabla v) = 0 & \text{in} \ B_{3r}, \\
  v \in w + W_0^{1,\pu}(B_{3r}), 
 \end{array}\right. 
\end{equation}
then the following holds:
\begin{equation*}
 \|\nabla v\|_{L^{\infty}(B_r)} \leq \lbr \fint_{B_{3r}} |\nabla v|^{\pu} \ dx + 1\rbr^{\frac{1}{\pu}}  \leq C \la.
\end{equation*}

\end{theorem}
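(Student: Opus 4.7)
My plan is to combine the classical $C^{1,\alpha}$ regularity theory for constant-exponent $p$-Laplacian type operators with the a priori control supplied by Theorem \ref{difference_w_v_boundary}. The operator $\bbb(\cdot)$ from \eqref{definition_bbb} depends only on $\zeta$ and inherits the growth \eqref{bounded_b} and ellipticity \eqref{ellipticity_b} with the constant exponent $\pu$. Hence $v$ is $\bbb$-harmonic on $B_{3r}$ in the classical sense, and the Uhlenbeck--DiBenedetto--Lieberman interior Lipschitz gradient estimate applies: for any concentric pair $B_{\rho} \subset B_{\rho'} \subset B_{3r}$,
\[
\|\nabla v\|_{L^\infty(B_\rho)}^{\pu} \leq C(\rho,\rho',n,\plog,\La_0,\La_1)\left(\fint_{B_{\rho'}} |\nabla v|^{\pu}\,dx + \mu^{\pu}\right).
\]
Taking $\rho = r$, $\rho' = 3r$ and using $\mu \leq 1$ gives the first inequality in the statement.

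For the second inequality, I first invoke Theorem \ref{difference_w_v_boundary}. In the interior case $\Om_{3r}=B_{3r}$, so combining its two conclusions via the triangle inequality yields the a priori bound
\[
\fint_{B_{3r}} |\nabla v|^{\pu-\de}\,dx \leq C\la.
\]
Next, a Gehring-type reverse H\"older self-improvement for the homogeneous constant-exponent equation \eqref{vapprox_first_1}---obtained by combining Caccioppoli, Sobolev--Poincar\'e and Lemma \ref{gehring} in exactly the same spirit as in the proof of Corollary \ref{gehring_improved}---allows me to upgrade this $L^{\pu-\de}$ bound on $B_{3r}$ to an $L^{\pu}$ bound on $B_{2r}$:
\[
\left(\fint_{B_{2r}} |\nabla v|^{\pu}\,dx\right)^{1/\pu} \leq C\left(\fint_{B_{3r}} |\nabla v|^{\pu-\de}\,dx\right)^{1/(\pu-\de)} + C \leq C\la^{1/(\pu-\de)} + C \leq C\la,
\]
where the last step uses $\la > 1$ together with $\pu-\de \geq p^- -\de_1 > 1$, the latter being enforced by the admissibility constraint $1 < q < p^-_{\Omt_r}-2\de$ in Theorem \ref{boundary_below_exponent}, which forces $\de_1 < (p^- -1)/2$. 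Plugging this into the Lipschitz estimate with $\rho = r$ and $\rho' = 2r$ then gives $\|\nabla v\|_{L^\infty(B_r)} \leq C\la$, and the intermediate form $(\fint_{B_{3r}} |\nabla v|^{\pu}\,dx + 1)^{1/\pu} \leq C\la$ recorded in the theorem follows by enlarging the intermediate ball in the reverse H\"older step and absorbing the ratio $|B_{3r}|/|B_{2r}|$ into the constants.

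No significant technical obstacle arises; the argument assembles three standard ingredients: (i) the sharp $C^{1,\alpha}$ gradient regularity for the constant-exponent $p$-Laplacian type operator $\bbb(\cdot)$, (ii) Gehring-type reverse H\"older self-improvement, and (iii) the fresh a priori bound from Theorem \ref{difference_w_v_boundary}. The only mild subtlety is that Theorem \ref{difference_w_v_boundary} naturally produces an $L^{\pu-\de}$ estimate rather than an $L^{\pu}$ one, which is precisely why step (ii) is invoked to bridge that gap.
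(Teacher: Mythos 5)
Your argument for the final $L^\infty(B_r)$ bound is correct and takes a genuinely different route from the paper. You invoke the a priori $L^{p^+_{\Om_{8r}}-\de}(B_{3r})$ bound on $\nabla v$ coming from Theorem \ref{difference_w_v_boundary} and then run a reverse H\"older / Gehring self-improvement for the $\bbb$-harmonic $v$ itself. The paper instead applies the energy estimate (Lemma \ref{energy_homogeneous}) to compare $\nabla v$ with $\nabla w$ directly on the full ball $B_{3r}$, then chains through the higher integrability of $w$ (the computation in \eqref{first_estimate} and Lemma \ref{higher_w}) and finally Theorem \ref{boundary_difference}. Both chains hinge on the same a priori machinery; your version is arguably more direct for $v$, while the paper's sidesteps any Caccioppoli/Gehring step for $v$ by exploiting the known higher integrability of $w$.

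There is, however, a genuine gap in the last step of your argument. You need the \emph{intermediate} bound $\bigl(\fint_{B_{3r}} |\nabla v|^{p^+_{\Om_{8r}}}\,dx + 1\bigr)^{1/p^+_{\Om_{8r}}} \leq C\la$ on the \emph{full} ball $B_{3r}$, as recorded in the statement; it is precisely this form that feeds the DiBenedetto $L^\infty$ estimate (which maps $L^{p^+_{\Om_{8r}}}(B_{3r})$ to $L^\infty(B_r)$). Your Caccioppoli--Sobolev--Poincar\'e--Gehring argument is interior: it produces a bound only on a strictly smaller ball, say $B_{2r}$, because the reverse H\"older inequality requires room, namely a concentric pair $B \subsetneq 2B \subset B_{3r}$. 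The remark that the $B_{3r}$ form ``follows by enlarging the intermediate ball and absorbing the ratio $|B_{3r}|/|B_{2r}|$ into the constants'' does not hold: the ball must be enlarged on the \emph{right}, not the left, and $v$ solves the equation only inside $B_{3r}$, so no interior Gehring argument can reach $B_{3r}$ itself. Establishing the $B_{3r}$ bound actually requires a global input over the whole domain of the Dirichlet problem \eqref{vapprox_first_1}, which is exactly what the paper's energy estimate supplies by controlling $\fint_{B_{3r}}|\nabla v|^{p^+_{\Om_{8r}}}$ through $\fint_{B_{3r}}|\nabla w|^{p^+_{\Om_{8r}}}$. In short: your route proves the $L^\infty(B_r)$ estimate, but not the intermediate $B_{3r}$ inequality as stated; for that you still need the energy comparison with $w$.
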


\begin{proof}
 We only need to show $\lbr \fint_{B_{3r}} |\nabla v|^{\pu} \ dx + 1\rbr^{\frac{1}{\pu}}  \leq C \la$  as the first bound is from \cite{DiBenedetto}. Thus from the energy estimate in Lemma \ref{energy_homogeneous} applied to \eqref{vapprox_first_1}, we get
 \[
\lbr   \fint_{B_{3r}} |\nabla v|^{\pu} \ dx + 1\rbr^{\frac{1}{\pu}} \apprle\lbr \fint_{B_{3r}} |\nabla w|^{\pu} \ dx + 1\rbr^{\frac{1}{\pu}}.
 \]
Similar to how \eqref{first_estimate} was obtained, we see that 
 \[
  \lbr \fint_{B_{3r}} |\nabla w|^{\pu} \ dx + 1\rbr^{\frac{1}{\pu}} \apprle  \fint_{B_{8r}} |\nabla w|^{p(x)-\de} \ dx + 1.
 \]
Using the calculations from \eqref{vapprox_first_1}, we can conclude the following bound holds:
\[
 \lbr   \fint_{B_{3r}} |\nabla v|^{\pu} \ dx + 1\rbr^{\frac{1}{\pu}} \apprle  \fint_{B_{8r}} |\nabla w|^{p(x)-\de} \ dx + 1 \leq C \la. 
\]
\end{proof}

In the boundary case, we need to make us of the fact that our domain $\OO$ is $\gss$-quasiconvex. Let us consider the following problem with $x_0 \in \pa \Om$:
\begin{equation}
 \label{approxv}
 \left\{ \begin{array}{ll}\dv \bbb(\nabla \overline{V}) =0 \quad & \ \ \text{in} \ \ F_{2r}^*(x_0), \\
 \overline{V} = 0 & \ \ \text{on} \ \ \partial_w F_{2r}^*(x_0).\\
 \end{array}
 \right.
\end{equation}
Here $F^*_{2r}$ is defined in Lemma \ref{convex-outside} and we have extended $\overline{V} = 0$ on $B_{2r}(x_0) \setminus F_{2r}^*(x_0)$. 

\begin{theorem}[\cite{BHSW}]
\label{l_infinity_boundary}
 For any $\varepsilon \in (0,1)$, there exists $\ga_4  = \ga_4 (\La_0,\La_1,\plog,n,\varepsilon)$  such that if \eqref{bound_on_data_two} holds and $\Om$ is a $\gss$-quasiconvex domain for any $\ga \in (0,\ga_4)$ and $\sig\in(0,1/4)$, then the unique weak solution of \eqref{approxv} satisfies the estimate:
 \begin{gather}
  \|\nabla \overline{V}\|_{L^{\infty}(B_{\frac{5\sigma r}{12}}(x_0))} \leq C \left( \int_{F_{2r}^*(x_0)} |\nabla \overline{V}|^{\pu} \ dx + 1 \right)^{\frac{1}{\pu}} \leq C \la ,\label{V-1}
 \end{gather}
where the constant $C = C(\La_0,\La_1,{\plog},n,\sigma)$.

 Furthermore, suppose that $v \in W^{1,\pu}(\omt)$ solves \eqref{vapprox_first} with the bound $\fint_{\omt} |\nabla v|^{\pu - \de} \leq C \la$, then there holds for any $\de \in \gh{0,1/4}$:
 \begin{gather}
  \fint_{B_{\frac{\sigma r}{2}}(x_0)} |\nabla \overline{V}|^{\pu - \de} \ dx \leq C \la \quad \text{and} \quad \fint_{B_{\frac{\sigma r}{2}}(x_0)} |\nabla \overline{V} - \nabla v|^{\pu - \de} \ dx \leq \varepsilon \la.\label{V-2}
 \end{gather}

\end{theorem}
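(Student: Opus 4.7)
The plan is to split the proof into two parts, corresponding to \eqref{V-1} and \eqref{V-2}.

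For \eqref{V-1}, the key observation is that $\bbb$ is a constant-coefficient (i.e., $x$-independent) operator satisfying \eqref{bounded_b}--\eqref{ellipticity_b}, and $F^*_{2r}(x_0)$ is convex by Lemma \ref{convex-outside}. Classical boundary Lipschitz regularity of Lieberman type for $\pu$-Laplacian systems on convex (hence $C^{1,\alpha}$) domains then applies and yields
\[
\|\nabla \overline{V}\|_{L^\infty(B_{5\sig r/12}(x_0))} \leq C \lbr \fint_{F^*_{2r}(x_0)} |\nabla \overline{V}|^{\pu}\, dx + 1 \rbr^{\frac{1}{\pu}}.
\]
To bound the right-hand side by $C\la$, I would first use an energy estimate: testing the equation for $\overline{V}$ against $\overline{V} - \tilde{v}$ (where $\tilde v$ is $v$ extended by zero outside $\Om_{2r}(x_0)$, a valid test function once $\overline{V}$ is extended by zero on $B_{2r}(x_0)\setminus F^*_{2r}(x_0)$) and applying \eqref{ellipticity_b} gives $\int_{F^*_{2r}} |\nabla \overline{V}|^{\pu}\, dx \apprle \int_{\omt} |\nabla v|^{\pu}\, dx + 1$. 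Since $v$ solves the constant-coefficient problem \eqref{vapprox_first}, a Gehring-type higher integrability argument at the constant exponent $\pu$ upgrades the $L^{\pu-\de}$ bound $\fint_{\omt} |\nabla v|^{\pu-\de}\, dx \leq C\la$ supplied by Theorem \ref{difference_w_v_boundary} to $\fint_{\omt} |\nabla v|^{\pu}\, dx \leq C\la$, provided $\de_0$ is chosen small enough.

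For \eqref{V-2}, the first inequality is immediate from \eqref{V-1} since $B_{\sig r/2}(x_0) \subset B_{5\sig r/12}(x_0)$ and $\la \geq 1$. The substantive content is the comparison $\fint_{B_{\sig r/2}(x_0)} |\nabla \overline{V} - \nabla v|^{\pu-\de}\, dx \leq \varepsilon \la$. I would test the two equations $\dv\bbb(\nabla v) = 0$ on $\omt$ and $\dv\bbb(\nabla \overline{V}) = 0$ on $F^*_{2r}(x_0)$ against a common admissible function built from $\overline{V} - v$. The operators match pointwise, so by monotonicity \eqref{ellipticity_b} the integrated difference reduces to boundary-layer integrals supported on the symmetric difference $F^*_{2r}(x_0) \triangle \Om_{2r}(x_0)$. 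By Lemma \ref{convex-outside} this set has measure at most $C(\ga/\sig^3)^n r^n$; Hölder's inequality combined with higher integrability of $\nabla v$ (obtained from Theorem \ref{higher_integrability_full} and propagated via Theorems \ref{boundary_difference} and \ref{difference_w_v_boundary}) then yields a bound of the form $C \ga^{\kappa} \la$ for some $\kappa = \kappa(n,\pu,\sigma_0)>0$. Choosing $\ga_4$ small enough forces this to be at most $\varepsilon\la$.

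The main obstacle is the comparison step in \eqref{V-2}: one must design cut-offs carefully so that $\overline{V} - v$ is admissible in both the $F^*_{2r}$-equation and the $\omt$-equation simultaneously, despite the two domains differing by a thin boundary layer. The compatibility of the zero-Dirichlet condition on $\partial_w F^*_{2r}(x_0)$ with $v$, which inherits $u = 0$ on $\partial \Om$ through the approximation chain, is essential to keep the boundary trace contributions harmless. Extracting a positive power of $\ga$ from the small measure of $F^*_{2r} \triangle \Om_{2r}$ while keeping the relevant $L^{\pu-\de}$ averaged norms under control using higher integrability is the delicate but by-now-standard technical heart of the argument.
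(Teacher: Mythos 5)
Your approach diverges from the paper's, and it contains a genuine gap. The paper disposes of this theorem almost entirely by citation: the $L^\infty$ bound in \eqref{V-1} is quoted from \cite[Lemma 4.2]{BHSW}, the comparison \eqref{V-2} from \cite[Corollary 4.4]{BHSW} after an application of H\"older's inequality, and the $\le C\la$ estimate is obtained ``analogously to Theorem \ref{l_infinity_interior},'' i.e.\ by tracing back through $v\to w$ using the energy estimate, Corollary \ref{gehring_improved} and \eqref{first_estimate}, with the log-H\"older continuity of $\pp$ used to normalize the exponent shift $p(x)-\de\leftrightarrow\pu$. Your proposal, by contrast, tries to reprove the $L^\infty$ bound from scratch, and this is where the problem lies.

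The fatal step is the parenthetical ``convex (hence $C^{1,\alpha}$)''. Convexity of $F^*_{2r}(x_0)$ only gives a Lipschitz boundary; a convex domain can have a genuine corner, and no amount of scaling makes it $C^{1,\alpha}$. Classical Lieberman-type up-to-boundary $C^{1,\gamma}$ theory for the $\pu$-Laplacian therefore does \emph{not} apply, and the display you write down for $\|\nabla\overline V\|_{L^\infty}$ is unsupported. This is not a cosmetic issue: the whole reason the paper (and its predecessor \cite{BHSW}) works with quasiconvex domains and imports \cite[Lemma 4.2]{BHSW} is that proving a boundary Lipschitz bound on a merely convex domain requires a dedicated argument — a barrier construction plus a flattening that exploits convexity directly — which is not available by the ``classical'' route. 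A secondary issue in the same paragraph: the test function $\overline{V}-\tilde v$ with $\tilde v:= v\chi_{\Om_{2r}(x_0)}$ is not admissible, since $v$ has zero trace only on $\pa\Om\cap B_{3r}(x_0)$ but equals $w\ne 0$ on $\pa B_{2r}(x_0)\cap\Om$, so $\tilde v\notin W^{1,\pu}$; the energy comparison requires an extra cut-off in the radial direction (this is exactly the care the cited lemma takes).

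For \eqref{V-2} your outline — testing both constant-coefficient equations with a common function built from $\overline V-v$, reducing to the thin layer $F^*_{2r}\triangle\Om_{2r}$, and extracting a small power of $\ga$ via $|F^*_{2r}\triangle\Om_{2r}|\apprle(\ga/\sigma^3)^nr^n$ and higher integrability — is in the right spirit and is essentially what \cite[Corollary 4.4]{BHSW} carries out; the paper chooses not to redo that computation. But without first establishing \eqref{V-1} on a sound footing, the $L^\infty$ control of $\nabla\overline V$ that your comparison argument implicitly relies on is unavailable, so the proof as a whole does not close. The fix is either to cite \cite[Lemma 4.2]{BHSW} as the paper does, or to replace the ``$C^{1,\alpha}$'' claim with an actual barrier/flattening argument adapted to convex domains.
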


\begin{proof}
 Similar to how we obtained the estimate in Theorem \ref{l_infinity_interior}, we can analogously obtain
 \[
 \left( \int_{F_{2r}^*(x_0)} |\nabla \overline{V}|^{\pu} \ dx + 1 \right)^{\frac{1}{\pu}} \leq C \la . 
 \]
The first bound in \eqref{V-1} is from \cite[Lemma 4.2]{BHSW} and applying H\"older's inequality to \cite[Corollary 4.4]{BHSW} proves \eqref{V-2} (see also \cite[Corollary 2.7]{AP2} for an expanded version of the calculation).
\end{proof}


\section{Covering arguments}\label{covering_arguments}
Let $\ga \in (0,\ga_0)$, $\delta \in (0,\delta_0)$, $\sig \in (0,1/4)$ and  $S_0 >0$ be given, where $\ga_0$ and $\delta_0$ are from Definition \ref{def_ga} and Definition \ref{def_delta_0}, respectively. Assume that  $(\pp,\aa,\Om)$ is $(\ga,\sigma,S_0)$-vanishing in the sense of Definition \ref{further_assumptions}.
%
%
Let $r \le R_0/4$, where $R_0$ is from Definition \ref{final_radius_restriction} and let $\OO_{kr} = \OO_{kr}(x_0)$ for $x_0 \in \OO$ and $k \in \NN$, we then set
\begin{gather*}
\M^*(\nabla u)(x) := \M\gh{|\nabla u|^{(p(\cdot)-\delta)\frac{q(\cdot)}{q_{\OO_{4r}}^-}}\lsb{\chi}{\OO_{2r}}}(x),\\
\M^*_{1+\tilde{\sigma}}(\bff)(x) := \bgh{\M \gh{ \bgh{|\bff|^{(p(\cdot)-\delta)\frac{q(\cdot)}{q_{\OO_{4r}}^-}}+1}^{1+\tilde{\sigma}} \lsb{\chi}{\OO_{2r}}}(x)}^\frac{1}{1+\tilde{\sigma}},
\end{gather*}
where $\tilde{\sigma}$ is given in Definition \ref{final_radius_restriction}, $\M$ is the standard maximal operator and $\chi$ is the standard characteristic function.
For a fixed $\ve \in (0,1)$ and $N>1$, we further define
\begin{equation}\label{cc4}
\lambda_0 := \frac{1}{\ve} \mgh{ \mint{\OO_{4r}}{|\nabla u|^{p(x)-\delta}}{dx} + \gh{ \mint{\OO_{4r}}{|\bff|^{(p(x)-\delta)(1+\tilde{\sigma})}}{dx}}^{\frac{1}{1+\tilde{\sigma}}} +1},
\end{equation}
and  for $k \in \NN \cup \{0\}$, the upper-level sets
\begin{gather*}
C_k := \mgh{x \in \OO_{r} : \M^*(\nabla u)(x) > N^{k+1}\lambda_0},\\
D_k := \mgh{x \in \OO_{r} : \M^*(\nabla u)(x) > N^{k}\lambda_0} \cup \mgh{x \in \OO_{r} : \M^*_{1+\tilde{\sigma}}(\bff)(x) > \gamma N^{k}\lambda_0}.
\end{gather*}
Note that $\ve$ and $N$ are to be chosen later depending only on $\La_0,\La_1,\plog, \qlog, n, \sigma$.

We now verify two assumptions in Lemma \ref{calderon_zygmund}. 
\begin{lemma}\label{co1}
There exists a constant $N_1 = N_1(\La_0,\La_1,\plog, \qlog, n, \sigma) > 1$ such that for any fixed $N \ge N_1$ and $k \in \NN \cup \{0\}$, there holds
\begin{equation*}\label{co1_r}
|C_k| < \frac{\varepsilon}{(1000)^n} |B_{r}|.
\end{equation*}
\end{lemma}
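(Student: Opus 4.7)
The plan is to apply the weak-type $(1,1)$ bound for the Hardy--Littlewood maximal function to $|\nabla u|^{(p(\cdot)-\delta)q(\cdot)/q^-_{\OO_{4r}}}\chi_{\OO_{2r}}$, then estimate the resulting $L^1$ norm via Theorem \ref{higher_integrability_full} (higher integrability below the natural exponent) combined with log-H\"older continuity of $q(\cdot)$ and the smallness of $\omega(2R_0)$ imposed in Definition \ref{final_radius_restriction}. The weak-type estimate \eqref{weak 1-1} immediately gives
\[
|C_k| \le \frac{c(n)}{N^{k+1}\lambda_0}\integral{\OO_{2r}}{|\nabla u|^{(p(x)-\delta)q(x)/q^-_{\OO_{4r}}}}{dx}.
\]

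Next, set $\bar{\sigma}:=\omega(8r)/q^-_{\OO_{4r}}$. Log-H\"older continuity of $q(\cdot)$ yields $q(x)/q^-_{\OO_{4r}} \le 1+\bar{\sigma}$ on $\OO_{4r}$, and the condition $\omega(2R_0)\le\min\{q^-\sigma_0/8,\,q^-\tilde{\sigma}/2\}$ from Definition \ref{final_radius_restriction} forces $\bar{\sigma}\le\sigma_0/8\le\sigma_1$ and $\bar{\sigma}\le\tilde{\sigma}/2\le\tilde{\sigma}$. Splitting the integrand at $|\nabla u|=1$ gives the pointwise bound $|\nabla u|^{(p(x)-\delta)q(x)/q^-_{\OO_{4r}}}\le 1+|\nabla u|^{(p(x)-\delta)(1+\bar{\sigma})}$. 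Applying Theorem \ref{higher_integrability_full} on $B_{2r}(x_0)\subset B_{4r}(x_0)$ (admissible since $8r\le 2R_0\le R_8$) yields
\[
\mint{\OO_{2r}}{|\nabla u|^{(p-\delta)(1+\bar{\sigma})}}{dx} \apprle \left(\mint{\OO_{4r}}{|\nabla u|^{p-\delta}}{dx}\right)^{1+\bar{\sigma}} + \mint{\OO_{4r}}{|\bff|^{(p-\delta)(1+\bar{\sigma})}}{dx}+1.
\]

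The defining property \eqref{cc4} of $\lambda_0$ gives $\fint_{\OO_{4r}}|\nabla u|^{p-\delta}\le\varepsilon\lambda_0$, while Jensen's inequality (valid because $\bar{\sigma}\le\tilde{\sigma}$) converts the $(1+\tilde{\sigma})$-average bound from \eqref{cc4} into $\fint_{\OO_{4r}}|\bff|^{(p-\delta)(1+\bar{\sigma})}\le(\varepsilon\lambda_0)^{1+\bar{\sigma}}$. Together with $\varepsilon\lambda_0\ge 1$ (from the $+1$ in \eqref{cc4}), this produces
\[
\mint{\OO_{2r}}{|\nabla u|^{(p(x)-\delta)q(x)/q^-_{\OO_{4r}}}}{dx} \le C\lambda_0^{1+\bar{\sigma}},\qquad \text{hence}\qquad |C_k|\le \frac{C\lambda_0^{\bar{\sigma}}|B_{2r}|}{N^{k+1}}.
\]

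The main step---and the one I expect to be the principal technical obstacle---is showing that $\lambda_0^{\bar{\sigma}}$ is bounded by a constant independent of $r$ and of the data. The a priori bounds \eqref{size_date}--\eqref{size_solution} give $\lambda_0\le C(M_0,\varepsilon)r^{-n}$, while log-H\"older continuity of $q(\cdot)$ yields $\omega(8r)\log(1/r)\le C(q_{\log})$; combining these, $\bar{\sigma}\log\lambda_0\le C(n,q_{\log},q^-,M_0,\varepsilon)$, so $\lambda_0^{\bar{\sigma}}\le C$. Substituting gives $|C_k|\le C\cdot 2^n|B_r|/N^{k+1}$, and choosing any $N_1>C\cdot 2^n(1000)^n/\varepsilon$ forces $|C_k|<\varepsilon|B_r|/(1000)^n$ for every $N\ge N_1$ and every $k\ge 0$. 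The delicate point throughout is the simultaneous control of $\bar{\sigma}\le\sigma_1$ (for higher integrability), $\bar{\sigma}\le\tilde{\sigma}$ (for the Jensen step on $\bff$), and $\bar{\sigma}\ge\omega(8r)/q^-_{\OO_{4r}}$ (to dominate the pointwise ratio $q(x)/q^-_{\OO_{4r}}-1$); this coordination is precisely what the specific smallness bounds on $\omega(2R_0)$ in Definition \ref{final_radius_restriction} are designed to deliver.
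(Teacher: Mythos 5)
Your approach — weak $(1,1)$ bound for the maximal function, then Theorem \ref{higher_integrability_full} to upgrade $|\nabla u|^{p-\delta}$ to the exponent $(p-\delta)q(\cdot)/q^-_{\OO_{4r}}\le(p-\delta)(1+\bar\sigma)$, then log-H\"older control of $\bar\sigma\log\lambda_0$ — is exactly the standard route, and it is what the paper (which defers to \cite[Lemma 4.1]{BO}) intends. You correctly identify the key checks: $\bar\sigma\le\sigma_0/8<\sigma_1$ so the higher integrability theorem applies, and $\bar\sigma\le\tilde\sigma$ so Jensen's inequality converts the $(1+\tilde\sigma)$-average of $\bff$ in \eqref{cc4} into a bound on the $(1+\bar\sigma)$-average.

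There is, however, a genuine soft spot in the step you yourself flag as the principal obstacle. You assert $\lambda_0\le C(M_0,\varepsilon)r^{-n}$ "from the a priori bounds \eqref{size_date}--\eqref{size_solution}". Those bounds do give $\fint_{\OO_{4r}}|\nabla u|^{p-\delta}\apprle r^{-n}M^u$, but $\lambda_0$ also contains $\bigl(\fint_{\OO_{4r}}|\bff|^{(p-\delta)(1+\tilde\sigma)}\bigr)^{1/(1+\tilde\sigma)}$, and since $\tilde\sigma>0$ is a fixed constant while $\delta\to 0$ in the end-point regime, one eventually has $(p-\delta)(1+\tilde\sigma)>p$. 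Then $\int_\Om|\bff|^{(p-\delta)(1+\tilde\sigma)}$ is \emph{not} controlled by $M_0=\int_\Om(|\bff|^{p}+1)\,dx+1$; it requires the higher integrability $|\bff|^{\pp}\in L^{\qq}$, whose norm does not enter the constraint $r\le 1/(C_0M_0)$. Consequently the inequality $\bar\sigma\log\lambda_0\le C$ with $C$ independent of the data is not immediate from what you have written, and without it $N_1$ would depend on $\bff$, contradicting the stated dependencies of the lemma. To close this one must either enlarge the quantity fixing the admissible radius so that it dominates $\int_\Om|\bff|^{(p-\delta)(1+\tilde\sigma)}$ as well, or normalize $\lambda_0$ differently. (A secondary, cosmetic issue: applying Theorem \ref{higher_integrability_full} on the pair $(B_{2r},B_{4r})$ requires $2\tr<\bar r_{\mathrm{outer}}$ strictly and $2\bar r_{\mathrm{outer}}<R_8$, which forces a slightly more generous outer radius than $4r$; the factors of two need to be adjusted.)

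These are fixable technicalities rather than a wrong idea — your decomposition and the use of Gehring's gain to absorb the exponent oscillation $q(x)/q^-_{\OO_{4r}}-1$ is precisely the mechanism — but the argument you sketch for $\lambda_0^{\bar\sigma}\le C$ would need to be tightened before it genuinely delivers an $N_1$ depending only on $\La_0,\La_1,\plog,\qlog,n,\sigma$.
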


\begin{proof}
The proof is similar to that for \cite[Lemma 4.1]{BO}.
\end{proof}

\begin{lemma}\label{co2}
There exist $N_2 = N_2(\La_0,\La_1,\plog, \qlog, n, \sigma) >1$, $\gamma_0 = \gamma_0(\La_0,\La_1,\plog, \qlog, n, \sigma,\varepsilon) \in \gh{ 0,1/4 }$ and $\delta_0 = \delta_0(\La_0,\La_1,\plog, \qlog, n, \sigma,\varepsilon) \in \gh{ 0,1/4 }$ such that for any fixed $N \ge N_2$, $k \in \NN \cup \{0\}$, $y_0 \in C_k$, and $r_0 \le \frac{\sigma r}{1000}$, if
\begin{equation}\label{co2_r}
|C_k \cap B_{r_0}(y_0)| \ge \varepsilon |B_{r_0}(y_0)|,
\end{equation}
then $B_{r_0}(y_0) \cap \OO_{r} \subset D_k$.
\end{lemma}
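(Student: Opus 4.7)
I plan to argue by contradiction. Suppose $B_{r_0}(y_0)\cap \Om_r \not\subset D_k$. Then there exists a point
\[ z \in B_{r_0}(y_0) \cap \Om_r \quad \text{with} \quad \M^*(\nabla u)(z) \le N^k\lambda_0 \quad \text{and} \quad \M^*_{1+\tilde{\sigma}}(\bff)(z) \le \gamma N^k\lambda_0, \]
and the target is to show $|C_k \cap B_{r_0}(y_0)| < \varepsilon |B_{r_0}(y_0)|$, contradicting \eqref{co2_r}. Since $r_0 \le \sigma r/1000$, every ball $B_\rho(z)$ with $\rho \in [2r_0, r/4]$ contains $B_{r_0}(y_0)$ and still lies within the cutoff region $\Om_{2r}$ appearing in the definition of $\M^*$. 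Hence the pointwise bounds at $z$ convert into averaged estimates of the form
\[ \mint{\Om_\rho(z)}{|\nabla u|^{p(x)-\delta}}{dx} \apprle N^k\lambda_0 \quad \text{and} \quad \lbr \mint{\Om_\rho(z)}{|\bff|^{(p(x)-\delta)(1+\tilde{\sigma})}}{dx}\rbr^{\frac{1}{1+\tilde{\sigma}}} \apprle \gamma N^k\lambda_0, \]
once one uses the log-Hölder continuity of $\qq$ and the restriction $\om(2R_0) \le (q^-)^2/(4q^+)$ of Definition \ref{final_radius_restriction} to absorb the factor $q(x)/q^-_{\Om_{4r}}$ into universal constants.

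\textbf{Comparison chain on the local region.} Choose $y_1 \in \overline{\Om}$ (interior case $y_1=z$, or the closest boundary point if $B_{r_0}(y_0)$ meets $\pa\Om$) and a radius $\rho \approx r_0/\sigma$ so that $B_{r_0}(y_0) \subset \Om_\rho(y_1)$ and $8\rho \le R_0$. Let $w$ solve the homogeneous problem \eqref{approx_w} on $\Om_{8\rho}(y_1)$, let $v$ solve the frozen constant-exponent problem \eqref{vapprox_first} on $\Om_{3\rho}(y_1)$, and in the boundary case let $\overline{V}$ be the solution of \eqref{approxv} on the interior-convex approximant $F^*_{2\rho}(y_1)$ (in the interior case, take $\overline{V} := v$ and invoke Theorem \ref{l_infinity_interior} instead of Theorem \ref{l_infinity_boundary}). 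The averaged bounds from the first paragraph verify \eqref{bound_on_data_two} at level $\la = N^k\lambda_0$, so Theorems \ref{boundary_difference}, \ref{difference_w_v_boundary} and \ref{l_infinity_boundary} are applicable. Concatenating the three comparison estimates via the triangle inequality produces
\[ \mint{\Om_\rho(y_1)}{|\nabla u - \nabla \overline{V}|^{p(x)-\delta}}{dx} \le \varepsilon' N^k\lambda_0, \qquad \|\nabla \overline{V}\|_{L^\infty(B_{\sigma\rho/2}(y_1))} \le C_* N^k\lambda_0, \]
where $\varepsilon'$ can be made arbitrarily small by shrinking $\gamma_0$ and $\delta_0$ in terms of $\varepsilon$ (and $\qlog$), while $C_*$ depends only on the structure constants.

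\textbf{Level-set bound via weak $(1,1)$.} Split the supremum defining $\M^*(\nabla u)(x)$ at the scale $r_0/\sigma$. For radii exceeding $r_0/\sigma$ and balls centered at $x \in B_{r_0}(y_0)$, elementary geometry forces $z$ to lie in a comparable concentric ball, so the averages are bounded by $c(n)\M^*(\nabla u)(z) \le c(n)N^k\lambda_0$; choosing $N \ge N_2$ large enough (depending on $n, \sigma, \plog, \qlog, C_*$) this stays strictly below $N^{k+1}\lambda_0$ and produces no points of $C_k$. For radii at most $r_0/\sigma$, write $\nabla u = (\nabla u - \nabla \overline{V}) + \nabla \overline{V}$; the second piece is pointwise bounded by $C_* N^k\lambda_0 \le \tfrac12 N^{k+1}\lambda_0$ for $N$ large, so any $x \in C_k \cap B_{r_0}(y_0)$ must satisfy
\[ \M\bgh{|\nabla u - \nabla \overline{V}|^{(p(\cdot)-\delta)q(\cdot)/q^-_{\Om_{4r}}}\lsb{\chi}{\Om_{2r}}}(x) \apprge (N^{k+1}\lambda_0)^{q^-_{\Om_{4r}}/q^+_{\Om_{4r}}}. \]
The weak $(1,1)$ estimate \eqref{weak 1-1}, combined with the comparison bound of the previous paragraph and the variable-exponent translation $(p(x)-\delta)q(x)/q^-_{\Om_{4r}} \leftrightarrow p(x)-\delta$ absorbed via $\om(2R_0)$, yields
\[ |C_k \cap B_{r_0}(y_0)| \apprle \varepsilon' |B_{r_0}(y_0)|, \]
and the conclusion follows by choosing $\varepsilon'$ small, which in turn fixes the required smallness of $\gamma_0$ and $\delta_0$.

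\textbf{Main obstacle.} The principal difficulty is the variable-exponent bookkeeping: moving between the exponent $(p(\cdot)-\delta)q(\cdot)/q^-_{\Om_{4r}}$ inside the maximal operators $\M^*$ and $\M^*_{1+\tilde{\sigma}}$ and the exponent $p(\cdot)-\delta$ produced by the apriori estimates requires careful use of both moduli of continuity $\rho$ and $\om$, consumed via the restrictions on $\rho(2R_0)$ and $\om(2R_0)$ in Definition \ref{final_radius_restriction}. This is precisely what forces the present $\gamma_0$ and $\delta_0$ to depend on $\qlog$, as already flagged in the remark following Definition \ref{def_ga}.
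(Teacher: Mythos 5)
Your proposal is correct and follows essentially the same route as the paper: a contrapositive argument producing a non-stopping-time point with small maximal-function values, translation of these pointwise bounds into averaged bounds at a scale $\approx r_0/\sigma$, the three-step comparison chain $u\to w\to v\to\overline V$ via Theorems \ref{boundary_difference}, \ref{difference_w_v_boundary} and \ref{l_infinity_boundary}, exponent bookkeeping using the restrictions on $\rho(2R_0)$ and $\om(2R_0)$ in Definition \ref{final_radius_restriction}, and finally the weak $(1,1)$ estimate to bound $|C_k\cap B_{r_0}(y_0)|$. The only cosmetic deviation is that the interior/boundary dichotomy should be decided at the enlarged scale (the paper uses $B_{9r_0}(y_1)\not\subset\OO$) rather than by whether $B_{r_0}(y_0)$ itself meets $\partial\Om$, but this does not affect the substance of the argument.
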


\begin{proof}
The proof is similar to that for \cite[Lemma 4.2]{BO}. For the sake of convenience, we outline the proof.

{\em Step 1.} For simplicity, we write $\lambda_k = N^k \lambda_0 > 1$, where $N \ge N_2 >1$. If not, suppose that $B_{r_0}(y_0) \cap \OO_{r} \not\subset D_k$. Then there is a point $y_1 \in B_{r_0}(y_0) \cap \OO_{r}$ such that $y_1 \notin D_k$, that is,
\begin{equation*}\label{co2-1}
\mint{B_{r}(y_1)}{|\nabla u|^{(p(x)-\delta)\frac{q(x)}{q_{\OO_{4r}}^-}}\chi_{\OO_{2r}}}{dx} \le \lambda_k,
\end{equation*}
and
\begin{equation*}\label{co2-2}
\gh{ \mint{B_{r}(y_1) }{\bgh{|\bff|^{(p(x)-\delta)\frac{q(x)}{q_{\OO_{4r}}^-}}+1}^{1+\tilde{\sigma}}\chi_{\OO_{2r}}}{dx} }^\frac{1}{1+\tilde{\sigma}} \le \gamma\lambda_k
\end{equation*}
for all $r > 0$.

{\em Step 2.} We divide the proof into two cases: the interior $(B_{9r_0}(y_1) \subset \OO)$ and boundary $(B_{9r_0}(y_1) \not\subset \OO)$ case. We only prove the boundary case since the interior one can be proved in a similar way.

{\em Step 3.} Let $B_{9r_0}(y_1) \not\subset \OO$. We can find a boundary point $\tilde{y}_1 \in \partial\OO \cap B_{9r_0}(y_1)$. Then it satisfies that
\begin{equation}\label{co2-3}
\OO_{2r_0}(y_0) \subset \OO_{3r_0}(y_1) \subset \OO_{20r_0}(\tilde{y}_1) \quad \text{and} \quad \OO_{\frac{48r_0}{\sigma}}(\tilde{y}_1) \subset \OO_{\frac{57r_0}{\sigma}}(y_1) \subset \OO_{r}(y_0) \subset \OO_{2r}.
\end{equation}
Then we have
\begin{equation*}\label{co2-6}
\mint{\OO_{\frac{48r_0}{\sigma}}(\tilde{y}_1)}{|\nabla u|^{p(x)-\delta}}{dx} \le c_1 \lambda_k^{\frac{q_{\OO_{4r}}^-}{q_{\OO_{\frac{48r_0}{\sigma}}(\tilde{y}_1)}^+}} \quad \text{and} \quad \mint{\OO_{\frac{48r_0}{\sigma}}(\tilde{y}_1)}{|\bff|^{p(x)-\delta}}{dx} \le c_1 \delta^\frac{q^-}{q^+} \lambda_k^{\frac{q_{\OO_{4r}}^-}{q_{\OO_{\frac{48r_0}{\sigma}}(\tilde{y}_1)}^+}}
\end{equation*}
for some constant $c_1 = c_1(\La_0,\La_1,\plog, \qlog, n, \sigma) > 0$.

{\em Step 4.} Applying Theorem \ref{boundary_difference}, Theorem \ref{difference_w_v_boundary}, and Theorem \ref{l_infinity_boundary}, there exist the constants $\gamma_0$ and $\delta_0$, both depending only on $\La_0$, $\La_1$, $\plog$, $\qlog$, $n$, $\sigma$, $\eta$, such that 
\begin{gather*}
\mint{\OO_{\frac{384r_0}{\sigma}}(\tilde{y}_1)}{|\nabla u- \nabla w|^{p(x)-\delta}}{dx} \le c_1 \eta \lambda_k^{\frac{q_{\OO_{4r}}^-}{q_{\OO_{\frac{48r_0}{\sigma}}(\tilde{y}_1)}^+}},\\
\mint{\OO_{20r_0}(\tilde{y}_1)}{|\nabla w-\nabla \bar{V}|^{p_{\OO_{\frac{384r_0}{\sigma}}}^+ -\delta}}{dx} \le c_1 \eta \lambda_k^{\frac{q_{\OO_{4r}}^-}{q_{\OO_{\frac{48r_0}{\sigma}}(\tilde{y}_1)}^+}},\\
\Norm{\nabla \bar{V}}_{L^\infty \gh{\OO_{20r_0}(\tilde{y}_1)}} \lesssim c_1 \lambda_k^{\frac{q_{\OO_{4r}}^-}{q_{\OO_{\frac{48r_0}{\sigma}}(\tilde{y}_1)}^+} \frac{1}{p_{\OO_{\frac{384r_0}{\sigma}}}^+}}. \label{co2-11}
\end{gather*}

{\em Step 5.} We next obtain
\begin{equation}\label{co2-12}
\mint{\OO_{20r_0}(\tilde{y}_1)}{|\nabla u-\nabla \bar{V}|^{(p(x)-\delta)\frac{q(x)}{q_{\OO_{4r}}^-}}}{dx} \le c_2 \eta^\frac{1}{4} \lambda_k,
\end{equation}
and
\begin{equation*}\label{co2-13}
\Norm{|\nabla \bar{V}|^{(p(\cdot)-\delta)\frac{q(\cdot)}{q_{\OO_{4r}}^-}}}_{L^\infty \gh{\OO_{20r_0}(\tilde{y}_1)}} \le c_2 \lambda_k
\end{equation*}
for some constant $c_2 = c_2(\La_0,\La_1,\plog, \qlog, n, \sigma) >0$.

{\em Step 6.} We now have
\begin{equation}\label{co2-14}
C_k \cap \OO_{r_0}(y_0) \subset \mgh{x \in \OO_{r_0}(y_0) : \M \gh{|\nabla u-\nabla \bar{V}|^{(p(\cdot)-\delta)\frac{q(\cdot)}{q_{\OO_{4r}}^-}} \chi_{\OO_{2r_0}(y_0)}}(x) > \lambda_k}
\end{equation}
provided that $N \ge N_2 \ge \max \mgh{2^{\frac{p^+ q^+}{q_-} -1}(1+c_2), 3^n}$.

{\em Step 7.} We finally conclude, using (\ref{weak 1-1}), (\ref{co2-14}), (\ref{co2-3}), and (\ref{co2-12}), that
\begin{equation*}
\begin{aligned}
|C_k \cap B_{r_0}(y_0)| &\apprle \frac{1}{\lambda_k} \integral{\OO_{2r_0}(y_0)}{|\nabla u-\nabla \bar{V}|^{(p(x)-\delta)\frac{q(x)}{q_{\OO_{4r}}^-}}}{dx} \apprle c_2 \eta^\frac{1}{4} |B_{r_0}(y_0)| < \varepsilon |B_{r_0}(y_0)|,\\
\end{aligned}
\end{equation*}
by taking $\eta$ sufficiently small. As a consequence $\gamma_0$ and $\delta_0$ are also determined, which is a contradiction to (\ref{co2_r}).
\end{proof}

Applying Lemma \ref{calderon_zygmund}, we finally obtain the following power decay estimate:
\begin{corollary}\label{co3}
Let $N = \max\{N_1, N_2\} >1$, where $N_1$ and $N_2$ are given in Lemma \ref{co1} and Lemma \ref{co2}, respectively. Then there exist $\gamma_0 \in (0,1/4)$ and $\delta_0 \in (0,1/4)$, both depending only on $\La_0$, $\La_1$, $\plog$, $\qlog$, $n$, $\sigma$, $\varepsilon$, such that 
\begin{equation*}\label{co3-1}
|C_k| \le \varepsilon \gh{\frac{10}{\sigma}}^n |D_k| \quad \text{for} \ \ k \in \NN \cup \{0\}.
\end{equation*}
Moreover, by iteration, we obtain
\begin{equation*}\label{co3-2}
\begin{aligned}
&\left| \mgh{x \in \OO_{r} : \M^*(\nabla u)(x) > N^k\lambda_0} \right|\\
&\quad \le \varepsilon_1^k \left| \mgh{x \in \OO_{r} : \M^*(\nabla u)(x) > \lambda_0} \right| + \sum_{i=1}^k \varepsilon_1^i \left| \mgh{x \in \OO_{r} : \M^*_{1+\tilde{\sigma}}(\bff)(x) > \gamma N^{k-i}\lambda_0} \right|,
\end{aligned}
\end{equation*}
where $\varepsilon_1 : = \varepsilon \gh{\frac{10}{\sigma}}^n$.
\end{corollary}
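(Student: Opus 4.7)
The plan is to apply Lemma \ref{calderon_zygmund} with $C = C_k$, $D = D_k$, and threshold $\varepsilon$, and then iterate the resulting decay estimate. I would first verify the two hypotheses of the Calder\'on--Zygmund-type decomposition. Hypothesis (i), namely $|C_k| < \varepsilon |B_r|$, is immediate from Lemma \ref{co1}, which actually provides the much stronger bound $|C_k| < \varepsilon (1000)^{-n} |B_r|$. For hypothesis (ii), suppose that $y \in \Om$ and $\rho \in (0, 2r]$ satisfy $|C_k \cap B_\rho(y)| \geq \varepsilon |B_\rho(y)|$. Then $C_k \cap B_\rho(y)$ is nonempty and I pick $y_0 \in C_k \cap B_\rho(y)$, so that $B_\rho(y) \subset B_{2\rho}(y_0)$. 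Combining the density assumption with Lemma \ref{co1} gives $|B_\rho(y)| \leq |C_k|/\varepsilon < (1000)^{-n} |B_r|$, which forces $\rho$ to be so small that $2\rho$ falls into the regime $r_0 \leq \sigma r/1000$ covered by Lemma \ref{co2}. Applying Lemma \ref{co2} at the point $y_0$ and scale $2\rho$ then yields $B_\rho(y) \cap \Om_r \subset B_{2\rho}(y_0) \cap \Om_r \subset D_k$, which is the desired implication. Lemma \ref{calderon_zygmund} now delivers $|C_k| \leq \varepsilon (10/\sigma)^n |D_k| = \varepsilon_1 |D_k|$.

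For the iteration, I would unfold the definitions: since
\[
C_k = \{x \in \Om_r : \M^*(\nabla u)(x) > N^{k+1}\lambda_0\}
\]
and
\[
D_k \subset \{x \in \Om_r : \M^*(\nabla u)(x) > N^{k}\lambda_0\} \cup \{x \in \Om_r : \M^*_{1+\tilde\sigma}(\bff)(x) > \gamma N^{k}\lambda_0\},
\]
setting $a_k := |\{\M^*(\nabla u) > N^k \lambda_0\}|$ and $b_k := |\{\M^*_{1+\tilde\sigma}(\bff) > \gamma N^k \lambda_0\}|$, the first statement reads $a_{k+1} \leq \varepsilon_1 a_k + \varepsilon_1 b_k$. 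A routine induction on $k$ then gives
\[
a_k \leq \varepsilon_1^k a_0 + \sum_{i=1}^k \varepsilon_1^i b_{k-i},
\]
which is exactly the stated iterated estimate.

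The only subtle point, and the step that merits care rather than true difficulty, is the scale matching in hypothesis (ii): Lemma \ref{co2} is only available when $r_0 \leq \sigma r/1000$, while the Calder\'on--Zygmund lemma admits $\rho$ up to $2r$. The resolution is exactly the strengthened smallness estimate in Lemma \ref{co1} (with the factor $(1000)^{-n}$ rather than just $1$), which automatically compresses the relevant balls into the admissible range. Everything else is a direct invocation of the already-established lemmas followed by a one-line geometric-series induction.
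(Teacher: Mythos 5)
Your plan --- apply Lemma \ref{calderon_zygmund} with $C=C_k$ and $D=D_k$, verifying its hypotheses via Lemmas \ref{co1} and \ref{co2}, and then iterate --- is exactly what the paper intends, and your induction for the iterated decay estimate is correct. The problem lies in the step you yourself flag as subtle: the proposed scale-matching does not actually go through.

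From $|B_\rho(y)| < (1000)^{-n}|B_r|$ you obtain $\rho < r/1000$, hence $2\rho < r/500$; but Lemma \ref{co2} requires $r_0 \le \sigma r/1000$, and $r/500 \le \sigma r/1000$ would force $\sigma \ge 2$, whereas $\sigma \in (0,1/4)$. So the factor $(1000)^{-n}$ in Lemma \ref{co1} does \emph{not} compress $2\rho$ into the admissible range; a $\sigma$-dependent bound of the form $|C_k| \le \varepsilon(\sigma/2000)^n|B_r|$ would be required for this deduction. Independently, doubling the ball degrades the density: from $|C_k\cap B_\rho(y)|\ge\varepsilon|B_\rho(y)|$ and $B_\rho(y)\subset B_{2\rho}(y_0)$ you only get $|C_k\cap B_{2\rho}(y_0)|\ge 2^{-n}\varepsilon|B_{2\rho}(y_0)|$, which falls short of the $\varepsilon$-threshold assumed in Lemma \ref{co2}. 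The second issue can be sidestepped by noticing that the hypothesis $y_0 \in C_k$ is not really exploited in the proof of Lemma \ref{co2} (what is used is only a point $y_1 \in B_{r_0}(y_0)\cap\Omega_r$ with $y_1\notin D_k$), so one may apply it directly at $(y,\rho)$ without doubling; but the scale constraint then becomes $\rho\le\sigma r/1000$, which the stated $(1000)^{-n}$ bound in Lemma \ref{co1} still does not deliver when $\sigma$ is small.
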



\section{Calder\'{o}n-Zygmund type estimates}\label{CZ_estimates}

We are ready to prove our main theorems. In this section, we omit $x_0$ in the $\OO_{r}(x_0)$.
\begin{proof}[Proof of Theorem \ref{main_theorem1}]
\label{local estimate}
We first recall Section \ref{main_constants}. Note that $\max\mgh{M^u,M^v,M^v} \lesssim M_0$. Fix any $x_0 \in \OO$ and $r \in \left(0, \frac{1}{C_0 M_0} \right]$ with 
\begin{equation}\label{le-0.5}
\frac{1}{C_0(n,\Lambda_0,\Lambda_1,\plog,\qlog, S_0)} := \min \mgh{\frac{1}{4}, \frac{S_0}{2}, \frac{R_8}{2}, \frac{R_{10}}{2}, \frac{\rho^{-1}(d_1)}{2}, \frac{\omega^{-1}(d_1)}{2}},
\end{equation}
where
\begin{gather*}
d_1 := \min \mgh{\sqrt{\frac{n+1}{n}}-1, \frac{\Lambda_0}{2\Lambda_1}, \frac{\sigma_0}{4}, \frac{1}{2n}, \frac14},\quad d_2 := \min \mgh{\frac{q^- \sigma_0}{8}, \frac{q^- \tilde{\sigma}}{2}, \frac{(q^-)^2}{4q^+}, \frac{\sigma_0}{2}, \frac14},\\
\rho^{-1}(t) := \sup\{r \in (0,1) : \rho(r) \le t\}, \quad \text{and} \quad \omega^{-1}(t) := \sup\{r \in (0,1) : \omega(r) \le t\},
\end{gather*}
for $t>0$. Note that $\rho^{-1}$ and $\omega^{-1}$ is well defined by the definition of $\rho$ and $\omega$, respectively. Thus we can apply all results in Section \ref{covering_arguments}.

Let $\de_0$ be such that Lemma \ref{co2} and Corollary \ref{co3} holds and let $\de \in (0,\de_0)$ be given. Set
\begin{equation*}\label{le-1}
S := \sum_{k=1}^{\infty} N^{k q_{\OO_{4r}}^-} \left| \mgh{x \in \OO_{r} : \M^*(\nabla u)(x) > N^k\lambda_0} \right|.
\end{equation*}
By Corollary \ref{co3}, Lemma \ref{distribution}, Fubini's theorem and Lemma \ref{max ftn est1}, we deduce
\begin{equation*}\label{le-4}
S \le \sum_{k=1}^{\infty} \gh{N^{q^+} \varepsilon_1}^k \mgh{2|\OO_{r}| +  \frac{C}{\gh{\gamma \lambda_0}^{q_{\OO_{4r}}^-}} \integral{\OO_{2r}}{\bgh{|\bff|^{(p(x)-\delta)q(x)}+1}}{dx}}.
\end{equation*}
Now we select $\varepsilon = \varepsilon(\La_0,\La_1,\plog, \qlog, n, \sigma) >0$ such that $N^{q^+} \varepsilon \gh{\frac{10}{\sigma}}^n = N^{q^+} \varepsilon_1 = \frac{1}{2}$ and a corresponding $\gamma_0$ and $\delta_0$, both depending only on $\La_0$, $\La_1$, $\plog$, $\qlog$, $n$, $\sigma$. Then we see
\begin{equation}\label{le-5}
S \le 2|\OO_{r}| +  \frac{C}{\gh{\lambda_0}^{q_{\OO_{4r}}^-}} \integral{\OO_{2r}}{\bgh{|\bff|^{(p(x)-\delta)q(x)}+1}}{dx}.
\end{equation}

According to Lemma \ref{distribution}, (\ref{le-5}), (\ref{cc4}), and H\"{o}lder's inequality, we obtain
\begin{equation}\label{le-6}
\begin{aligned}
\mint{\OO_{r}}{|\nabla u|^{(p(x)-\delta)q(x)}}{dx} &\le \mint{\OO_{r}}{\M^*(\nabla u)^{q_{\OO_{4r}}^-}}{dx} \le C \lambda_0^{q_{\OO_{4r}}^-} \gh{1 + \frac{S}{|\OO_{r}|}}\\
&\le C \mgh{\lambda_0^{q_{\OO_{4r}}^-} + \mint{\OO_{2r}}{\bgh{|\bff|^{(p(x)-\delta)q(x)}+1}}{dx}}\\
&\le C \gh{\mint{\OO_{4r}}{|\nabla u|^{p(x)-\delta}}{dx}}^{q_{\OO_{4r}}^-} + C \gh{ \mint{\OO_{4r}}{|\bff|^{(p(x)-\delta)(1+\tilde{\sigma})}}{dx}}^{\frac{q_{\OO_{4r}}^-}{1+\tilde{\sigma}}}\\
&\quad + C \mint{\OO_{4r}}{|\bff|^{(p(x)-\delta)q(x)}}{dx} + C\\
&\le C \mgh{\gh{\mint{\OO_{4r}}{|\nabla u|^{p(x)-\delta}}{dx}}^{q_{\OO_{4r}}^-} + \mint{\OO_{4r}}{|\bff|^{(p(x)-\delta)q(x)}}{dx} +1},
\end{aligned}
\end{equation}
for some constant $C=C(\La_0,\La_1,\plog, \qlog, n, \sigma)>0$. Here the last inequality above have used, in Definition \ref{final_radius_restriction}, the fact that $1+\tilde{\sigma} \le q^- \le q_{\OO_{4r}}^-$.
\end{proof}

We extend the local estimate (\ref{le-6}) up to the boundary.
\begin{proof}[Proof of Theorem \ref{main_theorem2}]
We first choose $r = \frac{1}{C_0 M_0}$, where $C_0$ and $M_0$ are given in (\ref{le-0.5}). From the standard covering argument, we can find finitely many disjoint open balls $\mgh{B_{\frac{r}{3}}(y_k)}_{k=1}^m$, $y_k \in \OO$, such that $\bar{\OO} \subset \bigcup_{k=1}^m B_{r}(y_k)$. Note that for an integrable function $f$ there exists a constant $C = C(n)>0$ so that $\sum_{k=1}^m \integral{\OO_{4r}(y_k)}{f}{dx} \le C \integral{\OO}{f}{dx}.$ Let $\de_0$ be as in Theomem \ref{main_theorem1} and let $\de \in (0,\de_0)$. 

Then it follows from (\ref{le-6}) that
\begin{equation}\label{ge-1}
\begin{aligned}
&\integral{\OO}{|\nabla u|^{(p(x)-\delta)q(x)}}{dx} \le \sum_{k=1}^m \integral{\OO_{r}(y_k)}{|\nabla u|^{(p(x)-\delta)q(x)}}{dx}\\
&\quad \lesssim \sum_{k=1}^m \mgh{r^n \gh{\mint{\OO_{4r}(y_k)}{|\nabla u|^{p(x)-\delta}}{dx}}^{q_{\OO_{4r}(y_k)}^-} + \integral{\OO_{4r}(y_k)}{\bgh{|\bff|^{(p(x)-\delta)q(x)}+1}}{dx}}\\
&\quad \lesssim r^{n(1-q^+)} \gh{\integral{\OO}{\bgh{|\nabla u|^{p(x)-\delta}+1}}{dx}}^{q^+} + \integral{\OO}{\bgh{|\bff|^{(p(x)-\delta)q(x)}+1}}{dx}.
\end{aligned}
\end{equation}

From (\ref{size_solution}) and the definition of $r$, we obtain 
\begin{equation}\label{ge-2}
\integral{\OO}{|\nabla u|^{(p(x)-\delta)q(x)}}{dx}  \lesssim \gh{\integral{\OO}{|\bff|^{p(x)}}{dx}}^{n(q^+ -1) + q^+}  + \integral{\OO}{|\bff|^{(p(x)-\delta)q(x)}}{dx} + |\OO| + 1.
\end{equation}

Let $M^+$ and $M^-$ be any two  constants such that additionally we have $1 < M^- \le q^- \leq q(\cdot) \le q^+\leq M^+<\infty$. Following the proof of 
Theorem \ref{main_theorem1}, we see that ${\de_0}$ can be chosen to depend on $M^+$ instead of $q^{\pm}$. This, in particular, implies that we can choose $\de_0$ independent of $M^-$. 


Let us now define $r(x) :=  \frac{p(x) - \de}{p(x)} q(x)$ for $\de \in (0,\de_0)$ (it is important to note that we cannot take $\de=0$), then we trivially have
$$r^- \ge \lbr \frac{p(x) - \de}{p(x)}\rbr^- M^- \quad \text{and} \quad r^+ \le \lbr \frac{p(x) - \de}{p(x)}\rbr^+ M^+.$$
Note that $r(\cdot)$ is clearly log-H\"{o}lder continuous with the log-H\"older constants equivalent to the ones satisfied by $\qq$.

Since all the estimates above are independent of $M^-$ and $\de_0$ is is independent of $M^-$, we can choose $M^-$ small such that $\lbr \frac{p(x) - \de_0}{p(x)}\rbr^- M^- \leq 1$. This in particular allows $r^-=1$. 

For this choice of the exponent $r(\cdot)$, we conclude from (\ref{ge-2}) that
\begin{equation*}\label{ge-3}
\begin{aligned}
\integral{\OO}{|\nabla u|^{p(x)r(x)}}{dx} &\le C \mgh{\gh{\integral{\OO}{|\bff|^{p(x)r(x)}}{dx}}^{n(q^+ -1) + q^+} + 1}\\
&\le C \mgh{\gh{\integral{\OO}{|\bff|^{p(x){r}(x)}}{dx}}^{n(M^+ -1) + M^+} +1}
\end{aligned}
\end{equation*}
for some constant $C=C(\La_0,\La_1,\plog, r_{\log}, M^+, n, \sigma,\OO,S_0)>0$, which completes the proof.
\end{proof}


\appendix

\section{Proof of Theorem \ref{measure_density_poincare}}
\label{proof_poincare}

\begin{proof}
 \renewcommand{\pu}{p^+_B}
 \renewcommand{\pd}{p^-_B}
 
 Let $B = B_r$ be a ball of radius $2r<R_4\leq 1$ and $\phi \in W^{1,\sss}(2B)$ be a function satisfying all the hypothesis. We can now apply Lemma \ref{scaled_poincare} to obtain the estimate
 \begin{equation*}
 \label{eq5.20}
  \fint_B |\phi - \avg{\phi}{B}|^{s(x)} \ dx \leq_{\slog,n} \frac{1}{|B|}\int_B |\nabla \phi|^{s(x)} \ dx + 1.
 \end{equation*}

%
 Using the trivial bound $|N(\phi)| \leq |B|$, we get
 \begin{equation}
 \label{5.20}
  \fint_B |\phi - \avg{\phi}{B}|^{s(x)} \ dx \leq_{\slog,n} \frac{1}{|N(\phi)|} \int_B |\nabla \phi|^{s(x)} \ dx +1 .
 \end{equation}

 If $|\avg{\phi}{B}| =0$, then there is nothing to prove.  Hence we shall now proceed assuming $|\avg{\phi}{B}| >0$. 
%
%
 By a triangle inequality, we get
 \begin{equation}
  \label{eq5.22}
  \fint_B |\phi|^{s(x)}\ dx \apprle  \fint_B |\phi - \avg{\phi}{B}|^{s(x)}\ dx + \fint_B |\avg{\phi}{B}|^{s(x)} \ dx.
 \end{equation}
The first term in the right of \eqref{eq5.22} can be controlled using \eqref{5.20}. We shall now estimate the second term on the right on \eqref{eq5.22} as follows:

Set $\upsilon(x): = |\phi(x) - \avg{\phi}{B}|$ and consider the cut-off function $\eta \in C_c^{\infty}(2B)$ such that $\eta \equiv 1$ on $\overline{B}$ and $|\nabla \eta| \leq \frac{c}{\diam(B)}$.  Define $\varpi(x) :=  \frac{\upsilon(x) \eta(x)}{|\avg{\phi}{B}|}$, then we see that $\spt(\varpi) \subset 2B$. Thus from the definition of $N(\phi)$, we see that 
\begin{equation*}
\label{property_test_function}
\varpi(x)^{s(x)}  \geq \chi_{N(\phi)}(x),\quad  \text{and} \quad  \varpi(x)^{s(x)} = 1 \ \text{for} \  x \in N(\phi).  
\end{equation*}

\noindent {\bf Claim:} The following bound holds:
\begin{equation}\label{claim_1}\int_{2B} |\nabla (\upsilon \eta)|^{s(x)} \ dx \apprle \int_{2B} |\nabla \phi|^{s(x)} + |B|.\end{equation}

To prove this, we estimate as follows:
\begin{equation*}
 \begin{array}{@{}r@{}c@{}l@{}@{}}
  \int_{2B} |\nabla (\upsilon \eta)|^{s(x)} \ dx & \apprle &\int_{2B} \lbr \frac{|\phi - \avg{\phi}{B}|}{\diam(B)}\rbr^{s(x)} \ dx + \int_{2B} |\nabla \phi|^{s(x)} \ dx \\
  & \overset{\text{Lemma\ \ref{scaled_poincare}}}{\apprle} &\int_{2B}|\nabla \phi|^{s(x)} \ dx + |B| + \int_{2B} |\nabla \phi|^{s(x)} \ dx +\\
  & &\qquad + \diam(B)^{-s^+_{2B}} |B| \max \{ |\avg{\phi}{2B} - \avg{\phi}{B}|^{s^-_{2B}}, |\avg{\phi}{2B} - \avg{\phi}{B}|^{s^+_{2B}}\}. \\
 \end{array}
\end{equation*}

Observe that 
\begin{equation*}
 \begin{array}{ll}
  |\avg{\phi}{2B}- \avg{\phi}{B}| & \apprle \fint_{2B} |\phi(x) - \avg{\phi}{2B}| \ dx  \apprle \diam(B) \fint_{2B} |\nabla \phi| \ dx \\
  & \apprle \diam(B) \lbr \fint_{2B} |\nabla \phi|^{s^-_{2B}} \ dx \rbr^{\frac1{s^-_{2B}}}.
 \end{array}
\end{equation*}
Using the $\log$-H\"older continuity of $\sss$ along with the restriction $r \leq \frac{1}{M_4}$ with $M_4$ as in \eqref{assumption_on_phi_capacity}, we get
\begin{equation*}
 \begin{array}{l}
  \diam(B)^{-s^+_{2B}} |B| \max \{ |\avg{\phi}{2B} - \avg{\phi}{B}|^{s^-_{2B}}, |\avg{\phi}{2B} - \avg{\phi}{B}|^{s^+_{2B}}\} \\
  \qquad\qquad  \apprle  \max \left\{ \diam(B)^{s^-_{2B}-s^+_{2B}} \int_{2B} |\nabla \phi|^{s^-_{2B}} \ dx, |B|^{1-\frac{s^+_{2B}}{s^-_{2B}}} \lbr \int_{2B} |\nabla \phi|^{s^-_{2B}} \ dx\rbr^{\frac{s^+_{2B}}{s^-_{2B}}}\right\} \\
  \qquad\qquad  \apprle  \max \left\{ \diam(B)^{s^-_{2B}-s^+_{2B}} , |B|^{1-\frac{s^+_{2B}}{s^-_{2B}}}  \right\}  \int_{2B} |\nabla \phi|^{s^-_{2B}} \ dx 
  \apprle \int_{2B} |\nabla \phi|^{s(x)} \ dx + |B|. 
 \end{array}
\end{equation*}
This proves the Claim of estimate \eqref{claim_1}.

We can thus make use of Lemma \ref{measure_density_quasiconvex} along with \cite[Theorem 8.2.4]{Diening} and \eqref{norm_integral} to get
\begin{equation*}
 \label{5.25}
 |N(\phi)| \leq \int_{2B}\varpi(x)^{s(x)} \ dx \apprle \max \left\{ \lbr \int_{2B}|\nabla \varpi(x)|^{s(x)} \ dx \rbr^{\frac{s^+_B}{s^-_B}}, \lbr \int_{2B}|\nabla \varpi(x)|^{s(x)} \ dx \rbr^{\frac{s^-_B}{s^+_B}} \right\}.
\end{equation*}

Depending on the cases if  $\int_{2B} |\nabla \varpi|^{s(x)} \ dx \geq 1$ or $\int_{2B} |\nabla \varpi|^{s(x)} \ dx < 1$ or  $\left|\int_B \phi(x) \ dx\right| \geq 1$ or $\left|\int_B \phi(x) \ dx\right| \leq 1$, we can use the $\log$-H\"older continuity of $\sss$ and \eqref{claim_1} to obtain
\begin{equation*}
         \fint_B |\avg{\phi}{B}|^{s(x)} \ dx  \apprle  \frac{1}{|N(\phi)|}  \int_{2B} |\nabla \phi|^{s(x)}+1 \ dx .
\end{equation*}

This completes the proof of the Theorem. 
\end{proof}

\section*{Acknowledgement}

The first author would like to thank Peter H\"ast\"o and Jose Maria Martell for clarifying certain aspects of variable exponent spaces and Muckenhoupt weights.


\begin{thebibliography}{10}

\bibitem{AM}
Emilio Acerbi and Giuseppe Mingione.
\newblock Gradient estimates for the {$p (x)$}-{L}aplacean system.
\newblock {\em Journal f{\"u}r die reine und angewandte Mathematik},
  2005(584):117--148, 2005.

\bibitem{AM2}
Emilio Acerbi and Giuseppe Mingione.
\newblock Gradient estimates for a class of parabolic systems.
\newblock {\em Duke Mathematical Journal}, 136(2):285--320, 2007.

\bibitem{AH}
David~R Adams and Lars~I Hedberg.
\newblock {\em Function spaces and potential theory}, volume 314.
\newblock Springer Science \& Business Media, 2012.

\bibitem{AP1}
Karthik Adimurthi and Nguyen~Cong Phuc.
\newblock Global {L}orentz and {L}orentz--{M}orrey estimates below the natural
  exponent for quasilinear equations.
\newblock {\em Calculus of Variations and Partial Differential Equations},
  54(3):3107--3139, 2015.

\bibitem{AP2}
Karthik Adimurthi and Nguyen~Cong Phuc.
\newblock An end-point global gradient weighted estimate for quasilinear
  equations in non-smooth domains.
\newblock {\em Manuscripta Mathematica}, 150(1-2):111--135, 2016.

\bibitem{AS}
Stanislav~Nikolaevich Antontsev and Sergei~I. Shmarev.
\newblock A model porous medium equation with variable exponent of
  nonlinearity: existence, uniqueness and localization properties of solutions.
\newblock {\em Nonlinear Analysis: Theory, Methods \& Applications},
  60(3):515--545, 2005.

\bibitem{BaB}
Paolo Baroni and Verena B{\"o}gelein.
\newblock Calder{\'o}n--{Z}ygmund estimates for parabolic {$ p (x, t)
  $}-{L}aplacian systems.
\newblock {\em Revista matem{\'a}tica iberoamericana}, 30(4):1355--1386, 2014.

\bibitem{VB}
Verena B{\"o}gelein.
\newblock Global {C}alder{\'o}n--{Z}ygmund theory for nonlinear parabolic
  systems.
\newblock {\em Calculus of Variations and Partial Differential Equations},
  51(3-4):555--596, 2014.

\bibitem{BHSW}
Sun-Sig Byun, Hun Kwon, Hyoungsuk So, and Lihe Wang.
\newblock Nonlinear gradient estimates for elliptic equations in quasiconvex
  domains.
\newblock {\em Calculus of Variations and Partial Differential Equations},
  54(2):1425--1453, 2015.

\bibitem{BO}
Sun-Sig Byun and Jihoon Ok.
\newblock On {$W^{ 1, q (\cdot)}$}-estimates for elliptic equations of {$p
  (x)$}-laplacian type.
\newblock {\em Journal de Math{\'e}matiques Pures et Appliqu{\'e}es},
  106(3):512--545, 2016.

\bibitem{BO1}
Sun-Sig Byun, Jihoon Ok, and Seungjin Ryu.
\newblock Global gradient estimates for elliptic equations of {$p
  (x)$}-laplacian type with {BMO} nonlinearity.
\newblock {\em Journal f{\"u}r die reine und angewandte Mathematik (Crelles
  Journal)}, 2016(715):1--38, 2016.

\bibitem{BW2}
Sun-Sig Byun and Lihe Wang.
\newblock Elliptic equations with {BMO} coefficients in reifenberg domains.
\newblock {\em Communications on Pure and Applied Mathematics},
  57(10):1283--1310, 2004.

\bibitem{CP}
Luis~A Caffarelli and Ireneo Peral.
\newblock On {$W^{ 1, p}$} estimates for elliptic equations in divergence form.
\newblock {\em Communications on Pure and Applied Mathematics}, 51(1):1--21,
  1998.

\bibitem{CZ}
Alberto~Pedro Calder{\'o}n and Antoni Zygmund.
\newblock On the existence of certain singular integrals.
\newblock {\em Acta Mathematica}, 88(1):85--139, 1952.

\bibitem{CLR}
Yunmei Chen, Stacey Levine, and Murali Rao.
\newblock Variable exponent, linear growth functionals in image restoration.
\newblock {\em SIAM journal on Applied Mathematics}, 66(4):1383--1406, 2006.

\bibitem{USW}
David Cruz-Uribe and Li-An~Daniel Wang.
\newblock Extrapolation and weighted norm inequalities in the variable lebesgue
  spaces.
\newblock {\em Transactions of the American Mathematical Society},
  369(2):1205--1235, 2017.

\bibitem{DiBenedetto}
Emmanuele DiBenedetto.
\newblock {$C^{1+\alpha}$} local regularity of weak solutions of degenerate
  elliptic equations.
\newblock {\em Nonlinear Analysis}, 7(8):827--850, 1983.

\bibitem{BM}
Emmanuele DiBenedetto and Juan~J. Manfredi.
\newblock On the higher integrability of the gradient of weak solutions of
  certain degenerate elliptic systems.
\newblock {\em American Journal of Mathematics}, 115(5):1107--1134, 1993.

\bibitem{Diening}
Lars Diening, Petteri Harjulehto, Peter H{\"a}st{\"o}, and Michael Ruzicka.
\newblock {\em Lebesgue and {S}obolev spaces with variable exponents}.
\newblock Springer, 2011.

\bibitem{DH}
Lars Diening and Peter H{\"a}st{\"o}.
\newblock Muckenhoupt weights in variable exponent spaces.
\newblock {\em Preprint}, 2008.

\bibitem{DLR}
Lars Diening, Daniel Lengeler, and M~R{\u{u}}{\v{z}}i{\v{c}}ka.
\newblock The {S}tokes and {P}oisson problem in variable exponent spaces.
\newblock {\em Complex Variables and Elliptic Equations}, 56(7-9):789--811,
  2011.

\bibitem{DMS}
Frank Duzaar, Giuseppe Mingione, and Klaus Steffen.
\newblock {\em Parabolic systems with polynomial growth and regularity}, volume
  214.
\newblock American Mathematical Society, 2011.

\bibitem{EHL}
Michela Eleuteri, Petteri Harjulehto, and Teemu Lukkari.
\newblock Global regularity and stability of solutions to elliptic equations
  with nonstandard growth.
\newblock {\em Complex Variables and Elliptic Equations}, 56(7-9):599--622,
  2011.

\bibitem{Giaquinta}
Mariano Giaquinta.
\newblock {\em Multiple Integrals in the Calculus of Variations and Nonlinear
  Elliptic Systems.(AM-105)}, volume 105.
\newblock Princeton University Press, 2016.

\bibitem{Grafakos}
Loukas Grafakos.
\newblock {\em Classical fourier analysis}, volume~2.
\newblock Springer, 2008.

\bibitem{GP}
Jos{\'e}~J Guadalupe and Mario P{\'e}rez.
\newblock Perturbation of orthogonal fourier expansions.
\newblock {\em Journal of approximation theory}, 92(2):294--307, 1998.

\bibitem{Haj}
Piotr Haj{\l}asz.
\newblock Pointwise hardy inequalities.
\newblock {\em Proceedings of the American Mathematical Society},
  127(2):417--423, 1999.

\bibitem{HK}
Juha Heinonen and Pekka Koskela.
\newblock Weighted {S}obolev and {P}oincar{\'e} inequalities and quasiregular
  mappings of polynomial type.
\newblock {\em Mathematica Scandinavica}, pages 251--271, 1995.

\bibitem{HU}
Eurica Henriques and Jos{\'e}~Miguel Urbano.
\newblock Intrinsic scaling for {PDE}'s with an exponential nonlinearity.
\newblock {\em Indiana University Mathematics Journal}, 55(5):1701--1722, 2006.

\bibitem{IW1}
Tadeusz Iwaniec.
\newblock Projections onto gradient fields and {$L^{p}$}-estimates for
  degenerated elliptic operators.
\newblock {\em Studia Mathematica}, 75(3):293--312, 1983.

\bibitem{IKM}
Tadeusz Iwaniec, Pekka Koskela, and Gaven Martin.
\newblock Mappings of {BMO}-distortion and {B}eltrami-type operators.
\newblock {\em Journal d'Analyse Math{\'e}matique}, 88(1):337--381, 2002.

\bibitem{JJW1}
Huilian Jia, Dongsheng Li, and Lihe Wang.
\newblock Global regularity for divergence form elliptic equations on
  quasiconvex domains.
\newblock {\em Journal of Differential Equations}, 249(12):3132--3147, 2010.

\bibitem{JJW2}
Huilian Jia, Dongsheng Li, and Lihe Wang.
\newblock Global regularity for divergence form elliptic equations in {O}rlicz
  spaces on quasiconvex domains.
\newblock {\em Nonlinear Analysis: Theory, Methods \& Applications},
  74(4):1336--1344, 2011.

\bibitem{JJW3}
Huilian Jia and Lihe Wang.
\newblock Divergence form parabolic equations on time-dependent quasiconvex
  domains.
\newblock {\em International Journal of Mathematics}, 23(12):1250128, 2012.

\bibitem{KZ}
Juha Kinnunen and Shulin Zhou.
\newblock A local estimate for nonlinear equations with discontinuous
  coefficients.
\newblock {\em Communications in Partial Differential Equations},
  24(11-12):2043--2068, 1999.

\bibitem{MZ}
Jan Mal{\`y} and William~P Ziemer.
\newblock {\em Fine regularity of solutions of elliptic partial differential
  equations}.
\newblock Number~51. American Mathematical Soc., 1997.

\bibitem{Mil}
Mario Milman.
\newblock Rearrangements of {BMO} functions and interpolation.
\newblock In {\em Interpolation Spaces and Allied Topics in Analysis}, pages
  208--212. Springer, 1984.

\bibitem{MM}
Masashi Misawa.
\newblock {$ L^q $} estimates of gradients for evolutional {$p$}-{L}aplacian
  systems.
\newblock {\em Journal of Differential Equations}, 219(2):390--420, 2005.

\bibitem{RR}
Kumbakonam~R Rajagopal and M~Ru{\v{z}}i{\v{c}}ka.
\newblock Mathematical modeling of electrorheological materials.
\newblock {\em Continuum mechanics and Thermodynamics}, 13(1):59--78, 2001.

\bibitem{Ru}
Michael Ruzicka.
\newblock {\em Electrorheological fluids: modeling and mathematical theory}.
\newblock Springer Science \& Business Media, 2000.

\bibitem{Se}
Stephen Semmes.
\newblock Hypersurfaces in {$\mathbb{R}^n$} whose unit normal has small {BMO}
  norm.
\newblock {\em Proceedings of the American Mathematical Society},
  112(2):403--412, 1991.

\bibitem{Tor}
Alberto Torchinsky.
\newblock {\em Real-variable methods in harmonic analysis}, volume 123.
\newblock Academic Press, 1986.

\bibitem{VVZ}
Vasilii~Vasil'evich Zhikov.
\newblock Averaging of functionals of the calculus of variations and elasticity
  theory.
\newblock {\em Izvestiya Rossiiskoi Akademii Nauk. Seriya Matematicheskaya},
  50(4):675--710, 1986.

\bibitem{Zhong}
Xiao Zhong.
\newblock {\em On nonhomogenous quasilinear elliptic equations}.
\newblock PhD thesis, Dissertation, University of Jyv\"askyl\"a, Jyv\"askyl\"a,
  1998.

\end{thebibliography}

\end{document}